\tikzset{
directed/.style={postaction={decorate,decoration={
  markings,
  mark=at position .5 with {\arrow{>}}  
  }}},
vertex/.style={draw, circle, fill=white, minimum size=5pt, inner sep=0pt},
edge/.style={thick, directed},
leaf/.style={thick, line cap=round, directed},
disconnected leaf/.style={thick, line cap=round},
}
\numberwithin{equation}{section}
\theoremstyle{plain}
\newtheorem{thm}{Theorem}[section]
\newtheorem*{thm*}{Theorem}
\newtheorem{prp}[thm]{Proposition}
\newtheorem{cor}[thm]{Corollary}
\newtheorem{lem}[thm]{Lemma}
\theoremstyle{definition}
\newtheorem{dfn}[thm]{Definition}
\newtheorem{example}[thm]{Example}
\newtheorem{rem}[thm]{Remark}
\title{Twisted generalized Weyl algebras and primitive quotients of enveloping algebras}
\author{Jonas T. Hartwig\and Vera Serganova}
\date{ }
\newcommand\al{\alpha} \newcommand\ga{\gamma}
\newcommand\ep{\varepsilon}
 \newcommand\la{\lambda}
  \newcommand\si{\sigma}
\newcommand{\Ga}{\Gamma}
    \newcommand\Z{\mathbb{Z}}   \newcommand\Q{\mathbb{Q}}
\newcommand\K{\Bbbk}    
\newcommand\Supp{\mathrm{Supp}}
\newcommand\Ann{\mathrm{Ann}}
\DeclareMathOperator{\End}{End}\DeclareMathOperator{\Hom}{Hom}
\DeclareMathOperator{\Aut}{Aut}\DeclareMathOperator{\Id}{Id}
\DeclareMathOperator{\ad}{ad}
\newcommand{\iv}[2]{\llbracket #1,#2 \rrbracket}
\newcommand{\TGWC}[4]{\mathcal{C}_{#1}({#2},{#3},{#4})}  
\newcommand{\TGWA}[4]{\mathcal{A}_{#1}({#2},{#3},{#4})}
\newcommand{\TGWI}[4]{\mathcal{I}_{#1}({#2},{#3},{#4})}
\renewcommand{\hat}{\widehat}
\begin{document}
\maketitle
\begin{abstract}

To each multiquiver $\Gamma$ we attach a solution to the consistency equations associated to twisted generalized Weyl (TGW) algebras. This generalizes several previously obtained solutions in the literature. We show that the corresponding algebras $\mathcal{A}(\Gamma)$ carry a canonical representation by differential operators and that $\mathcal{A}(\Gamma)$ is universal among all TGW algebras with such a representation. We also find explicit conditions in terms of $\Ga$ for when this representation is faithful or locally surjective. 

By forgetting some of the structure of $\Gamma$ one obtains a Dynkin diagram, $D(\Gamma)$.
We show that the generalized Cartan matrix of $\mathcal{A}(\Gamma)$ coincides with the one corresponding to $D(\Gamma)$ and that $\mathcal{A}(\Gamma)$ contains graded homomorphic images of the enveloping algebra of the positive and negative part of the corresponding Kac-Moody algebra.

Finally, we show that a primitive quotient $U/J$ of the enveloping algebra of a finite-dimensional simple Lie algebra over an algebraically closed field of characteristic zero is graded isomorphic to a TGW algebra if and only if $J$ is the annihilator of a completely pointed (multiplicity-free) simple weight module. The infinite-dimensional primitive quotients in types $A$ and $C$ are closely related to $\mathcal{A}(\Gamma)$ for specific $\Gamma$.
We also prove one result in the affine case.
\end{abstract}

\tableofcontents

\section{Preliminaries}
\subsection{Introduction}

Twisted generalized Weyl (TGW) algebras were introduced by Mazorchuk and Turowska in 1999 \cite{MazTur1999}.
They are constructed from a commutative unital ring $\K$, an associative unital $\K$-algebra $R$, a collection of elements $t=\{t_i\}_{i\in I}$ from the center of $R$, a collection of commuting $\K$-automorphisms $\si=\{\si_i\}_{i\in I}$ of $R$, and a scalar matrix $\mu=(\mu_{ij})_{i,j\in I}$, by adjoining to $R$ new non-commuting generators $X_i$ and $Y_i$ for $i\in I$, imposing some relations and taking the quotient by a certain radical $\mathcal{I}$ (see Section \ref{sec:background} for the full definition). These algebras are denoted by $\mathcal{A}_\mu(R,\si,t)$.
They are naturally graded by the free abelian group on the index set $I$ and come with a canonical map $R\to\mathcal{A}_\mu(R,\si,t)$ making them $R$-rings. In addition, if $\mu$ is symmetric, they can be equipped with an involution.

The structure and representation theory of TGW algebras have been investigated in several papers.
For example, families of simple weight modules were classified in \cite{MazTur1999},\cite{MazPonTur2003},\cite{Hartwig2006}, Whittaker modules classified in \cite{FutHar2012a},
bounded and unbounded $\ast$-representations studied in \cite{MazTur2002},
generalized Serre relations were found in \cite{Hartwig2010}, and 
conditions for a TGW algebra to be a simple ring were given in \cite{HarOin2013}.

Examples of TGW algebras include multiparameter quantized Weyl algebras \cite{MazTur2002}, \cite{Hartwig2006}, \cite{FutHar2012a}, $U(\mathfrak{sl}_2)$, $U_q(\mathfrak{sl_2})$, $Q_{ij}$-CCR (Canonical Commutation Relation) algebras \cite{MazTur2002}, quantized Heisenberg algebras, extended OGZ algebras $\mathcal{U}(r-1,r,r+1)$ \cite{MazPonTur2003},  the Mickelsson-Zhelobenko algebra associated to the pair $(\mathfrak{gl}_n,\mathfrak{gl}_{n-1})$ \cite{MazPonTur2003}, an example related to $\mathfrak{gl}_3$ \cite{Sergeev2001}, and examples attached to any symmetric generalized Cartan matrix \cite{Hartwig2010}.

In addition, any higher rank generalized Weyl algebra (GWA) \cite{Bavula1992} in which $X_i$ and $Y_i$ are not zero-divisors is also an example of a TGW algebra. The GWAs are obtained precisely when the following additional condition  holds:
\begin{equation}\label{eq:GWA-consistency}
\si_i(t_j)=t_j,\qquad i\neq j.
\end{equation}
In a GWA one has $X_iX_j=X_jX_i$ and $Y_iY_j=Y_jY_i$ for all $i,j$. These relations need not hold in a general TGW algebra, where instead they are replaced by higher degree Serre-type relations \cite{Hartwig2010}.

A question of particular importance is whether a given input datum $(R,\si,t)$ actually gives rise to a non-trivial TGW algebra. Indeed, it can happen that the relations are contradictory so that $\mathcal{A}_\mu(R,\si,t)=\{0\}$, the algebra with one element \cite[Ex.~2.8]{FutHar2012b}.
This does not occur for higher rank GWAs, as the conditions \eqref{eq:GWA-consistency} implies the algebra is consistent. Thus it becomes important to find a substitute for \eqref{eq:GWA-consistency} which ensures that a given TGW algebra is non-trivial. This question was solved in \cite{FutHar2012b}, in the case when the $t_i$ are not zero-divisors in $R$. The answer is the following.
\begin{thm*}[\cite{FutHar2012b}] Assume that the elements $t_i$ are not zero-divisors in $R$. Then the following set of equations is sufficient for a TGW algebra $\mathcal{A}_{\mu}(R,\si,t)$ to be non-trivial:
\begin{subequations}\label{eq:intro-tgw-consistency}
\begin{align}
\label{eq:intro-tgw-consistency1}
\si_i\si_j(t_it_j) &=\mu_{ij}\mu_{ji}\si_i(t_i)\si_j(t_j),\qquad i\neq j \\ 
\label{eq:intro-tgw-consistency2}
\si_i\si_k(t_j)t_j &=\si_i(t_j)\si_k(t_j),\qquad i\neq j\neq k\neq i
\end{align}
\end{subequations}
Moreover, if one requires the canonical map $R\to\mathcal{A}_\mu(R,\si,t)$ to be injective, then equations \eqref{eq:intro-tgw-consistency} are necessary.
\end{thm*}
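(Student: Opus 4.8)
Recall (from Section~\ref{sec:background}) that $\TGWA{\mu}{R}{\si}{t}=\TGWC{\mu}{R}{\si}{t}/\mathcal{I}$, where the TGW \emph{construction} $\TGWC{\mu}{R}{\si}{t}$ is the $\Z^I$-graded $R$-ring generated by $X_i,Y_i$ ($i\in I$) subject to $X_ir=\si_i(r)X_i$, $Y_ir=\si_i^{-1}(r)Y_i$, $Y_iX_i=t_i$, $X_iY_i=\si_i(t_i)$ and $X_iY_j=\mu_{ij}Y_jX_i$ for $i\neq j$, and $\mathcal{I}$ is the radical. Since $\mathcal{I}$ meets the degree-zero component $(\TGWC{\mu}{R}{\si}{t})_0\supseteq R$ trivially, $\TGWA{\mu}{R}{\si}{t}$ is non-trivial as soon as $1\neq 0$ in $\TGWC{\mu}{R}{\si}{t}$. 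Thus everything reduces to statements about $\TGWC{\mu}{R}{\si}{t}$: for sufficiency I must produce a representation of it on which $R$ acts faithfully, and for necessity I must read off relations that hold in it.

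\textbf{Necessity.} Suppose $R\to\TGWA{\mu}{R}{\si}{t}$ is injective, hence so is $R\to\TGWC{\mu}{R}{\si}{t}$. I evaluate the degree-zero element $X_iY_jX_jY_i$ in two ways: using $Y_jX_j=t_j$ and then $X_iY_i=\si_i(t_i)$ one gets $\si_i(t_it_j)$, while using $X_aY_b=\mu_{ab}Y_bX_a$ to move $X_i$ past $Y_j$ and $X_j$ past $Y_i$, and then $X_iY_i=\si_i(t_i)$, one gets $\mu_{ij}\mu_{ji}\,\si_j^{-1}\si_i(t_i)\,t_j$. By injectivity these agree in $R$, and applying $\si_j$ gives \eqref{eq:intro-tgw-consistency1}. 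Equation \eqref{eq:intro-tgw-consistency2} does not arise by pure rewriting in $\TGWC{\mu}{R}{\si}{t}$; it is obtained from a longer computation with a suitable degree-zero word in the generators indexed by a triple $\{i,j,k\}$, after cancelling the common factor $t_j$ --- the one point in this direction where the non-zero-divisor hypothesis enters.

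\textbf{Sufficiency (the main step).} Assume \eqref{eq:intro-tgw-consistency}. Let $\tilde R$ be the localization of $R$ at the multiplicative set generated by $\{\si^\al(t_i):\al\in\Z^I,\ i\in I\}$; since the $t_i$ are central non-zero-divisors, $R\hookrightarrow\tilde R$, each $\si_i$ extends to $\tilde R$, and every $\si^\al(t_i)$ is a unit there. On $M=\bigoplus_{g\in\Z^I}\tilde R_g$, with one copy $\tilde R_g$ of $\tilde R$ per $g$, I seek an action in which $r\in R$ acts on $\tilde R_g$ by multiplication by $\si^{-g}(r)$, $X_i\colon\tilde R_g\to\tilde R_{g+e_i}$ is multiplication by some $a_i(g)\in\tilde R^\times$, and $Y_i\colon\tilde R_g\to\tilde R_{g-e_i}$ is multiplication by $b_i(g)\in\tilde R^\times$. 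The relations with $R$ and the relations $Y_iX_i=t_i$, $X_iY_i=\si_i(t_i)$ are then automatic once one imposes $b_i(g+e_i)\,a_i(g)=\si^{-g}(t_i)$, which recovers $b_i$ from $a_i$. The remaining relations $X_iY_j=\mu_{ij}Y_jX_i$ ($i\neq j$) become a system of multiplicative functional equations for the functions $a_i\colon\Z^I\to\tilde R^\times$, and the crux is that this system is solvable \emph{exactly} when \eqref{eq:intro-tgw-consistency} holds: the ``scalar part'' of the pairwise equations forces the scalar $\mu_{ij}\mu_{ji}$ to be a $\si$-twist of a ratio of the $t$'s --- this is \eqref{eq:intro-tgw-consistency1} --- while compatibility of the partial solutions over three distinct indices is a cocycle-type closedness condition equivalent to \eqref{eq:intro-tgw-consistency2}. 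Writing down an explicit solution $a_i(g)$ (a monomial in $\si$-conjugates of the $t_j$ with $g$-dependent exponents) produces a well-defined algebra homomorphism $\rho\colon\TGWC{\mu}{R}{\si}{t}\to\End_\K(M)$; evaluating $\rho(r)$ at $1\in\tilde R_0=\tilde R$ shows $\rho|_R$ is injective, so $R\hookrightarrow\TGWC{\mu}{R}{\si}{t}$ and hence $\TGWA{\mu}{R}{\si}{t}$ is non-trivial.

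\textbf{Main obstacle, and an alternative.} The only real work is the bookkeeping in the sufficiency step --- solving the functional system for the $a_i$ and verifying that solvability is controlled precisely by \eqref{eq:intro-tgw-consistency}. I would organize it by passing to ``logarithmic'' coordinates, recording each $a_i(g)$ as an integer-exponent monomial in the generators $\si^\al(t_j)$ of $\tilde R^\times$ modulo $\K^\times$, so that the functional equations become affine recurrences in $g$ whose solvability obstruction is visibly \eqref{eq:intro-tgw-consistency1} (a scalar ``level'' matching) together with \eqref{eq:intro-tgw-consistency2} (closedness over triples). An alternative to the representation-theoretic argument is a reduction to small rank: non-triviality of $\TGWC{\mu}{R}{\si}{t}$ for arbitrary $I$ can be deduced from its non-triviality for all index sets of size $\leq 2$ and of size $\leq 3$, and these cases are treated directly --- the rank-$\leq 2$ case reducing, once \eqref{eq:intro-tgw-consistency1} holds, to a generalized Weyl algebra computation, and the rank-$3$ case isolating the role of \eqref{eq:intro-tgw-consistency2}.
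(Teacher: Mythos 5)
This statement is quoted in the paper from \cite{FutHar2012b} (it is restated as Theorem \ref{thm:consistency}); the paper itself gives no proof, so your proposal can only be measured against the known argument, which indeed proceeds as you suggest: pass to the localization at the central regular elements $\si_g(t_i)$, build a faithful $\Z I$-graded module (equivalently a crossed-product model) on which $R$ acts faithfully, and extract the necessary identities from two-fold reductions of degree-zero words, using regularity to cancel. So your strategy is the right one, and your necessity computation for \eqref{eq:intro-tgw-consistency1} via the two reductions of $X_iY_jX_jY_i$ is correct and complete (note also that your action-by-multiplication argument tacitly uses that the multipliers $a_i(g)$, being built from $\si^\al(t_j)$ with $t_j$ central, are central -- worth saying, since $R$ is not assumed commutative).

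The genuine gap is that both remaining cores are asserted rather than proved, and they are exactly the content of the theorem. For necessity of \eqref{eq:intro-tgw-consistency2} you do not exhibit the degree-zero word, its two reductions, or the cancellation; finding a word whose two reductions differ precisely by $\si_i\si_k(t_j)t_j$ versus $\si_i(t_j)\si_k(t_j)$ (times common regular factors that must be cancelled, which is legitimate only after invoking injectivity of $\rho$ together with regularity of the $\si_g(t_i)$) is a nontrivial piece of bookkeeping, not a routine variant of the rank-two computation. For sufficiency, you correctly reduce everything to solving the multiplicative system $a_i(g-e_j)\,b_j(g)=\mu_{ij}\,b_j(g+e_i)\,a_i(g)$ with $b_i(g+e_i)a_i(g)=\si^{-g}(t_i)$, but the claims that its solvability is ``exactly'' governed by \eqref{eq:intro-tgw-consistency}, that \eqref{eq:intro-tgw-consistency1} is the ``scalar part'' and \eqref{eq:intro-tgw-consistency2} a ``cocycle closedness over triples,'' are precisely what must be proved: no explicit $a_i(g)$ is written down and no verification that \eqref{eq:intro-tgw-consistency} implies the required identities is carried out. (Your fallback reduction to index sets of size $\le 3$ is also unjustified as stated: non-triviality, i.e.\ injectivity of $R\to\TGWA{\mu}{R}{\si}{t}$, is not obviously a rank-$\le 3$ local condition, and you give no argument.) As it stands the proposal is a sound plan whose two central verifications -- the triple-index word computation and the explicit multiplier/cocycle construction -- are missing.
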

Clearly, any solution to \eqref{eq:GWA-consistency} is also a solution to \eqref{eq:intro-tgw-consistency}, if we take $\mu_{ij}=1$ for all $i,j$. The first equation \eqref{eq:intro-tgw-consistency1} was known already in \cite{MazTur1999},\cite{MazTur2002}. The second equation \eqref{eq:intro-tgw-consistency2} was found in \cite{FutHar2012b} and is independent of the first one.
Hence, constructing examples of TGW algebras is equivalent to finding solutions $(R,\si,t)$ to the consistency equations \eqref{eq:intro-tgw-consistency}. This task is much more difficult than solving the GWA equation \eqref{eq:GWA-consistency}.

In the present paper we construct a new family of solutions to \eqref{eq:intro-tgw-consistency}, parametrized by multiquivers. A multiquiver is a quiver (directed graph) where we allow multiple edges between two vertices.

The motivation for this construction came from several directions. Already in \cite{MazTur1999}, the authors essentially attached a graph to a TGW algebra such that vertices $i$ and $j$ are connected if and only if $\si_i(t_j)\neq t_j$. In a sense, therefore, the complexity of this graph measures the extent to which the TGW algebra differs from a GWA. In \cite{Hartwig2006} this idea was further quantified by attaching multiplicities $(a_{ij},a_{ji})$ to each edge according to the degree of the minimal polynomial of $\si_i$ acting on $t_j$. It was also shown in \cite{Hartwig2006} that this results in a generalized Cartan matrix (GCM), that corresponding Serre-type relations hold, and that any symmetric GCM can occur. No examples of TGW algebras with non-symmetric GCM were known at the time. In Section \ref{sec:theGCM} we show that in fact any GCM can occur.

Another inspiration was the paper by A. Sergeev \cite{Sergeev2001}, in which he shows that certain infinite-dimensional primitive quotients of $U(\mathfrak{gl}_3)$ are TGW algebras. The corresponding simple $\mathfrak{gl}_3$-modules are completely pointed (multiplicity-free), and can be realized as eigenspaces of the Euler operator in simple weight modules over the Weyl algebra. The latter is a special case of the classification of such modules over $\mathfrak{sl}_n$ and $\mathfrak{sp}_{2n}$ given in \cite{BenBriLem1997}. Thus it is natural to ask for a generalization of Sergeev's construction to higher rank simple Lie algebras of types $A$ and $C$. We obtain this in Sections \ref{sec:special-linear-lie-algebras} and \ref{sec:symplectic-lie-algebras}.

We now describe the contents and main results of this article in more detail.

We assume throughout that $\K$ is a field of characteristic zero.
In Section \ref{sec:background} we collect some basic facts about TGW algebras.
A new result about quotients of TGW algebras that we will need is proved in Section \ref{sec:epimorphisms}.

The definition of the new TGW algebras $\mathcal{A}(\Ga)$ attached to multiquivers $\Ga$ is given in Section \ref{sec:AGamma}, along with some examples in Section \ref{sec:examples}. The relation to a previous family of TGW algebras studied in \cite{Hartwig2006} is given in Section \ref{sec:symmetric}.

In Section \ref{sec:diffops} we study a canonical representation $\varphi_\Gamma$ of $\mathcal{A}(\Ga)$ by differential operators. We determine the rank of the kernel of the incidence matrix of $\Ga$ as the number of connected components of $\Ga$ in \emph{equilibrium} (Theorem \ref{thm:rank-ker-gamma}), give a description of the centralizer of $R$ in $\mathcal{A}(\Ga)$ (Lemma \ref{lem:centralizer}) and prove that this centralizer is a maximal commutative subalgebra (Corollary \ref{cor:centralizer}). This is applied to obtain the first main result of the paper, which gives precise conditions under which $\varphi_\Ga$ is faithful.

{
\renewcommand{\thethm}{\ref{thm:faithfulness}}
\begin{thm}
Let $\Gamma$ be a multiquiver, $\gamma$ be its incidence matrix, $\mathcal{A}(\Ga)$ the corresponding TGW algebra, and $\varphi_\Gamma$ be the canonical representation by differential operators.
Then the following statements are equivalent:
\begin{enumerate}[{\rm (i)}]
\item $\varphi_\Gamma$ is faithful;
\item $\gamma$ is injective;
\item $R_E$ is a maximal commutative subalgebra of $\mathcal{A}(\Gamma)$;
\item No connected component of $\Ga$ is in equilibrium.
\end{enumerate}
\end{thm}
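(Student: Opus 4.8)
The plan is to establish the cycle of implications (ii) $\Rightarrow$ (i) $\Rightarrow$ (iii) $\Rightarrow$ (iv) $\Rightarrow$ (ii), leaning on the structural results announced just before the statement. First I would observe that (iv) $\Leftrightarrow$ (ii) is essentially immediate from Theorem \ref{thm:rank-ker-gamma}: that theorem identifies $\dim\ker\gamma$ with the number of connected components of $\Gamma$ in equilibrium, so $\gamma$ is injective precisely when no component is in equilibrium. Thus the real content is the chain connecting faithfulness of $\varphi_\Gamma$ to injectivity of $\gamma$ and to the maximality of $R_E$ as a commutative subalgebra.

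For (ii) $\Rightarrow$ (i): I would use the grading of $\mathcal{A}(\Gamma)$ by the free abelian group on the vertex set. The representation $\varphi_\Gamma$ is a graded map into differential operators, so it suffices to check faithfulness degree by degree. On each graded component $\mathcal{A}(\Gamma)_g$ the algebra is a free module of rank one over $R$ (this is the standard "Hilbert series" description of a consistent TGW algebra coming from the earlier sections), and the action of such a component is given by a monomial differential operator whose coefficient is a product of the $t_i$-type polynomials twisted by the $\sigma_i$. The point is that these coefficients are nonzero in the polynomial ring underlying $\varphi_\Gamma$, and an element of the kernel would have to lie in the degree-zero part, i.e. in $R$ itself; injectivity of $\gamma$ guarantees that the characters by which $R=R_E$ acts on the Fock-type module separate points, so nothing in $R$ is killed. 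This is the step I expect to be the main obstacle, because one must control exactly which coefficients vanish and argue that injectivity of $\gamma$ is precisely what prevents a nonzero kernel — this is where the combinatorics of $\Gamma$ (equilibrium of components) enters, and it must be dovetailed carefully with Theorem \ref{thm:rank-ker-gamma}.

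For (i) $\Rightarrow$ (iii): since $\varphi_\Gamma$ is faithful, $\mathcal{A}(\Gamma)$ embeds in the ring of differential operators; by Lemma \ref{lem:centralizer} the centralizer of $R$ in $\mathcal{A}(\Gamma)$ is $R_E$, and the image $\varphi_\Gamma(R_E)$ is a polynomial subalgebra whose centralizer inside the relevant differential operator algebra is itself (the standard fact that a polynomial subalgebra in enough independent "Euler-type" variables is maximal commutative). Faithfulness transports this back: the centralizer of $R_E$ in $\mathcal{A}(\Gamma)$ injects into the centralizer of $\varphi_\Gamma(R_E)$, which is $\varphi_\Gamma(R_E)$, so $R_E$ is maximal commutative. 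Alternatively, and more in the spirit of the paper, (i) $\Rightarrow$ (iii) follows directly from Corollary \ref{cor:centralizer} once faithfulness is known, so I would cite that.

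Finally, for (iii) $\Rightarrow$ (iv) I would argue by contraposition: if some connected component $\Gamma_0$ of $\Gamma$ is in equilibrium, then by Theorem \ref{thm:rank-ker-gamma} there is a nonzero element $v\in\ker\gamma$, and I would use $v$ to build an element of $\mathcal{A}(\Gamma)$ outside $R_E$ that nevertheless centralizes $R$ — concretely, the monomial $\prod X_i^{v_i^+}\prod Y_i^{v_i^-}$ (or the appropriate normally ordered product dictated by $v$) has degree $\gamma$-image zero, hence commutes with all of $R=R_E$ up to the twist $\sigma^v$, and $v\in\ker\gamma$ forces $\sigma^v=\mathrm{id}$, so this monomial lies in the centralizer of $R$ but has nonzero $\mathbb{Z}^I$-degree, contradicting maximality (it is not in $R_E$). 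This closes the cycle. The only delicate bookkeeping here is checking that such a monomial is genuinely nonzero in $\mathcal{A}(\Gamma)$ — which is exactly the consistency (non-triviality) guaranteed by the construction of $\mathcal{A}(\Gamma)$ as a solution to \eqref{eq:intro-tgw-consistency} — and that its commutation with $R$ is exact (no lower-order correction terms), which is automatic from the defining relations $X_i r = \sigma_i(r) X_i$ and the fact that $R$ is central-by-$t$ in the TGW sense.
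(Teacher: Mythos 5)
Your cycle stands or falls on the step (ii)$\Rightarrow$(i), and that is exactly where the argument breaks. The claim that each graded component $\mathcal{A}(\Ga)_g$ is a free $R_E$-module of rank one is false in general: by Lemma \ref{lem:monomials} a component is only a \emph{sum} of cyclic modules $R_E\cdot m$ over the reduced monomials $m$ of degree $g$, and these need not collapse to one generator. For instance, in Example \ref{ex:graph3} the matrix $\ga$ is injective, yet $\mathcal{A}(\Ga)_{(1,1)}$ is spanned over $R_E$ by $X_1X_2$ and $X_2X_1$, whose images under $\varphi_\Ga$ are $P\,z^{\ga(g)}$ and $Q\,z^{\ga(g)}$ with $P,Q$ coprime non-units (Lemma \ref{lem:Pmn}); so this component maps onto the non-principal ideal $(P,Q)\subseteq R_E$ and cannot be cyclic. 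In the paper cyclicity of the components is only obtained as a \emph{consequence} of faithfulness together with local surjectivity, and the latter requires $\bar\Ga$ acyclic (Theorem \ref{thm:local-surjectivity}), so it is not available here. The remaining phrases in your sketch ("an element of the kernel would have to lie in the degree-zero part", "characters \dots separate points") do not substitute for an argument: injectivity of $\varphi_\Ga$ on $R_E$ is automatic from the $R_E$-ring property and has nothing to do with injectivity of $\ga$, and a homogeneous kernel element of nonzero degree is a general $R_E$-combination of several reduced monomials, for which the vanishing of $\sum_m r_m p_m$ in $R_E$ does not visibly force $\sum_m r_m m=0$ in $\mathcal{A}(\Ga)$. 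The paper closes this direction differently: it proves (iii)$\Rightarrow$(i) by noting that $R_E\cap\ker\varphi_\Ga=0$ and invoking \cite[Thm.~3.6]{HarOin2013}, which guarantees that when $R_E$ is maximal commutative any (two-sided) ideal meeting $R_E$ trivially is zero; you would need this result (or an equivalent statement that nonzero ideals of a TGW algebra intersect $C_A(R_E)$ nontrivially) to complete your cycle, since without (ii)$\Rightarrow$(i) you only obtain (i)$\Rightarrow$(iii)$\Rightarrow$(iv)$\Rightarrow$(ii) with nothing returning to (i).

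The rest of your proposal is essentially sound, with two citation slips. In (i)$\Rightarrow$(iii), appealing to Lemma \ref{lem:centralizer} for $C_A(R_E)=R_E$ is circular (the lemma gives this only when $\ker\ga=0$, i.e.\ assuming (ii)), and Corollary \ref{cor:centralizer} asserts that $C_A(R_E)$ is maximal commutative, not that $R_E$ is; however, your direct argument---$C_{A_E(\K)}(\K[u_e\mid e\in E])=\K[u_e\mid e\in E]$ and faithfulness pulls this back---does prove (i)$\Rightarrow$(iii), and is in fact a small variation on the paper, which instead proves (i)$\Rightarrow$(ii) directly using that $\mathcal{A}(\Ga)$ is a domain. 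Your (iii)$\Rightarrow$(iv) by contraposition is correct: for $0\neq v\in\ker\ga$ the corresponding monic monomial is nonzero by regularity (Theorem \ref{thm:regularity}), commutes exactly with $R_E$ since $\si_v=\Id$, and has nonzero degree, contradicting maximality; this, with Theorem \ref{thm:rank-ker-gamma} for (iv)$\Leftrightarrow$(ii), reproves in effect the paper's Lemma \ref{lem:centralizer}. But as it stands the proposal does not prove the hard implication that injectivity of $\ga$ (or maximal commutativity) implies faithfulness.
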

\addtocounter{thm}{-1}
} 

We are also interested in the question of how surjective $\varphi_\Gamma$ is. We say that it is \emph{locally surjective} if it maps each homogeneous component of $\mathcal{A}(\Ga)$ onto a homogeneous component of the Weyl algebra. The second main theorem of the paper gives a precise condition for $\varphi_\Ga$ to be locally surjective.

{
\renewcommand{\thethm}{\ref{thm:local-surjectivity}}
\begin{thm}
Let $\Gamma$ be a multiquiver and $\mathcal{A}(\Ga)$ be the corresponding TGW algebra. Let $\varphi_\Gamma$ be the canonical representation by differential operators.
Let $\bar\Ga$ denote the underlying undirected graph of $\Ga$.
Then the following two statements are equivalent:
\begin{enumerate}[{\rm (i)}]
\item $\varphi_\Ga$ is locally surjective;
\item $\bar\Gamma$ is acyclic.
\end{enumerate}
\end{thm}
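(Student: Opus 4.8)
\emph{Setup.} Recall from Section~\ref{sec:diffops} that $\varphi_\Gamma$ realizes $\mathcal{A}(\Gamma)$ inside the Weyl algebra $\mathcal{D}$ with one pair of variables $u_v,\partial_v$ for each vertex $v$ of $\Gamma$: each $X_a$ is sent to the monomial $u_{t(a)}^{p_a}\partial_{h(a)}^{q_a}$ determined by the orientation and multiplicities of the edge $a$, each $Y_a$ to the transposed monomial, and $R$ isomorphically onto $\mathcal{D}_0=\K[\theta_v:v\in V(\Gamma)]$ with $\theta_v=u_v\partial_v$. With respect to the $\mathbb{Z}^{E(\Gamma)}$-grading of $\mathcal{A}(\Gamma)$ and the $\mathbb{Z}^{V(\Gamma)}$-grading of $\mathcal{D}$, the map $\varphi_\Gamma$ is graded along the incidence map $\gamma\colon\mathbb{Z}^{E(\Gamma)}\to\mathbb{Z}^{V(\Gamma)}$, each component $\mathcal{D}_e$ is free of rank one over $\mathcal{D}_0$ on the monomial $m_e=\prod_v u_v^{[e_v]_+}\partial_v^{[e_v]_-}$, and $\varphi_\Gamma(X_a)=m_{\gamma(e_a)}$, $\varphi_\Gamma(Y_a)=m_{-\gamma(e_a)}$. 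Local surjectivity means $\varphi_\Gamma(\mathcal{A}(\Gamma)_g)=\mathcal{D}_{\gamma(g)}$ for all $g$. Since $\mathcal{A}(\Gamma)$ is generated by $R$ and the $X_a,Y_a$, one has $\mathcal{A}(\Gamma)_g=\sum_Z R\,Z$ with $Z$ running over the words in the $X_a^{\pm1}$ of degree $g$, and hence $\varphi_\Gamma(\mathcal{A}(\Gamma)_g)=m_{\gamma(g)}\,\mathfrak a_g$, where $\mathfrak a_g\subseteq\mathcal{D}_0$ is the ideal generated by the normal--ordering polynomials $P_Z$ defined by $\varphi_\Gamma(Z)=m_{\gamma(g)}\,P_Z(\theta)$. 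So (i) is equivalent to $\mathfrak a_g=(1)$ for every $g\in\mathbb{Z}^{E(\Gamma)}$.

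\emph{(i)$\Rightarrow$(ii).} I would work in the Fock representation of $\mathcal{D}$ on $\K[u_v:v\in V(\Gamma)]$. Every $\varphi_\Gamma(X_a)$ and $\varphi_\Gamma(Y_a)$ carries a differential operator factor, so it kills the constant $1$; hence for $g\neq 0$ every element of $\mathcal{A}(\Gamma)_g$ — a sum of $R$-multiples of words of positive length — acts by $0$ on $1$, while $\mathcal{D}_{\gamma(g)}\cdot 1\neq 0$ whenever $\gamma(g)\ge 0$ componentwise. Thus, if $\varphi_\Gamma$ is locally surjective, there is no $g\in\mathbb{Z}^{E(\Gamma)}\setminus\{0\}$ with $\gamma(g)\ge 0$. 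A cycle $C$ in $\bar\Gamma$ always produces such a $g$: if $\gamma$ is not injective take any nonzero element of $\ker\gamma$; if $\gamma$ is injective then, since the edges of $C$ meet only the vertices of $C$ and $|E(C)|=|V(C)|$, the restriction of $\gamma$ is a linear isomorphism $\mathbb{R}^{E(C)}\xrightarrow{\ \sim\ }\mathbb{R}^{V(C)}$, and a suitable positive integer multiple of the preimage of the all--ones vector works. Hence (i) forces $\bar\Gamma$ acyclic. (The same argument shows (i) forces $\gamma$ injective, so by Theorem~\ref{thm:faithfulness} local surjectivity implies faithfulness.)

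\emph{(ii)$\Rightarrow$(i).} It suffices to treat one tree component $T$ and show $\mathfrak a_g=(1)$ for each $g\in\mathbb{Z}^{E(T)}$. The case $g=0$ is immediate because $\varphi_\Gamma(R)=\mathcal{D}_0$. For $g\neq 0$ I would induct on the number of edges of $T$. Choose a leaf $\ell$ with unique incident edge $a$, joining $\ell$ to a vertex $v$; as no other edge meets $\ell$, in any word the letters $X_a,Y_a$ interact with the letters coming from $T\setminus\{\ell\}$ only through the variable $v$. Comparing the two orderings of a word of degree $g$ in which the occurrences of $X_a,Y_a$ are collected on the left, respectively on the right, one finds — modulo the contribution of the edges of $T\setminus\{\ell\}$, which is governed by the inductive hypothesis — that the resulting polynomials $P_Z$ differ by products of rising factorials $\prod_{j>0}(\theta_w+j)$ versus falling factorials $\prod_{i\ge 0}(\theta_w-i)$ in the variables $\theta_\ell,\theta_v$ (obtained from the identities $\partial_w^q u_w^p=u_w^{p-q}\prod_{j=p-q+1}^{p}(\theta_w+j)$ and $u_w^p\partial_w^q=u_w^{p-q}\prod_{i=0}^{q-1}(\theta_w-i)$). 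Since the first have roots in $\mathbb{Z}_{<0}$ and the second in $\mathbb{Z}_{\ge 0}$, they are coprime, and this yields $1\in\mathfrak a_g$. The base cases ($T$ with no edge, or one edge, where $\varphi_\Gamma(X_a^n)=m_{\gamma(ne_a)}$ forces $P_{X_a^n}=1$) are trivial.

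\emph{Main obstacle.} The crux is the inductive step of (ii)$\Rightarrow$(i): commuting the leaf letters $X_a,Y_a$ — with their multiplicities and orientation — past an arbitrary word and extracting the resulting factorials precisely enough that the coprimality argument applies uniformly in $g$, while correctly feeding the remaining edges into the inductive hypothesis. The Setup reduction, the necessity direction, and the base cases should be routine once the explicit form of $\varphi_\Gamma$ and the $\mathcal{D}_0$-module structure of the components $\mathcal{D}_e$ are in hand.
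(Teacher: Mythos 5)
Your setup does not match the construction in the paper, and the mismatch is fatal to your necessity direction. In Theorem \ref{thm:map_to_Weyl_algebra} the generators $X_v,Y_v$ of $\mathcal{A}(\Gamma)$ are indexed by the \emph{vertices}, the Weyl algebra $A_E(\K)$ has one pair $x_e,y_e$ per \emph{edge}, and $\gamma\colon\Z V\to\Z E$; thus $\varphi_\Gamma(X_v)=z^{\gamma(v)}$ is a monomial in the variables of \emph{all} edges incident to $v$, not a two-variable monomial attached to an edge. You have transposed this (one generator per edge, one Weyl variable per vertex, $\gamma\colon\Z E\to\Z V$), and the transposition is not a harmless relabelling. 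With the correct orientation your key claim fails: $\varphi_\Gamma(X_v)$ need not contain any differentiation factor (Example (4) of Section \ref{sec:examples}, where $\varphi_\Gamma(X_i)=x_i$), so elements of $\mathcal{A}(\Gamma)_g$ with $g\neq 0$ need not kill the vacuum, and your intermediate criterion ``local surjectivity implies there is no $g\neq 0$ with $\gamma(g)\ge 0$'' is false: that same example is locally surjective, yet $\gamma(i)=e_i\ge 0$. Your parenthetical conclusion that local surjectivity forces $\gamma$ injective (hence faithfulness) is likewise refuted by Example (5): a single proper edge is acyclic, hence locally surjective by the theorem, while $v_1+v_2$ lies in $\ker\gamma$ and $\varphi_\Gamma$ is not faithful. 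Your cycle construction also leans on the transposition: in the correct setting an element $g$ supported on the vertices of a cycle $C$ has $\gamma(g)$ with entries on edges incident to $C$ but not belonging to $C$, and these can be negative, so $\gamma(g)\ge 0$ is not achieved; it is the vertices, not the edges, that can meet the rest of the graph. The actual obstruction in the paper is finer: for $g=\sum_{v\in V'}v$ one shows via the parity Lemma \ref{lem:acyclic_equiv} that the normal-ordering polynomials of \emph{all} orderings of $\prod X_v$ lie in one proper ideal $J\subset R_E$; the resulting common zero is an integer point that in general is not the vacuum, so a vacuum argument cannot detect it.

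For (ii)$\Rightarrow$(i), your guiding idea --- rising and falling factorials have disjoint integer zero sets, hence are coprime --- is exactly the engine of the paper's proof (Lemma \ref{lem:Pmn}), but you leave the inductive step as an acknowledged ``main obstacle'', so this direction is not established. The paper completes it differently: reduce to $g$ with finitely many nonzero coefficients, all equal to $1$, using Lemma \ref{lem:monomials} and Lemma \ref{lem:alpha_lemma} (the sign hypothesis there is what legitimizes splitting $g=g_++g_-$ and rescaling columns of $\gamma$); compute $\varphi_\Gamma(X_{\pi(1)}\cdots X_{\pi(m)})$ in closed form (Lemma \ref{lem:varphi-calculation}); use acyclicity through Lemma \ref{lem:acyclic_equiv} to see that, as $\pi$ ranges over all orderings, the resulting products of $P_{mn}$-polynomials have no common zero; and finish with the weak Nullstellensatz. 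A leaf-peeling induction might be completable along your lines, but as written it is a plan rather than a proof, and the reductions handling infinite vertex sets, mixed signs and multiplicities $g_i>1$ are not addressed.
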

\addtocounter{thm}{-1}
}

The third main result gives substance to the statement that $\varphi_\Ga$ is ``canonical''. In fact, we prove that the pair $(\mathcal{A}(\Ga), \varphi_\Ga)$ is universal in the following sense:
{
\renewcommand{\thethm}{\ref{thm:main}}
\begin{thm}
Let $\mathcal{A}=\TGWA{\mu}{R}{\si}{t}$ be any TGW algebra with index set denoted $V$, such that $R$ is a polynomial algebra $R=R_E=\K[u_e\mid e\in E]$ (for some index set $E$),
and $\mu$ is symmetric. Assume that
\[ 
\varphi:\mathcal{A}\to A_E(\K)
\] 
is a map of $R_E$-rings with involution, where $A_E(\K)$ is the Weyl algebra over $\K$ with index set $E$.
Then $\mathcal{A}$ is consistent and
there exists a multiquiver $\Gamma=(V,E,s,t)$ with vertex set $V$ and edge set $E$ and a map
\[\xi:\mathcal{A}\to\mathcal{A}(\Gamma)\]
of $\Z V$-graded $R_E$-rings with involution
such that the following diagram commutes: 
\[ 
\begin{aligned}
\begin{tikzcd}[ampersand replacement=\&,
               column sep=small]
\mathcal{A}
 \arrow{rr}{\varphi}
 \arrow{d}[swap]{\xi} 
 \& \& A_E(\K) \\ 
\mathcal{A}(\Gamma)
 \arrow{rru}[swap]{\varphi_{\Gamma}}
 \& \& 
\end{tikzcd}
\end{aligned}
\] 
Moreover, if $\varphi(X_i)\neq 0$ for each $i\in V$, then $\Gamma$ is uniquely determined and $\mu_{ij}=1$ for all $i,j\in V$. 
\end{thm}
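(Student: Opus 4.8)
\emph{Overview.} The plan is to recover the multiquiver $\Gamma$ from the operators $\varphi(X_i)$, build $\xi$ one graded component at a time, and settle the two uniqueness statements by a leading-symbol argument in the Weyl algebra. First, consistency is essentially free: since $\varphi$ is a morphism of $R_E$-rings, the composite $R_E\to\mathcal A\xrightarrow{\varphi}A_E(\K)$ is the canonical embedding making $A_E(\K)$ an $R_E$-ring, which is injective; hence the canonical map $R_E\to\mathcal A$ is injective, i.e.\ $\mathcal A$ is consistent (and in particular the equations \eqref{eq:intro-tgw-consistency} hold in $\mathcal A$ by the quoted theorem of \cite{FutHar2012b}). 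Applying $\varphi$ to $Y_iX_i=t_i$ and using injectivity of $\varphi|_{R_E}$ shows $\varphi(X_i)=0\iff t_i=0\iff\varphi(Y_i)=0$ (the last equivalence because $Y_i=X_i^\ast$ and $\varphi$ respects the involutions).

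\emph{Extracting $\Gamma$.} By the construction of $\varphi_\Gamma$ in Section~\ref{sec:diffops}, the image of $R_E$ in $A_E(\K)$ lies in the degree-zero component of $A_E(\K)$ for its natural $\Z E$-grading (it is the polynomial ring in the Euler operators). Writing $\varphi(X_i)=\sum_{g\in\Z E}a_g$ in homogeneous parts, the relation $X_ir=\sigma_i(r)X_i$ ($r\in R_E$) becomes $\varphi(X_i)r=\sigma_i(r)\varphi(X_i)$; comparing degree-$g$ parts and using that $A_E(\K)$ is a domain with degree-zero part $R_E$ forces, for every $g$ with $a_g\neq0$, a single relation that determines $\sigma_i$ and pins $g$ down uniquely. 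Thus (when $\varphi(X_i)\neq0$) $\varphi(X_i)$ is $\Z E$-homogeneous of a well-defined degree $\gamma_i\in\Z E$, and $\sigma_i$ is the translation it determines; set $\gamma=(\gamma_{ie})$ and, when $\varphi(X_i)=0$, declare vertex $i$ isolated (this, together with $t_i=0$ in that case, is what makes the non-generic $\Gamma$ harmlessly non-unique). \emph{The main obstacle is here:} one must verify that $\gamma$ really is the incidence matrix of a multiquiver $\Gamma=(V,E,s,t)$ — e.g.\ that no single edge $e$ is incident to three vertices — and I expect this to follow from the second consistency equation \eqref{eq:intro-tgw-consistency2} (valid since $\mathcal A$ is consistent) together with $\varphi$ being a $\ast$-morphism.

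\emph{Constructing $\xi$.} Let $\xi|_{R_E}$ be the structure map. For each $i$ the component $\mathcal A(\Gamma)_{\bfe_i}$ equals $R_EX_i^\Gamma$, and $\varphi_\Gamma$ maps it injectively into the degree-$\gamma_i$ component of $A_E(\K)$ (because $\varphi_\Gamma(X_i^\Gamma)$ is a nonzero monomial and $A_E(\K)$ is a domain); as $\varphi(X_i)$ lies in that component, there is a unique $\xi(X_i)\in\mathcal A(\Gamma)_{\bfe_i}$ with $\varphi_\Gamma(\xi(X_i))=\varphi(X_i)$, and we put $\xi(Y_i)=\xi(X_i)^\ast$ (and $\xi(X_i)=\xi(Y_i)=0$ if $\varphi(X_i)=0$). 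Each defining relation of $\mathcal A$, after applying $\xi$, is an identity inside a single homogeneous component of $\mathcal A(\Gamma)$ on which $\varphi_\Gamma$ is injective — these being $\mathcal A(\Gamma)_0=R_E$ (for $Y_iX_i=t_i$, $X_iY_i=\sigma_i(t_i)$), $\mathcal A(\Gamma)_{\pm\bfe_i}=R_EX_i^{\pm\Gamma}$ (for the $\sigma_i$-commutation relations), and $\mathcal A(\Gamma)_{\bfe_i-\bfe_j}=R_EX_i^\Gamma Y_j^\Gamma$ (for $X_iY_j=\mu_{ij}Y_jX_i$) — so it holds because $\varphi$ satisfies the corresponding relation. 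To descend past the defining radical $\mathcal I$: the ideal of $\mathcal A(\Gamma)$ generated by the image of $\mathcal I$ is annihilated by $\varphi_\Gamma$ (since $\varphi=\varphi_\Gamma\circ\xi$ kills $\mathcal I$), hence meets $R_E$ trivially, hence is zero, as $\mathcal A(\Gamma)$ is a TGW algebra and so has no nonzero graded ideal meeting $R_E$ trivially. This yields the $\Z V$-graded $R_E$-ring morphism with involution $\xi\colon\mathcal A\to\mathcal A(\Gamma)$ with $\varphi_\Gamma\circ\xi=\varphi$ by construction.

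\emph{Uniqueness and $\mu_{ij}=1$.} Assume $\varphi(X_i)\neq0$ for all $i$. If $(\Gamma',\xi')$ also works, then $\varphi(X_i)=\varphi_{\Gamma'}(\xi'(X_i))$ with $\xi'(X_i)\in\mathcal A(\Gamma')_{\bfe_i}=R_EX_i^{\Gamma'}$, so $\varphi(X_i)$ is $\Z E$-homogeneous of degree equal to the $i$-th row of the incidence matrix of $\Gamma'$; by uniqueness of $\gamma_i$ this forces $\Gamma'=\Gamma$. For the scalars, pass to $\operatorname{gr}A_E(\K)$ for the Bernstein filtration, a commutative polynomial domain: $\operatorname{gr}\varphi(X_i)$ and $\operatorname{gr}\varphi(Y_j)$ are nonzero and commute, so $\varphi(X_i)\varphi(Y_j)$ and $\varphi(Y_j)\varphi(X_i)$ have the same nonzero leading symbol, and applying $\varphi$ to $X_iY_j=\mu_{ij}Y_jX_i$ then forces $\mu_{ij}=1$ (the diagonal $\mu_{ii}$ being $1$ by convention). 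The remaining work is routine bookkeeping with the graded components and with $\mathcal I$; the one genuinely delicate point is the multiquiver verification flagged above.
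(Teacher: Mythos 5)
Your outline reproduces the paper's skeleton (consistency for free, homogeneity of $\varphi(X_i)$ via the commutation relation with $R_E$, definition of $\xi$ component-by-component, descent past the radical using that a TGW algebra has no nonzero graded ideal meeting the degree-zero part trivially), and two of your shortcuts are fine and even slightly cleaner than the paper's: verifying the defining relations by pushing them through $\varphi_\Gamma$ on the cyclic components $R_E$, $R_EX_i^\Gamma$, $R_EY_jX_i^\Gamma$, and getting $\mu_{ij}=1$ by taking Bernstein-filtration symbols of $\varphi(X_iY_j)=\mu_{ij}\varphi(Y_jX_i)$ (the paper instead takes leading terms of the resulting identity in $R_E$, where the $\sigma_i$ act trivially on the associated graded). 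But the step you flag as ``the main obstacle'' is exactly the heart of the theorem, and you have not supplied it: you must show that the matrix $\gamma=(\gamma_{ev})$ read off from the degrees of the $\varphi(X_v)$ satisfies condition \eqref{eq:condition_M}, i.e.\ each row has at most one positive and at most one negative entry, for otherwise there is no multiquiver $\Gamma$ with this incidence matrix and the whole construction collapses.

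Your proposed route --- deducing \eqref{eq:condition_M} from the consistency equation \eqref{eq:intro-tgw-consistency2} plus $\ast$-equivariance --- cannot work as stated: a violation of \eqref{eq:condition_M} already occurs with just two vertices $i\neq j$ sharing the sign of $\gamma_{e i},\gamma_{e j}$ in a single row $e$, whereas \eqref{eq:intro-tgw-consistency2} only constrains triples of distinct indices and is vacuous for $|V|=2$. Nor do the consistency equations by themselves see \eqref{eq:condition_M}: e.g.\ $\sigma_1(u_e)=\sigma_2(u_e)=u_e-1$ with $t_1=t_2=1$ satisfies \eqref{eq:intro-tgw-consistency} while violating \eqref{eq:condition_M}; what rules such data out is the existence of $\varphi$ into the Weyl algebra, not consistency. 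The paper's proof of \eqref{eq:condition_M} is a genuine argument specific to $A_E(\K)$: apply $\varphi$ to the relation $X_iY_j=\mu_{ij}Y_jX_i$ from \eqref{eq:tgwarels3}, rewrite $z^{(\gamma_i)}z^{(-\gamma_j)}=P_{ij}z^{(\gamma_i-\gamma_j)}$ and $z^{(-\gamma_j)}z^{(\gamma_i)}=Q_{ij}z^{(\gamma_i-\gamma_j)}$ using the polynomials of Lemma \ref{lem:Pmn}, observe that when $\gamma_{ei}\gamma_{ej}>0$ the roots of $P_{e;ij}$ and of $Q_{e;ij}$ lie on opposite sides of zero, and then run a descent (maximal integer root $k_0$ of a suitable finite set $S$) that produces a contradiction with the identity $s_i's_j'P_{ij}=\mu_{ij}\sigma_i^{-1}(s_j')\sigma_j^{-1}(s_i')Q_{ij}$. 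Without this (or an equivalent root-location/descent argument), your proof is incomplete at its decisive point. A small additional remark: since the paper requires each $t_i\neq0$ and $\varphi|_{R_E}$ is injective, $\varphi(Y_i)\varphi(X_i)=\varphi(t_i)\neq0$, so your dichotomy ``$\varphi(X_i)=0\iff t_i=0$'' in fact never puts you in the degenerate branch; this is harmless, but it signals that the case analysis there was not thought through against the paper's definitions.
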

\addtocounter{thm}{-1}
}

In the final part of the paper we address the relation between TGW algebras and universal enveloping algebras of simple Lie algebras. Section \ref{sec:theGCM} describes a simple algorithm which associates a Dynkin diagram $D(\Ga)$ to any multiquiver $\Ga$ in such a way that the GCM of $\mathcal{A}(\Ga)$ is exactly the one corresponding to $D(\Ga)$ (Theorem \ref{thm:commutative-diagram}). From this it easily follows that for any (not just symmetric) GCM $C$, there exists a TGW algebra $\mathcal{A}(\Ga)$ whose associated GCM is $C$. Another consequence is the existence of graded homomorphisms from the enveloping algebra of the positive and negative parts of the Kac-Moody algebra associated to $D(\Ga)$ into $\mathcal{A}(\Ga)$ (Theorem \ref{thm:serre-relations}).

In Theorem \ref{thm:primitivity} we show that $\mathcal{A}(\Ga)$ is a primitive ring, provided $\varphi_\Ga$ is faithful. This shows that perhaps it is more natural to expect that some primitive quotients of $U(\mathfrak{g})$, rather than $U(\mathfrak{g})$ itself, can be realized as TGW algebras.
Indeed, in Sections \ref{sec:special-linear-lie-algebras} and \ref{sec:symplectic-lie-algebras} we establish that for
$\mathfrak{sl}_{n+1}$ and $\mathfrak{g}=\mathfrak{sp}_{2n}$, any primitive quotient $U(\mathfrak{g})/J$, where $J$ is the annihilator of an infinite-dimensional simple completely pointed $\mathfrak{g}$-module, is graded isomorphic to TGW algebras of the form $\mathcal{A}(\widetilde{A_n})/\langle \mathbb{E}-\la\rangle$ where $\mathbb{E}$ is central and $\la\in\K$, respectively  $\mathcal{A}(\widetilde{C_n})$, and $\widetilde{C_n}$ and $\widetilde{A_n}$ are certain multiquivers whose Dynkin diagrams are $A_n$ and $C_n$ respectively. Our results clarify and generalize the type $A_2$ case considered in \cite{Sergeev2001} and give a direct relation between the representation $\varphi_\Ga$ and the classification of simple completely pointed $\mathfrak{g}$-modules in \cite{BenBriLem1997} using Weyl algebras.

This leads naturally to the following question: When is a primitive quotient  $U(\mathfrak{g})/J$ for a simple Lie algebra $\mathfrak{g}$ graded isomorphic to a TGW algebra? To get a complete answer we need to slightly generalize the notion of TGW algebra to allow $\si_i\si_j\neq\si_j\si_i$. This is our final main result:

{
\renewcommand{\thethm}{\ref{thm:primquotient}}
\begin{thm}
Assume that $\K$ is an algebraically closed field of characteristic zero.
Let $\mathfrak g$ be a finite-dimensional simple Lie algebra over $\K$ with
Serre generators $e_i,f_i$, $i=1,\dots,n$ and $J$ be a primitive ideal
of $U(\mathfrak g)$. The following conditions are equivalent:
\begin{enumerate}[{\rm (a)}]
\item There exists a not necessarily abelian TGW algebra $\TGWA{\mu}{R}{\si}{t}$ and a surjective homomorphism  
\[\psi: U(\mathfrak g)\to\TGWA{\mu}{R}{\si}{t}\]
with kernel $J$ such that $\psi(e_i)=X_i$, $\psi(f_i)=Y_i$;
\item There exists a simple completely pointed (multiplicity free)
weight $\mathfrak g$-module $M$ such that
\[\operatorname{Ann}_{U(\mathfrak g)} M=J.\] 
\end{enumerate}
\end{thm}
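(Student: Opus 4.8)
The proof naturally splits into the two implications, and both are organized around the $\Z^n$-grading of $U(\mathfrak g)$ by the root lattice, $U(\mathfrak g)_\al=\{u\mid [h,u]=\al(h)u\ \forall h\in\mathfrak h\}$, for which $e_i$ has degree $\al_i$, $f_i$ has degree $-\al_i$, and $\mathfrak h\subseteq U(\mathfrak g)_0=Z_{U(\mathfrak g)}(\mathfrak h)$. Since $\mathfrak g$ is generated by the $e_i,f_i$ and each is sent by a map $\psi$ as in (a) to a homogeneous element of the matching $\Z^n$-degree, $\psi$ is automatically a graded surjection onto the TGW algebra, so $\psi(U(\mathfrak g)_\al)=\mathcal A_\al$ for every $\al$; in particular $\psi(U(\mathfrak g)_0)=\mathcal A_0$. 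The whole theorem is then mediated by the single structural observation that $\mathcal A_0$ is commutative if and only if $J$ is the annihilator of a simple completely pointed weight module, one direction of which is easy and the other of which is the heart of the matter.

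For (b)$\Rightarrow$(a), let $M$ be a simple completely pointed weight module with $\Ann_{U(\mathfrak g)}M=J$, put $\mathcal A:=U(\mathfrak g)/J$ and $R:=\mathcal A_0$. Because every weight space $M_\mu$ has dimension at most one and $\mathcal A$ acts faithfully on $M$, the degree-zero part $R$ acts by a scalar on each $M_\mu$ and hence embeds into $\prod_\mu\K$, so $R$ is commutative. Setting $X_i:=\psi(e_i)$, $Y_i:=\psi(f_i)$, the PBW theorem together with the grading shows that $R$, the $X_i$ and the $Y_i$ generate $\mathcal A$; one defines $\si_i\in\Aut R$ from the relation $X_ir=\si_i(r)X_i$ (concretely $\si_i$ is the permutation-with-scalars of $\Supp M$ induced by $e_i$, extended suitably to an algebra automorphism of $R$), puts $t_i:=Y_iX_i\in R$ and $\mu_{ij}:=1$, and checks that the defining relations of a TGW algebra hold in $\mathcal A$. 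This gives a surjection $\TGWA{\mu}{R}{\si}{t}\to\mathcal A$ of $\Z^n$-graded $R$-rings with involution, which one verifies is an isomorphism by showing the defining radical $\mathcal I$ already vanishes, using the multiplicity-free module $M$ to match graded components. In general the $\si_i$ need not commute — already for the realization of $\mathfrak{sl}_3$ on $\Lambda^2\K^3$ they generate a nonabelian group of permutations of $\Supp M$ — which is exactly why the nonabelian generalization of the notion is required. When $M$ is infinite-dimensional the classification of simple completely pointed modules forces $\mathfrak g$ to be of type $A_n$ or $C_n$, and the construction then reproduces the explicit identifications with $\mathcal A(\widetilde{A_n})/\langle\mathbb{E}-\la\rangle$ and $\mathcal A(\widetilde{C_n})$ of Sections~\ref{sec:special-linear-lie-algebras} and~\ref{sec:symplectic-lie-algebras}.

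For (a)$\Rightarrow$(b) we are given the induced isomorphism $\psi\colon U(\mathfrak g)/J\xrightarrow{\ \sim\ }\mathcal A=\TGWA{\mu}{R}{\si}{t}$ with $\psi(e_i)=X_i$, $\psi(f_i)=Y_i$. One first checks that $\mathcal A_0=R$ (so that $\mathcal A$ is consistent and $\psi(U(\mathfrak g)_0)=R$ is commutative and finitely generated), and that $\mu_{ij}=1$ since all $X_i\neq0$, paralleling the last assertion of Theorem~\ref{thm:main}. To manufacture the module, fix (using $\K$ algebraically closed) a character $\chi\colon R\to\K$ in general position, form the weight module $\mathcal A\otimes_R\K_\chi$, and let $M$ be a simple subquotient; it is a simple $\mathfrak h$-weight module, and $\Ann_{U(\mathfrak g)}M=J$ for generic $\chi$ because $\bigcap_\chi\Ann_{\mathcal A}(\mathcal A\otimes_R\K_\chi)=0$ and $\mathcal A$ is prime. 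The weight-$\al$ multiplicity of $M$ is bounded by $\operatorname{rank}_R\mathcal A_\al$, so it remains to prove that each homogeneous component $\mathcal A_\al$ has rank at most one over $R=\mathcal A_0$ — equivalently, that any two monomials of degree $\al$ in the $X$'s and $Y$'s become proportional over $\Frac R$. This is carried out by induction on the height of $\al$, reducing $\mathcal A_\al$ to a product $\mathcal A_{\pm\al_i}\cdot\mathcal A_{\al\mp\al_i}$ and invoking the TGW commutation relations, the generalized Serre relations of \cite{Hartwig2010}, and the maximality of $R$ as a commutative subalgebra of $\mathcal A$ (the analogue of Corollary~\ref{cor:centralizer}). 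This collapsing of the higher graded components — the algebraic incarnation of multiplicity-freeness, and the step where the classification of simple completely pointed modules enters and restricts $\mathfrak g$ to types $A$ and $C$ in the infinite-dimensional case — is the main obstacle; the rest is bookkeeping with the grading and the defining relations.
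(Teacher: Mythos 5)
Your direction (a)$\Rightarrow$(b) is where the proposal genuinely breaks down, and it is also where you diverge from the paper. You try to manufacture the completely pointed module by inducing from a generic character $\chi$ of $R$ and then need two facts, neither of which you establish. First, the claim that $\Ann_{U(\mathfrak g)}M=J$ for generic $\chi$ does not follow from ``$\bigcap_\chi\Ann_{\mathcal A}(\mathcal A\otimes_R\K_\chi)=0$ and $\mathcal A$ is prime'': an infinite intersection of nonzero ideals can perfectly well be zero in a prime (even primitive) ring, so no single $\chi$ is forced to give a faithful module over $\mathcal A=U(\mathfrak g)/J$. Second, the statement that every component $\mathcal A_\al$ has rank at most one over $R=\mathcal A_0$ is exactly the multiplicity-freeness you are trying to prove, and your sketch (induction on height, generalized Serre relations, ``maximality of $R$ as a commutative subalgebra'') rests on tools you do not have: maximal commutativity of $R$ is not part of hypothesis (a), and Corollary \ref{cor:centralizer} concerns only the special algebras $\mathcal{A}(\Ga)$; likewise the assertion $\mu_{ij}=1$ ``paralleling Theorem \ref{thm:main}'' is inapplicable, since that theorem requires a polynomial base ring and a given map to a Weyl algebra. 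The paper's argument avoids all of this and is very short: the degree-zero component of $\TGWA{\mu}{R}{\si}{t}$ is the image of $R$ (Lemma \ref{lem:monomials}), hence commutative, so $\psi(U(\mathfrak g)_0)$ is commutative; by Duflo's theorem the primitive ideal $J$ is the annihilator of a simple \emph{highest weight} module $M$; by Prop.~9.6.1 of \cite{Dixmier} each weight space of $M$ is a simple module over $U(\mathfrak g)_0$, and since this action factors through the commutative image and $\K$ is algebraically closed, each weight space is one-dimensional. You should replace your construction by this argument (or else supply complete proofs of the two missing facts, which would be a substantially harder and different route).

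Your direction (b)$\Rightarrow$(a) follows the paper's strategy in outline: for infinite-dimensional $M$ both you and the paper reduce to types $A$ and $C$ via \cite{BenBriLem1997} and invoke Theorems \ref{thm:sln-quotient} and \ref{thm:sp2n-quotient}. For finite-dimensional $M$, however, the two points you wave through are precisely where the work is. The paper takes $\mathcal A=\End_\K(M)=U(\mathfrak g)/J$ (Jacobson density) and defines $\si_i$ on the basis of weight projectors $E_\ga$ of $R=\mathcal A_0$ by $E_\ga\mapsto E_{\ga-\al_i}$ when $\ga-\al_i$ is a weight and by the wrap-around $E_\ga\mapsto E_{\ga+k_\ga\al_i}$ otherwise; this extension is what makes $\si_i$ a genuine automorphism while still satisfying $X_ir=\si_i(r)X_i$, because $X_i$ annihilates the offending weight spaces, and it is also why the $\si_i$ need not commute (Remark \ref{rem:abelian}) --- all of this is hidden in your ``extended suitably.'' Finally, instead of ``showing the radical $\mathcal I$ vanishes by matching graded components,'' the paper simply uses that $\End_\K(M)$ is a simple ring, so once the TGW relations hold there is no nonzero graded ideal meeting $R$ trivially; if you intend an abstract uniform construction also covering infinite-dimensional $M$, that vanishing step remains unproven in your write-up.
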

\addtocounter{thm}{-1}
}

Lastly, in Section \ref{sec:affine}, we prove that some primitive quotients of enveloping algebras
of affine Lie algebras can also be realized as TGW algebras.

\subsection*{Acknowledgements}
Part of this work was completed during the first author's visit at Stanford University. 
The first author is grateful to Daniel Bump, Vyacheslav Futorny, Johan \"Oinert,
Joanna Meinel and Apoorva Khare for interesting discussions related to this paper 
and to John Baez for Remark \ref{rem:markov} about Kolmogorov's criterion for detailed balance. 
Some results from this paper were reported at the School of Algebra in Salvador, Brazil in 2012.
The second author was supported by NSF grant DMS - 1303301.

\subsection{Notation}
Unless otherwise stated, $\K$ will denote a field of characteristic zero.
Rings and algebras are understood to be associative and unital. Subrings, subalgebras and homomorphisms of rings and algebras are unital.
The set of integers $x$ with $a\le x\le b$ is denoted $\iv{a}{b}$.

\subsection{Background on TGW algebras}
\label{sec:background}
We recall the definition of TGW algebras \cite{MazTur1999,MazTur2002} and some useful properties.

\subsubsection{TGW data}
\begin{dfn}[TGW datum]
Let $I$ be a set.
A \emph{twisted generalized Weyl datum over $\K$ with index set $I$} is a triple $(R,\si,t)$ where
\begin{itemize}
\item $R$ is a unital associative $\K$-algebra, called the \emph{base algebra},
\item $\si=(\si_i)_{i\in I}$ a sequence of commuting $\K$-algebra automorphisms of $R$,
\item $t=(t_i)_{i\in I}$ is a sequence of nonzero central elements of $R$.
\end{itemize} 
The cardinality of $I$ is called the \emph{rank} (or \emph{degree}) of $(R,\si,t)$.
\end{dfn}
Let $\Z I$ denote the free abelian group on the set $I$.
For $g=\sum g_i i\in\Z I$ we put $\si_g=\prod \si_i^{g_i}$. Then $g\mapsto\si_g$ defines an action of $\Z I$ on $R$ by $\K$-algebra automorphisms.
\begin{rem}
In Section \ref{sec:primitivequotients}, we will need to remove the assumption that $\si_i$ and $\si_j$ commute.
\end{rem}

\subsubsection{TGW constructions}

\begin{dfn}[TGW construction]
Let $I$ be a set and
\begin{itemize}
\item $(R,\si,t)$ be a TGW datum  over $\K$ with index set $I$,
\item $\mu$ be an $I\times I$-matrix
without diagonal, $\mu=(\mu_{ij})_{i\neq j}$,
with $\mu_{ij}\in\K\setminus\{0\}$.
\end{itemize}
The \emph{twisted generalized Weyl construction} associated to $\mu$ and $(R,\si,t)$, denoted $\TGWC{\mu}{R}{\si}{t}$, is defined as the free $R$-ring on the set $\bigcup_{i\in I}\{X_i,Y_i\}$ modulo the two-sided ideal generated by the following elements:
\begin{subequations}\label{eq:tgwarels}
\begin{alignat}{3}
\label{eq:tgwarels1}
X_ir  &-\si_i(r)X_i,  &\quad Y_ir&-\si_i^{-1}(r)Y_i, 
 &\quad \text{$\forall r\in R,\, i\in I$,} \\
\label{eq:tgwarels2}
Y_iX_i&-t_i, &\quad X_iY_i&-\si_i(t_i),
 &\quad \text{$\forall i\in I$,} \\
\label{eq:tgwarels3}
&&\quad X_iY_j&-\mu_{ij}Y_jX_i,
 &\quad \text{$\forall i,j\in I,\, i\neq j$.}
\end{alignat}
\end{subequations}
\end{dfn}
The ring $\TGWC{\mu}{R}{\si}{t}$ has a $\Z I$-gradation $\TGWC{\mu}{R}{\si}{t}=\bigoplus_{d\in\Z I} \TGWC{\mu}{R}{\si}{t}_d$ given by requiring 
\begin{equation}\label{eq:TGWA-gradation}
\begin{gathered}
\deg X_i= i,\quad \deg Y_i=(-1)i,\quad \forall i\in I,\\
\deg r=0, \quad \forall r\in R.
\end{gathered}
\end{equation}

\subsubsection{TGW algebras}

Let $\TGWI{\mu}{R}{\si}{t}\subseteq \TGWC{\mu}{R}{\si}{t}$ be 
the sum of all graded ideals $J\subseteq \TGWC{\mu}{R}{\si}{t}$ 
such that \[\TGWC{\mu}{R}{\si}{t}_0\cap J=\{0\},\]
where $\TGWC{\mu}{R}{\si}{t}_0$ denotes the degree zero component with respect to the $\Z I$-gradation \eqref{eq:TGWA-gradation}.
It is easy to see that $\TGWI{\mu}{R}{\si}{t}$ is the unique maximal graded ideal of $\TGWC{\mu}{R}{\si}{t}$ having zero intersection with $\TGWC{\mu}{R}{\si}{t}_0$.

\begin{dfn}[TGW algebra]
The \emph{twisted generalized Weyl algebra} $\TGWA{\mu}{R}{\si}{t}$ associated to $\mu$ and $(R,\si,t)$ is
defined as the quotient $\TGWA{\mu}{R}{\si}{t}:=\TGWC{\mu}{R}{\si}{t} / \TGWI{\mu}{R}{\si}{t}$.
\end{dfn}
Since $\TGWI{\mu}{R}{\si}{t}$ is graded,
$\TGWA{\mu}{R}{\si}{t}$ inherits a $\Z I$-gradation from $\TGWC{\mu}{R}{\si}{t}$.
The images in $\TGWA{\mu}{R}{\si}{t}$ of the elements $X_i, Y_i$ will also be denoted by $X_i, Y_i$.

\subsubsection{Example: The Weyl algebra}\label{sec:WeylAlgebraExample}
The Weyl algebra over $\K$ with index set $I$, denoted $A_I(\K)$, is the $\K$-algebra generated by $\{x_i,y_i\mid i\in I\}$ subject to defining relations
\[[x_i,x_j]=[y_i,y_j]=[y_i,x_j]-\delta_{ij}=0,\quad\forall i,j\in I.\]
There is a $\K$-algebra isomorphism $\TGWA{\mu}{R}{\tau}{u}\to A_I(\K)$ where $\mu_{ij}=1$ for all $i\neq j$, $R=\K[u_i\mid i\in I]$, $\tau=(\tau_i)_{i\in I}$, $\tau_i(u_j)=u_j-\delta_{ij}$, $u=(u_i)_{i\in I}$,
 given by $X_i\mapsto x_i$, $Y_i\mapsto y_i$, $u_i\mapsto y_ix_i$.

\subsubsection{Reduced and monic monomials}
A \emph{monic monomial} in a TGW algebra is any product of elements from the set $\bigcup_{i\in I}\{X_i,Y_i\}$.
A \emph{reduced monomial} is a monic monomial of the form
$Y_{i_1}\cdots Y_{i_k} X_{j_1}\cdots X_{j_l}$
where $\{i_1,\ldots,i_k\}\cap \{j_1,\ldots,j_l\}=\emptyset$.
The following statement is easy to check.
\begin{lem}{\cite[Lem.~3.2]{Hartwig2006}}
\label{lem:monomials}
$\TGWA{\mu}{R}{\si}{t}$ is generated as a left (and as a right) $R$-module by the reduced monomials.
\end{lem}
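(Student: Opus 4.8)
The plan is to reduce everything to the TGW \emph{construction} $\mathcal{C}:=\TGWC{\mu}{R}{\si}{t}$: since $\TGWA{\mu}{R}{\si}{t}$ is by definition a quotient of $\mathcal{C}$ by a graded ideal and the quotient map is a surjective homomorphism of $R$-rings carrying reduced monomials to reduced monomials, it is enough to show that $\mathcal{C}$ is generated as a left (and as a right) $R$-module by its reduced monomials. Let $M\subseteq\mathcal{C}$ be the left $R$-submodule generated by all reduced monomials. The empty reduced monomial is $1$, so $R\subseteq M$, and $X_i,Y_i\in M$ for all $i\in I$. As $\mathcal{C}$ is generated as a $\K$-algebra by $R\cup\{X_i,Y_i\mid i\in I\}$, to get $M=\mathcal{C}$ it suffices to check $X_iM\subseteq M$ and $Y_iM\subseteq M$; and since $X_i(rw)=\si_i(r)X_iw$ and $Y_i(rw)=\si_i^{-1}(r)Y_iw$ for $r\in R$ by \eqref{eq:tgwarels1}, it in turn suffices to show that $Zw\in M$ for every reduced monomial $w$ and every generator $Z\in\{X_i,Y_i\mid i\in I\}$.

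So fix a reduced monomial $w=Y_{i_1}\cdots Y_{i_k}X_{j_1}\cdots X_{j_l}$, with $A:=\{i_1,\dots,i_k\}$ and $B:=\{j_1,\dots,j_l\}$ disjoint. I would treat $Z=Y_m$ in detail; the case $Z=X_m$ is analogous and in fact easier, because $X_m$ already sits to the left of the $Y$-block and one only has to slide it rightward until it either clears the block (producing a reduced monomial) or meets a $Y_m$ with which it annihilates via \eqref{eq:tgwarels2}. If $m\notin B$ then $Y_mw$ is already reduced, so assume $m\in B$. The reduction then goes as follows:
\begin{enumerate}[{\rm (1)}]
\item Using the cross relations \eqref{eq:tgwarels3}, which are available for every pair consisting of an $A$-index and a $B$-index precisely because $A\cap B=\emptyset$, slide each $Y_{i_r}$ rightward past the whole block $X_{j_1}\cdots X_{j_l}$; this rewrites $Y_mw$ as $c\cdot Y_mX_{j_1}\cdots X_{j_l}Y_{i_1}\cdots Y_{i_k}$ with $c$ a product of nonzero scalars.
\item Let $p$ be minimal with $j_p=m$; using \eqref{eq:tgwarels3} again, slide $Y_m$ rightward past $X_{j_1},\dots,X_{j_{p-1}}$ (legitimate since none of these indices is $m$).
\item Apply $Y_mX_m=t_m$ from \eqref{eq:tgwarels2} to the now-adjacent pair, producing a central element of $R$, and move it to the far left using \eqref{eq:tgwarels1}, where it becomes $\si_g(t_m)$ for a suitable $g\in\Z I$, still central in $R$.
\item What is left is a scalar times $r\cdot(X\text{-block})(Y\text{-block})$ whose $X$-indices lie in $B$ and whose $Y$-indices lie in $A$; since $A\cap B=\emptyset$, one more application of \eqref{eq:tgwarels3} moves the $Y$-block in front of the $X$-block, yielding a reduced monomial.
\end{enumerate}
Hence $Y_mw\in M$, so $M=\mathcal{C}$; the right $R$-module statement is obtained by the mirror argument (pushing elements of $R$ to the right via \eqref{eq:tgwarels1} and reflecting the reduction above).

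The one genuine subtlety, and the only place this is more than bookkeeping, is that $\mathcal{C}$ has no relations among the $X_i$'s and none among the $Y_i$'s, so the generators cannot simply be sorted into reduced order. The device in steps (1) and (4) is that a generator of the ``wrong'' type that blocks the way can always be evacuated to the opposite side through the cross relation \eqref{eq:tgwarels3}, which is exactly what the disjointness of $X$- and $Y$-indices in a reduced monomial makes possible. Everything else is routine: the scalars accumulated are finite products of the nonzero $\mu_{ij}$, hence units, and applying any $\si_g$ to a central element of $R$ again gives a central element, so each intermediate term really does lie in $R\cdot(\text{reduced monomial})$.
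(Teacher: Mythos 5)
Your proof is correct. The paper itself gives no argument for this lemma --- it cites \cite[Lem.~3.2]{Hartwig2006} and remarks only that the statement ``is easy to check'' --- and your straightening argument (reduce to $\TGWC{\mu}{R}{\si}{t}$, then show the left $R$-span of reduced monomials is closed under left multiplication by each $X_m$, $Y_m$ by evacuating the blocking generators through the cross relation \eqref{eq:tgwarels3} and contracting an adjacent $Y_mX_m$ or $X_mY_m$ via \eqref{eq:tgwarels2}) is precisely the standard one.
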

Since a TGW algebra $\TGWA{\mu}{R}{\si}{t}$ is a quotient of an $R$-ring, it is an $R$-ring itself with a natural map $\rho:R\to \TGWA{\mu}{R}{\si}{t}$.
By Lemma \ref{lem:monomials}, the degree zero component of $\TGWA{\mu}{R}{\si}{t}$ (with respect to the $\Z I$-gradation) is equal to the image of $\rho$.

\subsubsection{Regularity and consistency}
\begin{dfn}[Regularity]
A TGW datum $(R,\si,t)$ is called \emph{regular} if $t_i$
is regular (i.e. not a zero-divisor) in $R$ for all $i\in I$.
\end{dfn}
Due to relation \eqref{eq:tgwarels2}, the canonical map $\rho:R\to\TGWC{\mu}{R}{\si}{t}$ is not guaranteed to be injective, and indeed sometimes it is not \cite{FutHar2012b}. It is injective if and only if the map $R\to\TGWA{\mu}{R}{\si}{t}$ is injective. 
\begin{dfn}[$\mu$-Consistency]
A TGW datum $(R,\si,t)$ is \emph{$\mu$-consistent} if the canonical map $\rho:R\to \TGWA{\mu}{R}{\si}{t}$ is injective.
\end{dfn}
By abuse of language we sometimes say that the TGW algebra $\TGWA{\mu}{R}{\mu}{t}$ is consistent if $(R,\si,t)$ is $\mu$-consistent.

\begin{thm}{\cite{FutHar2012b}}\label{thm:consistency}
If $(R,\si,t)$ is a regular TGW datum, and $\mu=(\mu_{ij})_{i\neq j}$ with $\mu_{ij}\in\K\setminus\{0\}$, then $(R,\si,t)$ is $\mu$-consistent iff
\begin{subequations}\label{eq:consistency_rels}
\begin{align}\label{eq:consistency_rel1}
\si_i\si_j(t_it_j)&=\mu_{ij}\mu_{ji}\si_i(t_i)\si_j(t_j),\quad\forall i\neq j;\\
\si_i\si_k(t_j)t_j&=\si_i(t_j)\si_k(t_j),\quad \forall i\neq j\neq k\neq i.
\end{align}
\end{subequations}
\end{thm}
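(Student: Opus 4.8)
\textbf{Proof plan for Theorem \ref{thm:consistency}.}

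The plan is to analyze the degree-zero component of the TGW construction $\TGWC{\mu}{R}{\si}{t}$ by writing down a normal form for its elements and then determining precisely when the canonical map $\rho\colon R\to\TGWC{\mu}{R}{\si}{t}_0/\bigl(\TGWI{\mu}{R}{\si}{t}\cap\TGWC{\mu}{R}{\si}{t}_0\bigr)$ is injective. By Lemma \ref{lem:monomials} (applied to the construction, where the analogous spanning statement holds by the same argument), every element of $\TGWC{\mu}{R}{\si}{t}_0$ is an $R$-combination of reduced monomials $Y_{i_1}\cdots Y_{i_k}X_{j_1}\cdots X_{j_l}$ of total degree zero; using the relations \eqref{eq:tgwarels1}--\eqref{eq:tgwarels3} one can sort indices and collapse such a monomial, modulo lower terms, into a product $\prod_{i} p_i(\si)(t_i^{m_i})$ for suitable twisted shifts, where the $\mu_{ij}$ collect into scalar factors. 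The key structural input is that $\TGWI{\mu}{R}{\si}{t}$ is the \emph{largest} graded ideal meeting $\TGWC{\mu}{R}{\si}{t}_0$ trivially, so $\rho$ is injective exactly when the relations do not already force a nonzero element of $R$ into a graded ideal with zero degree-zero part — equivalently, when a certain explicitly computable two-sided "collision" between the two ways of reducing a monomial of the form $X_iX_jY_jY_i$ (and of $X_iX_kY_jY_j\cdots$ type for three indices) produces no contradiction inside $R$.

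The concrete steps are as follows. First, I would reduce to rank $2$ and rank $3$ subdata: since the defining relations \eqref{eq:tgwarels} only ever involve one or two indices at a time, and the ideal $\TGWI{}{}{}{}$ is graded, consistency of $(R,\si,t)$ is equivalent to consistency of every rank-$2$ and rank-$3$ sub-datum $(R,\si|_S,t|_S)$ for $S\subseteq I$ with $|S|\le 3$ — this is essentially the content of the shape of \eqref{eq:consistency_rels}, and it is the place where regularity of the $t_i$ is first used, to guarantee that localizing or passing to subdata does not destroy or create relations. Second, in rank $2$ with $I=\{i,j\}$: compute $X_iX_jY_jY_i$ two ways. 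On one hand $X_j Y_j = \si_j(t_j)$, so $X_iX_jY_jY_i = X_i\si_j(t_j)Y_i = \si_i\si_j(t_j)X_iY_i = \si_i\si_j(t_j)\si_i(t_i)$. On the other hand, commuting $X_i$ past $Y_j$ via \eqref{eq:tgwarels3} and $X_j$ past $Y_i$, one gets $\mu_{ij}\mu_{ji}\, Y_jY_iX_iX_j = \mu_{ij}\mu_{ji}\,Y_j t_i X_j = \mu_{ij}\mu_{ji}\,\si_j^{-1}(t_i)Y_jX_j t_i / \ldots$ — carefully tracking the twists, the two expressions for the degree-zero element $X_iX_jY_jY_i$ differ by the factor appearing in \eqref{eq:consistency_rel1}, and since $t_i,t_j$ are regular one can apply an appropriate automorphism to see that this factor must equal $1$, i.e.\ \eqref{eq:consistency_rel1} holds, \emph{or else} a nonzero element of $R$ lies in a graded ideal trivial in degree zero, contradicting injectivity of $\rho$. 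Third, in rank $3$ with distinct $i,j,k$: compute $X_iX_kY_j$ versus the reordering obtained by moving $X_i,X_k$ past each other — here the relevant monomial is one whose two reductions both pass through $\si_i(t_j)$ and $\si_k(t_j)$, and the obstruction is precisely \eqref{eq:consistency_rel1}'s companion, the second equation. Conversely — and this is the harder direction — assuming \eqref{eq:consistency_rels}, I would exhibit a faithful $R$-linear realization of $\TGWC{\mu}{R}{\si}{t}_0$ (for instance inside $\End_R$ of the $R$-module freely spanned by reduced monomials, or via the generalized Weyl algebra structure on a suitable localization) on which $R$ acts faithfully, forcing $\rho$ to be injective; this is where one invokes that \eqref{eq:consistency_rels} are exactly the compatibility conditions making such an action well defined, and regularity is used again to pass between $R$ and its localization at the multiplicative set generated by the $\si_g(t_i)$.

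The main obstacle I expect is the "conversely" direction: showing that \eqref{eq:consistency_rels} is \emph{sufficient}. The necessity direction is a finite computation of the type sketched above, producing explicit elements whose vanishing in degree zero (together with regularity) yields the two equations. But to prove sufficiency one must rule out \emph{all} possible further collapses of the degree-zero component — not just the obvious rank-$2$ and rank-$3$ ones — and this requires either (a) a Bergman diamond-lemma style confluence argument showing that the rewriting system built from \eqref{eq:tgwarels} has unique normal forms once \eqref{eq:consistency_rels} holds, with \eqref{eq:consistency_rel1} resolving the $(X_i,X_j,Y_j,Y_i)$-ambiguity and \eqref{eq:consistency_rel2} resolving the $(X_i,X_k,Y_j)$-ambiguity and all overlaps being consequences of these two, or (b) directly constructing the faithful module alluded to above and checking the relations. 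I would pursue (b), since the representation-theoretic construction is cleaner and dovetails with the canonical representations studied later in the paper; the delicate point there is verifying that the $\mu_{ij}$-twisted commutation \eqref{eq:tgwarels3} is compatible with the $t$-data, which is exactly where \eqref{eq:consistency_rel1} enters, while the simultaneous well-definedness of the three commuting "shift-and-multiply" operators for a triple of indices is where \eqref{eq:consistency_rel2} enters.
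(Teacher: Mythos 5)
First, a point of reference: the paper does not prove Theorem \ref{thm:consistency} at all --- it is imported from \cite{FutHar2012b} --- so your proposal has to be measured against the proof in that reference. Its overall strategy (necessity by reducing a degree-zero monic monomial in two ways and invoking regularity; sufficiency by realizing the algebra faithfully over a localization of $R$) is indeed the one your outline anticipates, but the concrete computations you give for necessity do not work as written. The monomial $X_iX_jY_jY_i$ has only one adjacent pair governed by a defining relation, namely $X_jY_j=\sigma_j(t_j)$, so it admits a unique reduction, to $\sigma_i\sigma_j(t_j)\sigma_i(t_i)$, and cannot produce \eqref{eq:consistency_rel1}: there is no relation carrying $X_iX_jY_jY_i$ to $Y_jY_iX_iX_j$, since \eqref{eq:tgwarels3} only exchanges $X_i$ with $Y_j$ for $i\ne j$ and there are no defining $XX$ or $YY$ relations. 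The correct witness is, e.g., $X_iY_jX_jY_i$: collapsing the middle via $Y_jX_j=t_j$ gives $\sigma_i(t_j)\sigma_i(t_i)$, while swapping the two outer pairs via \eqref{eq:tgwarels3} and then using $X_iY_i=\sigma_i(t_i)$ gives $\mu_{ij}\mu_{ji}\,\sigma_j^{-1}\sigma_i(t_i)\,t_j$; equating and applying $\sigma_j$ yields \eqref{eq:consistency_rel1}. Your rank-three computation is in worse shape: $X_iX_kY_j$ is not of degree zero, and ``moving $X_i$ and $X_k$ past each other'' is not licensed by any defining relation, so as stated it proves nothing. Extracting the second equation of \eqref{eq:consistency_rels} requires a longer degree-zero monomial admitting two genuinely distinct reduction paths, and it is there (not in the rank-two case) that regularity is needed to cancel a common factor.

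There are also two structural gaps. (i) Your opening reduction --- that consistency of $(R,\si,t)$ is equivalent to consistency of all rank-$\le 3$ subdata --- is circular in the direction where you need it: for necessity one only ever exhibits two- and three-index collisions, which is fine, but for sufficiency the assertion that these local conditions control the whole algebra \emph{is} the theorem and cannot be taken as step one. (ii) Sufficiency itself is named but not argued. The essential content is that the relations \eqref{eq:consistency_rels}, which are conditions on generators, propagate to every pair of degrees $g,h\in\Z I$ --- equivalently, that the $R$-valued function built from products of $\si_g(t_i)$'s and $\mu_{ij}$'s satisfies the full $2$-cocycle identity, or that every ambiguity of the rewriting system (not just the quadratic and cubic ones) is resolvable. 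That requires an induction on degree which your plan does not supply; \cite{FutHar2012b} handles it by embedding $\TGWA{\mu}{R}{\si}{t}$ into a twisted crossed product over the localization of $R$ at the multiplicative set generated by the $\si_g(t_i)$ (regularity makes this localization injective) and pulling faithfulness of $R$ back along that embedding. You correctly flag this as the main obstacle, but flagging it is not the same as closing it, so the proposal as it stands establishes neither direction completely.
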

That relation \eqref{eq:consistency_rel1} is necessary for consistency of a regular TGW datum was known already in \cite{MazTur1999},\cite{MazTur2002}.
If $(R,\si,t)$ is not regular, sufficient and necessary conditions for $\mu$-consistency are not known.

For consistent TGW algebras one can characterize regularity as follows:
\begin{thm}{\cite[Thm.~4.3]{HarOin2013}}
\label{thm:regularity}
Let $A=\TGWA{\mu}{R}{\si}{t}$ be a consistent TGW algebra. Then the following are equivalent
\begin{enumerate}[{\rm (i)}]
\item $(R,\si,t)$ is regular;
\item Each monic monomial in $A$ is non-zero and generates a free left (and right) $R$-module of rank one;
\item $A$ is \emph{regularly graded}, i.e. for all $g\in\Z I$, there exists a nonzero, regular element in $A_g$;
\item If $a\in A$ is a homogeneous element such that $bac=0$ for some monic monomials $b,c\in A$, then $a=0$.
\end{enumerate}
\end{thm}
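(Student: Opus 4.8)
The plan is to treat (i) as the hub: from (i) I will deduce (ii), (iii) and (iv) all at once by embedding $A$ into a localized crossed product in which every monic monomial becomes a unit, and then establish the three comparatively soft converses to (i) one at a time.

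\emph{From (i) to (ii), (iii), (iv).} Assume $(R,\si,t)$ is regular. Products of regular central elements are regular (hence nonzero), so the central multiplicative set $S\subseteq R$ generated by all $\si_g(t_i)$ ($g\in\Z I$, $i\in I$) does not contain $0$; thus the Ore localization $S^{-1}R$ exists, $R\hookrightarrow S^{-1}R$, and each $\si_g(t_i)$ is a unit there. Since $(R,\si,t)$ is regular and $\mu$-consistent, the construction behind \cite{FutHar2012b} produces a $\Z I$-graded crossed product $\hat A=\bigoplus_{g\in\Z I}(S^{-1}R)Z_g$, with $Z_gr=\si_g(r)Z_g$ and $Z_gZ_h=c(g,h)Z_{g+h}$ for a cocycle $c$ valued in the units of $S^{-1}R$, together with a graded homomorphism $\pi:\TGWC{\mu}{R}{\si}{t}\to\hat A$ given by $r\mapsto r$, $X_i\mapsto Z_i$, $Y_i\mapsto t_iZ_{-i}$. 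Because $S$ consists of regular elements, $\pi$ is injective on the degree-zero component (which, by $\mu$-consistency, is $\cong R$), so $\ker\pi$ is a graded ideal with trivial degree-zero intersection, whence $\ker\pi\subseteq\TGWI{\mu}{R}{\si}{t}$. For the reverse inclusion, the image under $\pi$ of any monic monomial of degree $g$ has the form $uZ_g$ with $u$ a product of $\si$-conjugates of the $t_i$ times a scalar, hence a unit of $\hat A$; consequently, if $N$ is a graded ideal of $\pi(\TGWC{\mu}{R}{\si}{t})$ and $0\neq x\in N$ is homogeneous of degree $g$, then multiplying $x$ on the right by $\pi$ of a monic monomial of degree $-g$ yields a nonzero element of $N$ of degree zero. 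Thus $\pi(\TGWC{\mu}{R}{\si}{t})$ has no nonzero graded ideal meeting degree zero trivially, which forces $\ker\pi=\TGWI{\mu}{R}{\si}{t}$, so $\pi$ descends to an embedding $\bar\pi:A\hookrightarrow\hat A$. Now (ii)--(iv) follow: $\bar\pi$ carries every monic monomial of $A$ to a unit of $\hat A$, so each is nonzero and freely generates a rank-one left and a rank-one right $R$-module in $\hat A$, and these properties transfer back along $\bar\pi$, giving (ii); applying this to the reduced monomial of degree $g$ produces a nonzero regular element of $A_g$, so (iii); and if $bac=0$ in $A$ with $b,c$ monic monomials and $a$ homogeneous, then $\bar\pi(b)\bar\pi(a)\bar\pi(c)=0$ with $\bar\pi(b),\bar\pi(c)$ units, so $\bar\pi(a)=0$ and hence $a=0$, giving (iv).

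\emph{The converses.} (ii)$\Rightarrow$(i): $t_i=Y_iX_i$ is itself a monic monomial, so by (ii) the map $r\mapsto rt_i$ from $R$ onto $Rt_i$ is injective; as $t_i$ is central it is regular. (iv)$\Rightarrow$(i): if $r\in R$ and $rt_i=0$, then $Y_i\,\si_i(r)\,X_i=rY_iX_i=0$ with $Y_i,X_i$ monic monomials and $\si_i(r)$ homogeneous of degree $0$, so (iv) gives $\si_i(r)=0$, i.e.\ $r=0$; hence $t_i$ is regular. (iii)$\Rightarrow$(i): the only reduced monomial of degree $i$ (resp.\ $-i$) being $X_i$ (resp.\ $Y_i$), Lemma \ref{lem:monomials} gives $A_i=X_iR$ and $A_{-i}=Y_iR$, so a nonzero regular $w'\in A_i$ and $w\in A_{-i}$ have the form $w'=X_is$ and $w=Y_iu$ with $s,u\in R$; then $w'w=\si_i(t_i)\si_i(s)u$ lies in $A_0=R$ and is regular (a product of the regular elements $w'$ and $w$), and since $\si_i(t_i)$ is central, any $r\in R$ with $r\si_i(t_i)=0$ satisfies $(w'w)r=\si_i(s)u\,(r\si_i(t_i))=0$, hence $r=0$; thus $\si_i(t_i)$, and therefore $t_i=\si_i^{-1}(\si_i(t_i))$, is regular.

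\textbf{Main obstacle.} The single substantive step is the identification $\ker\pi=\TGWI{\mu}{R}{\si}{t}$: one must show that the canonical radical $\TGWI{\mu}{R}{\si}{t}$ is recovered exactly as the kernel of the crossed-product representation, which is what lets the transparent module structure of $\hat A$ be exported to $A$ itself. Everything else is routine: the existence of the unit-valued cocycle $c$ is equivalent to the consistency relations and already available from \cite{FutHar2012b}, the unit-ness of the images of monomials is a direct telescoping computation, and the three converses are short.
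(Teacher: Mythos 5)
The paper itself gives no proof of Theorem \ref{thm:regularity}; it is imported from \cite[Thm.~4.3]{HarOin2013}, so your attempt has to be measured against that source rather than against anything in this article. Your argument is correct, and all three converses ((ii)$\Rightarrow$(i), (iii)$\Rightarrow$(i), (iv)$\Rightarrow$(i)) are exactly the short computations one expects; the only point of substance is the forward direction, where you differ from the cited proof. The argument in \cite{HarOin2013} stays inside $A$: for a monic monomial $m$ of degree $g$ one forms the reversed monomial $m'$ of degree $-g$ (swap $X_i\leftrightarrow Y_i$ and reverse the order) and checks by telescoping with the relations \eqref{eq:tgwarels} that $mm'$ and $m'm$ are nonzero scalars times products of elements $\si_h(t_i)$, hence regular elements of $A_0\cong R$; statements (ii)--(iv) then fall out directly, with no localization and no identification of $\TGWI{\mu}{R}{\si}{t}$ with the kernel of an auxiliary map. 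You instead embed $A$ into a localized crossed product $\hat A$ in which every monic monomial becomes a unit. This is a legitimate and genuinely different route: it buys a stronger conclusion (your (iv) holds without assuming $a$ homogeneous) and a conceptually transparent picture of $A$ as a subring of $S^{-1}R\ast\Z I$, but it concentrates all the work in two places you only sketch: the existence of the unit-valued cocycle $c$ satisfying the constraints $c(i,-i)=c(-i,i)=1$ and $\si_i(t_j)c(i,-j)=\mu_{ij}t_jc(-j,i)$ (this is where the consistency relations \eqref{eq:consistency_rels} enter, and it is the same telescoping computation the internal proof performs), and the identification $\ker\pi=\TGWI{\mu}{R}{\si}{t}$, which you do argue correctly via the maximality of $\TGWI{\mu}{R}{\si}{t}$ among graded ideals meeting degree zero trivially. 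In short: correct, but a heavier machine than the cited proof needs, with the genuinely nontrivial verification (the cocycle condition) deferred to \cite{FutHar2012b}.
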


\subsubsection{Non-degeneracy of the gradation form}
For a group $G$, any $G$-graded ring $A=\bigoplus_{g\in G}A_g$
can be equipped with a $\Z$-bilinear form $f:A\times A\to A_e$ called the \emph{gradation form}, defined by
\begin{equation}
f(a,b)=\mathfrak{p}_e(ab)
\end{equation}
where $\mathfrak{p}_e$ is the projection $A\to A_e$ along the direct sum $\bigoplus_{g\in G} A_g$, and $e\in G$ is the neutral element.

\begin{thm}{\cite[Cor.~3.3]{HarOin2013}}
\label{thm:nondeg}
The ideal $\TGWI{\mu}{R}{\si}{t}$ is equal to the radical of the gradation form on $\TGWC{\mu}{R}{\si}{t}$ (with respect to the $\Z I$-gradation), and thus the gradation form on $\TGWA{\mu}{R}{\si}{t}$ is non-degenerate.
\end{thm}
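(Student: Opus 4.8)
The plan is to establish the first assertion, that $\TGWI{\mu}{R}{\si}{t}$ coincides with the radical $\operatorname{rad}(f)$ of the gradation form $f$ on $\mathcal C:=\TGWC{\mu}{R}{\si}{t}$ (for the $\Z I$-grading, with $\mathfrak p_0:\mathcal C\to\mathcal C_0$ the projection onto the degree-$0$ component). The non-degeneracy statement for $\mathcal A:=\TGWA{\mu}{R}{\si}{t}$ is then formal: since $\TGWI{\mu}{R}{\si}{t}\cap\mathcal C_0=0$ we have $\mathcal A_0=\mathcal C_0$, the form on $\mathcal A$ is the one induced by $f$, and its radical is the image in $\mathcal A$ of $\operatorname{rad}(f)$; once $\operatorname{rad}(f)=\TGWI{\mu}{R}{\si}{t}$ is known, this image is zero.

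I would begin with an identity valid in any $G$-graded ring $A$ with identity component $A_e$ and projection $\mathfrak p_e$. The set $\mathcal M:=\{a\in A:\mathfrak p_e(AaA)=0\}$ is a two-sided ideal, because $A(ca)A$ and $A(ac)A$ are contained in $AaA$; it is graded, because the defining condition $\mathfrak p_e(A_haA_k)=0$ for all $h,k$ involves, for each fixed $h,k$, only the single homogeneous component of $a$ in degree $e-h-k$; it satisfies $\mathcal M\cap A_e=0$, by applying $\mathfrak p_e$ to $1\cdot a\cdot1$; and it contains every graded ideal $J$ with $J\cap A_e=0$, since for $a\in J$ one has $\mathfrak p_e(AaA)\subseteq\mathfrak p_e(J)=J\cap A_e=0$. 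Thus $\mathcal M$ is the largest graded ideal of $A$ with trivial $A_e$-component; for $A=\mathcal C$ and $e=0$ this gives $\mathcal M=\TGWI{\mu}{R}{\si}{t}$. Since $\operatorname{rad}(f)=\{a\in\mathcal C:\mathfrak p_0(a\mathcal C)=\mathfrak p_0(\mathcal Ca)=0\}$ obviously contains $\mathcal M$ (note $a\mathcal C$ and $\mathcal Ca$ lie in $\mathcal Ca\mathcal C$), the theorem comes down to the reverse inclusion, that is, to proving: \emph{if $a\in\mathcal C_g$ is homogeneous and $a\mathcal C_{-g}=0=\mathcal C_{-g}a$, then $\mathcal C_h\,a\,\mathcal C_{-g-h}=0$ for all $h\in\Z I$}. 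In other words, for TGW constructions the one-sided vanishing conditions defining $\operatorname{rad}(f)$ already force the two-sided one defining $\mathcal M$.

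Fix $a$ and $h$ as above. Exactly as in Lemma~\ref{lem:monomials}, $\mathcal C_h$ is spanned on the left over $R$ by reduced monomials $m_1$ of degree $h$ and $\mathcal C_{-g-h}$ on the right over $R$ by reduced monomials $m_2$ of degree $-g-h$; using \eqref{eq:tgwarels1} in the form $m\rho(r)=\rho(\si_{\deg m}(r))m$ for monic monomials $m$, together with $\si_h\si_g\si_{-g-h}=\operatorname{id}$, one collects all base-ring coefficients to the outside of the products, so that it suffices to prove $m_1\,a\,m_2=0$ (an element of $\mathcal C_0$) for every such pair. For $h=0$ this is immediate, since then $m_1\in\mathcal C_0$ and $m_2\in\mathcal C_{-g}$, whence $m_1am_2\in\rho(R)\cdot a\mathcal C_{-g}=0$. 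The case $h\neq0$ is where the relations \eqref{eq:tgwarels2}--\eqref{eq:tgwarels3} must be used in an essential way to transport the annihilation hypotheses on $a$ through the monomials $m_1,m_2$; this is the main obstacle. It requires a careful induction in which one keeps exact track of the scalars $\mu_{ij}$ produced by \eqref{eq:tgwarels3} and of the automorphisms picked up when base-ring elements pass generators, and — because regularity is \emph{not} assumed — one must argue with the element $m_1am_2$ of $\mathcal C_0$ directly, the map $\rho:R\to\mathcal C$ being possibly non-injective so that the $t_i$ cannot be cancelled.

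Two remarks help organize this last step. When $\mu$ is symmetric, $\mathcal C$ carries a degree-reversing anti-automorphism $\omega$ that fixes $\rho(R)$ pointwise and interchanges $X_i$ with $Y_i$; it swaps the left and right one-sided radicals of $f$, so it is enough to treat one of the two hypotheses on $a$. The general case reduces to the symmetric one by a bimultiplicative $2$-cocycle twist giving a $\Z I$-graded isomorphism $\TGWC{\mu}{R}{\si}{t}\cong\TGWC{\mu'}{R}{\si}{t}$ with $\mu'$ symmetric (after adjoining, if necessary, square roots of the products $\mu_{ij}\mu_{ji}$, which is harmless since non-degeneracy of a bilinear form is preserved and reflected under extension of the base field). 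Neither remark removes the monomial computation for $h\neq0$, which is the technical core of \cite{HarOin2013}.
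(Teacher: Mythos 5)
The paper gives no proof of this statement---it is imported verbatim from \cite[Cor.~3.3]{HarOin2013}---so your attempt has to stand on its own. Its framing is correct: writing $\mathcal C=\TGWC{\mu}{R}{\si}{t}$, you rightly identify $\TGWI{\mu}{R}{\si}{t}$ with $\mathcal M=\{a\in\mathcal C:\mathfrak p_0(\mathcal Ca\mathcal C)=0\}$, observe $\mathcal M\subseteq\operatorname{rad}(f)$, note that $\operatorname{rad}(f)$ is graded, and correctly reduce the whole theorem to the claim that a homogeneous $a\in\mathcal C_g$ with $a\mathcal C_{-g}=0=\mathcal C_{-g}a$ satisfies $m_1am_2=0$ for all reduced monomials $m_1\in\mathcal C_h$, $m_2\in\mathcal C_{-g-h}$. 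The passage from the first assertion to non-degeneracy on $\TGWA{\mu}{R}{\si}{t}$ is also fine.

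The problem is that this last claim \emph{is} the theorem, and you do not prove it: for $h\neq0$ you only assert that it ``requires a careful induction'' and defer to ``the technical core of \cite{HarOin2013}''. That is a genuine gap, not a routine verification one may wave at. To see that it is not routine, note what the obvious moves give: since $m_2m_1\in\mathcal C_{-g}$, the hypotheses yield $b\,m_1=0$ and $m_2\,b=0$ for $b:=m_1am_2\in\mathcal C_0=\rho(R)$ (Lemma \ref{lem:monomials}); to extract $b=0$ one would like to strip off $m_1$, but right-multiplying by the reversed monomial $m_1^\ast$ (with $X_i\leftrightarrow Y_i$, so that $m_1m_1^\ast\in\rho(R)$) only produces $b\,\rho(r)=0$ with $r$ a product of elements $\si_d(t_i)$ and scalars $\mu_{ij}$---and neither regularity of the $t_i$ nor injectivity of $\rho$ is available to cancel $\rho(r)$. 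So some genuinely new input is required exactly where your write-up stops. The two auxiliary remarks (the anti-automorphism for symmetric $\mu$, the cocycle twist to symmetrize $\mu$) are plausible but, as you concede yourself, do not remove this obstacle; as it stands the proposal is an organized reduction plus a citation, not a proof.
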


\subsubsection{\texorpdfstring{$R$}{R}-rings with involution}
\label{sec:R-ring-with-involution}
We need the concept of an $R$-ring with involution, defined as follows.
\begin{dfn}\label{dfn:Rring_with_involution} 
\begin{enumerate}[{\rm (i)}]
\item An \emph{involution} on a ring $A$ is a $\Z$-linear map $\ast:A\to A, a\mapsto a^\ast$ satisfying $(ab)^\ast=b^\ast a^\ast$, $(a^\ast)^\ast=a$ for all $a,b\in A$.
\item Let $R$ be a ring.
An \emph{$R$-ring with involution} is a ring $A$ equipped with a ring homomorphism $\rho_A:R\to A$ and an involution $\ast:A\to A$ such that $\rho_A(r)^\ast=\rho_A(r)$ for all $r\in R$.
\item 
If $A$ and $B$ are two $R$-rings with involution, then a \emph{map of $R$-rings with involution} is a ring homomorphism $k:A\to B$ such that $k\circ \rho_A=\rho_B$ and $k(a^\ast)=k(a)^\ast$ for all $a\in A$.
\end{enumerate}
\end{dfn}
Any TGW algebra $A=\TGWA{\mu}{R}{\si}{t}$ with index set $I$ for which $\mu_{ij}=\mu_{ji}$ for all $i,j$, can be equipped with an involution $\ast$ given by $X_i^\ast=Y_i,\, Y_i^\ast=X_i\;\forall i\in I$, $r^\ast=r\;\forall r\in R$. Together with the canonical map $\rho:R\to A$ this turns $A$ into an $R$-ring with involution. In particular we regard the Weyl algebra $A_I(\K)$ as an $R$-ring with involution in this way, where $R=\K[u_i\mid i\in I]$ as in Section \ref{sec:WeylAlgebraExample}.

\subsection{A proposition about epimorphisms of TGW algebras}
\label{sec:epimorphisms}

The following proposition about a quotient construction will be used in Section \ref{sec:special-linear-lie-algebras} and is a variation of \cite[Thm.~4.1]{FutHar2012b}. 
Let $A=\mathcal{A}_\mu(R,\si,t)$ be a TGW algebra with index set $I$. We say that an ideal $J$ of $R$ is \emph{$\Z I$-invariant} if $\si_d(J)\subseteq J$ for all $d\in \Z I$. Then $J$ satisfies
\begin{equation}\label{eq:AJAJAJA}
AJ=AJA=JA
\end{equation}
Indeed, \[AJA=\sum_{g,h\in \Z I} A_gJA_h \subseteq \sum_{g,h\in \Z I} \si_g(J)A_gA_h\subseteq \sum_{g,h\in \Z I} JA_hA_g=JA.\] 
The inclusion $AJA\subseteq AJ$ is proved similarly. The reverse inclusions are trivial since $A$ is unital.

\begin{prp}\label{prp:quotients}
Let $A=\mathcal{A}_\mu(R,\si,t)$ be a consistent and regular TGW algebra with index set $I$ and let $J$ be a $\Z I$-invariant prime ideal of $R$ such that $t_i\notin J$ for all $i\in I$. Assume that $A_g$ is cyclic as a left (equivalently, right) $R$-module for all $g\in \Z I$. 
Let $\bar A=\mathcal{A}_\mu(\bar R, \bar\si, \bar t)$ where
$\bar R=R/J$, $\bar\si=(\bar\si_i)_{i\in I}$, $\bar\si_i(r+J)=\si_i(r)+J$, $\bar t=(t_i+J)_{i\in I}$. Then the kernel of the surjective homomorphism
\begin{equation}
\begin{aligned}
Q_J:A &\to \bar A\\ 
X_i &\mapsto X_i \\ 
Y_i &\mapsto Y_i \\ 
r &\mapsto r+J
\end{aligned}
\end{equation}
is equal to $AJA$.
\end{prp}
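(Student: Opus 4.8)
The plan is to identify $\ker Q_J$ by comparing graded components. Since $Q_J$ is a map of $\Z I$-graded rings, its kernel is graded, so it suffices to show $(\ker Q_J)_g = (AJA)_g$ for every $g\in \Z I$. By \eqref{eq:AJAJAJA} we know $AJA = AJ = JA$ is a two-sided ideal, and clearly $AJA \subseteq \ker Q_J$ since $Q_J(J) = 0$; so the content is the reverse inclusion. First I would fix $g\in\Z I$ and use the hypothesis that $A_g$ is a cyclic left $R$-module: by Theorem \ref{thm:regularity}(ii), regularity and consistency give that each monic monomial generates a free left $R$-module of rank one, so in fact $A_g = R\cdot m_g$ for a suitably chosen reduced monomial $m_g$ (a product of $X$'s, or of $Y$'s, depending on $g$), and this is a \emph{free} rank-one left $R$-module. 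Correspondingly, on the quotient side, $\bar A_g = \bar R\cdot \bar m_g$; the key point to check is that $\bar A$ is again consistent and regular so that $\bar A_g$ is also \emph{free} of rank one over $\bar R = R/J$ — here I would invoke Theorem \ref{thm:consistency}, noting that the consistency equations \eqref{eq:consistency_rels} for $(\bar R,\bar\si,\bar t)$ are the images under $R\to R/J$ of those for $(R,\si,t)$, and that $\bar t_i = t_i + J \neq 0$ because $t_i\notin J$, while regularity of $\bar t_i$ in $R/J$ follows from $J$ being prime and $t_i\notin J$.

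With both $A_g \cong R$ and $\bar A_g \cong R/J$ as free rank-one modules, compatibly via $Q_J$ (that is, $Q_J(r\cdot m_g) = (r+J)\cdot \bar m_g$), the restriction $Q_J|_{A_g}\colon A_g\to \bar A_g$ is, under these identifications, exactly the reduction map $R\to R/J$. Hence $(\ker Q_J)_g = J\cdot m_g$. On the other hand $(AJA)_g = (JA)_g = J\cdot A_g = J\cdot m_g$ as well, using $A_g = R m_g$ and that $J$ is an ideal of $R$. This gives $(\ker Q_J)_g = (AJA)_g$ for all $g$, and summing over $g$ finishes the proof. Surjectivity of $Q_J$ is immediate since $X_i, Y_i$ and the image of $R$ generate $\bar A$.

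The main obstacle I anticipate is the careful bookkeeping needed to make the identification $A_g \cong R m_g$ and $\bar A_g \cong \bar R \bar m_g$ genuinely compatible with $Q_J$ — one must check that the cyclic generator chosen upstairs maps to a cyclic generator downstairs and that no collapsing occurs, i.e. that $\bar m_g \neq 0$ in $\bar A$. This is exactly where regularity of $\bar t$ (equivalently, Theorem \ref{thm:regularity}(ii) applied to $\bar A$) is essential: a priori the reduced monomial $m_g$ could map into $\TGWI{\mu}{\bar R}{\bar\si}{\bar t}$ and become zero, and ruling this out requires knowing $(\bar R,\bar\si,\bar t)$ is a regular consistent TGW datum. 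A secondary point to be careful about is that the hypothesis "$A_g$ cyclic as a left module" should be checked to be equivalent to "cyclic as a right module" (as asserted parenthetically in the statement), which again follows from Theorem \ref{thm:regularity} together with the existence of the involution-free left/right symmetry of reduced monomials; I would either cite this or include the short argument. Everything else — that $AJA$ is an ideal, that $Q_J$ is well-defined, the gradedness of the kernel — is routine given the results already in the excerpt.
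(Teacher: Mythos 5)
There is a genuine gap, and it sits at the heart of your argument: the claim that ``$A_g=R\cdot m_g$ for a suitably chosen reduced monomial $m_g$'', free of rank one. The hypothesis only says $A_g$ is cyclic; its generator need not be a monomial, and Theorem \ref{thm:regularity}(ii) only guarantees that a monic monomial $m\in A_g$ generates a free rank-one submodule $Rm\subseteq A_g$, not that $Rm=A_g$. Indeed, in the very situations where the proposition is applied (e.g.\ $\mathcal{A}(\widetilde{A_n})$ in Theorem \ref{thm:sln-quotient}), the cyclic generator of $A_g$ is $\varphi_\Gamma^{-1}(z^{\gamma(g)})$, while monomials such as $X_1X_2$ and $X_2X_1$ are proper $R$-multiples of it (cf.\ Lemma \ref{lem:varphi-calculation}); so $Rm_g\subsetneq A_g$ and your computation $(AJA)_g=Jm_g$ is incorrect --- with the actual generator $u_g$ one has $(AJA)_g=JA_g=Ju_g$, which may strictly contain $Jm_g$. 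More importantly, the false identification hides exactly the hard step: writing $a=ru_g\in\ker Q_J$, one must show $r\in J$, i.e.\ that the image $Q_J(u_g)$ of the cyclic generator has zero annihilator over $\bar R=R/J$. The freeness you invoke (correctly obtained from consistency and regularity of $(\bar R,\bar\si,\bar t)$) applies to images of \emph{monomials}, not to $Q_J(u_g)$, and transferring it requires an extra argument you do not supply --- for instance writing a monomial $m=su_g$, observing $\bar r\,\overline{m}=\bar s\,\bar r\,Q_J(u_g)=0$ and using freeness of $\bar R\,\overline{m}$, or, as the paper does, showing that $u_gu_{-g}\notin J$ (because products of monomials of opposite degrees are products of $\si$-shifts of the $t_i$, which avoid the prime $\Z I$-invariant ideal $J$) and combining this with $R\cap\ker Q_J\subseteq J$, which the paper extracts from a cited lemma of Futorny--Hartwig, and with primeness of $J$.

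The surrounding scaffolding of your proposal is fine: $AJA\subseteq\ker Q_J$, the reduction to homogeneous components, the regularity of $\bar t_i$ in $R/J$ from primeness, and the consistency of $(\bar R,\bar\si,\bar t)$ via Theorem \ref{thm:consistency} are all correct, and the last point (monic monomials survive and generate free modules in $\bar A$) is a legitimate substitute for part of the paper's argument. But as written the proof rests on a wrong description of $A_g$, so the inclusion $\ker Q_J\subseteq AJA$ is not established; you need to argue directly that $ru_g\in\ker Q_J$ forces $r\in J$ for the genuine cyclic generator $u_g$, which is precisely the step the paper's proof is organized around.
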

\begin{proof} 
By \cite[Lem.~3.3]{FutHar2012b}, $\ker Q_J$ is equal to the sum of all graded ideals of $A$ whose intersection with $R$ is contained in $J$. In particular 
\begin{equation}\label{eq:RkerQJJ}
R\cap \ker Q_J\subseteq J.
\end{equation}
By \eqref{eq:AJAJAJA}, $(AJA)\cap R=(AJ)\cap R = RJ = J$,
hence $AJA\subseteq \ker Q_J$.

For the converse, let $d\in \Z I$ and $a\in A_d\cap\ker Q_J$.
For $g\in \Z I$, let $u_g$ be a generator for $A_g$ as a left $R$-module. 
We have $a=ru_d$ for some $r\in R$. We will show that $r\in J$. First we show that $u_du_{-d}\notin J$.
Let $m\in A_d$ and $m'\in A_{-d}$ be monic monomials
(products of elements from $\{X_i\}_{i\in I}\cup\{Y_i\}_{i\in I}$). By the TGW relations \eqref{eq:tgwarels}, $mm'$ is a product of elements of the form $\si_g(t_i)$. Since $J$ is $\Z I$-invariant, prime, and does not contain $t_i$, it follows that $mm'\notin J$.
On the other hand, $m=su_d$ and $m'=s'u_{-d}$ for some $s,s'\in R$. Thus
$mm'=s\si_d(s')u_du_{-d}$. Since $J$ is prime, it follows that $u_du_{-d}\notin J$.
We have $ru_du_{-d}=au_{-d}\in R\cap \ker Q_J\subseteq J$ by \eqref{eq:RkerQJJ}, which implies that $r\in J$ since $J$ is prime. Thus $a=ru_d\in JA=AJA$. Since $d$ was arbitrary and $Q_J$ is graded, this proves that $\ker Q_J\subseteq AJA$.
\end{proof}

\section{A family of TGW algebras parametrized by multiquivers}
\label{sec:AGamma}

\subsection{Multiquivers}
\label{sec:DirectedGraphs}

\begin{dfn}
A \emph{multiquiver} is a quadruple $\Gamma=(V,E,s,t)$ where
\begin{itemize}
\item $V$ and $E$ are sets,
\item $s,t:E\to (V\times\Z_{>0})\cup\{\emptyset\}$ are two functions,
\end{itemize}
subject to the following two conditions:
\begin{align}
\text{(No loops):}&\quad\text{If $s(e)=(u,m)$ and $t(e)=(v,n)$ then $u\neq v$.} \\ 
\text{(Locally finiteness):}&\quad\text{For each $v\in V$, $s^{-1}(\{v\}\times\Z_{>0})$ and $t^{-1}(\{v\}\times\Z_{>0})$ are finite sets.}
\end{align}
The elements of $V$ and $E$ are called \emph{vertices} and \emph{edges} respectively.
Let $e\in E$. If $s(e)=(v,n)$ (respectively $t(e)=(v,n)$) for some $v\in V$, $n\in \Z_{>0}$, then $v$ is called the \emph{source} (respectively \emph{target}) of $e$, $v$ is said to be \emph{incident} to $e$ and $n$ is the \emph{outgoing} (respectively \emph{incoming}) \emph{multiplicity} of $e$ at $v$.
If $s(e)=\emptyset$ (respectively $t(e)=\emptyset$) we say that $e$ has no source (respectively no target).
If $e$ has a target but no source, or a source but no target, $e$ is a \emph{connected leaf} (or a \emph{half-edge}).
If $e$ has neither a source nor a target, $e$ is a \emph{disconnected leaf} (or a \emph{loose edge}).
We depict an edge $e$ with $s(e)=(v_1,a)$ and $t(e)=(v_2,b)$ as follows:
\[ 
\begin{tikzpicture}[baseline=(1.base)]
\node[vertex] (1) at (0,0) [label=below:$v_1$] {};
\node[vertex] (2) at (2cm,0) [label=below:$v_2$] {};
\draw[thick,directed] (1) -- node[above] {$e$} node[auto,very near start] {$a$} node[auto,very near end] {$b$} (2);
\end{tikzpicture}
\]
If $a=1$ or $b=1$ we sometimes omit the corresponding incidence multiplicity from the diagram.

A multiquiver $\Ga=(V,E,s,t)$ is called a \emph{simple quiver} if $\Ga$ has no leaves and has at most one edge between any two vertices.

A multiquiver $\Ga=(V,E,s,t)$ without leaves is called \emph{symmetric} (or \emph{equally valued)} if, in each edge, the outgoing and incoming multiplicities coincide.
\end{dfn}

\begin{rem}\label{rem:simple-quiver}
In a simple quiver $\Ga=(V,E,s,t)$ one may re-interpret each edge $e\in E$, say $s(e)=(i,v_{ij})$ and $t(e)=(j,v_{ji})$, as a set of two labeled directed edges, one going from $i$ to $j$ with label $v_{ij}$, and the other going from $j$ to $i$ with label $v_{ji}$. In this way we obtain a directed graph (digraph) $Q(\Ga)$ with edges labeled by positive integers. However, some information is lost; switching the direction of $e$ in $\Ga$, i.e. replacing it by $e'$ with $s(e')=(j,v_{ji})$, $t(e')=(i,v_{ij})$, yields the same set of two labeled edges in $Q(\Ga)$. 
\end{rem}

\subsection{Incidence matrix}
For a set $X$ we let $\Z X$ denote the free abelian group on $X$. To each multiquiver $\Gamma=(V,E,s,t)$ we associate a group homomorphism $\gamma\in\Hom_\Z(\Z V, \Z E)$ as follows:
\begin{equation}
\label{eq:gamma-definition}
\gamma(v)=\sum_{\substack{e\in E,\, n\in\Z_{>0}\\t(e)=(v,n)}}n\cdot e - \sum_{\substack{e\in E,\, n\in\Z_{>0}\\s(e)=(v,n)}}n\cdot e, \qquad \forall v\in V.
\end{equation}
Since $\Gamma$ is locally finite, $\gamma(v)$ is a finite linear combination of elements of $E$, hence $\gamma(v)\in \Z E$.
We call $\gamma$ the \emph{incidence matrix} of $\Gamma$.
If $V$ and $E$ are nonempty, we can identify
 $\ga$ with the matrix $(\ga_{ev})_{e\in E, v\in V}$ given by
\[\gamma(v)=\sum_{e\in E}\gamma_{ev} e,
\qquad \forall v\in V.\]
It satisfies the following two properties:
\begin{gather}
\label{eq:condition_C}
\text{Every column $(\gamma_{ev})_{e\in E}$ has at most finitely many nonzero elements.} \\ 
\label{eq:condition_M}
\text{Every row $(\gamma_{ev})_{v\in V}$ contains at most one positive and at most one negative element.}
\end{gather}
Conversely, if $A=(a_{ij})_{i\in I,j\in J}$ is any matrix with integer entries satisfying conditions \eqref{eq:condition_C} and \eqref{eq:condition_M} there exists a unique multiquiver $\Gamma_A$ whose incidence matrix is $A$. Namely, $\Gamma_A=(J,I,s,t)$ where for any $i\in I$,
\begin{align*}
s(i)&=
\begin{cases}
(j, |a_{ij}|)& \text{if $a_{ij}<0$ for some (unique) $j\in J$,}\\
\emptyset & \text{otherwise,}
\end{cases}\\ 
t(i)&=
\begin{cases}
(j,a_{ij})& \text{if $a_{ij}>0$ for some (unique) $j\in J$,}\\
\emptyset & \text{otherwise.}
\end{cases}
\end{align*}
Thus there is a one-to-one correspondence between the set of multiquivers and the set of homomorphisms $\Hom_\Z(\Z V, \Z E)$ whose matrix satisfies 
conditions \eqref{eq:condition_C} and \eqref{eq:condition_M}.
Connected leaves correspond to rows with exactly one non-zero element while disconnected leaves correspond to rows with only zeroes.
Two multiquivers are \emph{isomorphic} if their incidence matrices coincide up to permutation of rows and columns.

\subsection{Construction of the TGW algebra \texorpdfstring{$\mathcal{A}(\Gamma)$}{A(Gamma)}}

Let $\Gamma=(V,E,s,t)$ be a multiquiver, and $\gamma=(\gamma_{ev})_{e\in E, v\in V}$ be its incidence matrix. Define
\begin{subequations}
\begin{align}
\mu&=(\mu_{ij})_{i,j\in V}, \quad\mu_{ij}=1, \quad\forall i,j\in V,\\
R_E &=\K[u_e\mid e\in E]\quad\text{(polynomial algebra),}\\
\label{eq:sigma-definition}
\sigma^\Gamma &=(\sigma_v)_{v\in V},\quad \sigma_v(u_e)=u_e-\gamma_{ev},\quad\forall v\in V, e\in E,\\
\label{eq:t-definition}
t^\Gamma &=(t_v)_{v\in V}, \quad t_v = \prod_{\substack{e\in E\\ \text{$v$ incident to $e$}}} u_{ev},
\end{align}
where
\begin{equation}
u_{ev}=
\begin{cases}
 u_e(u_e+1)\cdots (u_e+n-1),&\text{if $t(e)=(v,n)$}, \\ 
 (u_e-1)(u_e-2)\cdots (u_e-n),&\text{if $s(e)=(v,n)$}.
\end{cases}
\end{equation}
\end{subequations}
We define $\mathcal{A}(\Gamma)$ to be the twisted generalized Weyl algebra $\mathcal{A}_\mu(R_E,\sigma^\Ga,t^\Ga)$ with index set $V$. 
$\mathcal{A}(\Gamma)$ is a $\Z V$-graded algebra. Since $\mu$ is symmetric, $\mathcal{A}(\Gamma)$ is also an $R_E$-ring with involution 
(see Section \ref{sec:R-ring-with-involution}).

\begin{rem}
In terms of the matrix $\gamma$ we have
\begin{equation}
t_v=\prod_{e\in E} u_{ev}, \qquad 
u_{ev}=
\begin{cases}
 u_e(u_e+1)\cdots (u_e+\ga_{ev}-1),& \ga_{ev}> 0, \\ 
 1, &\ga_{ev}=0,\\
 (u_e-1)(u_e-2)\cdots (u_e-|\ga_{ev}|),& \ga_{ev}<0.
\end{cases}
\end{equation}
\end{rem}

\begin{thm} \label{thm:Gamma-solves-consistency}
For any multiquiver $\Gamma=(V,E,s,t)$, the TGW datum $(R_E,\si^\Gamma,t^\Gamma)$ satisfies the TGW consistency conditions \eqref{eq:consistency_rels} with $\mu_{ij}=1$ for $i,j\in V$.
\end{thm}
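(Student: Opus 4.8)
The plan is to reduce the verification of \eqref{eq:consistency_rels} to a small family of elementary one-variable polynomial identities, one for each edge of $\Gamma$. The datum is assembled ``edge by edge'': $R_E=\K[u_e\mid e\in E]$ is a polynomial ring, each $t_v=\prod_{e\in E}u_{ev}$ is a product of polynomials $u_{ev}$ (with $u_{ev}=1$ unless $v$ is incident to $e$), each $u_{ev}$ involves only the single variable $u_e$, and each automorphism $\sigma_v$ acts on the generators diagonally by the affine shift $u_e\mapsto u_e-\gamma_{ev}$. Hence $\sigma_d(u_{ev})$ is again a polynomial in $u_e$ alone, and for any word $w$ in the $\sigma$'s one has $\sigma_w(t_v)=\prod_{e\in E}\sigma_w(u_{ev})$, a product over $E$ of single-variable polynomials. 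Multiplying out both sides of \eqref{eq:consistency_rel1} and of the second equation in \eqref{eq:consistency_rels}, one sees that each side factors as a product over $e\in E$ of the corresponding expression in $u_e$; so it suffices to verify, for each fixed edge $e$, the identities
\[
\sigma_i\sigma_j(u_{ei}u_{ej})=\sigma_i(u_{ei})\,\sigma_j(u_{ej})\quad(i\neq j),\qquad
\sigma_i\sigma_k(u_{ej})\,u_{ej}=\sigma_i(u_{ej})\,\sigma_k(u_{ej})\quad(i,j,k\text{ distinct}),
\]
regarded as equalities in $\K[u_e]$, with $\sigma_v$ acting on $u_e$ by $u_e\mapsto u_e-\gamma_{ev}$.

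The key combinatorial input is condition \eqref{eq:condition_M}: row $e$ of $\gamma$ has at most one positive and at most one negative entry. For the second identity this forces at least one of the three entries $\gamma_{ei},\gamma_{ej},\gamma_{ek}$ (at distinct positions, since $i,j,k$ are distinct) to be zero. If $\gamma_{ej}=0$ then $u_{ej}=1$ and both sides equal $1$; if instead $\gamma_{ei}=0$ or $\gamma_{ek}=0$, then the corresponding automorphism fixes $u_e$ and the identity collapses to the commutativity of multiplication in $\K[u_e]$. Thus the second family of consistency relations holds automatically.

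For the first identity, \eqref{eq:condition_M} forbids $\gamma_{ei}$ and $\gamma_{ej}$ (distinct positions, $i\neq j$) from being both positive or both negative. If one of them vanishes the identity is again trivial, so the only substantial case is, after possibly swapping $i$ and $j$, that $\gamma_{ei}=a>0$ and $\gamma_{ej}=-c$ with $c>0$. Then $\sigma_i\sigma_j$ shifts $u_e$ by $c-a$, and substituting the explicit products $u_{ei}=u_e(u_e+1)\cdots(u_e+a-1)$ and $u_{ej}=(u_e-1)(u_e-2)\cdots(u_e-c)$ one checks that both sides of the identity equal the product of the $a+c$ consecutive linear factors $(u_e-a)(u_e-a+1)\cdots(u_e+c-1)$: on the left-hand side the two shifted factors contribute the consecutive blocks $(u_e-a)\cdots(u_e-a+c-1)$ and $(u_e-a+c)\cdots(u_e+c-1)$, while on the right-hand side they contribute $(u_e-a)\cdots(u_e-1)$ and $u_e\cdots(u_e+c-1)$. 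This telescoping is the only genuine computation in the argument, and it is elementary; I would write it out in a few lines.

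This proves \eqref{eq:consistency_rels} for $(R_E,\sigma^\Gamma,t^\Gamma)$ with all $\mu_{ij}=1$, which is the assertion. (I note in passing that since $R_E$ is a domain the nonzero $t_v$ are regular, so Theorem \ref{thm:consistency} then gives that $\mathcal{A}(\Gamma)$ is $\mu$-consistent.) The only point requiring care is the bookkeeping: making the reduction to per-edge, single-variable identities precise, and tracking the index shifts in the $a>0>-c$ telescoping. Everything else is formal.
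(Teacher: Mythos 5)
Your proposal is correct and follows essentially the same route as the paper: reduce both consistency relations to per-edge identities in the single variable $u_e$ and verify them using property \eqref{eq:condition_M} of the incidence matrix (the paper states this reduction and leaves the verification as "easy to check," which your case analysis and telescoping computation carry out explicitly and correctly).
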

\begin{proof}
By \eqref{eq:t-definition} it suffices to check that for each $e\in E$,
\[\si_i(u_{ei})\si_j(u_{ej})=\si_i\si_j(u_{ei}u_{ej}),
\quad \forall i,j\in V, i\neq j,\]
and
\[\si_i\si_k(u_{ej})u_{ej}=\si_i(u_{ej})\si_k(u_{ej}),
\quad \forall i,j,k\in V, i\neq j\neq k\neq i.\] 
These identities are easy to verify, using \eqref{eq:sigma-definition} and
Property \eqref{eq:condition_M} of the matrix $(\ga_{ev})_{e\in E,v\in V}$.
\end{proof}

\begin{cor} \label{cor:consistency_of_AGamma}
For any multiquiver $\Gamma$, the TGW algebra $\mathcal{A}(\Gamma)$ is consistent,
i.e. the canonical map $\rho:R_E\to\mathcal{A}(\Gamma)$ is injective.
\end{cor}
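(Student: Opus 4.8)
The plan is to deduce this immediately from the two results that precede it in the excerpt. By Theorem \ref{thm:Gamma-solves-consistency}, the TGW datum $(R_E,\si^\Ga,t^\Ga)$ satisfies the consistency equations \eqref{eq:consistency_rels} with all $\mu_{ij}=1$. To invoke Theorem \ref{thm:consistency} (the criterion from \cite{FutHar2012b}), the only remaining hypothesis to verify is that the datum is \emph{regular}, i.e. that each $t_v$ is a non-zero-divisor in $R_E$.

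First I would observe that $R_E=\K[u_e\mid e\in E]$ is a polynomial algebra over a field, hence an integral domain (a direct limit of the finitely-generated polynomial rings $\K[u_e\mid e\in E_0]$, each of which is a domain). In a domain, every nonzero element is regular, so it suffices to check that $t_v\neq 0$ for each $v\in V$. By the defining formula \eqref{eq:t-definition}, $t_v$ is a finite product — finite by the local-finiteness condition on $\Ga$ — of factors $u_{ev}$, each of which is either $1$ or a nonzero product of linear polynomials $u_e+c$ with $c\in\Z$; none of these factors is zero, so $t_v\neq 0$ since $R_E$ is a domain. Thus $(R_E,\si^\Ga,t^\Ga)$ is regular.

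Now Theorem \ref{thm:consistency} applies: a regular TGW datum satisfying \eqref{eq:consistency_rels} is $\mu$-consistent, which by definition means exactly that the canonical map $\rho:R_E\to\mathcal{A}(\Ga)=\mathcal{A}_\mu(R_E,\si^\Ga,t^\Ga)$ is injective. This is the assertion of the corollary.

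There is essentially no obstacle here; the corollary is a formal consequence of the theorem and the criterion of \cite{FutHar2012b}. The only point requiring a word of care is the case $E=\emptyset$ (or $V=\emptyset$), where one should note that an empty product is $1$, so $t_v=1$ is regular, and the statement holds trivially. All the genuine content lies in Theorem \ref{thm:Gamma-solves-consistency}, whose verification reduces, via the multiplicativity of $t^\Ga$ over $E$, to an edge-by-edge check exploiting Property \eqref{eq:condition_M}: for a fixed $e$, the entry $\ga_{ev}$ is nonzero for at most two vertices $v$, so the three-index relation and the two-index relation each involve at most two nontrivial factors and collapse to elementary identities among shifted polynomials in the single variable $u_e$.
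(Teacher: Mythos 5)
Your proposal is correct and follows essentially the same route as the paper, whose proof is simply ``Immediate by Theorem \ref{thm:Gamma-solves-consistency} and Theorem \ref{thm:consistency}.'' Your explicit verification of regularity (each $t_v$ is a nonzero, hence regular, element of the domain $R_E$) is a reasonable filling-in of a hypothesis the paper leaves implicit, not a different argument.
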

\begin{proof}
Immediate by Theorem \ref{thm:Gamma-solves-consistency} and Theorem \ref{thm:consistency}.
\end{proof}

We will henceforth use $\rho$ to identify $R_E$ with its image in $\mathcal{A}(\Gamma)$. By Lemma \ref{lem:monomials}, $\mathcal{A}(\Gamma)_0=R_E$.

\begin{cor}\label{cor:domain}
$\mathcal{A}(\Gamma)$ is a domain.
\end{cor}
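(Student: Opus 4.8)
The plan is to deduce this from the consistency and regularity of $\mathcal{A}(\Gamma)$ together with a leading-term argument for the $\Z V$-grading. First I would record the inputs: $R_E=\K[u_e\mid e\in E]$ is a polynomial algebra, hence an integral domain, and each $t_v$ is a product of nonzero polynomials (or an empty product), hence a nonzero, in particular regular, element of $R_E$; thus $(R_E,\si^\Gamma,t^\Gamma)$ is a regular TGW datum. By Corollary \ref{cor:consistency_of_AGamma}, $\mathcal{A}(\Gamma)$ is consistent, so Theorem \ref{thm:regularity} gives in particular its condition (iv): if $a\in\mathcal{A}(\Gamma)$ is homogeneous and nonzero, then $bac\neq 0$ for all monic monomials $b,c$. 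I also use that $\mathcal{A}(\Gamma)_0=R_E$, as noted above.

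Next comes the reduction to the homogeneous case. Since $\Z V$ is a free (hence torsion-free) abelian group, it admits a total order $\le$ compatible with the group operation. Given nonzero $a,b\in\mathcal{A}(\Gamma)$, write them as finite sums of homogeneous components $a=\sum_g a_g$, $b=\sum_h b_h$, and let $g_0$ be the largest degree with $a_{g_0}\neq0$ and $h_0$ the largest with $b_{h_0}\neq0$. Translation-invariance of $\le$ forces the degree-$(g_0+h_0)$ component of $ab$ to be precisely $a_{g_0}b_{h_0}$; hence it suffices to prove that $\mathcal{A}(\Gamma)$ has no homogeneous zero divisors, i.e.\ that $a_{g_0}b_{h_0}\neq 0$.

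For the homogeneous case I would flank the product by monic monomials of the opposite degrees in order to land in the degree-zero component. Choose monic monomials $m,m'$ with $\deg m=-g_0$ and $\deg m'=-h_0$ (these exist since $\Z V$ is free on $V$, taking suitable words in the $X_v,Y_v$). Then $ma_{g_0}$ and $b_{h_0}m'$ lie in $\mathcal{A}(\Gamma)_0=R_E$, and by Theorem \ref{thm:regularity}(iv) both are nonzero; since $R_E$ is a domain, $m(a_{g_0}b_{h_0})m'=(ma_{g_0})(b_{h_0}m')\neq 0$, whence $a_{g_0}b_{h_0}\neq 0$, and $\mathcal{A}(\Gamma)$ is a domain.

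There is no real obstacle here: the argument is routine once consistency and regularity are available, the only points needing a word of justification being the existence of a translation-invariant total order on $\Z V$ (automatic, $\Z V$ being torsion-free abelian) and the fact that flanking a nonzero homogeneous element by monic monomials of opposite degree yields a nonzero element of $R_E$ — which is exactly Theorem \ref{thm:regularity}(iv). One could instead localize at the normalizing Ore set $R_E\setminus\{0\}$, observe that each $X_v$ thereby becomes invertible with $Y_v=t_v X_v^{-1}$, and identify the localization with a crossed product of the field $\Frac(R_E)$ by the orderable group $\Z V$, which is a domain; but the leading-term argument above is shorter and avoids verifying the Ore conditions.
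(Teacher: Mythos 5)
Your argument is correct. Note, though, that the paper's own proof is a one-line citation: since $R_E$ is a domain, the claim follows from Corollary \ref{cor:consistency_of_AGamma} together with \cite[Prop.~2.9]{FutHar2012a}, which already asserts that a consistent TGW algebra over a domain is a domain. What you have done is essentially reprove that proposition from scratch inside this setting: regularity of $(R_E,\si^\Ga,t^\Ga)$ plus consistency lets you invoke Theorem \ref{thm:regularity}(iv), and then the translation-invariant total order on the torsion-free group $\Z V$ reduces everything to the homogeneous case, where flanking by monic monomials of opposite degree lands you in $\mathcal{A}(\Gamma)_0=R_E$ and the domain property of $R_E$ finishes the job. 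All the steps check out: monic monomials exist in every degree of $\Z V$, the top-degree component of $ab$ is exactly $a_{g_0}b_{h_0}$ by translation invariance of the order, and the nonvanishing of $ma_{g_0}$ and $b_{h_0}m'$ is precisely Theorem \ref{thm:regularity}(iv) (with the empty product allowed as a monic monomial, or, if one prefers, with an extra harmless factor $X_v$ appended). The trade-off is transparency versus economy: your route is self-contained and makes visible exactly which structural facts (regular gradation, maximality of the degree-zero part being a domain, orderability of $\Z V$) drive the conclusion, whereas the paper buys brevity by outsourcing the same leading-term mechanism to the cited proposition. Your closing remark about localizing at $R_E\setminus\{0\}$ and recognizing a crossed product over an orderable group would also work, but, as you say, it requires verifying Ore-type conditions that the direct argument avoids.
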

\begin{proof}
Since $R_E$ is a domain, this follows from Corollary \ref{cor:consistency_of_AGamma}
and \cite[Prop.~2.9]{FutHar2012a}.
\end{proof}

\begin{rem}
Different multiquivers $\Gamma$ can yield isomorphic algebras $\mathcal{A}(\Gamma)$. However, if $\Gamma_1=(V,E,s_1,t_1)$ and $\Gamma_2=(V,E,s_2,t_2)$ are two multiquivers with the same underlying vertex and edge sets, then $\mathcal{A}(\Gamma_1)$ and $\mathcal{A}(\Gamma_2)$ are isomorphic as $\Z V$-graded $R_E$-rings if and only if $\Gamma_1$ and $\Gamma_2$ are isomorphic.
\end{rem}

\section{Examples}
\label{sec:examples}
We list a number of examples of multiquivers, their incidence matrices and describe the corresponding algebras $\mathcal{A}(\Ga)$.

\begin{enumerate}[{\rm (1)}]
\item (Trivial case) $V=\emptyset$, $E=\emptyset$.
Then $\Z V=\Z E=\{0\}$, $\ga$ is the zero map $\{0\}\to\{0\}$ and $\mathcal{A}(\Gamma)\simeq \K$.

\item (Laurent polynomial algebra) $V$ arbitrary, $E=\emptyset$.
Then $\ga$ is the zero map $\Z V\to \{0\}$.
\[
\Ga:\;
\begin{tikzpicture}[baseline=(etc.base)]
\node[vertex] (1) at (0,0) {};
\node[vertex] (2) at (.5,0) {};
\node[vertex] (3) at (1,0) {};
\node[vertex] (4) at (2,0) {};
\node (etc) at (1.5,0) {$\cdots$};
\end{tikzpicture}
\qquad\qquad
\ga=0:\Z V\to \{0\}
\] 
We have $t_v=1$ for each $v\in V$ and $\mathcal{A}(\Gamma)\simeq \K[X_v,X_v^{-1}\mid v\in V]$, a Laurent polynomial algebra.

\item (Polynomial algebra) $V=\emptyset$, $E$ arbitrary.
Then $\ga$ is the zero map $\{0\}\to\Z E$.
\[
\Ga:\;
\begin{tikzpicture}[baseline=(etc.base)]
\node (1) at (0,0) {};
\node (2) at (.5,0) {};
\node (3) at (1,0) {};
\node (4) at (2,0) {};
\node (leaf1) at (0,1)    {};
\node (leaf2) at (.5,1)  {};
\node (leaf3) at (1,1)  {};
\node (leaf4) at (2,1)  {};
\node (etc) at (1.5,.5) {$\cdots$};
\draw[leaf] (1) -- (leaf1);
\draw[leaf] (2) -- (leaf2);
\draw[leaf] (3) -- (leaf3);
\draw[leaf] (4) -- (leaf4);
\end{tikzpicture}
\qquad\qquad
\ga=0:\{0\} \to \Z E
\]
We have $\mathcal{A}(\Gamma)\simeq R_E=\K[u_e\mid e\in E]$, a polynomial algebra.

\item (Weyl algebra)
Let $I$ be any set, and $V=E=I$, $s(i)=\emptyset$, $t(i)=(i,1)$ for all $i\in I$. Then $\ga$ is the identity map $\Z I\to \Z I$ and $\mathcal{A}(\Ga)$ is isomorphic to the Weyl algebra $A_I(\K)$.
\[
\Ga:\;
\begin{tikzpicture}[baseline=(X.base)]
\node (X) at (0,.4) {};
\node[vertex] (1) at (0,0) {};
\node[vertex] (2) at (.5,0) {};
\node[vertex] (3) at (1,0) {};
\node[vertex] (4) at (2,0) {};
\node (leaf1) at (0,1)    {};
\node (leaf2) at (.5,1)  {};
\node (leaf3) at (1,1)  {};
\node (leaf4) at (2,1)  {};
\node (etc) at (1.5,0) {$\cdots$};
\draw[leaf] (leaf1) -- (1);
\draw[leaf] (leaf2) -- (2);
\draw[leaf] (leaf3) -- (3);
\draw[leaf] (leaf4) -- (4);
\end{tikzpicture}
\qquad\qquad
\ga=
\begin{bmatrix}
1 & 0 & \cdots & 0 \\
0 & 1 & \cdots & 0 \\
\vdots & \vdots & \ddots & \vdots \\
0 & 0 & \cdots & 1
\end{bmatrix}
\]

\item (An example of type $A_2$)
\[
\Ga:\;  
\begin{tikzpicture}[baseline=(X.base)]
\node (X) at (0,-0.1) {};
\node[vertex] (1) at (0,0) [label=below:$v_1$] {};
\node[vertex] (2) at (2cm,0) [label=below:$v_2$] {};
\draw[edge] (1) -- (2);
\end{tikzpicture}
\qquad\qquad
\ga =
\begin{bmatrix}
-1 & 1
\end{bmatrix}
\] 
Then $\mathcal{A}(\Ga)$ is graded isomorphic to the TGW algebra defined by Mazorchuk and Turowska  \cite[Ex.~1.3]{MazTur1999}. Therefore, by \cite[Ex.~6.3]{Hartwig2010}, $\mathcal{A}(\Ga)$ is isomorphic to the algebra with generators $u, X_1,X_2,Y_1,Y_2$ and defining relations
\[
\begin{aligned}
X_1u&=(u+1)X_1, & X_2u&=(u-1)X_2, \\ 
Y_1u&=(u-1)Y_1, & Y_2u&=(u+1)Y_2, \\ 
Y_1X_1&=X_2Y_2=u-1, & X_1Y_2&=Y_2X_1,\\
Y_2X_2&=X_1Y_1=u, &  X_2Y_1&=Y_1X_2.
\end{aligned}
\quad
\begin{aligned}
X_1^2X_2-2X_1X_2X_1+X_2X_1^2&=0, \\ 
X_2^2X_1-2X_2X_1X_2+X_1X_2^2&=0, \\ 
Y_1^2Y_2-2Y_1Y_2Y_1+Y_2Y_1^2&=0, \\ 
Y_2^2Y_1-2Y_2Y_1Y_2+Y_1Y_2^2&=0.
\end{aligned}
\]

\item (An example related to $\mathfrak{gl}_{n+1})$
\[ 
\Ga:\;
\begin{tikzpicture}[baseline=(etc.base)]
\node (start) at (0,0) {};
\node[vertex] (1) at (1,0) [label=below:$1$] {};
\node[vertex] (2) at (2,0) [label=below:$2$] {};
\node[vertex] (3) at (3,0) [label=below:$3$] {};
\node[vertex] (N) at (4,0) [label=below:$n$] {};
\node (end) at (5,0) {};
\node (etc) at (3.5,0) {$\cdots$};
\draw[leaf] (start) -- (1);
\draw[edge] (1) -- (2);
\draw[edge] (2) -- (3);
\draw[leaf] (N) -- (end);
\end{tikzpicture}
\qquad\qquad
\ga=\begin{bmatrix}
1  &    &        & &\\
-1 & 1  &        & &\\
   & -1 & \ddots       &  \\
   &    & \ddots & \ddots & \\
   &    &        & -1 & 1 \\
   &    &        & &  -1
\end{bmatrix}
\] 
Then $\mathcal{A}(\Ga)$ is a graded homomorphic image of $U(\mathfrak{gl_{n+1}})$, under $E_{i,i+1}\mapsto X_i$, $E_{i+1,i}\mapsto Y_i$, $E_{jj}\mapsto u_j$ for $i=1,2,\ldots,n$ and $j=1,2,\ldots,n+1$. This generalizes the $\mathfrak{gl}_3$ case considered by A. Sergeev \cite{Sergeev2001}.
See Section \ref{sec:special-linear-lie-algebras} for more details.

\item (An example related to $\mathfrak{sp}_{2n}$)
\[
\Ga:\;\; 
\begin{tikzpicture}[baseline=(etc.base)]
\node (start) at (0,0) {};
\node[vertex] (1) at (1,0) [label=below:$1$] {};
\node[vertex] (2) at (2,0) [label=below:$2$] {};
\node[vertex] (3) at (3,0) [label=below:$3$] {};
\node[vertex] (N-2) at (4,0) [label=below:$n-2$] {};
\node[vertex] (N-1) at (5,0) [label=below:$n-1$] {};
\node[vertex] (N) at (6,0) [label=below:$n$] {};
\node (etc) at (3.5,0) {$\cdots$};

\draw[leaf] (start) -- (1);
\draw[edge] (1) -- (2);
\draw[edge] (2) -- (3);
\draw[edge] (N-2) -- (N-1);
\draw[edge] (N-1) -- node[auto,very near start] {$1$} node[auto,very near end] {$2$} (N);
\end{tikzpicture}
\qquad\qquad 
\ga=\begin{bmatrix}
1  &    &        & && \\
-1 & 1  &        & &&\\
   & -1 & \ddots       &  &&\\
   &    & \ddots & \ddots &&\\
   &    &        & -1 & 1 & \\
   &    &        &    &-1 & 2
\end{bmatrix}
\] 
Then $\mathcal{A}(\Ga)$ is a graded homomorphic image of $U(\mathfrak{sp}_{2n})$. See Section \ref{sec:symplectic-lie-algebras} for more details.

\item (Algebras associated to symmetric GCMs)
Let $\Ga=(V,E,s,t)$ be a symmetric simple quiver.
Then $\mathcal{A}(\Ga)$ is graded isomorphic to a quotient of the algebra $\mathcal{T}(C)$ from \cite{Hartwig2010}, where $C$ is the (symmetric) generalized Cartan matrix of the Dynkin diagram associated to $\Ga$. See Section \ref{sec:symmetric} for details.

\end{enumerate}

\section{Representation by differential operators}
\label{sec:diffops}

\subsection{The homomorphism \texorpdfstring{$\varphi_\Gamma$}{phiGamma}}
\label{sec:homomorphism-varphi}

Let $\Gamma=(V,E,s,t)$ be a multiquiver. In this section we show how the algebras $\mathcal{A}(\Gamma)$ can be naturally represented by differential operators.
Let $A_E(\K)$ be the Weyl algebra with index set $E$.
For $k=\sum_{e\in E} k_e e\in \Z E$, define $z^k\in A_E(\K)$ by
\begin{equation}\label{eq:zk_notation}
z^k=\prod_{e\in E}z_e^{(k_e)},\qquad \text{where }z_e^{(p)}=\begin{cases}x_e^p& p\ge 0,\\ y_e^{-p}& p<0.\end{cases}
\end{equation}

\begin{thm}\label{thm:map_to_Weyl_algebra}
Let $\Gamma=(V,E,s,t)$ be a multiquiver and $\gamma\in\Hom_\Z(\Z V, \Z E)$ be the corresponding group homomorphism \eqref{eq:gamma-definition}.
There exists a map of $R_E$-rings with involution
\begin{subequations}
\label{eq:varphi_def}
\begin{gather}
\varphi_\Gamma:\mathcal{A}(\Gamma)\longrightarrow A_E(\K),\\
\intertext{uniquely determined by}
\label{eq:varphi_def_1}
\varphi(X_v)=z^{\gamma(v)},\quad
\varphi(Y_v)=z^{-\gamma(v)},\quad \forall v\in V \\ 
\label{eq:varphi_def_2}
\varphi(u_e)=y_ex_e,\quad\forall e\in E.
\end{gather}
\end{subequations}
\end{thm}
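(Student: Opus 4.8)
The plan is to verify that the assignments \eqref{eq:varphi_def_1} and \eqref{eq:varphi_def_2} respect all the defining relations of $\mathcal{A}(\Gamma)$, and that they are compatible with the $R_E$-ring and involution structures; then uniqueness is automatic since the $X_v$, $Y_v$, $u_e$ generate. Since $\mathcal{A}(\Gamma)$ is by definition a quotient of the TGW construction $\TGWC{\mu}{R_E}{\si^\Gamma}{t^\Gamma}$, and the construction is a quotient of the free $R_E$-ring on $\{X_v,Y_v\}$, it suffices to define a map on that free $R_E$-ring — i.e. fix where $u_e$, $X_v$, $Y_v$ go — and check it kills the generators \eqref{eq:tgwarels} of the ideal; the induced map then automatically factors through $\TGWI{\mu}{R_E}{\si^\Gamma}{t^\Gamma}$ because the image $A_E(\K)$ has zero intersection of its degree-zero part $R_E$ with... more precisely, one argues that any graded ideal of the construction lying in the kernel of a map to $A_E(\K)$ which is injective on $R_E$ must meet the degree-zero component trivially, hence lies in $\TGWI{\mu}{R_E}{\si^\Gamma}{t^\Gamma}$; so the map descends.

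The core of the argument is a set of elementary computations in $A_E(\K)$. First I would record the basic commutation rule: for each $e\in E$, $x_e(y_ex_e) = (y_ex_e + 1)x_e$ and $y_e(y_ex_e) = (y_ex_e - 1)y_e$, and more generally $z^k \cdot (y_ex_e) = (y_ex_e + k_e) \cdot z^k$ for $k=\sum k_e e\in\Z E$. This immediately gives relation \eqref{eq:tgwarels1}: applying $\varphi_\Gamma$ to $X_v r - \sigma_v(r) X_v$ for $r=u_e$, the left side becomes $z^{\gamma(v)}(y_ex_e) - (y_ex_e - \gamma_{ev})z^{\gamma(v)} = 0$ by the rule with $k=\gamma(v)$, since $(z^{\gamma(v)})_e$-component is $\gamma_{ev}$; the case of $Y_v$ is symmetric. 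Next, relation \eqref{eq:tgwarels3}, $X_iY_j - \mu_{ij}Y_jX_i = X_iY_j - Y_jX_i$ for $i\neq j$: one checks $z^{\gamma(i)} z^{-\gamma(j)} = z^{-\gamma(j)} z^{\gamma(i)}$ in $A_E(\K)$. This is where property \eqref{eq:condition_M} of the incidence matrix enters — for each fixed edge $e$, at most one of $\gamma_{ei}$, $\gamma_{ej}$ is positive and at most one is negative, so $x_e$ and $y_e$ powers coming from the $i$-factor and $j$-factor can be reordered up to sign of exponents without picking up a net commutator, because the only obstruction $[x_e^p, y_e^q]$ would require both a genuine $x$-power and a genuine $y$-power at the same $e$ from the two different vertices, and one checks the signs force $z^{\gamma(i)}z^{-\gamma(j)}$ and $z^{-\gamma(j)}z^{\gamma(i)}$ to both equal $z^{\gamma(i)-\gamma(j)}$ when $i\ne j$ (here the no-loops and at-most-one-of-each-sign conditions are exactly what is needed).

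Finally, relation \eqref{eq:tgwarels2}: I must verify $\varphi_\Gamma(Y_vX_v) = \varphi_\Gamma(t_v)$ and $\varphi_\Gamma(X_vY_v) = \varphi_\Gamma(\sigma_v(t_v))$. Compute $z^{-\gamma(v)}z^{\gamma(v)} = \prod_{e} z_e^{(-\gamma_{ev})} z_e^{(\gamma_{ev})}$. For $\gamma_{ev}=n>0$ this factor is $y_e^n x_e^n = (y_ex_e)(y_ex_e+1)\cdots(y_ex_e+n-1)$, which is precisely $\varphi_\Gamma(u_{ev})$ when $t(e)=(v,n)$ by the definition of $u_{ev}$; for $\gamma_{ev} = -n < 0$ this factor is $x_e^n y_e^n = (y_ex_e - 1)(y_ex_e - 2)\cdots(y_ex_e - n)$... wait, one checks $x_e^n y_e^n = (y_ex_e+1)(y_ex_e+2)\cdots(y_ex_e+n)$ actually; re-indexing against $u_{ev} = (u_e-1)\cdots(u_e - n)$ and noting that $\varphi_\Gamma$ followed by the source-vertex bookkeeping matches after applying $\sigma_v$ or not — one carefully tracks which of $Y_vX_v = t_v$ versus $X_vY_v = \sigma_v(t_v)$ is being checked, and the shift by $\gamma_{ev}$ in $\sigma_v$ accounts exactly for the difference between $y_e^nx_e^n$ and $x_e^n y_e^n$. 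So this reduces to the identities $y_e^n x_e^n = \prod_{j=0}^{n-1}(y_ex_e+j)$ and $x_e^n y_e^n = \prod_{j=1}^{n}(y_ex_e+j)$ in the one-variable Weyl algebra, which are standard. The main obstacle — and the only place requiring genuine care rather than routine symbol-pushing — is the bookkeeping in relation \eqref{eq:tgwarels2}: matching the two cases $t(e)=(v,n)$ versus $s(e)=(v,n)$ against the two relations $Y_vX_v=t_v$ and $X_vY_v=\sigma_v(t_v)$, and confirming that the sign conventions in the definitions of $u_{ev}$, $\gamma$, $\sigma_v$ and $z^k$ all line up; once the dictionary is set this is a finite check edge-by-edge, each edge contributing independently because distinct edges give commuting copies of $A_1(\K)$ inside $A_E(\K)$. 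Compatibility with $\ast$ (namely $\varphi_\Gamma(X_v)^\ast = \varphi_\Gamma(Y_v)$, i.e. $(z^{\gamma(v)})^\ast = z^{-\gamma(v)}$ under $x_e\leftrightarrow y_e$) and with $\rho$ ($\varphi_\Gamma(u_e)=y_ex_e$ is $\ast$-fixed) is then immediate.
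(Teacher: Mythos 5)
Your verification of the TGW relations on the free $R_E$-ring is the same reduction the paper makes (and condition \eqref{eq:condition_M} enters exactly where you say it does), but the step where you pass from the construction $\TGWC{\mu}{R_E}{\si^\Gamma}{t^\Gamma}$ to the algebra $\mathcal{A}(\Gamma)=\TGWC{\mu}{R_E}{\si^\Gamma}{t^\Gamma}/\TGWI{\mu}{R_E}{\si^\Gamma}{t^\Gamma}$ is argued in the wrong direction, and this is a genuine gap. You claim the map descends because any graded ideal contained in $\ker\varphi$ meets the degree-zero component trivially and hence lies in $\TGWI{\mu}{R_E}{\si^\Gamma}{t^\Gamma}$; but that is the containment ``(graded part of) $\ker\varphi\subseteq\mathcal{I}$'', which is irrelevant for descent. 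What you need is $\TGWI{\mu}{R_E}{\si^\Gamma}{t^\Gamma}\subseteq\ker\varphi$, i.e.\ that every element of the maximal graded ideal trivially intersecting degree zero is killed by $\varphi$, and nothing in your argument addresses this. The paper closes this gap with Theorem \ref{thm:nondeg}: $\mathcal{I}$ is the radical of the gradation form, so a homogeneous $a\in\mathcal{I}$ satisfies $aa^\ast=f(a,a^\ast)=0$; applying $\varphi$ (which intertwines the involutions) gives $\varphi(a)\varphi(a)^\ast=0$, hence $\varphi(a)=0$ because the Weyl algebra is a domain. Some argument of this kind (or another explicit description of $\mathcal{I}$) is indispensable; without it you only have a map out of the construction, not out of $\mathcal{A}(\Gamma)$.

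A second, more minor issue: your Weyl-algebra identities mix the two sign conventions. With the paper's relation $[y_e,x_e]=1$ one has $x_eu_e=(u_e-1)x_e$, $y_eu_e=(u_e+1)y_e$, hence $z^{k}u_e=(u_e-k_e)z^{k}$ (not $u_e+k_e$), and correspondingly $y_e^nx_e^n=u_e(u_e+1)\cdots(u_e+n-1)$ while $x_e^ny_e^n=(u_e-1)(u_e-2)\cdots(u_e-n)$ (your first guess was the right one; the pair of identities you finally state is not consistent under either convention). With these corrected signs the edge-by-edge matching with $u_{ev}$ and $\si_v(t_v)$ works out exactly as you outline, so this part is a slip rather than a structural problem; the missing ingredient is the descent through $\mathcal{I}$ described above.
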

\begin{proof}
Put $(R,\si,t)=(R_E,\si^\Ga,t^\Ga)$. Let $\mathcal{X}=\cup_{v\in V}\{X_v,Y_v\}$ and define $\varphi:\mathcal{X} \to A_E(\K)$ by \eqref{eq:varphi_def_1}-\eqref{eq:varphi_def_2}.
Extend $\varphi$ uniquely to a homomorphism of $R$-rings $\varphi:F_R(\mathcal{X})\to A_E(\K)$, where $F_R(\mathcal{X})$ is the free $R$-ring on the set $\mathcal{X}$.
Using \eqref{eq:condition_M}, one verifies that the ideal in $F_R(\mathcal{X})$ generated by the elements \eqref{eq:tgwarels} is contained in the kernel of $\varphi$. Thus we get an induced homomorphism of $R$-rings
$\varphi:\TGWC{\mu}{R}{\si}{t}\to A_E(\K)$. Suppose $a\in \TGWC{\mu}{R}{\si}{t}$ is a homogeneous element which lies in the radical of the gradation form $f$ on $\TGWC{\mu}{R}{\si}{t}$. Then in particular $0=f(a,a^\ast)=a\cdot a^\ast$. Hence $\varphi(a)\cdot\varphi(a)^\ast=0$ which implies that $\varphi(a)=0$, since $A_n$ is a domain.
Thus, by Theorem \ref{thm:nondeg}, $\varphi$ induces a homomorphism $\varphi_\Gamma:\TGWA{\mu}{R}{\si}{t}\to A_E(\K)$ which is the required map of $R$-rings with involution. 
\end{proof}

\begin{rem}
It is well-known that $R_E\to  A_E(\K)$, $u_e\mapsto y_ex_e$, is injective. Using this fact and that $\varphi_\Ga$ is a map of $R_E$-rings, it follows that the canonical map $\rho:R_E\to \mathcal{A}(\Gamma)$ is also injective. This gives another independent proof that $\mathcal{A}(\Gamma)$ is consistent and that the TGW datum $(R_E,\si^\Ga,t^\Ga)$ satisfies the consistency relations \eqref{eq:consistency_rels}. 
\end{rem}

\begin{example}
For the first four examples of Section \ref{sec:examples} we have:
\begin{enumerate}[{\rm (1)}]
\item $\varphi_\Ga$ is the identity map $\K\to \K$. (Since $E=\emptyset$, the Weyl algebra $A_E(\K)$ is just $\K$.)
\item $\varphi_\Ga$ is the evaluation homomorphism $\K[X_v,X_v^{-1}\mid v\in V]\to \K$ given by $\varphi_\Ga(X_v)=1$ for each $v\in V$.
\item $\varphi_\Ga$ is the embedding of $R_E$ into the Weyl algebra $A_E(\K)$ given by $\varphi_\Ga(u_e)=u_e=y_ex_e$ for each $e\in E$.
\item $\varphi_\Ga:\mathcal{A}(\Ga)\simeq A_I(\K)$ is an isomorphism mapping $X_i$ (respectively $Y_i$) to $x_i$ (respectively $y_i$) for all $i\in I$.
\end{enumerate}
\end{example}

\subsection{Faithfulness}
\label{sec:faithfulness}

In this section we give a precise condition, in terms of the graph $\Gamma$, for when the representation $\varphi_\Gamma:\mathcal{A}(\Gamma)\to A_E(\K)$ is faithful.
We need several definitions.
\begin{enumerate}[{\rm (i)}]
\item A \emph{subgraph} $\Gamma'=(V',E',s',t')$ of a  multiquiver $\Gamma=(V,E,s,t)$ is a multiquiver such that
$V'\subseteq V$, $E'\subseteq E$ and 
\[s'(e)=\begin{cases} s(e),&
\text{if $s(e)=(v,n)$ and $v\in V'$}\\
\emptyset,&\text{otherwise}
\end{cases}\qquad
t'(e)=\begin{cases} t(e),&
\text{if $t(e)=(v,n)$ and $v\in V'$}\\
\emptyset,&\text{otherwise}
\end{cases}
 \]
for all $e\in E'$.

\item A multiquiver $\Gamma$ is a \emph{directed cycle} if $V$ and $E$ are finite sets of the same cardinality and there are enumerations $V=\{v_1,v_2,\ldots,v_n\}$ and $E=\{e_1,e_2,\ldots,e_n\}$ such that for all $i\in\iv{1}{n}$ we have $s(e_i)=(v_i,n_i)$ and $t(e_i)=(v_{i+1},m_i)$ for some positive integers $n_i,m_i$ and we put $v_{n+1}=v_1$.

\item Two multiquivers $\Ga_1$ and $\Ga_2$ are called \emph{sign-equivalent} if they become isomorphic after changing the direction of some edges. In other words, the incidence matrices $\ga_1$ and $\ga_2$ will coincide (up to permutation of rows and columns) after multiplying some rows in $\ga_1$ or $\ga_2$ by $-1$.

\item A multiquiver $\Gamma=(V,E,s,t)$ is a \emph{not necessarily directed (NND) cycle} if it is sign-equivalent to a directed cycle.

\item If $\Ga'$ is a subgraph of a multiquiver $\Ga$, and $\Ga'$ is a (directed or NND) cycle, we simply say that $\Ga'$ is a (directed or NND) cycle \emph{in $\Ga$}.

\item A directed cycle $\Gamma$ is \emph{balanced} if the product of all outgoing multiplicities equals the product of all incoming multiplicities. This is equivalent to $\det(\ga)=0$, where $\ga$ is the incidence matrix of $\Ga$.

\item An NND cycle $\Ga$ is \emph{balanced} if it is sign-equivalent to a balanced directed cycle.

\item A connected component of a multiquiver $\Gamma$ is said to be \emph{in equilibrium} if it has finitely many vertices, no leaves and every NND cycle in $\Gamma$ is balanced.
\end{enumerate}

\begin{example}\label{ex:graph1}
This multiquiver has a disconnected vertex $v_1$, a disconnected leaf $e_1$, a connected leaf $e_5$, and one unbalanced cycle. Only the connected component consisting of $v_1$ is in equilibrium.
\[ 
\Ga:\;\;\begin{tikzpicture}[baseline=(60:1)]
\node[vertex] (A) at (-1,0) [label=below:$v_1$] {};
\node (1) at (-1.5,.5) {};
\node (2) at (-.5,.5) {};
\node (E) at (3,0) {};
\node[vertex] (B) at (0,0) [label=below:$v_2$] {};
\node[vertex] (C) at (60:2) [label=above:$v_3$] {};
\node[vertex] (D) at (2,0) [label=below:$v_4$] {};

\draw[disconnected leaf] (1) -- node[above] {$e_1$} (2);
\draw[leaf] (E) -- node[above] {$e_5$} (D);
\draw[edge] (B) -- node[above left] {$e_2$} node[auto,very near start] {$2$} node[auto,very near end] {$3$} (C);
\draw[xshift=10pt] (B) -- (C); 
\draw[edge] (C) -- node[above right] {$e_3$} node[auto,very near start] {$1$} node[auto,very near end] {$2$} (D);
\draw[edge] (D) -- node[above] {$e_4$} node[auto,very near start] {$1$} node[auto,very near end] {$1$} (B);
\end{tikzpicture}
\qquad
\qquad
\ga=\begin{bmatrix}
0 & 0 & 0 & 0 \\ 
0 &-2 & 3 & 0 \\
0 & 0 &-1 & 2 \\
0 & 1 & 0 &-1 \\
0 & 0 & 0 & 1
\end{bmatrix}
\] 
\end{example}

\begin{example}\label{ex:graph2}
This is a connected multiquiver in equilibrium since it is a balanced NND cycle.
\[ 
\Ga:\;\;\begin{tikzpicture}[baseline=(60:1)]
\node[vertex] (A) at (0,0) [label=below left:$v_1$] {};
\node[vertex] (B) at (60:2) [label=above:$v_2$] {};
\node[vertex] (C) at (2,0) [label=below right:$v_3$] {};

\draw[edge] (A) -- node[auto,very near start] {$2$} node[auto,very near end] {$1$} (B);
\draw[edge] (C) -- node[above right,very near start] {$4$} node[above right,very near end] {$1$} (B);
\draw[edge] (C) -- node[auto,very near start] {$2$} node[auto,very near end] {$1$} (A);
\end{tikzpicture}
\qquad
\qquad
\ga=\begin{bmatrix}
-2& 1 & 0  \\ 
0 & 1 &-4  \\
1 & 0 &-2 
\end{bmatrix}
\] 
\end{example}

\begin{example}\label{ex:graph3}
This connected multiquiver without leaves is not in equilibrium since it has a non-balanced cycle.
\[ 
\Ga:\;\;\begin{tikzpicture}[baseline=(A.base)]
\node[vertex] (A) at (0,0) [label=left:$v_1$] {};
\node[vertex] (B) at (2,0) [label=right:$v_2$] {};

\draw[edge] (A) to[out=45,in=135] node[auto,near start] {$2$} node[auto,near end] {$1$} (B);
\draw[edge] (B) to[out=-135,in=-45] node[auto,near start] {$3$} node[auto,near end] {$4$} (A);
\end{tikzpicture}
\qquad
\qquad
\ga=\begin{bmatrix}
-2& 1   \\ 
4 &-3   \\
\end{bmatrix}
\] 
\end{example}

The next result describes the centralizer of $R_E$ in $\mathcal{A}(\Gamma)$.
\begin{lem}\label{lem:centralizer}
Let $\Ga=(V,E,s,t)$ be a multiquiver, $\ga:\Z V\to \Z E$ its incidence matrix, $A=\mathcal{A}(\Ga)$ and $C_A(R_E)=\{a\in A\mid ar=ra\,\forall r\in R_E\}$ be the centralizer of $R_E$ in $A$. Then 
\[C_A(R_E)=\bigoplus_{d\in\ker \ga} A_d\]
In particular, $\ga$ is injective if and only if $R_E$ is maximal commutative in $A$.
\end{lem}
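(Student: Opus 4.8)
The plan is to exploit the $\Z V$-gradation of $A = \mathcal{A}(\Ga)$ together with the explicit commutation relations $X_v r = \si_v(r) X_v$ and the formula $\si_v(u_e) = u_e - \ga_{ev}$. First I would reduce to homogeneous elements: since $R_E = A_0$ sits in degree zero and the gradation is by the abelian group $\Z V$, the centralizer $C_A(R_E)$ is a graded subspace, so it suffices to determine, for each $d \in \Z V$, which homogeneous elements $a \in A_d$ commute with all of $R_E$. By Lemma \ref{lem:monomials} and the fact that $(R_E,\si^\Ga,t^\Ga)$ is consistent and regular, $A_d$ is a free left $R_E$-module of rank one (Theorem \ref{thm:regularity}); fix a nonzero monic monomial $m_d \in A_d$ as generator, so $a = r m_d$ for a unique $r \in R_E$.

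Next I would compute, for $s \in R_E$, the commutator $as - sa = r m_d s - s r m_d = r\big(\si_d(s) - s\big) m_d$, using the relation $m_d s = \si_d(s) m_d$ which follows from iterating \eqref{eq:tgwarels1} (the automorphism attached to degree $d$ is $\si_d = \prod_v \si_v^{d_v}$). Since $m_d$ generates a free left $R_E$-module of rank one, $as = sa$ for all $s \in R_E$ if and only if $r\big(\si_d(s) - s\big) = 0$ in $R_E$ for all $s$. As $R_E$ is a domain (a polynomial algebra) and $r \neq 0$ unless $a = 0$, this forces $\si_d(s) = s$ for all $s \in R_E$, i.e. $\si_d = \mathrm{id}_{R_E}$. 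Conversely, if $\si_d = \mathrm{id}$ then every element of $A_d$ commutes with $R_E$. So $C_A(R_E) = \bigoplus_{d : \si_d = \mathrm{id}} A_d$, and it remains to identify $\{d \in \Z V : \si_d = \mathrm{id}_{R_E}\}$ with $\ker \ga$.

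For that last identification I would use \eqref{eq:sigma-definition}: for $d = \sum_v d_v v$ one computes $\si_d(u_e) = u_e - \sum_v d_v \ga_{ev} = u_e - (\ga d)_e$, where $\ga d \in \Z E$ is the image of $d$ under the incidence matrix. Since the $u_e$ generate $R_E$ as a $\K$-algebra, $\si_d = \mathrm{id}$ if and only if $(\ga d)_e = 0$ for all $e \in E$, i.e. $d \in \ker \ga$. This proves $C_A(R_E) = \bigoplus_{d \in \ker \ga} A_d$. The final sentence is then immediate: $R_E$ is maximal commutative exactly when $C_A(R_E) = R_E = A_0$, which by the displayed formula happens iff $\ker \ga = \{0\}$, i.e. iff $\ga$ is injective.

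I do not anticipate a serious obstacle here; the only point requiring a little care is the freeness/regularity input used to pass from "$r(\si_d(s)-s) = 0$ in $A$" to "$r(\si_d(s)-s) = 0$ in $R_E$", which is exactly what Corollary \ref{cor:consistency_of_AGamma} (consistency), the regularity of the datum $(R_E,\si^\Ga,t^\Ga)$, and Theorem \ref{thm:regularity} provide — and one should note $R_E$ being a domain is what kills the factor once we know $r \neq 0$.
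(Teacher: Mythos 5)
Your overall strategy (reduce to homogeneous elements, show that a nonzero element of $A_d$ can centralize $R_E$ only if $\si_d=\mathrm{id}$, then identify $\{d\in\Z V:\si_d=\mathrm{id}\}$ with $\ker\ga$) is sound, and it amounts to a direct reproof of the result the paper simply quotes: the published proof only verifies that the kernel of $\hat\si:\Z V\to\Aut_\K(R_E)$ equals $\ker\ga$ and then invokes \cite[Thm.~5.1]{HarOin2013}. However, one step of your argument is wrong as stated: Theorem \ref{thm:regularity} does \emph{not} say that $A_d$ is a free cyclic left $R_E$-module generated by a monic monomial; it says that each individual monic monomial generates a free rank-one submodule. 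By Lemma \ref{lem:monomials}, $A_d$ is spanned over $R_E$ by \emph{all} reduced monomials of degree $d$, and it genuinely fails to be cyclic in general: the paper treats cyclicity of the homogeneous components as an extra hypothesis in Proposition \ref{prp:quotients}, and in the worked example following Theorem \ref{thm:local-surjectivity} (the triangle, where $\ga$ is injective and $\varphi_\Ga$ faithful) the component of degree $(1,1,1)$ is isomorphic as a left $R_E$-module to a non-principal ideal of $R_E$, hence not cyclic. So you cannot write $a=rm_d$ with $r\in R_E$ unique, and the computation built on that decomposition does not go through as written.

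The gap is easy to close without any cyclicity. Writing $a\in A_d$ as an $R_E$-linear combination of reduced monomials of degree $d$ (Lemma \ref{lem:monomials}) and iterating \eqref{eq:tgwarels1}, one gets $as=\si_d(s)a$ for every $s\in R_E$ and every $a\in A_d$; this also gives the easy inclusion $\bigoplus_{d\in\ker\ga}A_d\subseteq C_A(R_E)$. Hence for homogeneous $a\in C_A(R_E)\cap A_d$ one has $\bigl(\si_d(s)-s\bigr)a=0$ for all $s\in R_E$, and since $\mathcal{A}(\Ga)$ is a domain (Corollary \ref{cor:domain}) and $R_E$ embeds in it (Corollary \ref{cor:consistency_of_AGamma}), $a\neq 0$ forces $\si_d=\mathrm{id}$. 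Your computation $\si_d(u_e)=u_e-\ga(d)_e$, the resulting identification with $\ker\ga$, and the maximal-commutativity conclusion (which also needs $A_d\neq 0$ for $d\neq 0$, supplied by Theorem \ref{thm:regularity}) are correct and agree with the paper.
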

\begin{proof}
Let $K$ be the kernel of the group homomorphism $\hat\sigma:\Z V\to \Aut_\K(R_E)$ determined by $\hat\sigma(v)=\sigma_v$ for all $v\in V$. By definition \eqref{eq:sigma-definition} of $\si_v$,  we have
\[\hat\sigma(d)(u_e)=u_e-\ga(d)_e\]
where $\ga(d)=\sum_{e\in E}\ga(d)_e e$.
Thus $K=\ker \ga$. By \cite[Thm.~5.1]{HarOin2013} the claim follows. 
\end{proof}

The following theorem describes the rank of $\ker \ga$ as a free abelian group, in terms of the graph $\Ga$. In view of Section \ref{sec:symmetric}, it is parallel to \cite[Thm.~5.8]{HarOin2013}, since if $\Ga$ is a symmetric simple quiver, then every connected component is in equilibrium.
\begin{thm}\label{thm:rank-ker-gamma}
Let $\Gamma$ be a multiquiver and $\gamma$ be its incidence matrix. Then the rank of the kernel of $\gamma$ is equal to the number of connected components of $\Gamma$ in equilibrium.
\end{thm}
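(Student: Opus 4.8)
The plan is to reduce to a single connected component and then read off $\ker\gamma$ directly; since the rank of $\ker\gamma$ equals $\dim_{\mathbb{Q}}\bigl(\ker\gamma\otimes_{\mathbb{Z}}\mathbb{Q}\bigr)$, I would work over $\mathbb{Q}$. First, writing $\Gamma=\bigsqcup_i\Gamma_i$ as the disjoint union of its connected components (a loose edge forming a component with one edge and no vertices), the fact that every edge incident to a vertex of $\Gamma_i$ already lies in $\Gamma_i$ makes the incidence matrix block diagonal, $\gamma=\bigoplus_i\gamma_i$, so $\ker\gamma=\bigoplus_i\ker\gamma_i$, and it suffices to prove that $\ker\gamma_i$ has rank $1$ if $\Gamma_i$ is in equilibrium and rank $0$ otherwise. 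Unwinding property \eqref{eq:condition_M}, a vector $d=\sum_v d_v v$ lies in $\ker\gamma$ precisely when $d_v=0$ for every vertex $v$ incident to a leaf, and $b\,d_v=a\,d_u$ for every edge $e$ with $s(e)=(u,a)$ and $t(e)=(v,b)$.

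The one auxiliary fact I would record is: in a \emph{connected} multiquiver, if $0\neq d\in\ker_{\mathbb{Q}}\gamma$ then $d_v\neq 0$ for every vertex $v$; this is immediate since $a,b>0$ in the relation $b\,d_v=a\,d_u$, so non-vanishing propagates along every edge and hence along every path. This settles two of the ways a component can fail to be in equilibrium. If $\Gamma_i$ has infinitely many vertices, a nonzero $d\in\ker_{\mathbb{Q}}\gamma_i$ is finitely supported and so cannot be everywhere nonzero, forcing $\ker\gamma_i=0$. If $\Gamma_i$ has a leaf at a vertex $v_0$, then $d_{v_0}=0$ propagates to $d\equiv 0$, again giving $\ker\gamma_i=0$. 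In both cases $\Gamma_i$ is not in equilibrium and $\ker\gamma_i$ has rank $0$, as needed.

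It then remains to treat $\Gamma_i$ finite, connected and leafless. Fixing a spanning tree $T$ rooted at $v_0$, each tree edge (whose two endpoints both exist, since $\Gamma_i$ is leafless) expresses $d_v$ as a fixed positive rational multiple of $d_u$, where $u$ is the parent of $v$; hence $d\mapsto d_{v_0}$ is injective on $\ker_{\mathbb{Q}}\gamma_i$, so $\ker\gamma_i$ has rank at most $1$, with rank exactly $1$ iff a nonzero $\mathbb{Q}$-solution exists. If every NND cycle of $\Gamma_i$ is balanced, I would produce such a solution by letting $\phi(v)$ be the product of the tree ratios along the path $v_0\to v$: it satisfies the tree relations by construction, and for each non-tree edge $e$ the fundamental cycle (the tree path joining its endpoints, together with $e$) is a simple cycle, hence, after consistently orienting its edges, an NND cycle, whose balancedness says precisely that $\phi$ also satisfies the relation attached to $e$; clearing denominators then gives a nonzero element of $\ker\gamma_i$, so its rank is $1$ and $\Gamma_i$ is in equilibrium. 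Conversely, if some NND cycle of $\Gamma_i$ is unbalanced, I would use that sign-equivalence only negates rows of the incidence matrix and hence leaves $\ker\gamma$ unchanged, so one may assume the cycle is a genuine directed cycle on vertices $w_1,\dots,w_n$ with $s(e_j)=(w_j,n_j)$, $t(e_j)=(w_{j+1},m_j)$, $w_{n+1}=w_1$, and $\prod_j m_j\neq\prod_j n_j$. Then any $0\neq d\in\ker_{\mathbb{Q}}\gamma_i$ has all $d_{w_j}\neq 0$ by the auxiliary fact, and multiplying the $n$ relations $m_j d_{w_{j+1}}=n_j d_{w_j}$ and cancelling the nonzero product $\prod_j d_{w_j}$ forces $\prod_j m_j=\prod_j n_j$, a contradiction; hence $\ker\gamma_i=0$ and $\Gamma_i$ is not in equilibrium. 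Combining these cases with the block decomposition finishes the proof.

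The step I expect to need the most care is the orientation bookkeeping in the leafless case: identifying the fundamental cycle of a non-tree edge with an NND cycle of $\Gamma$, unwinding the definition of ``balanced'' in terms of the product of outgoing versus incoming multiplicities for a consistent orientation, and verifying that passing to a sign-equivalent directed cycle does not change $\ker\gamma$. Everything else is elementary: the finite-support-plus-connectivity propagation and one-dimensional linear algebra over $\mathbb{Q}$.
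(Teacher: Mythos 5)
Your proof is correct and follows essentially the same route as the paper: decompose $\ker\gamma$ block-wise over connected components, show each component contributes rank at most one by propagating coefficients along edges, kill the kernel in the infinite-vertex, leafed, and unbalanced-cycle cases, and in the equilibrium case build a nonzero kernel element from products of multiplicity ratios based at a fixed vertex. The only differences are organizational: you define the ratios via a spanning tree and verify the remaining relations on fundamental cycles, where the paper uses arbitrary paths and appeals to balancedness for path-independence, and you rule out unbalanced cycles by multiplying the edge relations around the cycle instead of invoking $\det(\gamma')\neq 0$ for the cycle's incidence matrix; both variants are sound.
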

\begin{proof}
Put $K=\ker \ga$. Let $\mathcal{C}(\Ga)$ be the set of connected components of $\Ga$, and $\mathcal{C}^{\mathrm{eq}}(\Ga)\subseteq\mathcal{C}(\Ga)$ be the subset of components in equilibrium. We have
\[\Z V=\bigoplus_{C\in\mathcal{C}(\Ga)}\Z V_C\]
where $V_C$ is the vertex set of $C$,
and $\ga$ is block diagonal with respect to this decomposition (since no two vertices belonging to different components are adjacent). Therefore
\[K=\bigoplus_{C\in\mathcal{C}(\Ga)} K_C\]
where $K_C=K\cap \Z V_C$.
Furthermore, the rank of each subgroup $K_C$ is at most one.
Indeed, assume $K_C\neq\{0\}$ and let $a\in K_C\setminus\{0\}$. Write $a=\sum_{v\in V_C} \lambda_v v$, where $\lambda_v\in \Z$.
Let $e$ be any proper edge in $C$, connecting two vertices $v,w\in V_C$. Since $a\in K$, in particular the coefficient of $e$ in $\ga(a)$ is zero. That is, $0=\ga(a)_e=\la_v \ga_{ev} + \la_w \ga_{ew}$.
Since $C$ is connected, it follows that all coefficients $\la_v$ are nonzero and uniquely determined by any single one of them. Thus the rank of $K_C$ equals one.

Suppose $C\in\mathcal{C}(\Ga)$ is not in equilibrium.
Let $a=\sum_{v\in V_C}\la_v v\in K_C$. 
If $V_C$ is infinite, then $\la_v=0$ for some $v\in V_C$, hence $\la_w=0$ for all $w\in V_C$ since $C$ is connected, hence $a=0$.
If $C$ has a connected leaf $e$, let $v\in V_C$ be the vertex incident to $e$. Then $0=\ga(a)_e=\la_v\ga_{ev}$ which implies $\la_v=0$. As above that implies that $a=0$.
If $C$ has a non-balanced cycle $\Ga'=(V',E',s',t')$, let $\ga':\Z V'\to \Z E'$ be the incidence matrix of $\Ga'$. Since $\Ga'$ is non-balanced, $\det(\ga')\neq 0$. Let $a'=\sum_{v\in V'} \la_v v$. Since $\ga(a)=0$ we get $\ga'(a')=0$ which implies $a'=0$. So $\la_v=0$ for all $v\in V'$. Since $C$ is connected, $\la_v=0$ for all $v\in V$. This shows that $K_C=\{0\}$ for $C$ not in equilibrium.

Suppose $C\in\mathcal{C}(\Ga)$ is a component that is in equilibrium. We have to show that $K_C$ is nonzero.
Fix any vertex $v_0$ in $C$ and define $\la_{v_0}=1$. For any other vertex $w$ in $C$, pick a path (linear subgraph) $p$ from $v_0$ to $w$. Let $\vec{p}$ be the multiquiver which is sign-equivalent $p$ but every edge is directed forward (from $v_0$ to $w$).
Define $\la_w=O_{\vec{p}}/I_{\vec{p}}$
where $O_{\vec{p}}$ (respectively $I_{\vec{p}}$) is the product of all the outgoing
(respectively incoming) multiplicities in edges in $\vec{p}$. This is independent of the choice of path since all cycles are balanced. Put
$a'=\sum_{v\in V_C} \la_v v \in \Q V_C$. Let $k$ be an integer such that $a=ka'\in \Z V_C$. 
We claim that $a\in K_C\setminus\{0\}$. Clearly $a\neq 0$.
Let $e$ be any edge in $C$, between $w_1$ and $w_2$. Without loss of generality we may assume there is a path from $v_0$ to $w_2$ that goes through $w_1$. Then $\la_{w_2}=\la_{w_1}\cdot \frac{m_1}{m_2}$ where $m_i$ is the multiplicity of $e$ at $w_i$. Thus
\[\ga(a)_e=k\la_{w_1}\ga_{e,w_1}+k\la_{w_2}\ga_{e,w_2}=k\la_{w_1}(\ga_{e,w_1}+(m_1/m_2)\ga_{e,w_2})=\pm k\la_{w_1}(m_1-(m_1/m_2)m_2)=0.\]
Since $e$ was arbitrary, this proves that $a\in K_C$.
\end{proof}

\begin{cor}
\label{cor:centralizer}
Let $\Ga=(V,E,s,t)$ be any multiquiver and $A=\mathcal{A}(\Ga)$ the associated TGW algebra. Then the centralizer $C_A(R_E)$ is a maximal commutative subalgebra of $A$.
\end{cor}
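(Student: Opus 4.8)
The plan is to deduce Corollary~\ref{cor:centralizer} from Lemma~\ref{lem:centralizer} together with Theorem~\ref{thm:rank-ker-gamma}, reducing everything to the structure of the grading group $\ker\ga$. By Lemma~\ref{lem:centralizer} the centralizer is the graded subalgebra $B:=C_A(R_E)=\bigoplus_{d\in\ker\ga}A_d$, so the task is to show that $B$ is commutative and that nothing outside $B$ commutes with all of $B$. The latter is easy: if $a\in C_A(B)$ then in particular $a$ commutes with $R_E\subseteq B$, so $a\in B$ by Lemma~\ref{lem:centralizer}; hence a maximal commutative overalgebra of $B$ must lie inside $B$, and it suffices to prove that $B$ itself is commutative.

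To prove commutativity of $B$, first I would reduce to connected components. Writing $\Ga=\bigsqcup_{C}C$ over connected components, we have $R_E=\bigotimes_C R_{E_C}$ and $A=\mathcal{A}(\Ga)$ decomposes compatibly; more importantly $\ker\ga=\bigoplus_C K_C$ with $K_C=\ker\ga\cap\Z V_C$, and generators $X_v,Y_v$ attached to distinct components commute (their $\si$'s act on disjoint sets of variables and all $\mu_{ij}=1$). So $B$ is generated over $\K$ by the subalgebras $B_C:=\bigoplus_{d\in K_C}A_d$ which pairwise commute, and it is enough to show each $B_C$ is commutative. By Theorem~\ref{thm:rank-ker-gamma}, $K_C=\{0\}$ unless $C$ is in equilibrium, in which case $\mathrm{rank}\,K_C=1$. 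If $K_C=\{0\}$ then $B_C=A_0=R_E$ restricted to that component, which is commutative. If $\mathrm{rank}\,K_C=1$, choose a generator $d_0$ of $K_C\cong\Z$; then $B_C=\bigoplus_{n\in\Z}A_{nd_0}$, and since $A(\Ga)$ is a domain (Corollary~\ref{cor:domain}) and regularly graded (Theorem~\ref{thm:regularity}), each $A_{nd_0}$ is a free $R_E$-module of rank one (using that $A_g$ is cyclic — this holds because $t^\Ga$ is a product of the $u_{ev}$ and each homogeneous component of a monic monomial of degree $g$ generates $A_g$). Pick a nonzero homogeneous $z\in A_{d_0}$; then $z$ commutes with $R_E$, so conjugation by $z$ is the identity on $R_E$, which forces $z$ to be central in $B_C$ once one checks $zz^{-1}$-type relations, or more directly: $A_{nd_0}=R_E z^n$ for all $n\ge0$ and $A_{-nd_0}=R_E (z^\ast)^n$, and $z z^\ast,\,z^\ast z\in R_E$; a short computation using $X_v r=\si_v(r)X_v$ and $d_0\in\ker\ga$ shows $z^\ast z=zz^\ast$ and that $z$ commutes with $z^\ast$, whence $B_C=R_E[z,z^\ast]$ with $zz^\ast=z^\ast z\in R_E$ is commutative.

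The main obstacle is the last commutativity verification: showing that a degree-$d_0$ homogeneous element $z$ commutes with its adjoint $z^\ast$ and that the subalgebra it generates over $R_E$ is genuinely commutative rather than merely having a commuting pair of generators. The cleanest route is to use the involution and the domain property: since $d_0\in\ker\ga$, the automorphism $\si_{d_0}$ is trivial, so $z r=r z$ for $r\in R_E$ by the grading description of $C_A(R_E)$; then $z^\ast z$ and $zz^\ast$ are both elements of $R_E$ fixed by $\ast$, and one identifies them by applying $\varphi_\Ga$ (which is injective on $R_E$ and whose image lands in the Weyl algebra where the computation $\varphi_\Ga(z)=z^{\ga(d_0)}=z^0=1$ makes everything transparent when $\varphi_\Ga$ is faithful); in the general case one instead argues directly in $A$ using Theorem~\ref{thm:regularity}(iv) that $z^\ast z - z z^\ast=0$ because it is a homogeneous element killed on both sides by monic monomials. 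Once $z$ and $z^\ast$ commute and commute with $R_E$, the algebra $R_E[z,z^\ast]$ they generate is commutative, it equals $B_C$, and assembling over components gives that $C_A(R_E)$ is commutative; combined with the first paragraph this shows it is maximal commutative.
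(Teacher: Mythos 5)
Your overall skeleton---reduce maximality to commutativity of $C_A(R_E)=\bigoplus_{d\in\ker\gamma}A_d$ via Lemma \ref{lem:centralizer}, split over connected components, and use Theorem \ref{thm:rank-ker-gamma} to know each $K_C$ has rank at most one---matches the paper's strategy, but the decisive step, commutativity of $B_C=\bigoplus_{n\in\Z}A_{nd_0}$ for an equilibrium component $C$, has a genuine gap. You assert that each $A_{nd_0}$ is a free cyclic $R_E$-module and, further, that $A_{nd_0}=R_Ez^n$ and $A_{-nd_0}=R_E(z^\ast)^n$. Neither claim is justified: Theorem \ref{thm:regularity}(ii) only says that each monic monomial generates a free rank-one module, whereas $A_{nd_0}$ is the $R_E$-span of \emph{all} reduced monomials of that degree (Lemma \ref{lem:monomials}), and these need not be $R_E$-proportional (in Example \ref{ex:graph2} the generator of $K_C$ is $2v_1+4v_2+v_3$, so $A_{d_0}$ is spanned by the many orderings of $X_1^2X_2^4X_3$). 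The cyclicity statements the paper does establish are available only when $\varphi_\Gamma$ is faithful and locally surjective, and by Theorem \ref{thm:faithfulness} faithfulness fails precisely when some component is in equilibrium, i.e.\ exactly when $K_C\neq 0$; so your appeal to $\varphi_\Gamma$ is vacuous in the only case that matters, and the fallback via Theorem \ref{thm:regularity}(iv) is not an application of that statement (no monomials $b,c$ are produced). The identity $z^\ast z=zz^\ast$ itself is easily rescued ($zr=rz$ for $r\in R_E$ gives $(zz^\ast)z=z(z^\ast z)=(z^\ast z)z$, and $A$ is a domain by Corollary \ref{cor:domain}), but commutativity of $R_E[z,z^\ast]$ does not give commutativity of $B_C$ unless $B_C=R_E[z,z^\ast]$, which is exactly the unproven point.

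A secondary weak point: the fact that generators attached to distinct components commute, on which both your component splitting and the pairwise commutation of the $B_C$'s rely, is not a defining relation and does not follow merely from ``the $\sigma$'s act on disjoint variables and $\mu_{ij}=1$''---that is the very GWA/TGW distinction stressed in the introduction. It is true, and the paper's proof consists essentially of this point: from \eqref{eq:tgwarels} one checks $\si_j(t_j)X_iX_j=\si_j\si_i(t_j)X_jX_i$, so $\si_i(t_j)=t_j$ (automatic across components) together with the domain property gives $X_iX_j=X_jX_i$; the paper then quotes \cite[Thm.~5.3]{HarOin2013}, which is what disposes of the same-component case that your argument leaves open. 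If you want a self-contained repair of that case: every nonzero homogeneous $a\in A_g$ satisfies $ar=\si_g(r)a$, so $S=R_E\setminus\{0\}$ is an Ore set and $A$ embeds into $AS^{-1}$; there every homogeneous component is one-dimensional over $Q=\Frac(R_E)$ (for monic monomials $m_1,m_2\in A_g$ one has $m_1(m_2^\ast m_2)=(m_1m_2^\ast)m_2$ with $m_2^\ast m_2\in S$), so $B_CS^{-1}$ is spanned over $Q$ by powers of a single degree-$d_0$ element and of its image under $\ast$, and your rank-one commutation argument becomes valid there; commutativity of $B_C$ then follows by restriction.
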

\begin{proof} From the relations \eqref{eq:tgwarels} one checks that $\si_j(t_j)X_iX_j=\si_j\si_i(t_j)X_jX_i$ for all $i,j\in V$, $i\neq j$. Thus, since $A$ is a domain, if $\si_i(t_j)=t_j$ it follows that $X_i$ and $X_j$ commute. Similarly (or by applying the involution), $Y_i$ and $Y_j$ commute if $\si_i(t_j)=t_j$. This implies that if $C$ and $C'$ are two connected components of $\Ga$, then $[A_g,A_h]=0$ for any $g\in \Z V_C$, $h\in \Z V_{C'}$. Thus, by \cite[Thm.~5.3]{HarOin2013} and Lemma \ref{lem:centralizer}, it follows that $C_A(R_E)$ is commutative. Therefore, since $R_E$ is commutative, $C_A(R_E)$ is maximal commutative in $A$.
\end{proof}

\begin{rem} \label{rem:markov}
We would like to mention a connection with Markov chains.
Assume that $\Ga=(V,E,s,t)$ is a connected simple quiver. 
As in Remark \ref{rem:simple-quiver}, let $Q(\Ga)$ be the digraph corresponding to $\Ga$. An edge $e\in E$ from $i$ to $j$ yields two edges in $Q(\Ga)$, the one from $i$ to $j$ is labeled by $v_{ij}=|\ga_{ei}|$ and the one from $j$ to $i$ is labeled by $v_{ji}=|\ga_{ej}|$.
If there is no edge between $i$ and $j$ in $\Ga$, we put $v_{ij}=v_{ji}=0$.
Then $d=\sum_{v\in V} d_v v\in\Z V$ is in $\ker \ga$ if and only if $d_iv_{ij}=d_jv_{ji}$ for all $i,j\in V$. That is, $\ker \ga$ is nonzero if and only if the matrix $(v_{ij})_{i,j\in V}$ is \emph{symmetrizable}.

For each $i\in V$, put $Z_i=\sum_{j\in V}v_{ij}$ and define $p_{ij}=v_{ij}/Z_i$. Then one may view $Q(\Ga)$ as a discrete-time Markov chain with state space $V$ and transition matrix $P=(p_{ij})_{i,j\in V}$. Thus, at each time step, $p_{ij}$ is the probability that the system will jump from state $i$ to state $j$. Let $\{q_i\}_{i\in V}$ denote the stationary distribution.
The system is \emph{reversible} if and only if $q_ip_{ij}=q_jp_{ji}$ for all $i,j$, which holds if and only if $q=\sum_{v\in V}(q_v/Z_v) v$ is in $\ker \ga_\mathbb{Q}$ where $\ga_{\mathbb{Q}}:\mathbb{Q}V\to\mathbb{Q}E$ is the rational extension of $\ga$. Thus, by Theorem \ref{thm:rank-ker-gamma}, the system is reversible if and only if $\Ga$ is in equilibrium. The latter means exactly that
$p_{i_1i_2}p_{i_2i_3}\cdots p_{i_ki_1}=p_{i_ki_{k-1}}\cdots p_{i_3i_2}p_{i_2i_1}$ for any directed cycle in $Q(\Ga)$ with vertices $i_1,i_2,\ldots,i_k$, which is known as \emph{Kolmogorov's criterion for detailed balance}.
\end{rem}

We can now prove the main theorem of this section.
\begin{thm}\label{thm:faithfulness}
Let $\Gamma$ be a multiquiver, $\gamma$ be its incidence matrix, $\mathcal{A}(\Ga)$ the corresponding TGW algebra, and $\varphi_\Gamma$ be the canonical representation by differential operators \eqref{eq:varphi_def}.
Then the following statements are equivalent:
\begin{enumerate}[{\rm (i)}]
\item $\varphi_\Gamma$ is injective;
\item $\gamma$ is injective;
\item $R_E$ is a maximal commutative subalgebra of $\mathcal{A}(\Gamma)$;
\item No connected component of $\Ga$ is in equilibrium.
\end{enumerate}
\end{thm}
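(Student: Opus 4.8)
The plan is to prove the four conditions equivalent by establishing the cycle of implications (ii) $\Rightarrow$ (iii) $\Rightarrow$ (i) $\Rightarrow$ (ii), together with the already-available equivalence (ii) $\Leftrightarrow$ (iv), which follows immediately from Theorem \ref{thm:rank-ker-gamma}: $\ga$ is injective if and only if $\ker\ga$ has rank zero, i.e. if and only if no connected component of $\Ga$ is in equilibrium. The implication (ii) $\Leftrightarrow$ (iii) is likewise already contained in Lemma \ref{lem:centralizer}, which states that $C_A(R_E)=\bigoplus_{d\in\ker\ga}A_d$, so that $\ga$ injective is equivalent to $C_A(R_E)=A_0=R_E$, and the latter is equivalent to $R_E$ being maximal commutative by Corollary \ref{cor:centralizer} (which guarantees $C_A(R_E)$ is itself commutative, so it is the unique maximal commutative subalgebra containing $R_E$). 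Thus the only substantive work is the equivalence of (i) with (ii).

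For (i) $\Rightarrow$ (ii) I would argue contrapositively: if $\ga$ is not injective, pick $0\neq d\in\ker\ga$. By Theorem \ref{thm:regularity} (applicable since $\mathcal{A}(\Ga)$ is consistent by Corollary \ref{cor:consistency_of_AGamma} and regular, as $R_E$ is a polynomial ring and each $t_v$ is a nonzero product of nonzero-divisors), the homogeneous component $\mathcal{A}(\Ga)_d$ contains a nonzero monic monomial; fix such a reduced monomial $m$. Since $d\in\ker\ga$, we have $\ga(d)=0$, and tracking $\varphi_\Ga$ on a monic monomial of degree $d$ shows $\varphi_\Ga(m)=z^{\ga(d)}\cdot(\text{polynomial in the }y_ex_e)=(\text{polynomial in the }y_ex_e)$ lies in the commutative subalgebra $\varphi_\Ga(R_E)$. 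But then $m-\varphi_\Ga(m)$ — interpreting $\varphi_\Ga(m)$ via the embedding $R_E\hookrightarrow\mathcal{A}(\Ga)$ — is a candidate kernel element; more carefully, I would show directly that $m$ is not a scalar multiple of any element of $R_E$ inside $\mathcal{A}(\Ga)$ (it has strictly positive degree $d\neq 0$ and the gradation is a genuine $\Z V$-grading by Corollary \ref{cor:consistency_of_AGamma}/Lemma \ref{lem:monomials}), so that $m$ minus a suitable element of $R_E$ mapping to the same image is a nonzero homogeneous element of $\ker\varphi_\Ga$ of degree $d$. Hence $\varphi_\Ga$ is not faithful.

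For (ii) $\Rightarrow$ (i), assume $\ga$ is injective, equivalently (by the above) $R_E=C_A(R_E)$ is maximal commutative. I would use the standard criterion for faithfulness of the differential operator representation: by Theorem \ref{thm:nondeg} the gradation form on $\mathcal{A}(\Ga)$ is nondegenerate, and $\ker\varphi_\Ga$ is a graded ideal whose degree-zero part is $\ker(\varphi_\Ga|_{R_E})=0$ since $R_E\to A_E(\K)$, $u_e\mapsto y_ex_e$, is injective. So it suffices to show $\ker\varphi_\Ga$ is zero, and since it is graded it is enough to show each homogeneous component $\ker\varphi_\Ga\cap\mathcal{A}(\Ga)_d$ is zero. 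Because $\mathcal{A}(\Ga)_d$ is a cyclic $R_E$-module (generated by a single monic monomial $m_d$, by regularity and Theorem \ref{thm:regularity}(ii)), any homogeneous element of degree $d$ is $r m_d$ for $r\in R_E$; applying $\varphi_\Ga$ gives $r\cdot\varphi_\Ga(m_d)$ (after commuting $r$ past the monomial, using that $\varphi_\Ga$ is an $R_E$-ring map), and since $A_E(\K)$ is a domain and $\varphi_\Ga(m_d)\neq 0$ (it is a nonzero product of the $x_e,y_e,y_ex_e$), this vanishes only if $\varphi_\Ga(r)=0$, hence $r=0$. The main obstacle is making the cyclicity-plus-domain argument of the last step fully rigorous: one must verify that $\varphi_\Ga(m_d)\neq 0$ for the chosen monic monomial (a short computation with \eqref{eq:varphi_def_1}–\eqref{eq:varphi_def_2} and the definition of $z^k$, using that the factors $u_{ev}$ appearing in the $t_v$ are nonzero polynomials so no cancellation to zero occurs in $A_E(\K)$), and that commuting $r\in R_E$ past $m_d$ under $\varphi_\Ga$ only twists $r$ by an automorphism, which is harmless since automorphisms preserve nonvanishing. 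Once that is in place the theorem follows by assembling the implications.
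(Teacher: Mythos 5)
Your reductions (ii)$\Leftrightarrow$(iii) (Lemma \ref{lem:centralizer}) and (ii)$\Leftrightarrow$(iv) (Theorem \ref{thm:rank-ker-gamma}) agree with the paper, and your contrapositive proof of (i)$\Rightarrow$(ii) is a correct variant of the paper's argument (the paper instead assumes $\ga(d_1)=\ga(d_2)$ and uses that $\mathcal{A}(\Ga)$ is a domain): a nonzero monic monomial $m$ of degree $d\in\ker\ga\setminus\{0\}$ exists by consistency and regularity, $\varphi_\Ga(m)$ lies in $A_E(\K)_0=\varphi_\Ga(R_E)$ because $\ga(d)=0$, and $m$ minus an $R_E$-preimage of $\varphi_\Ga(m)$ is a nonzero element of $\ker\varphi_\Ga$ (it is not homogeneous of degree $d$ as you write, having components in degrees $d$ and $0$, but nonzero is all you need).

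The genuine gap is in (ii)$\Rightarrow$(i). Your key claim that each component $\mathcal{A}(\Ga)_d$ is a cyclic $R_E$-module generated by one monic monomial is not what Theorem \ref{thm:regularity}(ii) says: that theorem only asserts that every monic monomial generates a \emph{free rank-one submodule}, whereas $\mathcal{A}(\Ga)_d$ is in general spanned over $R_E$ by many distinct reduced monomials of degree $d$, which need not lie in a common cyclic module. Indeed the paper treats cyclicity of the homogeneous components as an extra hypothesis in Proposition \ref{prp:quotients}, and for $\widetilde{A_n}$ it is \emph{deduced} from faithfulness together with local surjectivity, so assuming it here is circular; it is also false in general. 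In the triangle example following Theorem \ref{thm:local-surjectivity}, the six monomials $X_{\pi(1)}X_{\pi(2)}X_{\pi(3)}$, $\pi\in S_3$, span $\mathcal{A}(\Ga)_g$ for $g=v_1+v_2+v_3$, and their images are $p_\pi z^{\ga(g)}$ with $\gcd_\pi p_\pi=1$; if $\mathcal{A}(\Ga)_g$ were cyclic with generator $w$, then $\varphi_\Ga(w)=q\,z^{\ga(g)}$ with $q$ dividing every $p_\pi$, forcing $q\in\K^\times$ and $\varphi_\Ga(\mathcal{A}(\Ga)_g)=A_E(\K)_{\ga(g)}$, contradicting the containment $\varphi_\Ga(\mathcal{A}(\Ga)_g)\subseteq J\,z^{\ga(g)}$ with $J$ proper established in that proof. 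Without cyclicity your computation only excludes kernel elements of the special form $r m_d$: a general element $\sum_k r_k m_k$ of $\mathcal{A}(\Ga)_d$ has image $\bigl(\sum_k r_k p_k\bigr)z^{\ga(d)}$, and ruling out that this vanishes while $\sum_k r_k m_k\neq 0$ is precisely the injectivity you are trying to prove. To close the implication you need a real input, e.g.\ the paper's route: maximal commutativity of $R_E$ (condition (iii), which you state but never use) combined with \cite[Thm.~3.6]{HarOin2013}. Alternatively, note that injectivity of $\ga$ makes $\ker\varphi_\Ga$ a \emph{graded} ideal (distinct $\Z V$-degrees have distinct $\Z E$-images) meeting $\mathcal{A}(\Ga)_0=R_E$ trivially, and then either the maximality of $\TGWI{\mu}{R_E}{\si^\Ga}{t^\Ga}$ in the construction or the non-degeneracy of the gradation form (Theorem \ref{thm:nondeg}, which you cite but do not actually use) finishes it: a nonzero homogeneous $a\in\ker\varphi_\Ga$ of degree $d$ pairs with some $b\in\mathcal{A}(\Ga)_{-d}$ to give $0\neq ab\in\ker\varphi_\Ga\cap R_E$, a contradiction.
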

\begin{proof}
(i)$\Rightarrow$(ii):
Assume that $\gamma(d_1)=\gamma(d_2)$ for some $d_1,d_2\in\Z V$. Pick nonzero elements $a_i\in\mathcal{A}(\Gamma)_{d_i}$ and let $b_i=\varphi_\Gamma(a_i)$ for $i=1,2$. From the definition of $\varphi_\Gamma$ we get $\deg (b_1)=\gamma(d_1)=\gamma(d_2)=\deg(b_2)$. Since each homogeneous component of the Weyl algebra $A_E(\K)$ is a free cyclic $R_E$-module there are nonzero $r_1,r_2\in R_E$ such that $r_1b_1=r_2b_2$. Since $\varphi_\Gamma$ is a map of $R_E$-rings we get $\varphi_\Ga(r_1a_1)=\varphi_\Ga(r_2a_2)$. If $\varphi_\Ga$ is injective this implies $r_1a_1=r_2a_2$, which by Corollary \ref{cor:domain} implies $d_1=d_2$.

(ii)$\Rightarrow$(iii): Follows by Lemma \ref{lem:centralizer}.

(iii)$\Rightarrow$(i):
Suppose $R_E$ is a maximal commutative subalgebra of $\mathcal{A}(\Gamma)$.
That means that $C_A(R_E)=R_E$. Since $\varphi_\Gamma$ is a map of $R_E$-rings, $R_E\cap\ker(\varphi_\Gamma)=0$, it follows by  
\cite[Thm.~3.6]{HarOin2013}
that $\ker(\varphi_\Gamma)=0$.

(ii)$\Leftrightarrow$(iv): Immediate by Theorem \ref{thm:rank-ker-gamma}.

\end{proof}

\begin{example}
$\varphi_\Ga$ is faithful in Example \ref{ex:graph3}, but not in Examples \ref{ex:graph1}-\ref{ex:graph2}.
\end{example}

\subsection{Local surjectivity}\label{sec:local-surjectivity}
Let $\Gamma$ be a multiquiver, $\ga$ its incidence matrix and $\varphi_\Ga$ the representation from Section \ref{sec:homomorphism-varphi}.
We say that $\varphi_\Gamma$ is \emph{locally surjective} if 
\begin{equation}
\varphi_\Ga(\mathcal{A}(\Ga)_g)=A_E(\K)_{\ga(g)},\qquad\forall g\in \Z V.
\end{equation}
Note that $\gamma$ (hence $\Gamma$) is recoverable from $\varphi_\Gamma$ since $\gamma(v)\in \Z E$ is the degree of $\varphi_\Gamma(X_v)$ for all $v\in V$.

In this section we show that the presence of (not necessarily directed) cycles in $\Gamma$ is precisely the obstruction to $\Phi_\Gamma$ being locally surjective.
\begin{rem}
The notion of local surjectivity is natural in the following sense.
A group-graded algebra $A=\bigoplus_{g\in G} A_g$ can be viewed as a category with object set $G$ and morphism sets $\Hom(g,h)=A_{hg^{-1}}$. In particular, if $\Gamma=(V,E,s,t)$ is a multiquiver, then $\mathcal{A}(\Gamma)$ is $\Z V$-graded and the Weyl algebra $A_E(\K)$ is $\Z E$-graded and we may regard them as categories in this way. The two maps $\gamma\in\Hom_\Z(\Z V, \Z E)$ and $\varphi_\Gamma:\mathcal{A}(\Gamma)\to A_E(\K)$ are naturally the object and morphism components of a functor $\Phi_\Gamma$ from $\mathcal{A}(\Ga)$ to $A_E(\K)$, viewed as categories. Then $\varphi_\Ga$ is locally surjective if and only if $\Phi_\Gamma$ is a \emph{full} functor.
\end{rem}

\begin{lem}\label{lem:alpha_lemma}
Let $E,V$ be sets and $\ga=(\ga_{ev})_{e\in E,v\in V}$ be an integer $E\times V$-matrix satisfying
conditions \eqref{eq:condition_C}-\eqref{eq:condition_M}.
Let $g=\sum_{v\in V} g_vv$ and $h=\sum_{v\in V} h_vv$ be elements of $\Z V$ satisfying
\begin{equation}\label{eq:alpha_lemma_1}
\begin{aligned}
g_ih_i &\ge 0,\quad\forall i\in V,\\
g_ih_j &\le 0,\quad\forall i,j\in V, i\neq j.
\end{aligned}
\end{equation}
Then 
\begin{equation}
A_E(\K)_{\ga(g)}A_E(\K)_{\ga(h)}=A_E(\K)_{\ga(g+h)}.
\end{equation}
\end{lem}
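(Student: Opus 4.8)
The inclusion $A_E(\K)_{\ga(g)}A_E(\K)_{\ga(h)}\subseteq A_E(\K)_{\ga(g+h)}$ is automatic from the $\Z E$-grading, so only the reverse inclusion needs argument, and the plan is to reduce it to a purely one-variable fact in the Weyl algebra. First I would recall, from the realization of $A_E(\K)$ as a regular TGW algebra in Section~\ref{sec:WeylAlgebraExample} (see also Theorem~\ref{thm:regularity}), that each graded component is a free rank-one left $R_E$-module, $A_E(\K)_k=R_E\,z^k$ with $z^k$ as in \eqref{eq:zk_notation}, and that $z^kr=\si_k(r)z^k$ for $r\in R_E$, where $\si_k$ is the automorphism of $R_E$ with $\si_k(u_e)=u_e-k_e$. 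Consequently $z^kR_E=R_E\,z^k$, and
\[
A_E(\K)_{\ga(g)}A_E(\K)_{\ga(h)}=R_E\,z^{\ga(g)}\,R_E\,z^{\ga(h)}=R_E\,z^{\ga(g)}z^{\ga(h)} .
\]
Thus it suffices to prove the single identity $z^{\ga(g)}z^{\ga(h)}=z^{\ga(g+h)}$, for then the left side above equals $R_E\,z^{\ga(g+h)}=A_E(\K)_{\ga(g+h)}$.

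Since factors $z_e^{(\,\cdot\,)}$ belonging to distinct $e\in E$ commute in $A_E(\K)$ and $\ga(g+h)=\ga(g)+\ga(h)$, the identity $z^{\ga(g)}z^{\ga(h)}=z^{\ga(g+h)}$ breaks up into the one-variable identities $z_e^{(a_e)}z_e^{(b_e)}=z_e^{(a_e+b_e)}$, $e\in E$, where $a_e:=\ga(g)_e$ and $b_e:=\ga(h)_e$. For integers $a,b$ with $ab\ge 0$ the identity $z_e^{(a)}z_e^{(b)}=z_e^{(a+b)}$ is trivial: it reads $x_e^ax_e^b=x_e^{a+b}$ if $a,b\ge 0$, $y_e^{-a}y_e^{-b}=y_e^{-(a+b)}$ if $a,b\le 0$, and is vacuous if $a=0$ or $b=0$. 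If instead $a$ and $b$ had strictly opposite signs one would only get a \emph{proper} submodule $R_E\,q(u_e)\,z_e^{(a+b)}\subsetneq R_E\,z_e^{(a+b)}$ with $q$ a nonconstant polynomial, which is precisely why hypothesis \eqref{eq:alpha_lemma_1} is needed. So the heart of the proof is the claim that \eqref{eq:alpha_lemma_1} forces $a_eb_e\ge 0$ for every $e\in E$.

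To prove that claim I would fix $e\in E$ and invoke condition \eqref{eq:condition_M}: the row $(\ga_{ev})_{v\in V}$ has at most one positive entry, say $\ga_{e,i}=m>0$, and at most one negative entry, say $\ga_{e,j}=-n$ with $n>0$ (necessarily $i\neq j$ when both occur; a term below is dropped if one is absent). Then $a_e=mg_i-ng_j$, $b_e=mh_i-nh_j$, and
\[
a_eb_e=m^2\,g_ih_i-mn\,(g_ih_j+g_jh_i)+n^2\,g_jh_j .
\]
By \eqref{eq:alpha_lemma_1} one has $g_ih_i\ge 0$, $g_jh_j\ge 0$, $g_ih_j\le 0$, $g_jh_i\le 0$, and $m,n>0$, so every summand on the right is $\ge 0$ and hence $a_eb_e\ge 0$; the degenerate cases of \eqref{eq:condition_M} (a zero row, or a single nonzero entry $\ga_{e,i}$) give $a_eb_e=0$, resp.\ $a_eb_e=\ga_{e,i}^2\,g_ih_i\ge 0$, by the same computation. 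This establishes the claim, hence $z^{\ga(g)}z^{\ga(h)}=z^{\ga(g+h)}$, hence the lemma. I do not expect a genuine obstacle here: the only step requiring care is the sign bookkeeping just displayed, and the one conceptual point is recognizing that \eqref{eq:condition_M} is exactly the property of $\ga$ that lets the sign conditions \eqref{eq:alpha_lemma_1} pass through $\ga$ edge by edge.
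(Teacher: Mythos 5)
Your proof is correct and follows essentially the same route as the paper: both reduce to showing $\ga(g)_e\,\ga(h)_e\ge 0$ for each $e\in E$ via condition \eqref{eq:condition_M} together with \eqref{eq:alpha_lemma_1}, deduce $z^{\ga(g)}z^{\ga(h)}=z^{\ga(g+h)}$ edge by edge, and conclude using that each homogeneous component of $A_E(\K)$ is a free cyclic left $R_E$-module. Your explicit expansion of the sign inequality and the remark on the trivial inclusion are just slightly more detailed versions of the paper's argument.
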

\begin{proof}
Since $\ga$ satisfies \eqref{eq:condition_M} we have
\begin{equation}\label{eq:pf_alpha_lemma}
\ga_{ei}\ga_{ej}\le 0,\qquad\forall e\in E, i,j\in V, i\neq j. 
\end{equation}
Using \eqref{eq:alpha_lemma_1} and \eqref{eq:pf_alpha_lemma} we have
\[\big(\sum_{i\in V} \ga_{ei}g_i\big) \big(\sum_{i\in V} \ga_{ei}h_i\big)\ge 0.
\qquad\forall e\in E,\]
Hence, putting $\ga(d)_e=\sum_{i\in V} \ga_{ei}d_i$ for $d=\sum_{v\in V} d_v v\in\Z V$, we have
\[z_e^{(\ga(g)_e)}z_e^{(\ga(h)_e)}=z_e^{(\ga(g+h)_e)}.\]
Taking the product over $e\in E$ (at most finitely many factors being $\neq 1$ by \eqref{eq:condition_C}) we get
\[z^{\ga(g)}z^{\ga(h)} = z^{\ga(g+h)}.\]
Since $A_E(\K)_p$ is a free cyclic as a left $R_E$-module with generator $z^{p}$ for each $p\in \Z E$, the claim follows.
\end{proof}

Let $\Gamma=(V,E,s,t)$ be a multiquiver. The \emph{underlying undirected graph}, denoted $\bar \Ga$, is obtained by forgetting the direction of each edge. Formally, $\bar\Ga$ can be identified with the sign-equivalence class of multiquivers containing $\Ga$. We will prove a lemma about cycles in $\bar\Ga$.
Let $L\subseteq E$ be the set of leaves and put $E^\text{prop}=E\setminus L$. Thus $E^\text{prop}=\{e\in E\mid s(e)\neq\emptyset\neq t(e)\}$. The edges in $E^\text{prop}$ will be called \emph{proper}.
To each total order $<$ on $V$ we associate a parity function
\begin{equation}\label{eq:parity-def}
\mathcal{P}_<:E^\text{prop} \to \{-1,1\},\qquad 
\mathcal{P}_<(e)=
\begin{cases}
1,&s_1(e)<t_1(e),\\ 
-1,&t_1(e)<s_1(e),
\end{cases}
\end{equation}
where $s_1(e)=v$ if $s(e)=(v,n)$ for some $n\in\Z_{>0}$ and similarly for $t_1$.
Equivalently,
\begin{equation}\label{eq:parity-alt}
\mathcal{P}_<(e)=
\begin{cases}
1,& \ga_{ev}<0, \ga_{ew}>0 \\ 
-1, & \ga_{ev}>0, \ga_{ew}<0
\end{cases}
\quad \text{where $\{v,w\}=\{s_1(e),t_1(e)\}$, $v<w$.}
\end{equation}

\begin{lem}\label{lem:acyclic_equiv}
Let $\Ga=(V,E,s,t)$ be a multiquiver with $V$ finite and $\bar\Ga$ be the underlying undirected graph. The following statements are equivalent:
\begin{enumerate}[{\rm (i)}]
\item $\bar\Gamma$ is acyclic;
\item For any function $f:E^\text{prop}\to\{-1,1\}$ there exists a total order $<$ on $V$ such that $f=\mathcal{P}_<$.
\end{enumerate} 
\end{lem}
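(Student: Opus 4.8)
The plan is to prove the equivalence of the two statements by interpreting a sign function $f:E^\text{prop}\to\{-1,1\}$ as an orientation of the proper edges of $\bar\Ga$, and asking whether this orientation arises from a linear order on $V$ — i.e.\ whether it is acyclic (a topological sort exists). More precisely, given a total order $<$ on $V$, the function $\mathcal P_<$ records, for each proper edge $e$ with endpoints $v<w$, whether the source-endpoint of $e$ is $v$ (value $1$) or $w$ (value $-1$); equivalently $\mathcal P_<$ tells us whether, after reorienting so that $e$ points from $s_1(e)$ to $t_1(e)$, the edge agrees with or opposes the order $<$. So the combined data $(\bar\Ga, f)$ determines a digraph $D_f$ on vertex set $V$ (reorient each proper edge of $\bar\Ga$ according to $f$; leaves are irrelevant), and $f=\mathcal P_<$ for some total order $<$ precisely when $<$ is a linear extension of $D_f$. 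Such a linear extension exists if and only if $D_f$ is acyclic (as a directed graph), and since $V$ is finite this is the standard fact that a finite poset admits a linear extension / a finite DAG admits a topological ordering.

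First I would set up this dictionary carefully: fix, for bookkeeping, the reorientation of $\Ga$ in which every proper edge $e$ points from $s_1(e)$ to $t_1(e)$ — call the resulting (proper part of the) digraph $\Ga^\text{prop}$ — and observe that $\mathcal P_<(e)=1$ iff the $\Ga^\text{prop}$-orientation of $e$ agrees with $<$. Then for a general $f$, let $D_f$ be $\Ga^\text{prop}$ with each edge $e$ flipped exactly when $f(e)=-1$; one checks directly from \eqref{eq:parity-def} that $f=\mathcal P_<$ iff every edge of $D_f$ goes from a $<$-smaller to a $<$-larger vertex, i.e.\ iff $<$ is a topological order for $D_f$.

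For (ii)$\Rightarrow$(i), suppose $\bar\Ga$ has a cycle; since leaves cannot lie on a cycle, the cycle uses only proper edges, so it is a cycle in the underlying undirected graph of $\Ga^\text{prop}$, say through vertices $i_1,i_2,\dots,i_k,i_1$ along edges $e_1,\dots,e_k$. Choose $f$ so that $D_f$ orients this cycle consistently as a directed cycle $i_1\to i_2\to\cdots\to i_k\to i_1$ (possible: for each $e_j$ set $f(e_j)$ to $\pm1$ according to whether the $\Ga^\text{prop}$-orientation of $e_j$ already points $i_j\to i_{j+1}$ or not; extend $f$ arbitrarily on the remaining proper edges). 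Then $D_f$ contains a directed cycle, so it has no topological order, hence no $<$ satisfies $f=\mathcal P_<$, contradicting (ii). For (i)$\Rightarrow$(ii): if $\bar\Ga$ is acyclic then for any $f$ the digraph $D_f$ has acyclic underlying undirected graph, so in particular $D_f$ has no directed cycle; as $V$ is finite, $D_f$ therefore admits a topological order $<$, and by the dictionary $f=\mathcal P_<$.

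The only genuinely delicate point — more bookkeeping than obstacle — is matching the sign conventions in \eqref{eq:parity-def}/\eqref{eq:parity-alt} with the chosen reference orientation, so that ``$f=\mathcal P_<$'' really does translate to ``$<$ is a topological sort of $D_f$'' with the correct direction of inequalities; once that is pinned down, both implications reduce to the elementary fact that a finite digraph has a topological order iff it has no directed cycle. I expect no serious difficulty beyond being careful that leaves (edges in $L=E\setminus E^\text{prop}$) are simply ignored throughout, which is consistent with the statement since $\mathcal P_<$ is only defined on $E^\text{prop}$ and acyclicity of $\bar\Ga$ is unaffected by leaves.
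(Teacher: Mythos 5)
Your proposal is correct, and the dictionary you set up (with the reference orientation $e\colon s_1(e)\to t_1(e)$ and the reoriented digraph $D_f$) does match the sign convention of \eqref{eq:parity-def}: $f=\mathcal P_<$ exactly when every edge of $D_f$ goes from a $<$-smaller to a $<$-larger vertex, and the multi-edge subtleties (parallel edges forming $2$-cycles, leaves being irrelevant, no loops) are all handled consistently with the paper's conventions. Your route differs from the paper's mainly in packaging: you reduce both implications to the single standard fact that a finite digraph admits a topological order iff it has no directed cycle. The paper instead argues directly with the parity function: for (i)$\Rightarrow$(ii) it treats each connected acyclic component as a rooted tree and builds the total order recursively, placing the children of each vertex to its left or right so as to realize any prescribed parity on the incident edges; for (ii)$\Rightarrow$(i) it takes a cycle $v_1,\dots,v_n$, the order $v_1<\dots<v_n$, and the function $f$ agreeing with $\mathcal P_<$ except at the closing edge $e_{n1}$, deriving a contradiction by transitivity --- which is exactly your "directed cycle has no topological sort" argument in disguise, so that direction is essentially identical. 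The forward direction is where the proofs genuinely diverge: your appeal to topological sorting is shorter and more conceptual (and makes the equivalence with linear extensions of $D_f$ transparent), while the paper's tree-based construction is self-contained and explicitly constructive, exhibiting the order rather than quoting the DAG fact. Either is acceptable; if you wanted to make yours fully self-contained you would just include the one-paragraph induction showing a finite digraph without directed cycles has a source and hence a topological order.
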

\begin{proof}
(i)$\Rightarrow$(ii): 
We can assume that $\Ga$ is connected. Consider $\Ga$ as a rooted tree by picking any vertex $j_0\in V$ as the root. We think of a total order on $V$ as a visual representation of $\Ga$ where $v<w$ means $v$ is placed to the left of $w$.
Any desired parity of the edges connected to the root vertex $j_0$ can be obtained by permuting the child vertices to the left or right of $j_0$.
Then continue inductively in this fashion with each child of the root. Since $V$ is finite this process will eventually end.

(ii)$\Rightarrow$(i): Suppose $\Ga$ has a not necessarily directed cycle. Without loss of generality we can assume this cycle is all of $\Ga$.
Let $n=\# V$ and denote the elements of $V$ by $v_1,v_2,\ldots,v_n$ such that there is an edge $e_{ij}$ between  $v_i$ and $v_j$ if and only if $j\equiv i+1 \text{(mod $n$)}$. Let $<$ be the total order on $V$ given by $v_1<v_2<\cdots <v_n$. 
Let $f:E\to \{-1,1\}$ be the function that differs from $\mathcal{P}_<$ only at the edge $e_{n1}$.
By (ii) there is a total order $\prec$ on $V$ such that $f=\mathcal{P}_\prec$.
Then $\mathcal{P}_\prec(e_{i,i+1})=f(e_{i,i+1})=\mathcal{P}_<(e_{i,i+1})$ which implies that $v_i\prec v_{i+1}$ for each $i\in\{1,2,\ldots,n-1\}$. Hence $v_1\prec v_n$ by transitivity, which implies that $f(e_{n1})=\mathcal{P}_\prec(e_{n1})=\mathcal{P}_<(e_{n1})=-f(e_{n1})$ which is a contradiction.
\end{proof}

The following lemma introduces certain useful polynomials $P_{mn}(u)$.
\begin{lem}\label{lem:Pmn}
For all integers $m,n\in \Z$ there exists a unique polynomial $P_{mn}(u)$ such that the following identity holds in the first Weyl algebra $A_1(\K)=\K\langle x,y\mid yx-xy=1\rangle$:
\begin{equation}
z^{(m)}z^{(n)}=P_{mn}(u) z^{(m+n)}
\end{equation}
where $u=yx$ and we use notation \eqref{eq:zk_notation}.
If $mn\ge 0$ then $P_{mn}(u)=1$ (constant polynomial).
If $mn<0$ then $P_{mn}(u)$ is a product of factors of the form $(u-k)$, $k\in\Z$.
If $m>0, n<0$, then all zeroes of $P_{mn}(u)$ are positive integers.
If $m<0, n>0$, then all zeroes of $P_{mn}(u)$ are non-positive integers.
\end{lem}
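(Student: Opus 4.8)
The plan is to work entirely inside the first Weyl algebra $A_1(\K)$ and establish the claimed factorization by an explicit computation of the products $z^{(m)}z^{(n)}$, splitting into cases according to the signs of $m$ and $n$. Uniqueness of $P_{mn}(u)$ is immediate: $A_1(\K)$ is a domain and each graded component $A_1(\K)_k$ (with respect to the $\Z$-grading $\deg x=1$, $\deg y=-1$) is a free left $R=\K[u]$-module of rank one generated by $z^{(k)}$, so if $P(u)z^{(m+n)}=Q(u)z^{(m+n)}$ then $P=Q$. Hence it only remains to exhibit, in each case, a polynomial with the asserted root structure.

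First I would dispose of the easy case $mn\ge 0$: if $m,n\ge 0$ then $z^{(m)}z^{(n)}=x^mx^n=x^{m+n}=z^{(m+n)}$, and similarly if $m,n\le 0$ then $z^{(m)}z^{(n)}=y^{-m}y^{-n}=y^{-(m+n)}=z^{(m+n)}$; also if one of $m,n$ is zero the statement is trivial. So $P_{mn}(u)=1$ in these cases. For the mixed cases, recall the standard commutation identities in $A_1(\K)$, which I would prove by induction: writing $u=yx$, one has $x^p y^p = (u+1)(u+2)\cdots(u+p)$ and $y^p x^p = u(u-1)\cdots(u-p+1)$ for $p\ge 1$ (the first because $xy=u+1$ and $x$ conjugates $u$ to $u+1$, the second because $yx=u$ and $y$ conjugates $u$ to $u-1$). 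Now suppose $m>0$, $n<0$; set $p=m$, $q=-n>0$. Then $z^{(m)}z^{(n)}=x^p y^q$. If $p\ge q$, then $x^py^q=(x^{p-q})(x^qy^q)=x^{p-q}(u+1)\cdots(u+q)$; commuting the polynomial past $x^{p-q}$ (which shifts $u\mapsto u+(p-q)$ in the argument) and recalling $z^{(m+n)}=z^{(p-q)}=x^{p-q}$, we can write $x^py^q=\big[(u+1+p-q)\cdots(u+p)\big]^{-1}$... — more cleanly, it is better to move $x^{p-q}$ to the right: $x^py^q = x^{p-q}\cdot x^q y^q$ and then $x^q y^q = \sigma^{-(p-q)}\big((u+1)\cdots(u+q)\big)$ evaluated appropriately; I would instead just record the final clean answer, which after the dust settles is $P_{m,n}(u)=(u+1-q)(u+2-q)\cdots (u) \cdot(\text{correction})$ — the point I want to make in the write-up is only that all the linear factors produced are of the form $(u-k)$ with $k$ a \emph{positive} integer when $m>0,n<0$. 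The symmetric case $m<0$, $n>0$ is handled identically with $x$ and $y$ interchanged and $u=yx$ fixed: here $z^{(m)}z^{(n)}=y^q x^p$ with $q=-m>0$, $p=n>0$, and $y^qx^p$ expands into $y^{q-p}$ (or $x^{p-q}$) times a product of factors $u(u-1)\cdots$, all zeroes being \emph{non-positive} integers; the sign asymmetry between the two cases comes precisely from $u=yx$ rather than $xy$.

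The bookkeeping I expect to be the main obstacle is keeping the shifts straight: when one pulls a polynomial $f(u)$ past a power $x^r$ one must replace it by $f(u-r)$ (since $x u = (u+1)x$ forces $x^r f(u) = f(u+r) x^r$, equivalently $f(u)x^r = x^r f(u-r)$), and getting the direction and sign of this shift right in each subcase — together with the subcase split $p\ge q$ versus $p<q$ — is where an error is easy to make. None of it is conceptually deep, so in the write-up I would present the two base identities for $x^py^p$ and $y^px^p$, then carry out the case $m>0>n$ carefully with the shift explicit, state that $m<0<n$ is obtained by the evident symmetry (applying the anti-automorphism or just repeating the computation), and conclude. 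The uniqueness argument and the $mn\ge 0$ cases take one or two lines each.
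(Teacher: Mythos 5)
Your overall strategy---uniqueness from the fact that each graded component of $A_1(\K)$ is a free rank-one $\K[u]$-module, the trivial case $mn\ge 0$, and a direct computation in the mixed-sign case---is the same as the paper's (the paper just says ``direct calculation'' and works one example). But there is a concrete error that invalidates the computation as written: your base identities belong to the wrong normalization of the Weyl algebra. The lemma's algebra has $yx-xy=1$ and $u=yx$, so $xy=u-1$, $xu=(u-1)x$ (hence $x^rf(u)=f(u-r)x^r$ and $y^rf(u)=f(u+r)y^r$), and therefore
\[
x^py^p=(u-p)(u-p+1)\cdots(u-1),\qquad y^px^p=u(u+1)\cdots(u+p-1),
\]
not $x^py^p=(u+1)\cdots(u+p)$ and $y^px^p=u(u-1)\cdots(u-p+1)$ as you state; those hold for the opposite convention $xy-yx=1$. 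This is not a harmless relabelling: if you carry out your computation with your identities and your shift rule $x^rf(u)=f(u+r)x^r$, the roots of $P_{mn}$ for $m>0>n$ come out as \emph{negative} integers, i.e.\ you would ``prove'' the opposite of the lemma and contradict the paper's own example $x^4y^2=(u-4)(u-3)x^2$. Since the precise location of the roots (positive versus non-positive) is exactly the content that gets used later (in the acyclicity/local-surjectivity argument via the parity functions and the Nullstellensatz), the sign cannot be waved away, yet your write-up asserts the correct sign while the identities you wrote imply the wrong one.

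Second, the key case is not actually carried out: for $m=p>0$, $n=-q<0$ you begin the $p\ge q$ computation, get tangled in the direction of the shift, and then ``just record the final clean answer'' without deriving it. With the corrected identities the computation is short and should be written out: for $p\ge q$, $x^py^q=x^{p-q}\,(u-q)\cdots(u-1)=(u-p)(u-p+1)\cdots(u-p+q-1)\,x^{p-q}$, so the roots are $p-q+1,\dots,p$, all positive; for $p<q$, $x^py^q=(u-p)\cdots(u-1)\,y^{q-p}$, roots $1,\dots,p$. Dually, for $m=-q<0$, $n=p>0$ one gets $y^qx^p=(u+q-p)\cdots(u+q-1)\,y^{q-p}$ if $q\ge p$ and $y^qx^p=u(u+1)\cdots(u+q-1)\,x^{p-q}$ if $q<p$, with all roots non-positive. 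Fix the convention, state the two base identities correctly, and complete these four short computations; the uniqueness argument and the $mn\ge 0$ case in your proposal are fine as they stand.
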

\begin{proof} Follows by direct calculation using the presentation of $A_1(\K)$ as a generalized Weyl algebra. For example, $x^4y^2=x^3(u-1)y=(u-4)x^3y=(u-4)x^2(u-1)=(u-4)(u-3)x^2$ so $P_{4,-2}(u)=(u-4)(u-3)$.
\end{proof}

Next we calculate some values of $\varphi_\Ga$ in terms of the polynomials $P_{mn}(u)$.
If $<$ is a total order on a finite set $I$ and $x_i$ for $i\in I$ are not necessarily commuting variables, we put $\prod_{i\in I}^{<} x_i=x_{i_1}x_{i_2}\cdots x_{i_k}$ if $I=\{i_1,i_2,\ldots,i_k\}$ and $i_1<i_2<\cdots<i_k$.
\begin{lem} \label{lem:varphi-calculation}
Let $\Ga=(V,E,s,t)$ be a multiquiver, $<$ be a total order on $V$, and $V'$ be a finite subset of $V$. Then 
\begin{equation}\label{eq:varphi-calculation}
\varphi_\Ga(\prod_{v\in V'}^{<} X_v) = \Big(\prod_{\substack{v,w\in V'\\ v<w}} \prod_{\substack{e\in E\\ \{s_1(e),t_1(e)\}=\{v,w\}}} P_{\ga_{ev},\ga_{ew}}(u_e) \Big) z^{\ga(g)}
\end{equation}
where $g=\sum_{v\in V'} v\in \Z V$, $s_1(e)=v$ if $s(e)=(v,n)$ for some $n\in\Z_{>0}$ and similarly for $t_1$,  $P_{mn}(u)$ are the polynomials from Lemma \ref{lem:Pmn}, and $\ga_{ev}$ are the entries of the incidence matrix $\ga$.
\end{lem}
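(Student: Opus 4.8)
\textbf{Proof strategy for Lemma \ref{lem:varphi-calculation}.}

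The plan is to prove the identity \eqref{eq:varphi-calculation} by induction on $\#V'$, peeling off the largest vertex. Let me set up notation: enumerate $V' = \{v_1 < v_2 < \cdots < v_k\}$ and write $g = v_1 + \cdots + v_k$ and $g' = v_1 + \cdots + v_{k-1}$, so $g = g' + v_k$. The base cases $\#V' = 0$ and $\#V' = 1$ are immediate: when $V'$ is empty both sides are $1$, and when $V' = \{v\}$ the left side is $\varphi_\Ga(X_v) = z^{\ga(v)}$ by \eqref{eq:varphi_def_1} while the right side has an empty product of $P$'s and equals $z^{\ga(g)}$ with $g = v$.

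For the inductive step, write $\prod_{v\in V'}^{<} X_v = \big(\prod_{v \in V' \setminus\{v_k\}}^{<} X_v\big)\, X_{v_k}$ and apply $\varphi_\Ga$, which is an algebra homomorphism. By the induction hypothesis the first factor maps to $\big(\prod_{\substack{i<j<k}} \prod_{e:\,\{s_1(e),t_1(e)\}=\{v_i,v_j\}} P_{\ga_{ev_i},\ga_{ev_j}}(u_e)\big) z^{\ga(g')}$, and the second factor maps to $z^{\ga(v_k)}$. Now I must multiply $z^{\ga(g')} z^{\ga(v_k)}$ in the Weyl algebra $A_E(\K)$. Working coordinate-by-coordinate over $e \in E$ using the factorization $z^p = \prod_e z_e^{(p_e)}$ (valid since for each fixed $p$ only finitely many factors differ from $1$ by \eqref{eq:condition_C}, and distinct Weyl-algebra variables commute), Lemma \ref{lem:Pmn} gives $z_e^{(\ga(g')_e)} z_e^{(\ga(v_k)_e)} = P_{\ga(g')_e,\,\ga(v_k)_e}(u_e)\, z_e^{(\ga(g)_e)}$. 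The key simplification is that $\ga(v_k)_e = \ga_{e v_k}$, and by condition \eqref{eq:condition_M} each row of $\ga$ has at most one positive and one negative entry; hence for a fixed $e$, the quantity $\ga(g')_e = \sum_{i<k} \ga_{e v_i}$ is a sum over at most one nonzero term among $v_1,\dots,v_{k-1}$, namely the other endpoint of $e$ if both endpoints of $e$ lie in $V'$. Thus $P_{\ga(g')_e,\,\ga(v_k)_e}(u_e) = P_{\ga_{e v_j},\,\ga_{e v_k}}(u_e)$ when $\{s_1(e),t_1(e)\} = \{v_j, v_k\}$ with $v_j \in V'$, and $P_{\ga(g')_e,\,\ga(v_k)_e}(u_e) = 1$ otherwise (either $\ga_{e v_k} = 0$, in which case $z_e^{(\ga(v_k)_e)} = 1$, or the other endpoint of $e$ is not in $V' \setminus \{v_k\}$, forcing $\ga(g')_e = 0$). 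Collecting these factors over all $e$ recovers exactly the new terms $\prod_{j<k}\prod_{e:\{s_1(e),t_1(e)\}=\{v_j,v_k\}} P_{\ga_{ev_j},\ga_{ev_k}}(u_e)$ needed to complete the product in \eqref{eq:varphi-calculation}, and $\varphi_\Ga(u_e) = u_e = y_ex_e$ since $\varphi_\Ga$ is a map of $R_E$-rings, so the polynomial factors land correctly. Finally one notes that all the $P$ factors produced commute with each other and with $z^{\ga(g')}$-versus-$z^{\ga(v_k)}$ reordering is harmless because the $P_{mn}(u_e)$ involve only the variable $u_e = y_ex_e$, which commutes with $x_f, y_f$ for $f \neq e$; this justifies moving all polynomial factors to the front.

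The main obstacle is purely bookkeeping: verifying that the product of polynomial prefactors emerging from the coordinatewise multiplication over $e \in E$ matches the double product indexed by unordered pairs $\{v, w\} \subseteq V'$ in the stated formula, and that no spurious $P_{mn}$ factors with $mn \geq 0$ (which are $1$ anyway by Lemma \ref{lem:Pmn}) cause trouble. This amounts to a careful application of condition \eqref{eq:condition_M}: a proper edge $e$ contributes a nontrivial polynomial $P_{\ga_{ev},\ga_{ew}}(u_e)$ to the pair $\{v,w\} = \{s_1(e), t_1(e)\}$ precisely when $\ga_{ev}\ga_{ew} < 0$, which always holds for proper edges, while leaves and edges with an endpoint outside $V'$ contribute $1$. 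There is no genuine analytic or structural difficulty; the identity is a formal consequence of Lemma \ref{lem:Pmn}, the multiplicativity of $\varphi_\Ga$, and the sparsity conditions \eqref{eq:condition_C}--\eqref{eq:condition_M} on $\ga$.
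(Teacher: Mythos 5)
Your proof is correct. The only difference from the paper's argument is organizational: the paper proves the identity in one shot by writing $\varphi_\Ga(\prod_{v\in V'}^{<}X_v)=\prod_{v\in V'}^{<}\prod_{e\in E}z_e^{(\ga_{ev})}$, interchanging the two products (legitimate because Weyl generators with distinct indices commute), and then applying Lemma \ref{lem:Pmn} once per edge, using condition \eqref{eq:condition_M} to see that for each fixed $e$ at most two factors $z_e^{(\ga_{ev})}$, $v\in V'$, are nontrivial; you instead induct on $\#V'$, peeling off the largest vertex and applying Lemma \ref{lem:Pmn} coordinatewise to the product $z^{\ga(g')}z^{\ga(v_k)}$, again invoking \eqref{eq:condition_M} to identify the new polynomial prefactors. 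Both arguments rest on exactly the same ingredients (Lemma \ref{lem:Pmn}, commutation of distinct-index generators, sparsity of the rows of $\ga$), so neither buys anything substantive over the other; the paper's version is slightly shorter, while yours makes the bookkeeping of which pairs $\{v,w\}$ acquire a factor at each step completely explicit. One small wording point: your claim that $\ga(g')_e$ is ``a sum over at most one nonzero term'' is literally false when both endpoints of $e$ lie in $V'\setminus\{v_k\}$ (then it has two nonzero terms), but in that case $\ga_{ev_k}=0$ and $P_{\ga(g')_e,0}=1$, which you handle correctly anyway, so this is only an imprecision of phrasing, not a gap.
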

\begin{proof} By definition of $\varphi_{\Ga}$ we have
\[ 
\varphi_\Ga(\prod_{v\in V'}^{<} X_v) =\prod_{v\in V'}^{<} z^{\ga(v)}   
=\prod_{v\in V'}^{<}\prod_{e\in E} z_e^{(\ga_{ev})} \] 
Since $[x_e,x_{e'}]=[x_e,y_{e'}]=[y_e,y_{e'}]=0$ for $e,e'\in E$, $e\neq e'$ we may switch the order of the products to get
\begin{equation}\label{eq:varphi-calc-step}
\prod_{e\in E} \prod_{v\in V'}^{<} z_e^{(\ga_{ev})}.
\end{equation}
If $e$ is an edge between two vertices $v,v'$ in $V'$ with $v<v'$ we get
\[\prod_{v\in V'}^{<} z_e^{(\ga_{ev})} = z_e^{(\ga_{ev})}z_e^{(\ga_{ev'})} = P_{\ga_{ev},\ga_{ev'}}(u_e) z^{(\ga_{ev}+\ga_{ev'})}\] 
by Lemma \ref{lem:Pmn}. For the other edges there are at most one $v\in V'$ such that $\ga_{ev}\neq 0$.
Thus \eqref{eq:varphi-calc-step} equals
\[\Big( \prod_{\substack{v,w\in V'\\ v<w}}\prod_{\substack{e\in E\\ \{s_1(e),t_1(e)\}=\{v,w\}}}
P_{\ga_{ev},\ga_{ew}}(u_e) \Big)\cdot  \prod_{e\in E} z_e^{\big(\sum_{v\in V'}\ga_{ev}\big)} \] 
which finishes the proof.

\end{proof}

We now prove the main result in this section.

\begin{thm}\label{thm:local-surjectivity}
Let $\Gamma=(V,E,s,t)$ be a multiquiver with incidence matrix $\gamma$ and let $\mathcal{A}(\Gamma)$ be the corresponding TGW algebra. Let $\varphi_\Gamma:\mathcal{A}(\Gamma)\to A_E(\K)$ be the representation from Theorem \ref{thm:map_to_Weyl_algebra}. Let $\bar\Ga$ denote the underlying undirected graph of $\Ga$.
Then the following two statements are equivalent:
\begin{enumerate}[{\rm (i)}]
\item $\varphi_\Ga$ is locally surjective;
\item $\bar\Gamma$ is acyclic.
\end{enumerate}
\end{thm}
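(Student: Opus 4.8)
The plan is to analyse, for each degree $g\in\Z V$, the $R_E$-submodule $\varphi_\Ga(\mathcal{A}(\Ga)_g)$ of $A_E(\K)_{\ga(g)}$. Recall that $A_E(\K)_{\ga(g)}=R_E\,z^{\ga(g)}$ is free of rank one over $R_E$, so, $\varphi_\Ga$ being a map of $R_E$-rings, we may write $\varphi_\Ga(\mathcal{A}(\Ga)_g)=I_g\,z^{\ga(g)}$ for a unique ideal $I_g\subseteq R_E$. By Lemma \ref{lem:monomials}, $\mathcal{A}(\Ga)_g$ is generated over $R_E$ by the reduced monomials of degree $g$, so $I_g$ is generated by the polynomials $p_M$ determined by $\varphi_\Ga(M)=p_M\,z^{\ga(g)}$. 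Hence $\varphi_\Ga$ is locally surjective if and only if $I_g=R_E$ for every $g$. The driving fact is Lemma \ref{lem:Pmn}: if $m>0>n$ (resp.\ $m<0<n$) then $P_{mn}$ is nonconstant with all its zeros positive (resp.\ non-positive) integers, so in particular for all $m,n$ the polynomials $P_{mn}$ and $P_{nm}$ are coprime in $\K[u]$.

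\textbf{(ii)$\Rightarrow$(i).} Assume $\bar\Ga$ is acyclic; I will prove that $z^{\ga(g)}\in\varphi_\Ga(\mathcal{A}(\Ga)_g)$ for every $g$ — equivalently $I_g=R_E$ — by induction on $\#\Supp(g)$, the case $\Supp(g)=\emptyset$ being trivial. For the step, let $V'=\Supp(g)$ and let $F$ be the graph on $V'$ whose edges are the proper edges of $\Ga$ with both endpoints in $V'$; since $\bar\Ga$ is acyclic, $F$ is a finite forest, so it has a vertex $v_0$ of degree at most $1$ in $F$. Set $g'=g-g_{v_0}v_0$, let $W$ be $X_{v_0}$ or $Y_{v_0}$ according to the sign of $g_{v_0}$, and choose $M'$ with $\varphi_\Ga(M')=z^{\ga(g')}$ by induction. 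Expanding $\varphi_\Ga(W^{|g_{v_0}|}M')$ and $\varphi_\Ga(M'W^{|g_{v_0}|})$ edge by edge as in the proof of Lemma \ref{lem:varphi-calculation}, the choice of $v_0$ ensures that every edge is hit nontrivially by at most one of the two tensor factors, \emph{except} possibly the unique edge $e_0$ (if it exists) joining $v_0$ to a second vertex $w_0\in V'$; hence $\varphi_\Ga(W^{|g_{v_0}|}M')=P_{ab}(u_{e_0})\,z^{\ga(g)}$ and $\varphi_\Ga(M'W^{|g_{v_0}|})=P_{ba}(u_{e_0})\,z^{\ga(g)}$ with $a=g_{v_0}\ga_{e_0,v_0}$, $b=g_{w_0}\ga_{e_0,w_0}$ (and both left sides already equal $z^{\ga(g)}$ if $v_0$ has no neighbour in $V'$). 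By coprimality there are $f,h\in R_E$ with $fP_{ab}(u_{e_0})+hP_{ba}(u_{e_0})=1$, and then $\varphi_\Ga(fW^{|g_{v_0}|}M'+hM'W^{|g_{v_0}|})=z^{\ga(g)}$, which closes the induction.

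\textbf{(i)$\Rightarrow$(ii).} I argue by contraposition. Suppose $\bar\Ga$ has a cycle on distinct vertices $v_1,\dots,v_n$ with a proper edge $e_i$ between $v_i$ and $v_{i+1}$ for each $i$ (indices mod $n$), and take $g=v_1+\cdots+v_n$. Since every coefficient of $g$ is $1$, the disjointness condition in the definition of a reduced monomial forces the reduced monomials of degree $g$ to be exactly the $X_{v_{\pi(1)}}\cdots X_{v_{\pi(n)}}$, $\pi\in S_n$; by Lemma \ref{lem:varphi-calculation} the corresponding generator $p_\pi$ of $I_g$ is a product of single-variable polynomials $P_{\ga_{ev},\ga_{ew}}(u_e)$, one for each proper edge $e$ joining two of the $v_1,\dots,v_n$, where $v$ is the endpoint of $e$ occurring first in the order $\pi$. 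Let $q_i,p_i$ be the target and source multiplicities of $e_i$; set $c_{e_i}$ to be a zero of $P_{-p_i,q_i}$ (a non-positive integer) if the target of $e_i$ is $v_{i+1}$, and a zero of $P_{q_i,-p_i}$ (a positive integer) if the target of $e_i$ is $v_i$; set $c_e=0$ for every other edge. For any $\pi$ it is impossible that $v_{i+1}$ precedes $v_i$ for \emph{all} $i$ (the inequalities $v_2<v_1,\,v_3<v_2,\,\dots,\,v_1<v_n$ cannot all hold), so for some $i$ the vertex $v_i$ precedes $v_{i+1}$; for that $i$ the factor of $p_\pi$ at $e_i$ is $P_{\ga_{e_i,v_i},\ga_{e_i,v_{i+1}}}(u_{e_i})$, which by construction vanishes at $c_{e_i}$. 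Thus $p_\pi$ lies in the proper ideal $\mathfrak{m}=(u_e-c_e\mid e\in E)$ for every $\pi$, so $I_g\subseteq\mathfrak{m}$, $I_g\neq R_E$, and $\varphi_\Ga$ is not locally surjective.

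\textbf{The main obstacle.} The crux is that a single reduced monomial essentially never maps onto a free generator $z^{\ga(g)}$: by Lemma \ref{lem:varphi-calculation} its image acquires a nonconstant polynomial factor from each proper edge inside $\Supp(g)$, so one is forced to combine several monomials. The leaf-peeling induction shows that such a combination exists when $\bar\Ga$ is acyclic — the B\'ezout step being powered by the coprimality of $P_{mn}$ and $P_{nm}$ — while the cycle construction shows that it does not otherwise; the delicate point there is that one must annihilate \emph{all} the generators $p_\pi$ at a \emph{single} point, which is exactly why one needs the orientation-free statement that the comparisons around a cycle cannot be made consistent. (Alternatively, $\mathrm{(ii)}\Rightarrow\mathrm{(i)}$ can be extracted from Lemma \ref{lem:acyclic_equiv} by choosing the total order so that the surviving polynomial factors avoid a hypothetical common zero; the induction above seems cleaner and needs no finiteness assumption on $\Ga$.)
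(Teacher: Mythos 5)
Your proof is correct, but it takes a genuinely different route from the paper, most notably in the direction (ii)$\Rightarrow$(i). The paper first reduces to the case $g_v\ge 0$ via Lemma \ref{lem:alpha_lemma}, then to $V$ finite with all $g_v=1$, and then shows that the images of \emph{all} permutation products $X_{\pi(1)}\cdots X_{\pi(m)}$ have no common zero, using the parity-function characterization of acyclicity (Lemma \ref{lem:acyclic_equiv}) together with the weak Nullstellensatz over $\bar\K$. Your leaf-peeling induction on $\#\Supp(g)$ replaces all of this by a rank-one observation: only the single edge $e_0$ at the peeled vertex produces a nonconstant factor, and the two elements $W^{|g_{v_0}|}M'$ and $M'W^{|g_{v_0}|}$ give the coprime polynomials $P_{ab}(u_{e_0})$ and $P_{ba}(u_{e_0})$, so a one-variable B\'ezout identity suffices; this handles mixed-sign $g$ uniformly, needs no reduction steps and no Nullstellensatz, at the cost of producing the generator $z^{\ga(g)}$ by a recursive combination rather than an explicit formula over monomials. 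For (i)$\Rightarrow$(ii) you follow the paper's skeleton (same degree $g$, same generators via Lemmas \ref{lem:monomials} and \ref{lem:varphi-calculation}), but instead of invoking Lemma \ref{lem:acyclic_equiv} and building the ideal $J$ from the polynomials $P_{e_\prec}^\pm$, you exhibit a single point $(c_e)_{e\in E}$ (integer roots on the cycle edges, $0$ elsewhere) at which every generator $p_\pi$ vanishes, the key being the transitivity contradiction ``$v_{i+1}\prec v_i$ for all $i$ is impossible'' — which is exactly the combinatorial core of the paper's Lemma \ref{lem:acyclic_equiv}, here inlined; evaluating at a point also makes properness of the resulting ideal immediate, including when $E$ is infinite. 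The only cosmetic caveats: the edges $e_1,\dots,e_n$ of the cycle should be taken distinct (automatic for $n\ge 3$, and part of the definition of an NND cycle for $n=2$), and your degree-$\le 1$ vertex argument tacitly uses that acyclicity excludes parallel edges, which is indeed covered by the paper's notion of cycle.
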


\begin{proof}
(ii)$\Rightarrow$(i): Assume $\bar\Gamma$ has no cycles. Put $\varphi=\varphi_\Gamma$. We have to prove that
\begin{equation}\label{eq:pfb_1}
\varphi\big(\mathcal{A}(\Gamma)_g\big)=A_E(\K)_{\ga(g)},
\end{equation}
for all $g=\sum_{v\in V} g_v v\in\Z V$.
We first reduce to the case when $g_v\ge 0$ for all $v\in V$.
Decompose $g=g_++g_-$ where $g_+=\sum_{v\in V} (g_+)_v v$ $(g_+)_v=\max(0,g_v)$ and $g_-=g-g_+$. By Lemma \ref{lem:monomials},
\begin{equation}\label{eq:pfb_3}
\varphi(\TGWA{\mu}{R}{\si}{t}_g)=\varphi(\TGWA{\mu}{R}{\si}{t}_{g_-})\varphi(\TGWA{\mu}{R}{\si}{t}_{g_+}).
\end{equation}
Assuming \eqref{eq:pfb_1} holds for $g_+$ and $-g_-$ and using that $\varphi$ is a map of rings with involution, \eqref{eq:pfb_3} and Lemma \ref{lem:alpha_lemma} imply that
\begin{equation}\label{eq:pfb_4}
\varphi(\TGWA{\mu}{R}{\si}{t}_g)=A_E(\K)_{\ga(g_-)}A_E(\K)_{\ga(g_+)}=A_E(\K)_{\ga(g)}.
\end{equation}
Thus we may assume that $g_i\ge 0$ for all $i\in V$.

Next we reduce to the case when $V$ is a finite set and $g_i>0$ for each $i\in V$ by the following argument. Let $V'=\{i\in V\mid g_i\neq 0\}$ and $\gamma'=(\ga_{ev})_{e\in E, v\in V'}$. Clearly $V'$ is a finite set. Let $\Gamma'$ be the multiquiver corresponding to $\gamma'$. In other words, $\Gamma'$ is obtained from $\Gamma$ by removing all vertices $v$ for which $g_v=0$ and turning edges at such $v$, if any, into (possibly disconnected) leaves.  The multiquiver $\Ga'$ has no NND cycles either.
Let $\mathcal{A}(\Gamma')$ be the corresponding TGW algebra with homomorphism $\varphi'=\varphi_{\Gamma'}$.
There is a commutative triangle
\begin{equation}
\begin{aligned}
\begin{tikzcd}[ampersand replacement=\&,
               column sep=small]
\mathcal{A}(\Gamma')
 \arrow{rr}{\psi}
 \arrow{dr}[swap]{\varphi'} 
 \& \& \mathcal{A}(\Gamma)
  \arrow{dl}{\varphi} \\ 
\& A_E(\K) \& 
\end{tikzcd}
\end{aligned}
\end{equation}
where $\psi$ is the inclusion map $\psi(X_v)=X_v, \psi(Y_v)=Y_v$ for all $v\in V'$, and $\psi(r)=r$ for all $r\in R_E$. 
Note that $g=\sum_{v\in V} g_v v=\sum_{v\in V'} g_v v\in \Z V'\subseteq \Z V$.
We have $\varphi(\mathcal{A}(\Ga)_g)=(\varphi\psi)(\mathcal{A}(\Ga')_g)=\varphi'(\mathcal{A}(\Ga')_g)$. 
Thus we may without loss of generality assume that $V$ is finite, and $g_i>0$ for all $i$.
For notational purposes we may assume that $V=\{1,2,\ldots,m\}$.

Finally, we reduce to the case when $g_i=1$ for all $i\in V$. Since $A_E(\K)_{\ga(g)}=R_E z^{\ga(g)}$, relation \eqref{eq:pfb_1} follows if we prove that
\begin{equation}\label{eq:pfb_5}
z^{\ga(g)}\in \sum_{\pi\in S_m} R_E \varphi(X_{\pi(1)}^{g_{\pi(1)}}\cdots X_{\pi(m)}^{g_{\pi(m)}})
\end{equation}
where $S_m$ denotes the symmetric group.
If some $g_i>1$ we may replace $g_i$ by $1$ and the $i$:th column of $\ga$ by
$(g_i\ga_{ei})_{e\in E}$.
The new matrix $\ga_g$ still satisfies condition \eqref{eq:condition_M} and the identity \eqref{eq:pfb_5} for $\ga_g$ will coincide with the original one.
Therefore we may assume that $g_i=1$ for all $i\in V$.

By Lemma \ref{lem:varphi-calculation}, we have
\begin{equation}\label{eq:pfb_7}
\varphi(X_{\pi(1)}\cdots X_{\pi(m)}) =
\prod_{i<j}\prod_{e\in E} P_{\ga_{e\pi(i)},\ga_{e\pi(j)}}(u_e) z^{\ga(g)}.
\end{equation}
If $\pi(i)$ or $\pi(j)$ is not incident to $e$, then $P_{\ga_{e\pi(i)},\ga_{e\pi(j)}}=1$. If $e$ is a proper edge between $\pi(i)$ and $\pi(j)$, $i<j$, then by \eqref{eq:parity-alt} and Lemma \ref{lem:Pmn},
\[
\mathcal{V}(P_{\ga_{e\pi(i)},\ga_{e\pi(j)}})\subseteq 
\begin{cases}
\Z_{\le 0},&\mathcal{P}_\pi(e) = 1 \\ 
\Z_{>0},&\mathcal{P}_\pi(e) = -1 
\end{cases}
\]
where $\mathcal{V}(P)$ denote the set of zeroes in the algebraic closure $\bar\K$ of a polynomial $P\in \K[u]$, and we identified $\pi\in S_m$ with the total order $\prec$ on $V$ given by $\pi(i)\prec \pi(j)$ for all $i<j$.
Therefore, by (ii) and Lemma \ref{lem:acyclic_equiv},
the polynomials $\big\{\prod_{i<j}\prod_{e\in E} P_{\ga_{e\pi(i)},\ga_{e\pi(j)}}(u_e)\big\}_{\pi\in S_m}$ 
have no common zeroes in $\bar\K^E$.
By the weak Nullstellensatz, the ideal these polynomials generate in $R_E$ contains $1$. Hence
\[z^{\ga(g)}\in \sum_{\pi\in S_m} R_E \varphi_{\Ga}(X_{\pi(1)}\cdots X_{\pi(m)}).\] 
This finishes the proof that $\varphi_\Gamma$ is locally surjective.

(i)$\Rightarrow$(ii): 
Assume that $\Gamma$ has a not necessarily directed cycle $\Ga'=(V',E',s',t')$.
We will prove that $\varphi_\Ga$ is not locally surjective by showing that
for $g=\sum_{v\in V'} v$ we have
\[\varphi_\Ga(\mathcal{A}(\Ga)_g)\subseteq J \cdot A_E(\K)_{\ga(g)}\]
where $J$ is a proper ideal of $R_E$.
By Lemma \ref{lem:monomials},
\[\mathcal{A}(\Ga)_g=\sum_{\prec} R_E \cdot \prod_{v\in V'}^\prec X_v \] 
where we sum over all total orders $\prec$ on $V'$.
Thus, by Lemma \ref{lem:varphi-calculation},
\[\varphi_\Ga(\mathcal{A}(\Ga)_g)=\sum_{\prec} R_E\cdot
\Big( \prod_{\substack{v,w\in V'\\ v<w}}\prod_{\substack{e\in E\\ \{s_1(e),t_1(e)\}=\{v,w\}}} 
  P_{\ga_{ev},\ga_{ew}}(u_e)\Big) z^{\ga(g)}\] 
Since $P_{\ga_{ev},\ga_{ew}}=1$ unless $v$ and $w$ are both incident to $E$, we may without loss of generality assume that $\Ga'=\Ga$. In particular every edge is proper, and $V$ and $E$ are finite.
We have
\begin{equation}\label{eq:varphi-proof-identity}
\prod_{\substack{v,w\in V\\ v<w}}\prod_{\substack{e\in E\\ \{s_1(e),t_1(e)\}=\{v,w\}}} 
  P_{\ga_{ev},\ga_{ew}}(u_e) = 
  \prod_{\substack{e\in E\\ \mathcal{P}_<(e)=1}} P_e^+
  \prod_{\substack{e\in E\\ \mathcal{P}_<(e)=-1}} P_e^-  
\end{equation} 
where 
\[P_e^+ = P_{\ga_{e,s_1(e)},\ga_{e,t_1(e)}}(u_e),\qquad  
  P_e^- = P_{\ga_{e,t_1(e)},\ga_{e,s_1(e)}}(u_e). \] 
By Lemma \ref{lem:acyclic_equiv}, there exists a function $f:E\to \{-1,1\}$ such that $f\neq \mathcal{P}_\prec$ for all total orders $\prec$ on $V$. This means that for any total order $\prec$ on $V$, there exists $e_{\prec}\in E$ such that $\mathcal{P}_\prec (e_\prec ) = -f(e_{\prec})$.
Define
\begin{equation}\label{eq:ideal-J}
J=\sum_{\prec} R_E\cdot P_\prec 
\end{equation}
where 
\[
P_\prec = 
\begin{cases}
P_{e_\prec }^+, & \text{if $\mathcal{P}_\prec (e_\prec ) = 1 $} \\ 
P_{e_\prec }^-, & \text{if $\mathcal{P}_\prec (e_\prec ) = - 1$}
\end{cases}
\] 
We claim that $J$ is a proper ideal of $R_E$.
Assume that $\prec_1$ and $\prec_2$ are total orders on $V$ such that $e_{\prec_1}=e_{\prec_2}=e$.
Then $\mathcal{P}_{\prec_1}(e)=-f(e)=\mathcal{P}_{\prec_2}(e)$ and hence $P_{\prec_1}=P_{\prec_2}$.
Therefore $J$ is proper.
\end{proof}

\begin{example}
$\varphi_\Ga$ is locally surjective in
Examples (6)-(7) of Section \ref{sec:examples}, but not in Examples \ref{ex:graph1}-\ref{ex:graph3}.
\end{example}

\begin{example} We illustrate the proof of the theorem by considering a more detailed example.
Let $V=\{1,2,3\}$, $E=\{a,b,c\}$ and let $\Ga=(V,E,s,t)$ be the following multiquiver:
\[
\Ga:\;\;\begin{tikzpicture}[baseline=(60:1)]
\node[vertex] (A) at (0,0) [label=below left:$1$] {};
\node[vertex] (B) at (60:2) [label=above:$2$] {};
\node[vertex] (C) at (0:2) [label=below right:$3$] {};

\draw[edge] (A) -- node[left,very near start] {$2$} node[left] {$a$}  node[left,very near end] {$3$} (B);
\draw[edge] (C) -- node[right,very near start] {$2$} node[right] {$b$} node[right,very near end] {$1$} (B);
\draw[edge] (A) -- node[below,very near start] {$1$} node[below] {$c$}  node[below,very near end] {$1$} (C);
\end{tikzpicture}
\qquad 
\ga=\begin{bmatrix}
-2 & 3 & 0  \\ 
 0 & 1 &-2  \\ 
-1 & 0 & 1   
\end{bmatrix}
\] 
Let $\mathcal{A}(\Ga)$ be the TGW algebra associated to $\Ga$, and $\varphi_\Ga:\mathcal{A}(\Ga)\to A_E(\K)$ be the natural representation by differential operators. 
Since $\bar\Ga$ has a cycle, we expect $\varphi_\Ga$ to not be locally surjective. We check it using the methods and notation of the proof of Theorem \ref{thm:local-surjectivity}.
By definition of $\varphi_\Ga$ and relations in the Weyl algebra $A_E(\K)$ we have
\begin{align*}
\varphi_\Ga(X_1X_2X_3)&=y_a^2y_c\cdot x_a^3x_b\cdot y_b^2x_c \\ 
&= y_a^2x_a^3\cdot x_by_b^2\cdot y_cx_c \\ 
&= u_a(u_a+1)(u_b-1)u_c\cdot x_ay_b
\end{align*}
Recall the definition \eqref{eq:parity-def} of the parity function 
$\mathcal{P}_{\prec}$ attached to a total order $\prec$ on $V$.
Below we abbreviate the total order $i\prec j\prec k$ by $ijk$ and write
$\mathcal{P}_{ijk}(a,b,c)$ for $\big(\mathcal{P}_{ijk}(a),\mathcal{P}_{ijk}(b),\mathcal{P}_{ijk}(c)\big)$. Similar calculations to the above give
\begin{align*}
\varphi_\Ga(X_1X_2X_3)&=u_a(u_a+1)\cdot (u_b-1)\cdot u_c \cdot x_ay_b,
  & \mathcal{P}_{123}(a,b,c)&=(1,-1,1), \\ 
\varphi_\Ga(X_1X_3X_2)&=u_a(u_a+1)\cdot (u_b+1)\cdot u_c\cdot x_ay_b,
  & \mathcal{P}_{132}(a,b,c)&=(1,1,1), \\ 
\varphi_\Ga(X_2X_1X_3)&=(u_a-2)(u_a-3)\cdot (u_b-1)\cdot u_c\cdot x_ay_b,
  & \mathcal{P}_{213}(a,b,c)&=(-1,-1,1), \\ 
\varphi_\Ga(X_2X_3X_1)&=(u_a-2)(u_a-3)\cdot (u_b-1)\cdot (u_c-1)\cdot x_ay_b,
  & \mathcal{P}_{231}(a,b,c)&=(-1,-1,-1), \\ 
\varphi_\Ga(X_3X_1X_2)&=u_a(u_a+1)\cdot (u_b+1)\cdot (u_c-1)\cdot x_ay_b,
  & \mathcal{P}_{312}(a,b,c)&=(1,1,-1), \\ 
\varphi_\Ga(X_3X_2X_1)&=(u_a-2)(u_a-3)\cdot (u_b+1)\cdot (u_c-1) \cdot x_ay_b,
  & \mathcal{P}_{321}(a,b,c)&=(-1,1,-1). \\ 
\end{align*}
These identities agree with Lemma \ref{lem:varphi-calculation}.
For example, consider the total order $2\prec 1\prec 3$ on $V$.
The right hand side of \eqref{eq:varphi-calculation} is 
\[P_{\ga_{a2},\ga_{a1}}(u_a)\cdot P_{\ga_{b2},\ga_{b3}}(u_b)\cdot 
P_{\ga_{c1},\ga_{c3}}(u_c)\cdot z^{\ga(g)} \] 
Identifying $\Z V$ with $\Z^3$ we have $g=(1,1,1)$ and thus $\ga(g)=a-b\in\Z E$ so $z^{\ga(g)}=x_ay_b$. Substituting the matrix elements of $\ga$ we obtain
\[P_{3,-2}(u_a)\cdot P_{1,-2}(u_b)\cdot P_{-1,1}(u_c)\cdot x_ay_b=
(u_a-2)(u_a-3)\cdot (u_b-1)\cdot u_c\cdot x_ay_b.\] 
This also agrees with \eqref{eq:varphi-proof-identity}.

Let $f:E\to \{-1,1\}$ be given by $(f(a),f(b),f(c))=(1,-1,-1)$. One verifies that for any total order $\prec$ on $V$, $f\neq\mathcal{P}_\prec$. For any order $\prec$ let 
\[e_\prec=
\begin{cases}
a& \mathcal{P}_\prec(a)=-1 \\ 
b& \mathcal{P}_\prec(a)=1, \mathcal{P}_\prec(b)=1, \\ 
c& \text{otherwise}.
\end{cases}
\] 
Then $\mathcal{P}_\prec(e_\prec)=-f(e_{\prec})$ for any total order $\prec$ on $V$. The ideal $J\subseteq R_E$ from \eqref{eq:ideal-J} is given by 
\[J=R_E(u_a-2)(u_a-3) + R_E(u_b+1) + R_Eu_c\] 
which is a proper ideal of $R_E$, and
\[\varphi_\Ga(X_iX_jX_k)\subseteq J \cdot x_ay_b\]
for $\{i,j,k\}=\{1,2,3\}$. This shows that $\varphi_\Ga$ is not locally surjective.
\end{example}

\subsection{Universality of the pair \texorpdfstring{$(\mathcal{A}(\Gamma), \varphi_\Gamma)$}{(A(Gamma),phiGamma)}}
In this section we show that the algebras $\mathcal{A}(\Gamma)$ are universal among all TGW algebras (with polynomial base ring) that can be represented by differential operators.

\begin{thm}\label{thm:main}
Let $\mathcal{A}=\TGWA{\mu}{R}{\si}{t}$ be any TGW algebra with index set denoted $V$, such that $R$ is a polynomial algebra $R=R_E=\K[u_e\mid e\in E]$ (for some index set $E$),
and $\mu$ is symmetric. Assume that
\begin{equation}
\varphi:\mathcal{A}\to A_E(\K)
\end{equation}
is a map of $R_E$-rings with involution, where $A_E(\K)$ is the Weyl algebra over $\K$ with index set $E$.
Then $\mathcal{A}$ is consistent and
there exists a multiquiver $\Gamma=(V,E,s,t)$ with vertex set $V$ and edge set $E$ and a map
\[\xi:\mathcal{A}\to\mathcal{A}(\Gamma)\]
of $\Z V$-graded $R_E$-rings with involution
such that the following diagram commutes: 
\begin{equation}\label{eq:univ-diagram}
\begin{aligned}
\begin{tikzcd}[ampersand replacement=\&,
               column sep=small]
\mathcal{A}
 \arrow{rr}{\varphi}
 \arrow{d}[swap]{\xi} 
 \& \& A_E(\K) \\ 
\mathcal{A}(\Gamma)
 \arrow{rru}[swap]{\varphi_{\Gamma}}
 \& \& 
\end{tikzcd}
\end{aligned}
\end{equation}
Moreover, if $\varphi(X_i)\neq 0$ for each $i\in V$, then $\Gamma$ is uniquely determined and $\mu_{ij}=1$ for all $i,j\in V$. 

\end{thm}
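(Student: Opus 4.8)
The plan is to recover the combinatorial datum $\Gamma$ from $\varphi$ and then construct $\xi$ by hand. First, since $\varphi$ is a map of $R_E$-rings we have $\varphi\circ\rho_{\mathcal A}=\rho_{A_E}$ with $\rho_{A_E}\colon R_E\to A_E(\K)$, $u_e\mapsto y_ex_e$, injective; hence $\rho_{\mathcal A}$ is injective, i.e.\ $\mathcal A$ is consistent, and I identify $R_E$ with its image. For each $i\in V$ the element $\varphi(X_i)\varphi(X_i)^\ast=\varphi(X_iY_i)=\rho_{A_E}(\si_i(t_i))$ lies in $A_E(\K)_0$ and is nonzero (as $t_i\neq0$); since $A_E(\K)$ is a domain, any $b$ with $bb^\ast\in A_E(\K)_0$ must be $\Z E$-homogeneous (compare the largest and smallest degrees occurring in $b$). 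So $\varphi(X_i)=c_iz^{\gamma(i)}$ with $0\neq c_i\in R_E$ and $\gamma(i)\in\Z E$; this defines $\gamma\in\Hom_\Z(\Z V,\Z E)$. In particular $\varphi(X_i)\neq0$ always, so the extra hypothesis in the theorem's last sentence is automatically fulfilled.

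Applying $\varphi$ to $X_iu_e=\si_i(u_e)X_i$ and cancelling forces $\si_i(u_e)=u_e-\gamma_{ei}$; and $\varphi(Y_i)=\varphi(X_i)^\ast=\si_i^{-1}(c_i)z^{-\gamma(i)}$, whence, evaluating $z^{\pm\gamma(i)}z^{\mp\gamma(i)}$ in the Weyl algebra and using injectivity of $\varphi|_{R_E}$, one gets $\si_i(t_i)=c_i^2\,\si_i(F_i)$, where $F_i:=\prod_e u_{ei}$ is the expression \eqref{eq:t-definition} read off from the matrix $\gamma$. The essential point is that $\gamma$ satisfies condition \eqref{eq:condition_M}, which I would establish as follows. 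Since $R_E$ is a domain, $(R_E,\si,t)$ is regular, so by Theorem \ref{thm:consistency} the consistency relation \eqref{eq:consistency_rel1} holds in $\mathcal A$. Suppose \eqref{eq:condition_M} failed at an edge $e_0$, i.e.\ $\gamma_{e_0i}$ and $\gamma_{e_0j}$ were both equal to positive integers $m,n$ for some $i\neq j$ (the case of two negative values is handled in the same way). Plugging $\si_i(t_i)=c_i^2\si_i(F_i)$ (and likewise for $j$) into \eqref{eq:consistency_rel1} and comparing, for each $k\in\Z$, the multiplicity of the prime $(u_{e_0}-k)$ on the two sides gives, after discarding the scalar $\mu_{ij}\mu_{ji}$, an identity $2(a_{k-n}-a_k)+2(b_{k-m}-b_k)=\eta_k$, where $a_k,b_k\ge0$ are the multiplicities of $(u_{e_0}-k)$ in $c_i,c_j$ and $\eta_k$ depends only on $m,n,k$; summing $k\,\eta_k$ over $k$ telescopes the left side to $2n\sum_k a_k+2m\sum_k b_k\ge0$ and evaluates the right side to $-2mn<0$, a contradiction. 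I expect this verification of \eqref{eq:condition_M} to be the main obstacle.

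Granting \eqref{eq:condition_M}, let $\Gamma=(V,E,s,t)$ be the multiquiver with incidence matrix $\gamma$; then $\si^\Gamma=\si$, $t_i^\Gamma=F_i$, and $t_i=\si_i^{-1}(c_i)^2t_i^\Gamma$. Applying $\varphi$ to $X_iY_j-\mu_{ij}Y_jX_i$ ($i\neq j$) and using \eqref{eq:condition_M} (which makes $z^{\gamma(i)}z^{-\gamma(j)}=z^{-\gamma(j)}z^{\gamma(i)}$) yields $\si_j(c_i)\si_i(c_j)=\mu_{ij}c_ic_j$ in $R_E$; comparing top-degree homogeneous parts, which are unchanged by the translations $\si_i$, forces $\mu_{ij}=1$ and hence $\si_j(c_i)\si_i(c_j)=c_ic_j$. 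Now define $\xi\colon\mathcal A\to\mathcal A(\Gamma)$ on generators by $\xi(X_i)=c_iX_i$, $\xi(Y_i)=\si_i^{-1}(c_i)Y_i$, $\xi|_{R_E}=\mathrm{id}$. The identities above show $\xi$ respects the defining relations of $\mathcal A_\mu(R_E,\si,t)$ — for instance $\xi(Y_i)\xi(X_i)=\si_i^{-1}(c_i)^2t_i^\Gamma=t_i$, and $\xi(X_i)\xi(Y_j)=\mu_{ij}\xi(Y_j)\xi(X_i)$ because $\mu_{ij}=1$ and $\si_j(c_i)\si_i(c_j)=c_ic_j$ — and, exactly as in the proof of Theorem \ref{thm:map_to_Weyl_algebra}, $\xi$ passes from the construction to the quotient $\mathcal A(\Gamma)$ since the latter is a domain. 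By construction $\xi$ is $\Z V$-graded, commutes with $\ast$, and satisfies $\varphi_\Gamma\circ\xi=\varphi$ on generators ($\varphi_\Gamma(\xi(X_i))=c_iz^{\gamma(i)}=\varphi(X_i)$ and $\varphi_\Gamma(\xi(u_e))=y_ex_e=\varphi(u_e)$), hence everywhere. Finally, for uniqueness: if $\Gamma'$ is another multiquiver on $(V,E)$ admitting a map $\xi'\colon\mathcal A\to\mathcal A(\Gamma')$ as in the statement, then $\xi'(X_i)\in\mathcal A(\Gamma')_i$ is nonzero, so $\varphi(X_i)=\varphi_{\Gamma'}(\xi'(X_i))$ is homogeneous of degree $\gamma'(i)$; thus $\gamma'=\gamma$ and $\Gamma'\cong\Gamma$.
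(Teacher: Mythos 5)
Your proposal is correct, and while its overall skeleton (read $\gamma$ off from the $\Z E$-degree of $\varphi(X_i)$, verify condition \eqref{eq:condition_M}, get $\mu_{ij}=1$ by comparing leading terms, set $\xi(X_i)=c_iX_i$, descend using the gradation form and the fact that $\mathcal{A}(\Gamma)$ is a domain, then read off uniqueness from degrees) is the same as the paper's, the two technically substantive steps are handled by genuinely different arguments. For homogeneity of $\varphi(X_i)$ the paper expands $\varphi(X_v)=\sum_g z^{(g)}s_{v,g}$ and applies $\ad u_e$ to isolate a single degree, whereas you use the involution: $\varphi(X_i)\varphi(X_i)^\ast=\rho(\si_i(t_i))\neq 0$ lies in degree zero, and a largest-versus-smallest degree comparison in the domain $A_E(\K)$ forces homogeneity; this also shows that $\varphi(X_i)\neq 0$ holds automatically (since $t_i\neq 0$ by the definition of a TGW datum and $R_E\hookrightarrow A_E(\K)$ is injective), so under the paper's definitions the hypothesis in the last sentence of the statement is automatically satisfied — a harmless strengthening that the paper does not exploit, its proof instead allowing $\varphi(X_v)=0$ and setting the corresponding column of $\gamma$ to zero. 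More significantly, for condition \eqref{eq:condition_M} the paper works with the image of the relation $X_iY_j=\mu_{ij}Y_jX_i$ in the Weyl algebra, the sign pattern of the zeroes of the polynomials from Lemma \ref{lem:Pmn}, and a maximality argument on a finite set of integer roots; you instead invoke Theorem \ref{thm:consistency} — legitimately, since $\mathcal{A}$ is consistent by hypothesis and regular because $R_E$ is a domain and $t_i\neq 0$ — to obtain \eqref{eq:consistency_rel1}, substitute $\si_i(t_i)=c_i^2\si_i(F_i)$, and compare multiplicities of the linear factors $(u_{e_0}-k)$; the weighted sum does check out, namely $\sum_k k\bigl(2(a_{k-n}-a_k)+2(b_{k-m}-b_k)\bigr)=2n\sum_k a_k+2m\sum_k b_k\ge 0$ while $\sum_k k\,\eta_k=-2mn<0$, giving a genuine contradiction, with the two-negative case symmetric. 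Your route buys a short, purely numerical contradiction at the price of importing the consistency equations from \cite{FutHar2012b}; the paper's argument is self-contained inside the Weyl-algebra computation. The remaining steps — the identity $\si_j(c_i)\si_i(c_j)=\mu_{ij}c_ic_j$, the leading-term argument for $\mu_{ij}=1$, the verification that $\xi$ respects the relations, the descent via Theorem \ref{thm:nondeg} and Corollary \ref{cor:domain} (your phrase ``passes from the construction to the quotient'' should read: descends from $\TGWC{\mu}{R_E}{\si}{t}$ to $\mathcal{A}$ because the codomain $\mathcal{A}(\Gamma)$ is a domain), commutativity of the diagram, and uniqueness of $\Gamma$ — agree with the paper up to the normalization $c_i=\si_i(s_i)$.
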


\begin{proof}
(a) Since $\varphi$ is a map of $R_E$-rings, $\mathcal{A}$ is consistent.
We will define the graph $\Gamma$ by specifying its incidence matrix 
$\gamma=(\gamma_{ev})_{e\in E, v\in V}$.
Let $v\in V$. If $\varphi(X_v)=0$ (or equivalently $\varphi(Y_v)=0$, since $\varphi$ is a map of rings with involution) then we define 
$\gamma_{ev}=0$ for all $e\in E$.
Assume $\varphi(X_v)\neq 0$.
For each $i\in V$ we have
\begin{equation}\label{eq:pf_alpha1}
\varphi(X_v)=\sum_{g\in \Z E} z^{(g)}s_{v,g},\quad\forall i\in\iv{1}{m},
\end{equation}
for some $s_{v,g}\in R_E$, only finitely many nonzero and $s_{v,g}\neq 0$ for at least one $g$.
Let $e\in E$ and act by $\ad u_e$ on both sides of \eqref{eq:pf_alpha1} to get
\begin{equation}\label{eq:pf_alpha2}
(u_e-\si_v(u_e))\varphi(X_v)=\sum_{g\in\Z E} g_ez^{(g)}s_{v,g},\quad\forall v\in V,\forall e\in E.
\end{equation}
Multiplying both sides of \eqref{eq:pf_alpha1} by $u_e-\si_v(u_e)$ from the left and subtracting the resulting equation from \eqref{eq:pf_alpha2}, and using the $\Z E$-gradation on $A_E(\K)$, we get
\begin{equation}
\big(g_e-u_e+\si_v(u_e)\big)z^{(g)}s_{v,g}=0,\quad\forall g\in\Z E,\forall v\in V,\forall e\in E.
\end{equation}
So if $s_{v,g}\neq 0$ for some $g\in\Z E$, then, since $A_E(\K)$ is a domain, 
$g_e-u_e+\si_v(u_e)=0 \forall e\in E$.
This shows that there exists exactly one $\gamma_v\in\Z E$ such that $s_{v,\gamma_v}\neq 0$. Namely, $\ga_v=\sum_{e\in E} \ga_{ev}e$ where
\begin{equation}\label{eq:pf_alpha3}
\si_v(u_e)=u_e-\ga_{ev},\quad \forall v\in V,\forall e\in E.
\end{equation} 
This defines a matrix $\ga=(\ga_{ev})_{e\in E, v\in V}$ with integer entries. From the above it follows that the column $(\ga_{ev})_{e\in E}$ is uniquely determined if $\varphi(X_v)\neq 0$. Since $\ga_v\in \Z E$, condition \eqref{eq:condition_C} holds.

Next we show that $\ga$ satisfies condition \eqref{eq:condition_M}.
For $i\in V$, put $s_i=s_{i,\ga_i}$ if $\varphi(X_i)\neq 0$ and $s_i=0$ otherwise. We have shown that
\begin{equation}
\varphi(X_i)= z^{(\ga_i)} s_i,\quad \varphi(Y_i)=\varphi(X_i)^\ast=s_iz^{(-\ga_i)}.
\end{equation}
Let $i,j\in V$, $i\neq j$, and assume $s_i,s_j\neq 0$. Applying $\varphi$ to the relation $X_iY_j=\mu_{ij}Y_jX_i$ we obtain
\[z^{(\ga_i)}s_is_jz^{(-\ga_j)}=\mu_{ij}s_jz^{(-\ga_j)}z^{(\ga_i)}s_i.\] 
Using that $\varphi$ is a map of $R_E$-rings we get
\begin{equation}\label{eq:univ-eq1}
\si_i(s_is_j)z^{(\ga_i)}z^{(-\ga_j)}=\mu_{ij}s_j\si_j^{-1}\si_i(s_i)z^{(-\ga_j)}z^{(\ga_i)}.
\end{equation}
We have
\begin{equation}
z^{(\ga_i)}z^{(-\ga_j)}=P_{ij} z^{(\ga_i-\ga_j)}
\end{equation}
and
\begin{equation}
z^{(-\ga_j)}z^{(\ga_i)}=Q_{ij} z^{(\ga_i-\ga_j)}
\end{equation}
where $P_{ij}=\prod_{e\in E} P_{e;ij}$ and $Q_{ij}=\prod_{e\in E} Q_{e;ij}$ where $P_{e;ij}, Q_{e;ij}\in \K[u_e]$. 
Substituting into \eqref{eq:univ-eq1}, canceling $z^{(\ga_i-\ga_j)}$ we obtain:
\begin{equation}\label{eq:univ-eq2}
s_i's_j' P_{ij} = \mu_{ij} \si_i^{-1}(s_j')\si_j^{-1}(s_i') Q_{ij}
\end{equation}
where $s_i'=\si_i(s_i)$ and $s_j'=\si_i(s_j)$.

Suppose for the sake of contradiction that $\gamma$ does not satisfy condition
\eqref{eq:condition_M}. Then there exist $e\in E$, $i,j\in V$, $i\neq j$, such that $\ga_{ei}\ga_{ej}>0$. We consider the case when $\ga_{ei}$ and $\ga_{ej}$ are both negative, the case when both are positive being treated analogously. 
Since $z_e^{(\ga_{ei})}z_e^{(-\ga_{ej})}=P_{e;ij}z_e^{(\ga_{ei}-\ga_{ej})}$
and 
$z_e^{(-\ga_{ej})}z_e^{(\ga_{ei})}=Q_{e;ij}z_e^{(\ga_{ei}-\ga_{ej})}$,
Lemma \ref{lem:Pmn} then implies that
\begin{equation}\label{eq:univ-key1}
\text{All roots of $P_{e;ij}$ are non-positive and all roots of $Q_{e;ij}$ are positive.}
\end{equation}
Let $S$ be the set of integers $k$ satisfying the following two conditions:
\begin{gather}
\label{eq:univ-div1}
(u_e-k)\big| s_i's_j'P_{ij},\\ 
\label{eq:univ-div2}
(u_e-k)\not\big| P_{ij}.
\end{gather}
Clearly $S$ is finite, due to \eqref{eq:univ-div1}. The set $S$ is nonempty, since it contains all roots of $Q_{e;ij}$, by \eqref{eq:univ-eq2} and \eqref{eq:univ-key1}. 
Let $k_0$ denote the largest element of $S$. In particular $k_0>0$.
From \eqref{eq:univ-div1}-\eqref{eq:univ-div2}, $u_e-k_0$ divides $s_i'$ or $s_j'$. Applying $\si_j^{-1}$ respectively $\si_i^{-1}$, using \eqref{eq:pf_alpha3}, we obtain that 
\[(u_e+\ga_{ej}-k_0)\big| \si_j^{-1}(s_i)
\quad\text{or}\quad 
(u_e+\ga_{ei}-k_0)\big| \si_i^{-1}(s_j).\] 
By \eqref{eq:univ-eq2} we get
\[ 
(u_e+\ga_{ea}-k_0)\big| s_is_jP'_{ij}
\]
for some $a\in \{i,j\}$.
By \eqref{eq:univ-key1}, $-\ga_{ea}+k_0$ is not a root of $P_{e;ij}$. Therefore $-\ga_{ea}+k_0\in S$ which contradicts the maximality of $k_0$.
This contradiction shows that $\gamma$ satisfies condition \eqref{eq:condition_M}.
In particular, using this together with \eqref{eq:univ-eq1} we have
\begin{equation}
\label{eq:univ-eq3}
[z^{(\ga_i)},z^{(-\ga_j)}]=0\quad\forall i,j\in V, i\neq j.
\end{equation}
By \eqref{eq:univ-eq3} and \eqref{eq:univ-eq1} we get
\begin{equation}\label{eq:univ-eq4}
\si_i(s_is_j)=\mu_{ij}s_j\si_j^{-1}\si_i(s_i)
\qquad\forall i,j\in V, i\neq j.
\end{equation}
Giving $E$ any order, the polynomial ring $R_E$ becomes filtered and the automorphisms $\si_i$ act as the identity on the associated graded algebra. Therefore, taking leading terms on both sides of \eqref{eq:univ-eq4} we get that
\begin{equation}\label{eq:univ-eq5}
\mu_{ij}=1\quad\text{if $s_is_j\neq 0$.}
\end{equation}
Next we construct the map $\xi:\mathcal{A}\to\mathcal{A}(\Ga)$. Let $\mathcal{X}=\{X_v\mid v\in V\}\cup \{Y_v\mid v\in V\}$ and define $\xi:\mathcal{X}\to\mathcal{A}(\Ga)$ by
\begin{equation}\label{eq:xi-def}
\xi(X_v)=X_v^\Ga s_v,\quad \xi(Y_v)=s_v Y_v^\Ga,\qquad \forall v\in V,
\end{equation}
where $X_v^\Ga$ and $Y_v^\Ga$ denote the generators of $\mathcal{A}(\Ga)$.
Extend $\xi$ uniquely to a map $\xi:F_{R_E}(\mathcal{X})\to\mathcal{A}(\Ga)$ where $F_{R_E}(\mathcal{X})$ is the free $R_E$-ring on the set $\mathcal{X}$. We must verify that the elements \eqref{eq:tgwarels} are in the kernel of $\xi$. By \eqref{eq:pf_alpha3} and \eqref{eq:sigma-definition} one checks that
\[
\xi(X_v u_e-\si_v(u_e)X_v)=
X_v^\ga s_v u_e- (u_e-\ga_{ev}) X_v^\Ga s_v=0
,\quad\forall v\in V,\;\forall e\in E,\\ 
\] 
and thus
\begin{equation}
\label{eq:univ-check1}
\xi(X_v r-\si_v(r)X_v)=0,
\qquad\forall v\in V,\;\forall r\in R_E.
\end{equation}
Similarly one verifies that
\begin{equation}
\label{eq:univ-check1b}
\xi(Y_v r-\si_v^{-1}(r)Y_v)=0,
\qquad\forall v\in V,\;\forall r\in R_E.
\end{equation}
Let $i,j\in V$, $i\neq j$. Denoting the generators of $\mathcal{A}(\Ga)$ by $X_i^\Ga, Y_i^\ga$ we have
\begin{equation}
\label{eq:univ-check2}
[\xi(X_i),\xi(Y_j)]=[X_i^\Ga s_i,s_jY_j^\Ga]=
(\si_i(s_is_j)-s_j\si_j^{-1}\si_i(s_i))X_i^\Ga Y_j^\Ga
\end{equation}
which is zero by \eqref{eq:univ-eq4} and \eqref{eq:univ-eq5}.
For $i\in V$, put $t_i^\Ga=Y_i^\Ga X_i^\Ga\in R_E$. Note $t_i$ and $t_i^\Ga$ are related as follows:
\begin{align*} 
t_i&=\varphi(t_i)=\varphi(Y_iX_i)=\varphi(Y_i)\varphi(X_i)=s_iz^{(-\ga_i)}z^{(\ga_i)}s_i=\\
&=s_i\varphi_\Ga(Y_i^\Ga)\varphi_\Ga(X_i^\Ga)s_i=s_i^2\varphi_\Ga(t_i^\Ga)=s_i^2 t_i^\Ga.
\end{align*}
Therefore 
\begin{equation}
\label{eq:univ-check3}
\xi(Y_iX_i-t_i)=\xi(Y_i)\xi(X_i)-\xi(t_i)=s_iY_i^\Ga X_i^\Ga s_i - t_i = s_i^2 t_i^\Ga-t_i=0,
\qquad\forall i\in V.
\end{equation}
Similarly one checks that
\begin{equation}
\label{eq:univ-check4}
\xi(X_iY_i-\si(t_i))=0,\qquad\forall i\in V.
\end{equation}
This shows that $\xi$ descends to a map of $R_E$-rings with involution $\xi:\TGWC{\mu}{R_E}{\si}{t}\to \mathcal{A}(\Gamma)$. As in the last part of the proof of Theorem \ref{thm:map_to_Weyl_algebra}, we may use the gradation form on $\TGWC{\mu}{R_E}{\si}{t}$ and that $\mathcal{A}(\Gamma)$ is domain (by Corollary \ref{cor:domain}) to conclude that $\xi$ descends to a map of $R_E$-rings with involution $\xi:\mathcal{A}\to\mathcal{A}(\Gamma)$. From the definition \eqref{eq:xi-def} of $\xi$ it follows that $\xi$ is a map of $\Z V$-graded algebras and that the diagram \eqref{eq:univ-diagram} is commutative.
\end{proof}

\section{Relation to enveloping algebras}\label{sec:relation-to-enveloping-algebras}

In this section we assume that $\K$ is an algebraically close field of characteristic zero.

\subsection{Primitivity of \texorpdfstring{$\mathcal{A}(\Ga)$}{A(Gamma)}}
\label{sec:primitivity}
We give a sufficient condition for $\mathcal{A}(\Ga)$ to be primitive. 
\begin{thm}\label{thm:primitivity}
Let $\Ga$ be a multiquiver such that no connected component of $\Ga$ is in equilibrium.
Then $\mathcal{A}(\Ga)$ is a primitive ring.
\end{thm}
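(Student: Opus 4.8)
The plan is to show that $\mathcal{A}(\Ga)$ acts faithfully and with a simple submodule on a suitable module, or to transport primitivity from the Weyl algebra. Under the hypothesis that no connected component of $\Ga$ is in equilibrium, Theorem \ref{thm:faithfulness} tells us that $\varphi_\Ga\colon\mathcal{A}(\Ga)\to A_E(\K)$ is \emph{injective}. So $\mathcal{A}(\Ga)$ may be regarded as a subalgebra of the Weyl algebra $A_E(\K)$, which is itself a simple (hence primitive) ring. The first idea would be: $A_E(\K)$ acts faithfully and irreducibly on a module (e.g. a suitable polynomial or Bernstein module), and one would like to restrict such a module to $\mathcal{A}(\Ga)$ and check it remains irreducible. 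However, irreducibility is not automatically inherited by a subalgebra, so this needs an extra argument. The cleanest route is probably to produce a maximal left ideal $\mathfrak{m}$ of $\mathcal{A}(\Ga)$ with trivial core, i.e. such that the annihilator of the simple module $\mathcal{A}(\Ga)/\mathfrak{m}$ is zero.

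First I would exploit the structure of $\mathcal{A}(\Ga)$ as a $\Z V$-graded $R_E$-ring with $\mathcal{A}(\Ga)_0 = R_E$ a polynomial algebra (hence a domain, by Corollary \ref{cor:consistency_of_AGamma} and the remarks after it), together with Corollary \ref{cor:domain}. The strategy: pick a maximal ideal $\mathfrak{n}$ of $R_E$ ``in general position'' and consider the induced module $M_\mathfrak{n} = \mathcal{A}(\Ga)\otimes_{R_E} (R_E/\mathfrak{n})$ or, better, a weight-module-type quotient. Using regularity (Theorem \ref{thm:regularity}) each graded component $\mathcal{A}(\Ga)_g$ is a free cyclic $R_E$-module, so $M_\mathfrak{n}$ has a ``weight space'' for each $g\in\Z V$, one-dimensional generically. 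One then chooses $\mathfrak{n}$ so that the relevant elements $\si_d(t_i)$ — the products of the $u_{ev}$ appearing in the TGW relations — evaluate to nonzero scalars on the $\Z V$-orbit of $\mathfrak{n}$, which is possible because the $t_i$ are nonzero and $R_E$ is a polynomial ring over an algebraically closed field (so only countably/finitely many ``bad'' hyperplane conditions must be avoided). This yields a simple weight module $M$ supported on a single $\Z V$-coset, on which all $X_i, Y_i$ act bijectively between consecutive weight spaces.

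The key step is then to show $\operatorname{Ann}_{\mathcal{A}(\Ga)} M = 0$. Here is where faithfulness of $\varphi_\Ga$ enters decisively. The point is that $\mathcal{A}(\Ga)\subseteq A_E(\K)$, and the module $M$ above should be realized (via $\varphi_\Ga$) as a submodule or subquotient of a natural $A_E(\K)$-module — e.g. the module $\K[z_e^{\pm}\mid e\in E]$-type localization, on which $A_E(\K)$ acts faithfully. Since $\varphi_\Ga$ is injective, any $a\in\mathcal{A}(\Ga)$ with $aM=0$ would give $\varphi_\Ga(a)$ killing the corresponding $A_E(\K)$-submodule; if that submodule is large enough to detect faithfulness in $A_E(\K)$ (which it is, being generated under the $\Z E$-grading action), we conclude $\varphi_\Ga(a)=0$, hence $a=0$. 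Concretely I would take $M$ to be the pullback along $\varphi_\Ga$ of the $A_E(\K)$-module $\K[x_e^{\pm 1}\mid e\in E]$ (polynomials in $x_e$ with $y_e = \partial_{x_e}$, localized), restricted to a single $\Z V$-graded orbit; faithfulness of this $A_E(\K)$-action plus injectivity of $\varphi_\Ga$ forces $\mathcal{A}(\Ga)$ to act faithfully on it, and the grading/freeness makes the orbit-component $M$ a simple $\mathcal{A}(\Ga)$-module by a standard argument (a nonzero submodule, being graded after a twist, contains a nonzero weight vector, and the bijectivity of the $X_i,Y_i$ actions then generates everything).

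\textbf{Main obstacle.} The subtle point is the \emph{simultaneous} requirement that $M$ be simple over $\mathcal{A}(\Ga)$ \emph{and} faithful. Faithfulness wants $M$ ``large'' (so we are tempted to take all of $\K[x_e^{\pm 1}]$), while simplicity over the smaller algebra $\mathcal{A}(\Ga)$ wants $M$ ``small'' and requires control of the weight-space dimensions and bijectivity of the generators — which in turn requires choosing the base point $\mathfrak{n}$ to avoid the zero loci of all the polynomials $\si_d(t_i)$. Verifying that such a base point exists, and that the resulting orbit-module is genuinely irreducible over $\mathcal{A}(\Ga)$ rather than merely over $A_E(\K)$, is the technical heart; I expect to invoke results on simple weight modules over TGW algebras (in the style of the classification in the cited works, using regularity of the datum) to pin down irreducibility, and then a short localization/denseness argument over the algebraically closed $\K$ to secure the genericity of $\mathfrak{n}$.
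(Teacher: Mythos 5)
Your overall strategy is the same as the paper's: induce from a maximal ideal of $R_E$ whose $\Z V$-orbit avoids the zero loci of all $\si_d(t_i)$ (the paper simply takes the explicit point $\mathfrak{m}_{1/2}=(u_e-1/2\mid e\in E)$, which also removes your genericity worry -- over a countable algebraically closed field such as $\overline{\Q}$ one cannot just ``avoid countably many hypersurfaces''), obtain a simple weight module $M$ supported on the free orbit $\{\si_d(\mathfrak{m}_{1/2})\mid d\in\Z V\}$ (freeness coming from injectivity of $\ga$, i.e.\ the no-equilibrium hypothesis), and then argue that $M$ is faithful. For simplicity of $M$ the paper quotes \cite[Prop.~7.2]{MazPonTur2003} rather than arguing inside the Weyl algebra; your version is fine in spirit, but note that your concrete choice $\K[x_e^{\pm1}]$ contradicts your own genericity requirement (its weights are integral, the $t_i$ vanish on part of the support and the $y_e$ are not injective); you would need a twisted module such as $\prod_e x_e^{1/2}\K[x_e^{\pm1}]$. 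Also, the reduction of faithfulness to a statement about $R_E$ does not need the Weyl algebra at all: since $M$ is graded, $\Ann_{\mathcal{A}(\Ga)}M$ is a graded ideal, and a nonzero graded ideal of a TGW algebra meets $R_E$ nontrivially, so faithfulness is equivalent to $\Ann_{R_E}M=0$; this is how the paper argues.

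The genuine gap is in your faithfulness step. First, the inference ``if $a$ kills $M$ then $\varphi_\Ga(a)$ kills the corresponding $A_E(\K)$-submodule'' is unjustified: $M$ is only an $\mathcal{A}(\Ga)$-submodule (the sum of weight spaces over one coset of $\ga(\Z V)$), not $A_E(\K)$-stable, so faithfulness of the ambient $A_E(\K)$-module gives nothing. What your argument really needs -- and what the paper asserts without proof at the corresponding point -- is $\Ann_{R_E}M=\bigcap_{\mathfrak{m}\in\Supp(M)}\mathfrak{m}=0$, i.e.\ Zariski density of the orbit $\{\mu_0+\ga(d)\mid d\in\Z V\}$ in $\Specm R_E$. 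That requires $\ga(\Z V)$ to span $\Q E$, which does \emph{not} follow from injectivity of $\ga$. For instance, $\widetilde{A_n}$ has no component in equilibrium, yet every column of its incidence matrix has coordinate sum zero, so the orbit lies in the hyperplane $\sum_e u_e=\mathrm{const}$ and the nonzero element $\sum_e u_e - c\in R_E$ annihilates $M$. Indeed $\mathbb{E}$ is a non-scalar central element of $\mathcal{A}(\widetilde{A_n})$ (as the paper itself uses in Section \ref{sec:special-linear-lie-algebras}), so by the Dixmier--Quillen argument no simple module at all can be faithful there. Hence your proof cannot be completed under the stated hypothesis alone: both your route and the paper's need the additional assumption that the image of $\ga$ has full rank (e.g.\ $\ga_\Q:\Q V\to\Q E$ surjective), which is exactly the condition making the support Zariski dense and the element-killing argument work.
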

\begin{proof}
By Theorem \ref{thm:faithfulness}, the incidence matrix $\ga$ is injective.
Let $E$ and $V$ be the edge and vertex set of $\Ga$ respectively. Recall the definition $\mathcal{A}(R_E,\si^\Ga,t^\Ga)$ of $\mathcal{A}(\Ga)$ as a TGW algebra.
Consider the maximal ideal $\mathfrak{m}_{1/2}=(u_e-1/2\mid e\in E)$
of $R_E$. For $d\in\Z V$ we have $\si_d(\mathfrak{m}_{1/2})=(u_e-\ga(d)_e-1/2\mid e\in E)$. Thus $\si_d(\mathfrak{m}_{1/2})=\mathfrak{m}_{1/2} \Leftrightarrow \ga(d)=0 \Leftrightarrow d=0$ since $\ga$ is injective. Therefore, by \cite[Prop.~7.2]{MazPonTur2003}, the induced module $\mathcal{A}(\Ga)\otimes_{R_E} R_E/\mathfrak{m}_{1/2}$ has a unique simple quotient $M$ with the weight space $M_{\mathfrak{m}_{1/2}}\neq 0$. Since $t_i$ is invertible modulo $\mathfrak{m}_{1/2}$, $X_i$ and $Y_i$ act injectivly on $M$ for all $i\in V$, and thus the support of $M$ is
$\Supp(M)=\{\si_d(\mathfrak{m}_{1/2})\mid d\in \Z V\}\simeq \Z V$.
Therefore $\Ann_{R_E}M=\bigcap_{\mathfrak{m}\in\Supp(M)} \mathfrak{m}=\{0\}$.
Futhermore, $M$ is a graded $\mathcal{A}(\Ga)$-module via $M=\bigoplus_{d\in \Z V} M_{\si_d(\mathfrak{m}_{1/2})}$. Hence $I:=\Ann_{\mathcal{A}(\Ga)}M$ is a graded ideal of $\mathcal{A}(\Ga)$. If $I\neq\{0\}$, then $I\cap R_E\neq\{0\}$ since $\mathcal{A}(\Ga)$ is a TGW algebra. But $I\cap R_E=\Ann_{R_E}M=\{0\}$ as we just saw. This proves that $I=0$ and thus $M$ is a faithful simple $\mathcal{A}(\Ga)$-module. Hence $\mathcal{A}(\Ga)$ is primitive.
\end{proof}

\subsection{The generalized Cartan matrix of \texorpdfstring{$\mathcal{A}(\Gamma)$}{A(Gamma)}}
\label{sec:theGCM}

In \cite{Hartwig2010} it was found that one can associate generalized Cartan matrices to TGW data. These matrices do not completely determine the TGW algebra (as they do for Kac-Moody algebras) but they can be used to describe certain relations in the algebra.

Let $(R,\si,t)$ be a TGW datum with index set $I$. For $i,j\in I$, put
\begin{align}
V_{ij}&=\text{span}_\K \{\si_i^k(t_j)\mid k\in\Z\}, \\ 
p_{ij}(x) &=\text{minimal polynomial for $\si_i$ acting on $V_{ij}$ (if $\dim_\K V_{ij}<\infty$)},\\ 
a_{ij} &=
\begin{cases}
2,& i=j\\ 
1-\dim_\K V_{ij}, & i\neq j
\end{cases} \\ 
C&=(a_{ij})_{i,j\in I}
\end{align}
If $(R,\si,t)$ is \emph{regular} (i.e. $t_i$ is not a zero-divisor in $R$ for each $i\in I$) and \emph{$\K$-finitistic} (i.e. $V_{ij}$ is finite-dimensional for all $i,j\in I$) then $C$ is a generalized Cartan matrix, by \cite[Thm.~4.4]{Hartwig2010}. Moreover, the (not necessarily commuting) generators $X_i, X_j$ (respectively $Y_i,Y_j$) satisfy certain homogenous Serre-type relations expressed using the coefficients of the polynomials $p_{ij}(x)$.

In the following lemma we compute the polynomials $p_{ij}(x)$ and integers $a_{ij}$ explicitly for the TGW algebras $\mathcal{A}(\Ga)$ associated to a multiquiver $\Ga$.

\begin{thm}\label{thm:GCM}
Let $\Ga=(V,E,s,t)$ be a multiquiver, $\ga=(\ga_{ei})_{e\in E, i\in V}$ be its incidence matrix, and $(R_E,\si^\Ga,t^\Ga)$ the corresponding TGW datum. Then $(R_E,\si^\Ga,t^\Ga)$ is regular and $\K$-finitistic. Moreover, for 
$i,j\in V$ with $i\neq j$ we have
\begin{align} 
\label{eq:pij_from_gamma}
p_{ij}(x) &= (x-1)^{1-a_{ij}}, \\ 
\label{eq:GCM_from_gamma}
a_{ij} &= -\sum_{\substack{e\in E \\  \ga_{ei}\neq 0}} |\ga_{ej}|.
\end{align}
\end{thm}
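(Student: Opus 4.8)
The plan is to reduce the whole statement to a computation with a single commuting family of translation operators on a polynomial ring. Regularity is immediate: by \eqref{eq:t-definition} each $t_v$ is a product of nonzero elements of the polynomial ring $R_E$, hence a nonzero element of an integral domain, hence not a zero divisor. For the rest, fix $i,j\in V$ with $i\ne j$. Recalling $\sigma_v(u_e)=u_e-\gamma_{ev}$, the factor $u_{ej}$ in $t_j=\prod_{e\text{ incident to }j}u_{ej}$ is fixed by $\sigma_i$ precisely when $\gamma_{ei}=0$. So I would write $t_j=t_j^{(0)}\prod_{e\in E_{ij}}u_{ej}$, where $E_{ij}=\{e\in E:\gamma_{ei}\ne 0\ne\gamma_{ej}\}$ is the set of edges incident to both $i$ and $j$ (finite, by local finiteness of $\Gamma$) and $\sigma_i(t_j^{(0)})=t_j^{(0)}$.

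The key point is that $\sigma_i^k(t_j)$ is \emph{polynomial} in $k$, of controlled degree. For $e\in E_{ij}$ put $\pi_e(y)=u_{ej}\big|_{u_e\mapsto u_e-y\gamma_{ei}}\in R_E[y]$; since $\deg_{u_e}u_{ej}=|\gamma_{ej}|$ and $\gamma_{ei}\ne 0$, the polynomial $\pi_e(y)$ has degree $|\gamma_{ej}|$ in $y$ with leading coefficient $(-\gamma_{ei})^{|\gamma_{ej}|}\in\K^\times$. Hence $F(y):=t_j^{(0)}\prod_{e\in E_{ij}}\pi_e(y)\in R_E[y]$ has degree $D:=\sum_{e\in E_{ij}}|\gamma_{ej}|=\sum_{e\in E,\ \gamma_{ei}\ne 0}|\gamma_{ej}|$ in $y$, with leading coefficient $\ell:=t_j^{(0)}\prod_{e\in E_{ij}}(-\gamma_{ei})^{|\gamma_{ej}|}\ne 0$, and $\sigma_i^k(t_j)=F(k)$ for all $k\in\Z$. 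Writing $F(y)=\sum_{l=0}^{D}c_l y^l$ with $c_l\in R_E$ and $c_D=\ell$, we get $V_{ij}\subseteq\operatorname{span}_\K\{c_0,\dots,c_D\}$, so $\dim_\K V_{ij}\le D+1<\infty$; running the same argument with $i=j$ shows $V_{ii}$ is finite dimensional, so the datum is $\K$-finitistic. Now set $N=\sigma_i-\operatorname{id}$. Since $N^{r}$ applied to $\sigma_i^{k}(t_j)=F(k)$ produces the $r$-th forward difference of $m\mapsto F(k+m)$, and since order-$(D+1)$ differences annihilate polynomials of degree $\le D$ while the $D$-th difference sends $m^{D}$ to $D!$, I would compute $N^{D+1}=0$ on $V_{ij}$ and $N^{D}(t_j)=D!\,\ell\ne 0$ (here characteristic zero and $R_E$ being a domain are used). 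Hence $N$ is nilpotent on $V_{ij}$ with $N^{D}\ne 0$, so the chain $t_j,\ N t_j,\dots,N^{D}t_j$ consists of $D+1$ linearly independent vectors; combined with $\dim_\K V_{ij}\le D+1$ this forces $\dim_\K V_{ij}=D+1$ and exhibits $\sigma_i$ on $V_{ij}$ as a single Jordan block with eigenvalue $1$. Therefore $p_{ij}(x)=(x-1)^{D+1}$ and $a_{ij}=1-\dim_\K V_{ij}=-D=-\sum_{e\in E,\ \gamma_{ei}\ne 0}|\gamma_{ej}|$, which is exactly \eqref{eq:GCM_from_gamma}; and since $1-a_{ij}=D+1$, this is also \eqref{eq:pij_from_gamma}.

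The only non-formal step is upgrading ``$\sigma_i$ is unipotent on $V_{ij}$'' to ``$\sigma_i$ is a single Jordan block on $V_{ij}$'', i.e.\ ruling out that $N^{D}$ already vanishes. This is precisely what the identification $\sigma_i^k(t_j)=F(k)$ with $\deg_y F=D$ and invertible top coefficient $\ell$ buys, through the finite-difference identity $N^{D}(t_j)=D!\,\ell$. Everything else --- the upper bound on $\dim_\K V_{ij}$, the nilpotence of $N$, and the Jordan-chain linear independence --- is routine linear algebra over $\K$.
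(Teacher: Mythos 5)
Your proposal is correct, and its computational core is the same as the paper's: both proofs reduce to showing that $(\si_i-\Id)^{N_{ij}+1}(t_j)=0$ while $(\si_i-\Id)^{N_{ij}}(t_j)$ equals $N_{ij}!$ times a nonzero leading term, where $N_{ij}=\sum_{e:\,\ga_{ei}\neq 0}|\ga_{ej}|$. The packaging differs in a mildly interesting way. The paper works factor by factor: it applies the twisted Leibniz rule for $D_i=\si_i-\Id$ to the product $\prod_e u_{ej}$, uses a pigeonhole argument on how the $m$ copies of $D_i$ distribute among the factors to kill everything beyond order $N_{ij}$, and evaluates the surviving term via $D_i^m(u_e^m)=m!\,(-\ga_{ei})^m$. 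You instead globalize: you observe that the whole orbit is polynomial in $k$, namely $\si_i^k(t_j)=F(k)$ with $\deg F=N_{ij}$ and leading coefficient equal to a nonzero scalar times the $\si_i$-fixed factor of $t_j$, and then invoke the standard facts that order-$(N_{ij}+1)$ differences annihilate degree-$N_{ij}$ polynomials and that the $N_{ij}$-th difference returns $N_{ij}!$ times the leading coefficient (characteristic zero and $R_E$ a domain enter exactly as you say). A genuine bonus of your version is that you determine $\dim_\K V_{ij}=N_{ij}+1$ directly, with the upper bound coming from the span of the coefficients of $F$ and the lower bound from the Jordan chain $t_j,\,(\si_i-\Id)t_j,\dots$; the paper computes only the minimal polynomial and implicitly uses that $V_{ij}$ is cyclic under $\si_i^{\pm 1}$ (so its dimension equals $\deg p_{ij}$) to pass from \eqref{eq:pij_from_gamma} to \eqref{eq:GCM_from_gamma}. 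Your explicit remarks on regularity ($t_v$ is a nonzero element of a domain) and on the $i=j$ case needed for $\K$-finitism, which the paper leaves tacit, are also fine.
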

\begin{proof} Fix $i,j\in V$, $i\neq j$. Put
\[ N_{ij}=\sum_{\substack{e\in E\\ \ga_{ei}\neq 0}} |\ga_{ej}|. \] 
Consider the difference operator
\[ D_i=\si_i-\Id \] 
and let $m$ be a non-negative integer. By definition of $t_j$ we have
\begin{equation}\label{eq:Dynkin-lem-pf0a}
D_i^{m}(t_j) = D_i^m\Big(\prod_{e\in E} u_{ej}\Big).
\end{equation}
Since $u_{ej}$ is a monic polynomial in $u_e$, $D_i(u_e a)=u_e D_i(a)$ if $\ga_{ei}=0$, and $u_{ej}=1$ if $\ga_{ej}=0$, we have
\begin{equation}\label{eq:Dynkin-lem-pf0b}
D_i^m\Big(\prod_{e\in E} u_{ej}\Big) =
\Big(\prod_{e\in E\setminus E_{ij}} u_{ej}\Big)\cdot D_i^m\Big(\prod_{e\in E_{ij}} u_{ej}\Big)
\end{equation}
where $E_{ij}=\big\{e\in E\mid \ga_{ei}\neq 0\neq \ga_{ej}\big\}$ is the set of edges between the vertices $i$ and $j$. Using the twisted Leibniz rule
\begin{equation}\label{eq:Di-Leibniz}
D_i(ab)=D_i(a)\si_i(b)+aD_i(b),\quad\forall a,b\in R_E,
\end{equation}
the second factor in \eqref{eq:Dynkin-lem-pf0b} can be computed as the sum over all possible ways of distributing the $m$ difference operators $D_i$ among the factors $u_{ej}$, $e\in E_{ij}$ (and in addition applying $\si_i$ to some factors).
If $m=1+N_{ij}$ then, by the pigeon hole principle, any such term will contain at least one factor of the form $D_i^n(u_{ej})$ where $n\ge 1+|\ga_{ej}|$. Since $u_{ej}$ is a polynomial in $u_e$ of degree $|\ga_{ej}|$, \eqref{eq:Di-Leibniz} and $D_i(u_e)=-\ga_{ei}$ imply that $D_i^n(u_{ej})=0$. This shows that
\begin{equation}
D_i^{1+N_{ij}}(t_j)=0.
\end{equation}
Since $D_i$ and $\si_i$ commute, we get $D_i^{1+N_{ij}}\big|_{V_{ij}}=0$. Hence the minimal polynomial $p_{ij}(x)$ for $\si_i\big|_{V_{ij}}$ divides $(x-1)^{1+N_{ij}}$.

It remains to be shown that $D_i^{N_{ij}}(t_j)\neq 0$.
By the above argument, $D_i^m(p)=0$ if $p$ is a polynomial in $u_e$ of degree less than $m$. Hence we may replace $u_{ej}$ by its leading term when applying $D_i^{|\ga_{ej}|}$:
\begin{equation}\label{eq:Dynkin-lem-pf0c}
D_i^{|\ga_{ej}|}(u_{ej})=D_i^{|\ga_{ej}|}(u_e^{|\ga_{ej}|}).
\end{equation}
By iterating \eqref{eq:Di-Leibniz} we get
\[ 
D_i(u_e^m)=m u_e^{m-1}\cdot (-\ga_{ei})+\text{lower terms},\quad\forall e\in E.
\]
Thus
\begin{equation} \label{eq:Dynkin-lem-pf0d}
D_i^m(u_e^m)=m!\cdot(-\ga_{ei})^m.
\end{equation}
Combining \eqref{eq:Dynkin-lem-pf0a}-\eqref{eq:Dynkin-lem-pf0b} and \eqref{eq:Dynkin-lem-pf0c}-\eqref{eq:Dynkin-lem-pf0d} with the fact that there is at most one non-zero way to distribute $N_{ij}$ difference operators $D_i$ among the factors $u_{ej}$, $e\in E_{ij}$, we get
\begin{equation}
D_i^{N_{ij}}(t_j)=
  \prod_{e\in E\setminus E_{ij}} u_{ej} \cdot 
  \prod_{e\in E_{ij}} D_i^{|\ga_{ej}|}(u_{ej})  \\ 
= \prod_{e\in E\setminus E_{ij}} u_{ej} \cdot
  \prod_{e\in E_{ij}} |\ga_{ej}|!\cdot (-\ga_{ei})^{|\ga_{ej}|} \neq 0.
\end{equation}
This proves that $(\si_i-\Id)^m(t_j)\neq 0$ for any $m\le N_{ij}$. Hence $p_{ij}(x)=(x-1)^{1+N_{ij}}$ which finishes the proof.
\end{proof}

Theorem \ref{thm:GCM} leads us to the following procedure.
Let $\Ga$ be a multiquiver. One can directly associate to $\Ga$ a Dynkin diagram $D(\Ga)$ by applying the following steps:
\begin{enumerate}[{\rm (i)}]
\item Remove all leaves;
\item Forget the direction of each edge;
\item For each pair of adjacent vertices $i$ and $j$, replace the collection of edges between them by a single edge, adding multiplicities in the process:
\[
\begin{tikzpicture}[baseline=(X.base)]
\node (X) at (0,0) {};
\node[vertex] (1) at (0,0) [label=left:$i$] {};
\node[vertex] (2) at (2,0) [label=right:$j$] {};
\draw[thick] (1) to[out=60,in=120] node[above,very near start] {$a_1$} node[above,very near end] {$b_1$} (2);
\draw[thick] (1) to[out=30,in=150]
   node[below,near start] {$a_2$} node[below] {$\vdots$} node[below, near end] {$b_2$} (2);
\draw[thick] (1) to[out=-60,in=-120] node[below,very near start] {$a_k$} node[below,very near end] {$b_k$} (2);
\end{tikzpicture}
 \qquad \longmapsto \qquad 
\begin{tikzpicture}[baseline=(X.base)]
\node (X) at (0,0) {};
\node[vertex] (1) at (0,0)[label=left:$i$] {};
\node[vertex] (2) at (2,0) [label=right:$j$] {};
\draw[thick] (1) -- node[auto] {$(a_{ij},a_{ji})$} (2);
\end{tikzpicture}
\] 
where $a_{ij}=\sum_{e\in E_{ij}} |\ga_{ei}|=\sum_{m=1}^k a_m$ and
$a_{ji}=\sum_{e\in E_{ij}} |\ga_{ej}|=\sum_{m=1}^k b_m$, where $E_{ij}\subseteq E$ is the set of edges in $\Ga$ between $i$ and $j$.
\end{enumerate}
As is well-known (see for example \cite{Kac}), Dynkin diagrams are in bijection with generalized Cartan matrices $C=(a_{ij})_{i,j\in I}$ consisting of integers satisfying $a_{ij}\le 0$ for $i\neq j$, $a_{ii}=2$ and $a_{ij}=0$ iff $a_{ji}=0$.

\begin{thm}\label{thm:commutative-diagram}
The following diagram of assignments is commutative.
\[ 
\begin{tikzcd}[ampersand replacement=\&, column sep=small]
\text{multiquiver} \& \Ga\arrow[mapsto]{rr} \arrow[mapsto]{d}\&\& D(\Ga) \arrow[mapsto]{d}\& \text{Dynkin diagram}\\ 
\text{TGW datum}\& (R_E,\si^\Ga,t^\Ga) \arrow[mapsto]{rr}\arrow[mapsto]{d}\&\& C \arrow[mapsto]{d}\& \text{GCM}\\ 
\text{TGW algebra} \& \mathcal{A}(\Ga) \&\& \mathfrak{g}(C)\& \text{Kac-Moody algebra}
\end{tikzcd}
\]
\end{thm}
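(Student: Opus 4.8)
The lower part of the diagram carries no content: its left vertical arrow is by definition the passage from a TGW datum to its TGW algebra, its right vertical arrow is the standard passage from a GCM $C$ to the Kac--Moody algebra $\mathfrak g(C)$, and there is no bottom horizontal map -- the bottom row merely records the resulting objects. So the statement to prove is the commutativity of the \emph{upper} square: the GCM produced from $(R_E,\si^\Ga,t^\Ga)$ by the rule $a_{ij}=1-\dim_\K V_{ij}$ of Section \ref{sec:theGCM} must agree with the GCM attached to the Dynkin diagram $D(\Ga)$ under the bijection recalled from \cite{Kac}.

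The plan is to read off both matrices and compare. The left-hand route is handled entirely by Theorem \ref{thm:GCM}: it gives that $(R_E,\si^\Ga,t^\Ga)$ is regular and $\K$-finitistic (so by \cite[Thm.~4.4]{Hartwig2010} the rule genuinely returns a GCM), that $a_{ii}=2$, and that for $i\neq j$
\[ a_{ij}=-\sum_{\substack{e\in E\\ \ga_{ei}\neq 0}}|\ga_{ej}|=-\sum_{e\in E_{ij}}|\ga_{ej}|, \]
where $E_{ij}=\{e\in E\mid \ga_{ei}\neq 0\neq \ga_{ej}\}$ is the bundle of proper edges joining $i$ and $j$ (the two sums agree because a leaf at $i$ contributes $\ga_{ej}=0$). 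In particular $a_{ij}=0\iff E_{ij}=\emptyset\iff a_{ji}=0$, which is exactly the support axiom for a GCM and for a Dynkin diagram. The right-hand route is then a direct inspection of the three operations defining $D(\Ga)$: deleting leaves, forgetting orientations, and collapsing each bundle $E_{ij}$ to a single edge while summing multiplicities. None of these alters $E_{ij}$ or the numbers $|\ga_{ej}|$ for $i\neq j$; the result is an edge between $i$ and $j$ precisely when $E_{ij}\neq\emptyset$, carrying incidence multiplicity $\sum_{e\in E_{ij}}|\ga_{ej}|$ at the vertex $j$. Feeding this into the standard dictionary between the labels of a Dynkin diagram and the entries of its GCM reproduces exactly the displayed formula, so the two GCMs coincide.

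The one point requiring genuine care -- and the step I would nail down first -- is purely bookkeeping: reconciling the convention ``label nearest the vertex with index $i$'' used in drawing $D(\Ga)$ with the convention ``entry in row $i$, column $j$'' of the GCM, i.e.\ deciding which of $a_{ij}$ and $a_{ji}$ is recorded by the multiplicity $\sum_{e\in E_{ij}}|\ga_{ei}|$. I would fix this once and for all against the worked examples of Section \ref{sec:examples} -- the path multiquivers in Examples (6) and (7), whose associated Dynkin diagrams are $A_n$ and $C_n$ -- after which the matching is forced. Beyond this nothing further is needed; Theorem \ref{thm:GCM} additionally records $p_{ij}(x)=(x-1)^{1-a_{ij}}$, which is the extra input for the Serre-relation statement that follows but plays no role in the commutativity of the present diagram.
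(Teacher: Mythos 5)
Your proposal is correct and follows the same route as the paper, whose entire proof is the one-line observation that the statement follows directly from Theorem \ref{thm:GCM} together with the definition of $D(\Ga)$; your elaboration (including the labelling convention check of which multiplicity records $a_{ij}$ versus $a_{ji}$) just makes that explicit.
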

\begin{proof}
The statement follows directly from Theorem \ref{thm:GCM} and the definition of the Dynkin diagram $D(\Ga)$ associated to $\Ga$.
\end{proof}

\begin{rem}
In \cite{Hartwig2010}, it was shown that 
the image of the map $(R,\si,t)\mapsto C$ from TGW data to GCMs contains all symmetric GCMs. It was further asked if this image contains any non-symmetric GCMs. The above shows that the map is in fact surjective.
Indeed, it is easy to see that the map $D:\Ga\mapsto D(\Ga)$ from multiquivers to Dynkin diagrams is surjective (but not injective). Therefore, by Theorem \ref{thm:commutative-diagram}, $(R,\si,t)\mapsto C$ is surjective.
\end{rem}

The following theorem gives a partial connection between the algebras $\mathcal{A}(\Ga)$ and the Kac-Moody algebras $\mathfrak{g}(C)$.

\begin{thm}\label{thm:serre-relations}
Let $\Ga$ be a multiquiver, $\mathcal{A}(\Ga)$ be the corresponding TGW algebra, and $\mathfrak{g}$ be the Kac-Moody algebra associated to $D(\Ga)$. Let $\mathfrak{n}^+$ (respectively $\mathfrak{n}^-$) be the positive (respectively negative) subalgebra of $\mathfrak{g}$ with generators $e_i$ (respectively $f_i$) $i\in V$.
Then there exist graded $\K$-algebra homomorphisms
\begin{equation}\label{eq:psi_pm}
\begin{aligned}
\psi^+:U(\mathfrak{n}^+) &\to \mathcal{A}(\Ga),\quad e_i\mapsto X_i,\\
\psi^-:U(\mathfrak{n}^-) &\to \mathcal{A}(\Ga),\quad f_i\mapsto Y_i.
\end{aligned}
\end{equation}
Moreover, for any $i,j\in V$, $i\neq j$, and $k\in\iv{0}{-a_{ij}}$, the restrictions
\[\psi^+|_{U(\mathfrak{n}^+)_{k\al_i+\al_j}}\qquad\text{and}\qquad  
  \psi^-|_{U(\mathfrak{n}^-)_{-k\al_i-\al_j}}\] 
are injective, where $\{\al_i\}_{i\in V}$ is the set of simple roots of $\mathfrak{g}$.
\end{thm}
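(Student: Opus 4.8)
The plan is to prove the two assertions separately: the existence of $\psi^{\pm}$ is a quick consequence of Theorem~\ref{thm:GCM} together with the Serre-type relations of \cite{Hartwig2010}, while the injectivity of the displayed restrictions is the part that needs real work. For existence, Theorem~\ref{thm:GCM} tells us that $(R_E,\si^\Ga,t^\Ga)$ is regular and $\K$-finitistic with $p_{ij}(x)=(x-1)^{1-a_{ij}}$; substituting this into the generalized Serre relations of \cite{Hartwig2010} and using $\mu_{ij}=1$ yields $(\ad X_i)^{1-a_{ij}}(X_j)=0$ in $\mathcal A(\Ga)$ for $i\neq j$, and, applying the involution, $(\ad Y_i)^{1-a_{ij}}(Y_j)=0$. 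Since these are exactly the relations that present $\mathfrak n^{+}$ (resp.\ $\mathfrak n^{-}$) on its Serre generators, the assignments $e_i\mapsto X_i$ and $f_i\mapsto Y_i$ extend to $\K$-algebra homomorphisms $\psi^{\pm}$; identifying the root lattice of $\mathfrak g$ with $\Z V$ via $\al_i\leftrightarrow i$ makes them graded.

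For injectivity, fix $i\neq j$ and $0\le k\le -a_{ij}$. First I would note that $U(\mathfrak n^{+})_{k\al_i+\al_j}$ has $\K$-basis $\{e_i^{a}e_je_i^{b}\mid a+b=k\}$: among the defining relations only $(\ad e_i)^{1-a_{ij}}(e_j)$, of degree $(1-a_{ij})\al_i+\al_j$, can contribute to this weight space, and $1-a_{ij}>k$. So it suffices to show the $k+1$ elements $X_i^{a}X_jX_i^{b}$ $(a+b=k)$ are $\K$-linearly independent in $\mathcal A(\Ga)$, and for that it is enough that their images under $\varphi_\Ga$ are linearly independent in $A_E(\K)$. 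I would read these off in the faithful polynomial module $\K[w_e\mid e\in E]$ of $A_E(\K)$, with $x_e$ acting by multiplication by $w_e$ and $y_e$ by $\partial_{w_e}$. Because $\varphi_\Ga(X_v)=\prod_e z_e^{(\ga_{ev})}$ and the tensor factors $A_1(\K)$ commute, $\varphi_\Ga(X_i^{a}X_jX_i^{b})$ sends a monomial $\prod_e w_e^{m_e}$ to a scalar $\lambda^{(a)}\big((m_e)_e\big)=\prod_e\lambda^{(a)}_e(m_e)$ times another monomial, and the factor $\lambda^{(a)}_e$ at an edge incident to at most one of $i,j$ does not depend on $(a,b)$ with $a+b=k$.

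The $(a,b)$-dependence therefore lives only in the edges between $i$ and $j$. For such an edge $e$, with $p_e=|\ga_{ei}|$, $q_e=|\ga_{ej}|$, the factor $\lambda^{(a)}_e(m_e)$ is the scalar by which $\partial^{ap_e}w_e^{q_e}\partial^{bp_e}$ (or $w_e^{ap_e}\partial^{q_e}w_e^{bp_e}$, depending on the direction of $e$) acts on $w_e^{m_e}$; a short computation with factorials shows that, up to an $a$-independent factor, $\lambda^{(a)}_e(m_e)=Q_e\big(m_e\mp(k-a)p_e\big)$ with $Q_e$ a fixed polynomial of degree $q_e$. Collecting these, $\lambda^{(a)}=(\text{$a$-independent})\cdot S^{k-a}G$, where $G=\prod_{e}Q_e$ (product over the edges between $i$ and $j$) has total degree $\sum_e q_e=-a_{ij}\ge k$ — which is precisely the definition of $a_{ij}$ — and $S$ is the shift $m_e\mapsto m_e\mp p_e$ by a vector with all coordinates nonzero. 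Then $\{S^{k-a}G\mid 0\le a\le k\}$ is linearly independent by a finite-difference argument: on $G$ the operator $(S-1)$ lowers total degree by exactly one at each of the first $\sum_e q_e$ steps — the directional derivative along the shift vector, raised to the power $\sum_e q_e$, carries the leading form $\prod_e m_e^{q_e}$ to a nonzero constant — so $G,(S-1)G,\dots,(S-1)^kG$ have pairwise distinct total degrees and span the same space as $G,SG,\dots,S^kG$. This gives injectivity of $\psi^{+}$ on $U(\mathfrak n^{+})_{k\al_i+\al_j}$, and the statement for $\psi^{-}$ follows by applying the involution $\ast$, with respect to which $\varphi_\Ga$ is equivariant.

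The main obstacle is the bookkeeping in the third paragraph: organizing the computation of $\varphi_\Ga$ on the words $X_i^{a}X_jX_i^{b}$ so that the common shift $S^{k-a}$ becomes visible, and recognizing that the hypothesis $k\le -a_{ij}$ is exactly what forces $\deg G=\sum_e q_e\ge k$, hence what makes the finite-difference argument work; this is also the place where the precise relation $a_{ij}=-\sum_{e\in E_{ij}}|\ga_{ej}|$ from Theorem~\ref{thm:GCM} is used. A minor point to settle is that $\mathfrak n^{\pm}$ is genuinely presented by the Serre relations invoked above, which is automatic when the generalized Cartan matrix $C$ of $D(\Ga)$ is symmetrizable.
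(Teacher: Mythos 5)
Your proposal is correct in substance, but it takes a genuinely different route from the paper for the hard half of the statement. The paper's proof is a two-line reduction: Theorem \ref{thm:GCM} shows $(R_E,\si^\Ga,t^\Ga)$ is regular and $\K$-finitistic with $p_{ij}(x)=(x-1)^{1-a_{ij}}$, and then both the existence of $\psi^\pm$ and the injectivity on the components $U(\mathfrak{n}^\pm)_{\pm(k\al_i+\al_j)}$ are quoted from the general result \cite[Thm.~4.4(b)]{Hartwig2010}, which holds for any regular finitistic TGW algebra. You instead re-prove the injectivity from scratch, exploiting structure special to $\mathcal{A}(\Ga)$: you push the $k+1$ spanning elements $X_i^aX_jX_i^b$ through $\varphi_\Ga$ and check linear independence of their actions on the polynomial module, isolating the $(a,b)$-dependence in the edges of $E_{ij}$ and running a finite-difference/degree argument on $G,SG,\dots,S^kG$, where $\deg G=\sum_{e\in E_{ij}}|\ga_{ej}|=-a_{ij}\ge k$ by \eqref{eq:GCM_from_gamma}. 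This computation is sound (I verified the factorization $\lambda^{(a)}_e=(\text{$a$-independent})\cdot Q_e(m_e\mp(k-a)p_e)$, which for one orientation of $e$ holds only as an identity of rational functions because the $a$-independent factor $m_e!/(m_e-kp_e+q_e)!$ need not be polynomial — harmless, but worth stating, since you clear it before applying the difference argument), and it correctly uses the observation that independence of the \emph{images} under $\varphi_\Ga$ suffices even when $\varphi_\Ga$ is not faithful. What each approach buys: the paper's citation is shorter and applies uniformly to all TGW algebras with polynomial Cartan matrices; yours is self-contained for $\mathcal{A}(\Ga)$, showcases the canonical representation $\varphi_\Ga$, and makes visible exactly where the hypothesis $k\in\iv{0}{-a_{ij}}$ enters. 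One caveat: your existence argument (and your claim that $\{e_i^ae_je_i^b\}_{a+b=k}$ is a basis) rests on $\mathfrak{n}^\pm$ being presented by the Serre relations, which by Gabber--Kac is guaranteed only for symmetrizable $C$ — and non-symmetrizable GCMs genuinely occur as $D(\Ga)$ by Theorem \ref{thm:commutative-diagram}. For the injectivity step this is avoidable (you only need that the words $e_i^ae_je_i^b$ span the weight space, which is immediate, since independence of their images then forces all coefficients to vanish), but for the mere existence of $\psi^\pm$ the issue is real; the paper absorbs exactly this point into the citation of \cite[Thm.~4.4(b)]{Hartwig2010}, so your route is on the same footing as the paper's only if you likewise take the presentation (or that reference) as given.
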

\begin{proof}
The statement follows from Theorem \ref{thm:GCM} and \cite[Thm~4.4(b)]{Hartwig2010}.
\end{proof}

\subsection{General and special linear Lie algebras}
\label{sec:special-linear-lie-algebras}

Let $\widetilde{A_n}$ be the following multiquiver:
\[ 
\widetilde{A_n}:\;
\begin{tikzpicture}[baseline=(etc.base)]
\node (start) at (0,0) {};
\node[vertex] (1) at (1,0) [label=below:$1$] {};
\node[vertex] (2) at (2,0) [label=below:$2$] {};
\node[vertex] (3) at (3,0) [label=below:$3$] {};
\node[vertex] (N) at (4,0) [label=below:$n$] {};
\node (end) at (5,0) {};
\node (etc) at (3.5,0) {$\cdots$};
\draw[leaf] (start) -- (1);
\draw[edge] (1) -- (2);
\draw[edge] (2) -- (3);
\draw[leaf] (N) -- (end);
\end{tikzpicture}
\qquad\qquad
\ga=\begin{bmatrix}
1  &    &        && \\
-1 & 1  &        &&\\
   & -1 & \ddots       &  &\\
   &    & \ddots & \ddots   & \\
   &    &        & -1 & 1 \\
   &    &        &    & -1 
\end{bmatrix}
\] 
We identify $V=\iv{1}{n}$ with the set of simple roots of $\mathfrak{sl}_{n+1}$ and regard the edge set as $E=\iv{1}{n+1}$.
Let $\mathbb{E}=\sum_{i\in E} x_iy_i= \sum_{i\in E} (u_i-1)$ be the Euler operator in the Weyl algebra $A_E(\K)$. Let $A_E(\K)^\mathbb{E}=\{a\in A_E(\K)\mid [\mathbb{E},a]=0\}$ be the invariant subalgebra with respect to the adjoint action of $\mathbb{E}$.
The following lemma is well-known.
\begin{lem}\label{lem:sln+1}
There exists a homomorphism
\[ \pi: U(\mathfrak{gl}_{n+1})\to A_E(\K)\] 
given by
\[ 
\pi(e_i)=x_iy_{i+1},\quad 
\pi(f_i)=x_{i+1}y_i,\quad 
\pi(E_{jj})=u_j,
\] 
for $i\in \iv{1}{n}$ and $j\in\iv{1}{n+1}$, where $e_i,f_i$ are Chevalley generators and $E_{ii}$ are the diagonal matrix units of $\mathfrak{gl}_{n+1}$. Moreover, the image of $\pi$ coincides with $A_E(\K)^\mathbb{E}$.
\end{lem}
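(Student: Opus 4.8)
The plan is to recognize $\pi$ as the bosonic (oscillator) realization of $\mathfrak{gl}_{n+1}$ on the Fock space $\K[z_e\mid e\in E]$, twisted by the trace character, and then to identify its image with the $\mathbb{E}$-invariants of $A_E(\K)$ by means of the Euler grading. First I would define $\tilde\pi\colon\mathfrak{gl}_{n+1}\to A_E(\K)$ on matrix units by $E_{ab}\mapsto x_ay_b+\delta_{ab}$ (here $E=\iv{1}{n+1}$, so $A_E(\K)$ has exactly $n+1$ pairs of generators). Using only $[y_e,x_e]=1$ and the commutativity of all other pairs among the $x_e,y_e$, one checks the identity $[x_ay_b,x_cy_d]=\delta_{bc}\,x_ay_d-\delta_{ad}\,x_cy_b$, valid in $A_E(\K)$ for arbitrary (possibly equal) indices; this says $E_{ab}\mapsto x_ay_b$ is a homomorphism of Lie algebras into $(A_E(\K),[\cdot,\cdot])$, and since $E_{ab}\mapsto\delta_{ab}$ is a Lie character (it kills $[\mathfrak{gl}_{n+1},\mathfrak{gl}_{n+1}]=\mathfrak{sl}_{n+1}$), the sum $\tilde\pi$ is again a Lie algebra homomorphism. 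By the universal property of $U(\mathfrak{gl}_{n+1})$ it extends to an algebra homomorphism $\pi$, and reading off its values on the Chevalley generators $e_i=E_{i,i+1}$, $f_i=E_{i+1,i}$ and the $E_{jj}$ yields exactly the stated formulas, with $\pi(E_{jj})=x_jy_j+1=y_jx_j=u_j$, compatibly with $\mathbb{E}=\sum_{e\in E}x_ey_e=\sum_{e\in E}(u_e-1)$.

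For the image, note first that from $[\mathbb{E},x_e]=x_e$ and $[\mathbb{E},y_e]=-y_e$ one gets $[\mathbb{E},x_ay_b]=0$ for all $a,b$, and since these elements generate $\im\pi$ as an algebra, $\im\pi\subseteq A_E(\K)^{\mathbb{E}}$. Conversely, I would equip $A_E(\K)$ with the Euler $\Z$-grading $\deg x_e=1$, $\deg y_e=-1$; then $\ad\mathbb{E}$ is the corresponding degree operator, so $A_E(\K)^{\mathbb{E}}$ is precisely the degree-zero component, spanned by the normally ordered monomials $x_{i_1}\cdots x_{i_k}\,y_{j_1}\cdots y_{j_k}$ with equally many $x$'s and $y$'s. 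I would then prove by induction on $k$ that each such monomial lies in $\im\pi$: the case $k=0$ is a scalar, and for $k\ge1$ one has
\[
x_{i_1}\cdots x_{i_k}y_{j_1}\cdots y_{j_k}=(x_{i_1}y_{j_1})\bigl(x_{i_2}\cdots x_{i_k}y_{j_2}\cdots y_{j_k}\bigr)-\sum_{a\,:\,i_a=j_1}x_{i_1}x_{i_2}\cdots\widehat{x_{i_a}}\cdots x_{i_k}\,y_{j_2}\cdots y_{j_k},
\]
the correction being obtained by commuting $y_{j_1}$ to the right past $x_{i_2},\dots,x_{i_k}$; the first term on the right is a product of $x_{i_1}y_{j_1}\in\im\pi$ with an element of $\im\pi$ by the inductive hypothesis, and the correction monomials have $k-1$ factors of each type, hence lie in $\im\pi$ by induction, so the whole monomial does. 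Here $x_ay_b\in\im\pi$ for every pair, being $\pi(E_{ab})$ if $a\neq b$ and $\pi(E_{aa})-1$ if $a=b$. This gives $A_E(\K)^{\mathbb{E}}\subseteq\im\pi$, hence equality.

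I do not anticipate a real obstacle here: the only step with any content is surjectivity onto $A_E(\K)^{\mathbb{E}}$, which rests on the identification of that subalgebra with the Euler-degree-zero part together with the inductive peeling-off of one factor $x_ay_b$ at a time. The one bookkeeping point to watch is that $\pi(E_{jj})=u_j=x_jy_j+1$, i.e.\ the diagonal of $\pi$ is the oscillator diagonal shifted by the identity; this shift is harmless, since it only changes $\pi$ by a central character and affects neither $\mathbb{E}$-invariance nor the image.

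(An alternative to the universal-property step would be to verify the Serre relations for $\pi(e_i),\pi(f_i)$ and the defining commutators with $\pi(E_{jj})$ directly, but recognizing $\pi$ as the twisted oscillator representation makes this unnecessary.)
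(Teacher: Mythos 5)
Your argument is correct and complete. Note that the paper itself gives no proof of this lemma: it is simply declared well-known (the symplectic analogue, Lemma \ref{lem:sp2n-lemma}, is handled the same way with a reference to Dixmier), so there is nothing in the paper to compare against. Your route is the standard one and it works: the map $E_{ab}\mapsto x_ay_b+\delta_{ab}$ is a Lie homomorphism because $[x_ay_b,x_cy_d]=\delta_{bc}x_ay_d-\delta_{ad}x_cy_b$ and the scalar shift is the trace character, which vanishes on commutators; with the paper's convention $[y_i,x_j]=\delta_{ij}$ this gives $\pi(E_{jj})=x_jy_j+1=y_jx_j=u_j$ as required. The containment $\im\pi\subseteq A_E(\K)^{\mathbb{E}}$ from $[\mathbb{E},x_ay_b]=0$, the identification of $A_E(\K)^{\mathbb{E}}$ with the degree-zero part of the Euler grading (valid since $\operatorname{char}\K=0$, so $\ad\mathbb{E}$ acts on each graded component by its degree), and the inductive peeling of a factor $x_{i_1}y_{j_1}$ from a normally ordered degree-zero monomial all check out. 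The only bookkeeping nit is in your displayed identity: the correction sum should run over $a\in\{2,\dots,k\}$ with $i_a=j_1$ (as your verbal description of commuting $y_{j_1}$ past $x_{i_2},\dots,x_{i_k}$ already indicates), not over all $a$ with $i_a=j_1$; this does not affect the induction, since the correction terms still have $k-1$ factors of each type.
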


The next result says that $\pi$ factors through the canonical representation $\varphi_{\Ga}$ of the TGW algebra $\mathcal{A}(\Ga)$ where $\Ga=\widetilde{A_n}$. For $n=3$ we recover the case considered by A. Sergeev \cite{Sergeev2001}.

We will regard $U(\mathfrak{gl}_{n+1})$ as an $R_E$-ring via
\[R_E\to U(\mathfrak{gl}_{n+1}),\quad u_i\mapsto E_{ii}. \] 
\begin{prp}
There exists a homomorphism of $\Z V$-graded $R_E$-rings
\[\psi:U(\mathfrak{gl}_{n+1})\to \mathcal{A}(\widetilde{A_n})\] 
given by
\[\psi(e_i)=X_i,\quad \psi(f_i)=Y_i,\quad \psi(E_{jj})=u_j,\]
for $i\in\iv{1}{n}$ and $j\in\iv{1}{n+1}$.
Moreover, we have the following commutative diagram:
\begin{equation}
\begin{tikzcd}[ampersand replacement=\&,
               column sep=small]
U(\mathfrak{gl}_{n+1})
 \arrow[twoheadrightarrow]{rrd}{\pi}
 \arrow[twoheadrightarrow]{d}[swap]{\psi} 
 \& \&  \\ 
\mathcal{A}(\widetilde{A_n})
 \arrow{rr}{\simeq}[swap]{\varphi_{\widetilde{A_n}}}
 \& \& A_E(\K)^{\mathbb{E}} 
\end{tikzcd}
\end{equation}
\end{prp}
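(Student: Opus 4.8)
\medskip

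\noindent\emph{Proof plan.}
The plan is to avoid verifying the Chevalley--Serre relations of $U(\mathfrak{gl}_{n+1})$ by hand and instead deduce everything from the fact that $\varphi_{\widetilde{A_n}}$ is an isomorphism of $\mathcal{A}(\widetilde{A_n})$ onto the invariant subalgebra $A_E(\K)^{\mathbb E}$. First I would record the data of $\Ga=\widetilde{A_n}$ explicitly: with $V=\iv{1}{n}$ and $E=\iv{1}{n+1}$, the incidence matrix satisfies $\ga(v_i)=\bfe_i-\bfe_{i+1}$ (the difference of the $i$-th and $(i+1)$-st standard basis vectors of $\Z E$), hence $\si_i(u_e)=u_e-(\de_{ei}-\de_{e,i+1})$, $t_i=u_i(u_{i+1}-1)$ and $\si_i(t_i)=(u_i-1)u_{i+1}$, so in particular $\si_i(t_i)-t_i=u_i-u_{i+1}$. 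From the definition of $\varphi_\Ga$ in Theorem \ref{thm:map_to_Weyl_algebra} we then read off $\varphi_{\widetilde{A_n}}(X_i)=z^{\ga(v_i)}=x_iy_{i+1}=\pi(e_i)$, $\varphi_{\widetilde{A_n}}(Y_i)=z^{-\ga(v_i)}=x_{i+1}y_i=\pi(f_i)$ and $\varphi_{\widetilde{A_n}}(u_j)=y_jx_j=\pi(E_{jj})$.

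Second, I would show that $\varphi_{\widetilde{A_n}}\colon\mathcal{A}(\widetilde{A_n})\to A_E(\K)^{\mathbb E}$ is a ring isomorphism. Since $\widetilde{A_n}$ is connected and has (connected) leaves, no connected component of $\widetilde{A_n}$ is in equilibrium, so by Theorem \ref{thm:faithfulness} the map $\varphi_{\widetilde{A_n}}$ is injective and $\ga$ is injective. Since the underlying undirected graph of $\widetilde{A_n}$ is a path, hence acyclic, Theorem \ref{thm:local-surjectivity} gives $\varphi_{\widetilde{A_n}}\big(\mathcal{A}(\widetilde{A_n})_g\big)=A_E(\K)_{\ga(g)}$ for all $g\in\Z V$; consequently $\im\varphi_{\widetilde{A_n}}=\bigoplus_{h\in\ga(\Z V)}A_E(\K)_h$. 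Now $\ga(\Z V)$ is the $\Z$-span of $\{\bfe_i-\bfe_{i+1}\mid i\in\iv{1}{n}\}$, which is exactly $\{h\in\Z E\mid\sum_{e\in E}h_e=0\}$; and since $\ad\mathbb E$ acts on each homogeneous component $A_E(\K)_h$ by the scalar $\sum_{e\in E}h_e$, we have $A_E(\K)^{\mathbb E}=\bigoplus_{\sum_e h_e=0}A_E(\K)_h$. Hence $\im\varphi_{\widetilde{A_n}}=A_E(\K)^{\mathbb E}$, and $\varphi_{\widetilde{A_n}}$ is the asserted isomorphism (this also recovers the last assertion of Lemma \ref{lem:sln+1}).

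Third, I would set $\psi:=\varphi_{\widetilde{A_n}}^{-1}\circ\pi$, which is well defined because $\im\pi=A_E(\K)^{\mathbb E}$ by Lemma \ref{lem:sln+1}. Then $\psi$ is automatically an algebra homomorphism, the triangle in the statement commutes by construction, and $\psi$ is surjective since $\pi$ is surjective onto $A_E(\K)^{\mathbb E}$ while $\varphi_{\widetilde{A_n}}$ is bijective onto $A_E(\K)^{\mathbb E}$. Applying $\varphi_{\widetilde{A_n}}$ to $\psi(e_i),\psi(f_i),\psi(E_{jj})$ and using the identifications from the first step shows $\psi(e_i)=X_i$, $\psi(f_i)=Y_i$, $\psi(E_{jj})=u_j$. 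Finally, $\psi$ is a map of $R_E$-rings because $\pi$ and $\varphi_{\widetilde{A_n}}$ are, and it is a map of $\Z V$-graded algebras because $\pi$ is graded (grading $U(\mathfrak{gl}_{n+1})$ by the root lattice $\Z V$, with $E_{jj}$ in degree zero and $e_i$ in degree $\al_i$) while $\varphi_{\widetilde{A_n}}^{-1}$ is graded (as $\varphi_{\widetilde{A_n}}$ is graded and $\ga$ is injective).

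I do not expect a genuine obstacle here once Sections \ref{sec:faithfulness} and \ref{sec:local-surjectivity} are in hand; the only point requiring a little care is identifying $\im\varphi_{\widetilde{A_n}}$ precisely with $A_E(\K)^{\mathbb E}$, i.e.\ matching $\ga(\Z V)$ with the kernel of the total-degree functional on $\Z E$ and checking that $\ad\mathbb E$ is scalar on each $\Z E$-homogeneous component of $A_E(\K)$. As an alternative, one could instead construct $\psi$ by verifying the defining relations of $U(\mathfrak{gl}_{n+1})$ directly in $\mathcal{A}(\widetilde{A_n})$: the Cartan relations and the relations $[E_{jj},e_i]$, $[E_{jj},f_i]$ are immediate from $X_ir=\si_i(r)X_i$; the relation $[e_i,f_i]\mapsto u_i-u_{i+1}$ is exactly the identity $\si_i(t_i)-t_i=u_i-u_{i+1}$ from the first step, while $[e_i,f_j]\mapsto 0$ for $i\neq j$ is the TGW relation $X_iY_j=Y_jX_i$; and the Serre relations follow from Theorem \ref{thm:GCM}, which identifies the generalized Cartan matrix of $\mathcal{A}(\widetilde{A_n})$ with that of $A_n$, together with Theorem \ref{thm:serre-relations}.
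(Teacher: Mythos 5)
Your proposal is correct and follows essentially the same route as the paper: use Theorem \ref{thm:faithfulness} (no component in equilibrium, since $\widetilde{A_n}$ has leaves) and Theorem \ref{thm:local-surjectivity} (acyclic) to identify $\varphi_{\widetilde{A_n}}$ as an isomorphism onto $\bigoplus_{d\in\Z V}A_E(\K)_{\ga(d)}=A_E(\K)^{\mathbb{E}}$, and then define $\psi=\varphi_{\widetilde{A_n}}^{-1}\circ\pi$ and check it on generators. Your extra details (computing $\ga(v_i)=\bfe_i-\bfe_{i+1}$, $\si_i(t_i)-t_i=u_i-u_{i+1}$, and that $\ad\mathbb{E}$ acts by the total degree on each $\Z E$-homogeneous component) simply make explicit what the paper leaves to the reader.
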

\begin{proof}
The multiquiver $\widetilde{A_n}$ is connected and not in equilibrium since it has leaves. Thus by Theorem \ref{thm:faithfulness},  $\varphi_{\widetilde{A_n}}$ is faithful. Since 
$\widetilde{A_n}$ has no cycles, by Theorem \ref{thm:local-surjectivity}, the image of $\varphi_{\widetilde{A_n}}$ equals $\bigoplus_{d\in \Z E} (A_E(\K))_{\ga(d)}$, which coincides with $A_E(\K)^\mathbb{E}$ since $\ga(\Z^n)=\{g=\sum_e g_ee\in \Z E\mid \sum_e g_e=0\}$.
It only remains to note that $\psi$ coincides with $\varphi_{\widetilde{A_n}}^{-1}\circ \pi$ on the generators $e_i, f_i, E_{jj}$.
\end{proof}

For $\la\in\K$, let $\mathcal{A}^\la(\widetilde{A_n})$ denote the quotient algebra $\mathcal{A}(\widetilde{A_n})/\langle \mathbb{E}-\lambda\rangle$. Since $\varphi_{\widetilde{A_n}}$ is faithful and locally surjective, each homogeneous component $\mathcal{A}(\widetilde{A_n})_g$ is a cyclic left $R_E$-module generated by $(\varphi_{\widetilde{A_n}})^{-1}(z^{\ga(g)})$. By Proposition \ref{prp:quotients}, $\mathcal{A}^\la(\widetilde{A_n})$ is isomorphic to the TGW algebra $\mathcal{A}(R_E^\la, \bar \sigma, \bar t)$ where $R_E^\la=R_E/\langle \mathbb{E}-\la\rangle$ and $\bar\sigma_i$ is induced from $\sigma_i$, and $\bar t_i$ is the image of $t_i$. Note that $U(\mathfrak{sl}_{n+1})$ becomes an $R_E^\la$-ring via $\bar u_i-\bar u_{i+1}\mapsto h_i$. 

\begin{thm}\label{thm:sln-quotient}
Let $M$ be an infinite-dimensional completely pointed $\mathfrak{sl}_{n+1}$-module and let $J=\Ann_{U(\mathfrak{sl}_{n+1})} M$. Then there exists a $\la\in\K$ such that the primitive quotient $U(\mathfrak{sl}_{n+1})/J$ is isomorphic as $\Z V$-graded $R_E^\la$-rings to $\mathcal{A}^\la(\widetilde{A_n})$.
\end{thm}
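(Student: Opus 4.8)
The plan is to transport the classification of completely pointed modules through the isomorphism $\varphi_{\widetilde{A_n}}:\mathcal{A}(\widetilde{A_n})\xrightarrow{\ \sim\ }A_E(\K)^{\mathbb{E}}$ and the quotient description $\mathcal{A}^\la(\widetilde{A_n})\cong\mathcal{A}(R_E^\la,\bar\si,\bar t)$, reducing the statement to a faithfulness assertion about a TGW algebra.

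First I would invoke \cite{BenBriLem1997}: the infinite-dimensional simple completely pointed $\mathfrak{sl}_{n+1}$-module $M$ is realized as the $\mathbb{E}$-eigenspace $N_\la=\{v\in N\mid \mathbb{E}v=\la v\}$ of a simple weight module $N$ over the Weyl algebra $A_E(\K)$ (here $\#E=n+1$), with $\mathfrak{sl}_{n+1}$ acting through the homomorphism $\pi$ of Lemma \ref{lem:sln+1}, for a suitable $\la\in\K$. Since $\mathbb{E}=\pi\big(\sum_j E_{jj}\big)-(n+1)$ lies in $\pi\big(U(\mathfrak{gl}_{n+1})\big)=A_E(\K)^{\mathbb{E}}$ and hence commutes with $\pi(\mathfrak{gl}_{n+1})$, the subspace $N_\la$ is a $\mathfrak{gl}_{n+1}$-submodule of $N$ on which the central element $\sum_j E_{jj}$ acts by $\la+n+1$, so the action descends to the $\mathfrak{sl}_{n+1}$-module $M=N_\la$.

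Next I would set up the factorization. Because $\psi=\varphi_{\widetilde{A_n}}^{-1}\circ\pi$ is surjective and $\varphi_{\widetilde{A_n}}$ is an isomorphism, the ideal $\langle\mathbb{E}-\la\rangle$ of $\mathcal{A}(\widetilde{A_n})$ is the image under $\psi$ of $\langle\sum_jE_{jj}-(\la+n+1)\rangle$; using $U(\mathfrak{gl}_{n+1})=U(\mathfrak{sl}_{n+1})\cdot\K[\sum_jE_{jj}]$ one gets that restricting $\psi$ to $U(\mathfrak{sl}_{n+1})$ and composing with $\mathcal{A}(\widetilde{A_n})\to\mathcal{A}^\la(\widetilde{A_n})$ yields a surjective homomorphism of $\Z V$-graded $R_E^\la$-rings
\[\psi_\la\colon U(\mathfrak{sl}_{n+1})\twoheadrightarrow\mathcal{A}^\la(\widetilde{A_n}),\qquad e_i\mapsto X_i,\quad f_i\mapsto Y_i.\]
By construction the $\mathfrak{sl}_{n+1}$-action on $M=N_\la$ is the pullback along $\psi_\la$ of the $\mathcal{A}^\la(\widetilde{A_n})$-action induced from $A_E(\K)^{\mathbb{E}}/\langle\mathbb{E}-\la\rangle$. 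Hence, writing $K_\la=\ker\psi_\la$, we obtain $U(\mathfrak{sl}_{n+1})/K_\la\cong\mathcal{A}^\la(\widetilde{A_n})$ as $\Z V$-graded $R_E^\la$-rings and $J=\Ann_{U(\mathfrak{sl}_{n+1})}M=\psi_\la^{-1}\!\big(\Ann_{\mathcal{A}^\la(\widetilde{A_n})}M\big)$. The inclusion $K_\la\subseteq J$ is immediate since $\langle\mathbb{E}-\la\rangle$ annihilates $N_\la$, so everything comes down to showing $\Ann_{\mathcal{A}^\la(\widetilde{A_n})}M=0$.

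The hard part will be this faithfulness statement. Here the plan is: by Proposition \ref{prp:quotients}, $\mathcal{A}^\la(\widetilde{A_n})\cong\mathcal{A}(R_E^\la,\bar\si,\bar t)$ is a consistent TGW algebra whose base ring $R_E^\la$ is a polynomial ring in $n$ variables, hence a domain. Since $M$ is a simple multiplicity-free weight module, it decomposes as $M=\bigoplus_{d\in\Z V}M_d$ with support contained in a single coset of $\ga(\Z V)$, so $M$ is $\Z V$-graded and $\Ann_{\mathcal{A}^\la(\widetilde{A_n})}M$ is a graded ideal; as recalled in the proof of Theorem \ref{thm:primitivity}, a nonzero graded ideal of such a TGW algebra meets the base ring. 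But $\Ann_{\mathcal{A}^\la(\widetilde{A_n})}M\cap R_E^\la=\Ann_{R_E^\la}M=\bigcap_{\mathfrak m\in\Supp M}\mathfrak m$ (each weight space being one-dimensional), and this is $\{0\}$ because the weight support of an infinite-dimensional simple completely pointed $\mathfrak{sl}_{n+1}$-module is, by the explicit realization in \cite{BenBriLem1997}, a Zariski-dense subset of the $n$-dimensional coset of $\ga(\Z V)$ it spans (recall $\ga$ is injective, since $\widetilde{A_n}$ has leaves, by Theorem \ref{thm:faithfulness}). Therefore $\Ann_{\mathcal{A}^\la(\widetilde{A_n})}M=0$, whence $J=K_\la$ and $U(\mathfrak{sl}_{n+1})/J\cong\mathcal{A}^\la(\widetilde{A_n})$ as $\Z V$-graded $R_E^\la$-rings. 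The two points requiring the most care are extracting from \cite{BenBriLem1997} the precise form of the realization $M\cong N_\la$ together with the Zariski-density of $\Supp M$, and verifying that the $\Z V$-grading and $R_E^\la$-ring structures are matched by $\psi_\la$.
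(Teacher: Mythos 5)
Your overall strategy is the same as the paper's: realize $M$ via \cite{BenBriLem1997} as the $\mathbb{E}$-eigenspace of a simple weight module over $A_E(\K)$, factor the $\mathfrak{sl}_{n+1}$-action through a surjection $U(\mathfrak{sl}_{n+1})\twoheadrightarrow\mathcal{A}^\la(\widetilde{A_n})$ (using $U(\mathfrak{gl}_{n+1})=U(\mathfrak{sl}_{n+1})\cdot\K[\textstyle\sum_j E_{jj}]$, exactly as the paper does implicitly with its commutative diagram), and reduce everything to the statement that $M$ is faithful over the quotient algebra, which in turn reduces to $\Ann_{R_E^\la}M=0$. Your route from $\Ann_{R_E^\la}M=0$ to faithfulness (gradedness of $\Ann M$ plus the fact that a nonzero graded ideal of a TGW algebra meets the degree-zero part) is a perfectly good variant of the paper's argument, which instead uses that $B_E^\la=A_E(\K)^{\mathbb{E}}/\langle\mathbb{E}-\la\rangle$ is a domain; both work, and yours has the small advantage of staying entirely inside the TGW formalism.

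The one genuine gap is the step you yourself flag: the claim that $\Supp M$ is Zariski-dense in the relevant $n$-dimensional coset, i.e.\ that $\bigcap_{\mathfrak m\in\Supp M}\mathfrak m=0$. You justify this only by appeal to ``the explicit realization in \cite{BenBriLem1997}'', but density of the support is not a stated result there and is exactly the nontrivial point of the whole proof. The paper closes it by invoking Mathieu's theorem \cite{Mathieu2000}: there is a set $S$ of commuting roots spanning the weight lattice such that the root vectors $X_\beta$, $\beta\in S$, act injectively on $M$; hence from any weight $\mu$ the support contains the full-dimensional translated cone $\mu+\sum_{\beta\in S}\Z_{>0}\beta$, and the intersection of the corresponding maximal ideals is already zero. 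Alternatively one can extract density by hand from the BBL realization (the support is the intersection of a product of lines and half-lines in $\Z^{E}$ with the hyperplane $\mathbb{E}=\la$, which, when infinite, contains a full-dimensional cone), but some such argument must be supplied; as written, the crucial inequality $\Ann_{R_E^\la}M=0$ is asserted rather than proved. With that step filled in (e.g.\ by the Mathieu argument), your proof is complete and matches the paper's.
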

\begin{proof}
By \cite{BenBriLem1997}, there exists a simple weight module $W$ over the Weyl algebra $A_E(\K)$ and a scalar $\la\in\K$ such that $M$ is the $\la$-eigenspace in $W$ with respect to the Euler operator $\mathbb{E}$.
Moreover the algebra $B_E^\la$ of twisted differential operators on $\mathbb{P}^n$ given by $B_E^\la=A_E(\K)^\mathbb{E}/\langle\mathbb{E}-\la\rangle$ acts faithfully on $M$.
Indeed, since $B_E^\la$ is a domain, it is sufficient to show
that $\operatorname{Ann}_{R_E}M=0$. Recall from \cite{Mathieu2000} that
there exists the set $S$ of commuting roots such that $S$ spans
the whole weight space and a root vector $X_\beta$ acts injectively
on $M$ for all $\beta\in S$. Let $M_\mu$ be a weight subspace. Then 
$$\operatorname{Ann}_{R_E}M \subset\bigcap_{n_\beta
  >0}\operatorname{Ann}_{R_E}M_{\mu+\sum_{\beta\in S} n_\beta\beta}=0,$$
where the second equality follows from the condition that $S$ spans the whole space.

Let $\al:U(\mathfrak{sl}_{n+1})\to\End_\K(M)$ denote the representation corresponding to the module $M$.
Then we have the following commutative diagram, where we use $\psi$ and $\pi$ to also denote the restrictions to $U(\mathfrak{sl}_{n+1})$, $\xi_\la, \xi_\la'$ are the canonical projections, and $\bar\varphi_{\widetilde{A_n}}$ is induced from $\varphi_{\widetilde{A_n}}$:

\[ 
\begin{tikzcd}[ampersand replacement=\&,
               column sep=small]
U(\mathfrak{sl}_{n+1})
 \arrow{rr}{\al}
 \arrow{rd}{\pi}
 \arrow{d}{\psi} 
 \& \& \End_\K(M) \\ 
\mathcal{A}(\widetilde{A_n})
 \arrow{r}{\simeq}[swap]{\varphi_{\widetilde{A_n}}}
 \arrow[twoheadrightarrow]{d}{\xi_\la}
 \& A_E(\K)^\mathbb{E} \arrow{ur} \arrow[twoheadrightarrow]{dr}{\xi_\la'} \& \\ 
\mathcal{A}^\la(\widetilde{A_n})
 \ar{rr}{\simeq}[swap]{\bar\varphi_{\widetilde{A_n}}}
 \& \& A_E(\K)^\mathbb{E}/\langle\mathbb{E}-\la\rangle \ar[hookrightarrow]{uu}
\end{tikzcd}
\] 
Let $J_\la = \ker (\xi_\la \circ \psi)$. Then it follows from the above diagram that $J=\ker (\xi_\la'\circ\pi) = J_\la$ and thus that $\mathcal{A}^\la(\widetilde{A_n}) \simeq U(\mathfrak{sl}_n)/J_\la$ as required.

\end{proof}

\subsection{Symplectic Lie algebra}
\label{sec:symplectic-lie-algebras}
Let $\widetilde{C_n}=(V,E,s,t)$ be the following multiquiver:
\[
\widetilde{C_n}:\;\; 
\begin{tikzpicture}[baseline=(etc.base)]
\node (start) at (0,0) {};
\node[vertex] (1) at (1,0) [label=below:$1$] {};
\node[vertex] (2) at (2,0) [label=below:$2$] {};
\node[vertex] (3) at (3,0) [label=below:$3$] {};
\node[vertex] (N-2) at (4,0) [label=below:$n-2$] {};
\node[vertex] (N-1) at (5,0) [label=below:$n-1$] {};
\node[vertex] (N) at (6,0) [label=below:$n$] {};
\node (etc) at (3.5,0) {$\cdots$};

\draw[leaf] (start) -- (1);
\draw[edge] (1) -- (2);
\draw[edge] (2) -- (3);
\draw[edge] (N-2) -- (N-1);
\draw[edge] (N-1) -- node[auto,very near start] {$1$} node[auto,very near end] {$2$} (N);
\end{tikzpicture}
\qquad\qquad 
\ga=\begin{bmatrix}
1  &    &        & && \\
-1 & 1  &        & &&\\
   & -1 & \ddots & &&\\
   &    & \ddots &\ddots  &&\\
   &    &        & -1 &  1 & \\
   &    &        &   & -1 & 2
\end{bmatrix}
\] 
We identify $V=\iv{1}{n}$ with the set of simple roots of $\mathfrak{sp}_{2n}$ relative to a fixed choice of Borel subalgebra. In particular we regard $\Z V$ as the root lattice.
We also identify the edge set $E$ with $\iv{1}{n}$. Let $A_E(\K)=A_n(\K)$ be the Weyl algebra with index set $E$, and let $\ep$ be the automorphism of $A_n(\K)$ defined by $\ep(x_i)=-x_i$ and $\ep(y_i)=-y_i$ for all $i\in E$. Let $A_n(\K)^\ep=\{a\in A_n(\K)\mid \ep(a)=a\}$ be the fixed-point subalgebra of $A_n(\K)$ with respect to $\ep$.

\begin{lem}\label{lem:sp2n-lemma}
There exists a $\K$-algebra homomorphism
\[\pi:U(\mathfrak{sp}_{2n})\to A_n(\K)\] 
given by
\[ 
\begin{aligned}
\pi(e_i) &= x_iy_{i+1} \\ 
\pi(e_n) &= \frac{\sqrt{-1}}{2}x_n^2
\end{aligned}
\qquad 
\begin{aligned}
\pi(f_i) &= x_{i+1}y_i \\ 
\pi(f_n) &= \frac{\sqrt{-1}}{2}y_n^2
\end{aligned}
\qquad 
\begin{aligned}
\pi(h_i) &= u_i-u_{i+1} \\ 
\pi(h_n) &= u_n-\frac{1}{2}
\end{aligned}
\] 
for $i\in \iv{1}{n-1}$.
Moreover, the image of $\pi$ coincides with the subalgebra $A_n(\K)^\ep$.
\end{lem}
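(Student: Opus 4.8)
Here is the plan.

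\emph{Construction of $\pi$.} By the universal property of $U(\mathfrak{sp}_{2n})$ it is enough to exhibit a Lie algebra homomorphism $\mathfrak{sp}_{2n}\to A_n(\K)$, where $A_n(\K)$ is viewed as a Lie algebra under the commutator, sending the Chevalley generators $e_i,f_i,h_i$ to the indicated elements; equivalently, one checks that these elements satisfy Serre's defining relations of $\mathfrak{sp}_{2n}$. The relations $[\pi(h_i),\pi(h_j)]=0$ are immediate since all $\pi(h_i)$ lie in $\K[u_1,\dots,u_n]$. The relations $[\pi(h_i),\pi(e_j)]=a_{ij}\pi(e_j)$ and $[\pi(h_i),\pi(f_j)]=-a_{ij}\pi(f_j)$ are one-line weight computations in $A_n(\K)$, where $C=(a_{ij})$ is the GCM attached to $D(\widetilde{C_n})$; by Theorem~\ref{thm:GCM} this GCM has $a_{n-1,n}=-2$, $a_{n,n-1}=-1$ at the double edge. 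The relations $[\pi(e_i),\pi(f_j)]=\delta_{ij}\pi(h_i)$ reduce to short computations; the only nonobvious one is $[\tfrac{\sqrt{-1}}{2}x_n^2,\tfrac{\sqrt{-1}}{2}y_n^2]=-\tfrac14[x_n^2,y_n^2]=-\tfrac14(-2)(2y_nx_n-1)=u_n-\tfrac12=\pi(h_n)$, and it is exactly this identity that forces the normalization $\tfrac{\sqrt{-1}}{2}$. The Serre relations $(\ad\pi(e_i))^{1-a_{ij}}(\pi(e_j))=0$, and the analogues for the $f$'s, are trivial unless $\{i,j\}=\{n-1,n\}$, in which case one checks $(\ad x_{n-1}y_n)^{3}(x_n^2)=0$ and $(\ad x_n^2)^{2}(x_{n-1}y_n)=0$ by direct manipulation in the Weyl algebra. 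Conceptually this $\pi$ is just the classical oscillator (metaplectic) realization of $\mathfrak{sp}_{2n}$ as the span of the quadratic operators $x_ix_j$, $y_iy_j$, $\tfrac12(x_iy_j+x_jy_i)$; note that $\pi(h_n)=\tfrac12(x_ny_n+y_nx_n)$ and $\pi(h_i)=\tfrac12(x_iy_i+y_ix_i)-\tfrac12(x_{i+1}y_{i+1}+y_{i+1}x_{i+1})$, $i<n$, are precisely the symmetrized expressions. Passing to $U(\mathfrak{sp}_{2n})$ yields $\pi$.

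\emph{The inclusion $\im\pi\subseteq A_n(\K)^\ep$.} Each of $\pi(e_i),\pi(f_i)$ is homogeneous of total degree $2$ in the variables $x_e,y_e$ and each $\pi(h_i)$ has total degree $0$; since $\ep$ acts on a monomial of total degree $d$ by multiplication by $(-1)^d$, every generator, and hence the whole subalgebra $\im\pi$, is fixed by $\ep$.

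\emph{The inclusion $A_n(\K)^\ep\subseteq\im\pi$.} Here I would go through the multiquiver $\widetilde{C_n}$, using Theorems~\ref{thm:faithfulness} and~\ref{thm:local-surjectivity}. Since $\widetilde{C_n}$ is connected, acyclic and has a leaf, none of its connected components is in equilibrium, so the representation $\varphi:=\varphi_{\widetilde{C_n}}:\mathcal{A}(\widetilde{C_n})\to A_n(\K)$ is faithful and locally surjective; consequently $\im\varphi=\bigoplus_{d\in\Z V}A_n(\K)_{\ga(d)}$. Reading off the columns of $\ga$, the sublattice $\ga(\Z V)\subseteq\Z E$ is generated by $\ga(1)=\bfe_1-\bfe_2,\dots,\ga(n-1)=\bfe_{n-1}-\bfe_n$ and $\ga(n)=2\bfe_n$, so $\ga(\Z V)=\{k=\sum_{e}k_e\bfe_e\in\Z E\mid\sum_{e}k_e\in2\Z\}$. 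On the other hand, writing $A_n(\K)=\bigoplus_{k\in\Z E}R_E z^{k}$, we have $\ep(u_e)=(-y_e)(-x_e)=u_e$ and $\ep(z^{k})=(-1)^{\sum_e|k_e|}z^{k}=(-1)^{\sum_e k_e}z^{k}$, whence $A_n(\K)^\ep=\bigoplus_{\sum_e k_e\in2\Z}R_Ez^{k}=\im\varphi$. Finally, $\im\pi$ contains $\varphi(X_i)=\pi(e_i)$ and $\varphi(Y_i)=\pi(f_i)$ for $i<n$, contains $\varphi(X_n),\varphi(Y_n)$ as nonzero scalar multiples of $\pi(e_n),\pi(f_n)$, and contains $\varphi(R_E)=\K[\varphi(u_e)\mid e\in E]$ because the $\pi(h_i)$ generate it; since $\mathcal{A}(\widetilde{C_n})$ is generated as a $\K$-algebra by $R_E$ together with the $X_i,Y_i$ (Lemma~\ref{lem:monomials}), this gives $\im\pi\supseteq\varphi(\mathcal{A}(\widetilde{C_n}))=A_n(\K)^\ep$. (Alternatively one could argue directly, using that $A_n(\K)^\ep$ is generated by the quadratic operators, which span $\pi(\mathfrak{sp}_{2n})$.) Together with the previous step, $\im\pi=A_n(\K)^\ep$.

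\emph{Main difficulty.} The only real work is the bookkeeping in the construction step — verifying all of Serre's relations, pinning down the constant $\tfrac{\sqrt{-1}}{2}$ via $[\pi(e_n),\pi(f_n)]=\pi(h_n)$, and checking the two Serre relations across the double edge with the correct exponents $1-a_{n-1,n}=3$ and $1-a_{n,n-1}=2$. The lattice identification $\ga(\Z V)=\{k:\sum_e k_e\in2\Z\}$ and the surjectivity argument are routine once Theorems~\ref{thm:faithfulness}--\ref{thm:local-surjectivity} are available.
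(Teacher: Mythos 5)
Your proposal is correct, but it is a genuinely different (much more explicit) route than the paper's: the paper disposes of this lemma by citing the classical oscillator realization of $\mathfrak{sp}_{2n}$ inside the Weyl algebra (``well-known, see Dixmier''), adding only that the factor $\tfrac{\sqrt{-1}}{2}$ is chosen so that $\pi$ respects the natural involutions, whereas you verify the Chevalley--Serre relations by hand and then identify the image. Your verification is sound: the computations $[\tfrac{\sqrt{-1}}{2}x_n^2,\tfrac{\sqrt{-1}}{2}y_n^2]=u_n-\tfrac12$, the weight relations with $a_{n-1,n}=-2$, $a_{n,n-1}=-1$, and the two Serre relations across the double edge all check out, and Serre's presentation theorem justifies passing from generators to $U(\mathfrak{sp}_{2n})$. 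One small imprecision: the identity $[\pi(e_n),\pi(f_n)]=\pi(h_n)$ only forces the \emph{product} of the two normalizing constants to be $-\tfrac14$; it is the requirement that $\pi$ intertwine the involutions (as the paper says) that singles out the symmetric choice $\tfrac{\sqrt{-1}}{2}$ for both. For the image, your detour through $\varphi_{\widetilde{C_n}}$, Theorems \ref{thm:faithfulness} and \ref{thm:local-surjectivity}, and the lattice computation $\ga(\Z V)=\{k\mid\sum_e k_e\in 2\Z\}$ is not circular (those results are independent of this lemma), but it essentially duplicates the work the paper does in the Proposition immediately following the lemma; the parenthetical alternative you mention --- that $A_n(\K)^\ep$ is generated by the quadratic elements, all of which lie in $\pi(\mathfrak{sp}_{2n})+\K$ --- is the shorter argument and is the one implicit in the cited classical fact. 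Net effect: your version is self-contained where the paper leans on a reference, at the cost of some redundancy with the subsequent Proposition.
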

\begin{proof}
This is well-known, see for example \cite{Dixmier}. The normalization involving $\sqrt{-1}$ is chosen to make $\pi$ respect the natural involutions.
\end{proof}

Consider $U(\mathfrak{sp}_{2n})$ as an $R_E$-ring via
\begin{align*}
R_E &\to U(\mathfrak{sp}_{2n}), \\ 
u_i-u_{i+1} &\mapsto h_i, \\ 
u_n-\frac{1}{2} &\mapsto h_n,
\end{align*}
and equip $U(\mathfrak{sp}_{2n})$ with the $\K$-linear Cartan involution on $U(\mathfrak{sp}_{2n})$ satisfying $e_i^\ast=f_i$, $f_i^\ast=e_i$, $h_i^\ast=h_i$ for all $i$. Then the map $\pi$ from Lemma \ref{lem:sp2n-lemma} is a map of $R_E$-rings with involution.

\begin{prp}
There exists a map of $\Z V$-graded $R_E$-rings with involution
\[\psi:U(\mathfrak{sp}_{2n})\to \mathcal{A}(\widetilde{C_n})\] 
given by
\[ 
\begin{aligned}
\psi(e_i)&=X_i \\ 
\psi(e_n)&=\frac{\sqrt{-1}}{2} X_n
\end{aligned}
\qquad 
\begin{aligned}
\psi(f_i)&=Y_i\\ 
\psi(f_n)&=\frac{\sqrt{-1}}{2} Y_n
\end{aligned}
\qquad 
\begin{aligned}
\psi(h_i)&=u_i-u_{i+1} \\ 
\psi(h_n)&=u_n-\frac{1}{2}
\end{aligned}
\] 
for $i\in\iv{1}{n-1}$. Moreover, we have the following commutative diagram:
\begin{equation}
\begin{tikzcd}[ampersand replacement=\&,
               column sep=small]
U(\mathfrak{sp}_{2n})
 \arrow[twoheadrightarrow]{rrd}{\pi}
 \arrow[twoheadrightarrow]{d}[swap]{\psi} 
 \& \&  \\ 
\mathcal{A}(\widetilde{C_n})
 \arrow{rr}{\simeq}[swap]{\varphi_{\widetilde{C_n}}}
 \& \& A_n(\K)^{\ep} 
\end{tikzcd}
\end{equation}
where $\varphi_{\widetilde{C_n}}$ is the canonical representation by differential operators from Section \ref{sec:homomorphism-varphi}.
\end{prp}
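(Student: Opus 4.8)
The plan is to follow the template of the analogous proposition for $\widetilde{A_n}$: show that $\varphi_{\widetilde{C_n}}$ is an isomorphism of $\mathcal{A}(\widetilde{C_n})$ onto $A_n(\K)^\ep$, and then simply take $\psi=\varphi_{\widetilde{C_n}}^{-1}\circ\pi$ and read off its values on generators.

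First I would note that $\widetilde{C_n}$ is connected and has a leaf at vertex $1$, so no connected component is in equilibrium; hence $\varphi_{\widetilde{C_n}}$ is injective by Theorem~\ref{thm:faithfulness}. The underlying undirected graph $\overline{\widetilde{C_n}}$ is a path, in particular acyclic, so by Theorem~\ref{thm:local-surjectivity} the map $\varphi_{\widetilde{C_n}}$ is locally surjective, and therefore its image equals $\bigoplus_{d\in\Z V}A_n(\K)_{\ga(d)}$.

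The only step with genuine content is to identify this $\Z$-graded subalgebra with $A_n(\K)^\ep$. Reading off the incidence matrix, with $E$ identified with $\iv{1}{n}$, we have $\ga(v_i)=e_i-e_{i+1}$ for $1\le i\le n-1$ and $\ga(v_n)=2e_n$. A direct verification then shows that the subgroup $\ga(\Z V)\subseteq\Z E$ generated by these columns is exactly $\{g=\sum_e g_e e\mid \sum_e g_e\in 2\Z\}$: one inclusion is immediate from the generators, and for the converse, given such a $g$, one solves the equations $k_j=g_1+\cdots+g_j$ for $j<n$ together with $k_n=\tfrac12\sum_e g_e$, the last lying in $\Z$ precisely because $\sum_e g_e$ is even. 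Since $\ep$ acts on $A_n(\K)_g$ by the scalar $(-1)^{\sum_e g_e}$, this gives $\im\varphi_{\widetilde{C_n}}=A_n(\K)^\ep$; combined with injectivity, $\varphi_{\widetilde{C_n}}\colon \mathcal{A}(\widetilde{C_n})\to A_n(\K)^\ep$ is an isomorphism of $\Z V$-graded $R_E$-rings with involution.

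Finally I would set $\psi=\varphi_{\widetilde{C_n}}^{-1}\circ\pi$, invoking Lemma~\ref{lem:sp2n-lemma}, which gives $\pi\colon U(\mathfrak{sp}_{2n})\twoheadrightarrow A_n(\K)^\ep$ as a surjective map of $R_E$-rings with involution (and $\Z V$-graded, as one checks on the Chevalley generators). Then $\psi$ is automatically a surjective map of $\Z V$-graded $R_E$-rings with involution satisfying $\varphi_{\widetilde{C_n}}\circ\psi=\pi$, which is exactly the asserted commutative triangle. The stated formulas for $\psi$ follow by computing $\varphi_{\widetilde{C_n}}$ on generators: $\varphi_{\widetilde{C_n}}(X_i)=z^{\ga(v_i)}=x_iy_{i+1}=\pi(e_i)$ for $i<n$, and $\varphi_{\widetilde{C_n}}\bigl(\tfrac{\sqrt{-1}}{2}X_n\bigr)=\tfrac{\sqrt{-1}}{2}z^{2e_n}=\tfrac{\sqrt{-1}}{2}x_n^2=\pi(e_n)$, so that $\psi(e_i)=X_i$ for $i<n$ and $\psi(e_n)=\tfrac{\sqrt{-1}}{2}X_n$; applying the involution gives the values on the $f_i$, and $\psi(h_i)=\pi(h_i)\in R_E$ is unchanged since $\varphi_{\widetilde{C_n}}$ restricts to the identity on $R_E$. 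The main (and essentially only non-routine) obstacle is the lattice identification $\ga(\Z V)=\{g:\sum_e g_e\in 2\Z\}$; everything else is formal and parallels the $\widetilde{A_n}$ argument.
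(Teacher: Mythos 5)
Your proposal is correct and follows essentially the same route as the paper: faithfulness from the leaf (no component in equilibrium), local surjectivity from acyclicity, the lattice identification $\ga(\Z V)=\{g\in\Z E\mid \sum_e g_e\in 2\Z\}$ giving $\mathcal{A}(\widetilde{C_n})\simeq A_n(\K)^\ep$, and then $\psi=\varphi_{\widetilde{C_n}}^{-1}\circ\pi$ checked on generators. You merely spell out the lattice computation and the generator values that the paper leaves as ``one checks,'' which is fine.
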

\begin{proof}
$\widetilde{C_n}$ is connected and not in equilibrium since it has a leaf. Its underlying undirected graph has no cycles. By Theorem \ref{thm:faithfulness} and \ref{thm:local-surjectivity}, $\varphi_\Ga$ is faithful and locally surjective. Thus $\varphi_\Ga$ is an isomorphism $\mathcal{A}(\widetilde{C_n})\simeq \bigoplus_{d\in \Z V} A_n(\K)_{\ga(d)}$.
One checks that $\ga(\Z V)=\{d=\sum_v d_v v\in\Z V\mid \sum_i d_i\in 2\Z\}$. 
Thus 
$\varphi_{\widetilde{C_n}}:\mathcal{A}(\widetilde{C_n})\simeq A_n(\K)^\ep$. It remains to be noted that $\psi$ coincides with $\varphi_{\widetilde{C_n}}^{-1}\circ \pi$ on the generators $e_i,f_i$ of $U(\mathfrak{sp}_{2n})$.
\end{proof}

A weight module $M=\bigoplus_{\mu\in\mathfrak{h}^\ast}M_\mu$ over a semisimple finite-dimensional Lie algebra $\mathfrak{g}$ with Cartan subalgebra $\mathfrak{h}$ is called \emph{completely pointed} if $\dim_\K M_{\mu}\le 1$ for all $\mu\in\mathfrak{h}^\ast$.

\begin{thm}\label{thm:sp2n-quotient}
Assume $\K$ is algebraically closed. Let $M$ be an infinite-dimensional simple completely pointed $\mathfrak{sp}_{2n}$-module. 
Let $J=\Ann_{U(\mathfrak{sp}_{2n})} M$. Then $J=\ker \psi$, and hence the primitive quotient $U(\mathfrak{sp}_{2n})/J$ is isomorphic to $\mathcal{A}(\widetilde{C_n})$ as $\Z V$-graded $R_E$-rings with involution.
\end{thm}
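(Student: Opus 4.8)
The plan is to reduce the statement, via the commutative diagram of the preceding Proposition, to a faithfulness assertion about a module over $\mathcal{A}(\widetilde{C_n})$, and then to dispatch that assertion by the same mechanism used in Theorems~\ref{thm:primitivity} and~\ref{thm:sln-quotient}. Since $\varphi_{\widetilde{C_n}}\colon\mathcal{A}(\widetilde{C_n})\xrightarrow{\ \sim\ }A_n(\K)^\ep$ is an isomorphism and $\pi=\varphi_{\widetilde{C_n}}\circ\psi$, we have $\ker\psi=\ker\pi$, so it suffices to prove $\Ann_{U(\mathfrak{sp}_{2n})}M=\ker\pi$. As $\psi$ is a surjective map of $\Z V$-graded $R_E$-rings with involution, this equality immediately gives the claimed isomorphism $U(\mathfrak{sp}_{2n})/J\simeq\mathcal{A}(\widetilde{C_n})$ of $\Z V$-graded $R_E$-rings with involution.

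First I would invoke the classification of completely pointed modules of Benkart--Britten--Lemire~\cite{BenBriLem1997}: every infinite-dimensional simple completely pointed $\mathfrak{sp}_{2n}$-module $M$ occurs inside a simple weight module over the Weyl algebra $A_n(\K)$, the $\mathfrak{sp}_{2n}$-action being the restriction along $\pi$ (concretely, $M$ is realized on an $\ep$-eigenspace of such a module). In particular the $U(\mathfrak{sp}_{2n})$-action on $M$ factors through $\pi$, so that $M$ is naturally a module over $A_n(\K)^\ep\simeq\mathcal{A}(\widetilde{C_n})$ and $\Ann_{U(\mathfrak{sp}_{2n})}M=\pi^{-1}\!\big(\Ann_{A_n(\K)^\ep}M\big)\supseteq\ker\pi$. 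Hence the theorem reduces to showing that $M$ is a \emph{faithful} $\mathcal{A}(\widetilde{C_n})$-module.

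For faithfulness I would repeat the argument of Theorem~\ref{thm:primitivity}. The algebra $\mathcal{A}(\widetilde{C_n})$ is $\Z V$-graded and is a domain by Corollary~\ref{cor:domain}; since $M$ is a weight module it is $\Z V$-graded, so $I:=\Ann_{\mathcal{A}(\widetilde{C_n})}M$ is a graded ideal, and a nonzero graded ideal of a TGW algebra meets the degree-zero subring $R_E$ nontrivially. Thus it is enough to show $\Ann_{R_E}M=0$. This is proved exactly as in Theorem~\ref{thm:sln-quotient}: by Mathieu's structure theory~\cite{Mathieu2000} there is a set $S$ of commuting roots spanning $\mathfrak h^\ast$ such that each root vector $X_\beta$, $\beta\in S$, acts bijectively on $M$; consequently, for a fixed nonzero weight space $M_\mu$, the maximal ideals $\Ann_{R_E}M_{\mu+\sum_{\beta\in S}n_\beta\beta}$ with $n_\beta\ge0$ correspond to a Zariski-dense subset of $\Specm R_E$, and therefore $\Ann_{R_E}M\subseteq\bigcap_{n_\beta\ge0}\Ann_{R_E}M_{\mu+\sum_{\beta\in S}n_\beta\beta}=0$. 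Hence $I=0$, so $\Ann_{U(\mathfrak{sp}_{2n})}M=\ker\pi=\ker\psi=J$, which completes the proof.

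I expect the main obstacle to be the second step: carefully matching the $A_n(\K)$-realization of $M$ provided by~\cite{BenBriLem1997} with the homomorphism $\pi$ of Lemma~\ref{lem:sp2n-lemma}, keeping track of the automorphism $\ep$ and of the half-integer shift $u_n\mapsto u_n-\tfrac12$ built into $\pi$, so as to justify rigorously that the $U(\mathfrak{sp}_{2n})$-action on $M$ is the restriction of an $A_n(\K)^\ep$-action and hence factors through $\pi$. Once this compatibility is established, the remainder is a routine adaptation of the already-available tools (graded ideals of TGW algebras together with Zariski density of supports of weight modules), exactly parallel to the type $A$ case.
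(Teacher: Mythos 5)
Your proposal is correct and follows essentially the same route as the paper: realize $M$ via \cite{BenBriLem1997} inside a weight module over $A_n(\K)$ so that the action factors through $\pi$, prove $\Ann_{R_E}M=0$ by Mathieu's commuting-roots argument exactly as in Theorem~\ref{thm:sln-quotient}, deduce faithfulness of $M$ over $A_n(\K)^\ep\simeq\mathcal{A}(\widetilde{C_n})$, and read off $J=\ker\pi=\ker\psi$ from the commutative diagram. Your only (harmless) elaboration is making the step from $\Ann_{R_E}M=0$ to faithfulness explicit via the graded-ideal property of TGW algebras, which the paper leaves implicit.
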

\begin{proof}
Let $\alpha:U(\mathfrak{sp}_{2n})\to\End_\K(M)$ be the corresponding representation of the enveloping algebra. Thus $J=\ker \alpha$. By \cite{BenBriLem1997}, $M$ can be realized as an $A_n(\K)^\ep$-invariant subspace of an irreducible weight $A_n(\K)$-module. 
The argument with commuting roots in the proof of Theorem \ref{thm:sln-quotient} can be used here as well for proving $J=\operatorname{Ann}_{U(\mathfrak{sp}_{2n})}M$. Thus $M$ is faithful as a module over $A_n(\K)^\ep$ and we have the following commutative diagram:
\begin{equation}
\begin{tikzcd}[ampersand replacement=\&,
               column sep=small]
U(\mathfrak{sp}_{2n})
 \arrow{rr}{\alpha}
 \arrow[twoheadrightarrow]{rrd}{\pi}
 \arrow[twoheadrightarrow]{d}[swap]{\psi} 
 \& \& \End_\K(M) \\ 
\mathcal{A}(\widetilde{C_n})
 \arrow{rr}{\simeq}[swap]{\varphi}
 \& \& A_n(\K)^{\ep} \ar[hookrightarrow]{u}
\end{tikzcd}
\end{equation}
It follows from this diagram that $J=\ker \pi=\ker \psi$,
which proves the claim.
\end{proof}

\subsection{Primitive quotients of enveloping algebras as TGW algebras}
\label{sec:primitivequotients}

Let $\mathfrak{g}$ be a finite-dimensional simple Lie algebra. Motivated by the maps $\psi$ from Sections \ref{sec:symplectic-lie-algebras} and \ref{sec:special-linear-lie-algebras}, we consider the following question: When does there exist a surjective homomorphism from the universal enveloping algebra $U(\mathfrak{g})$ to a TGW algebra $\TGWA{\mu}{R}{\si}{t}$?
After generalizing the notion of a TGW algebra to allow $\si_i\si_j\neq\si_j\si_i$, we give a complete answer to the question in the case when $\psi(e_i)=X_i$, $\psi(f_i)=Y_i$ and $\ker \psi$ is a primitive ideal of $U(\mathfrak{g})$.

Let $(R,\si,t)$ be a TGW datum with index set $I$, except we do not require that $\si_i$ and $\si_j$ commute. We will let $G$ denote the group generated by $\si_i$. The definition of the TGW algebra $\TGWA{\mu}{R}{\si}{t}$ goes through without any problems. When some $t_i$'s are zero-divisors, there might exist a different choice of automorphisms $\si_i'$ which do commute and give the same algebra.
$(R,\si,t)$, and by abuse of language $\TGWA{\mu}{R}{\si}{t}$, will be called \emph{abelian} or \emph{non-abelian} depending on whether $G$ is abelian or not.
The following lemma shows that regular and  consistent TGW algebras must be abelian.
\begin{lem} Let $D=(R,\si,t)$ be a not necessarily abelian TGW datum. If $D$ is regular and $\mu$-consistent for some $\mu$, then $D$ is abelian.
\end{lem}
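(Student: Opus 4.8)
The plan is to show that the (a priori non-commuting) automorphisms $\si_i$ must in fact commute once we know the datum is regular and $\mu$-consistent. The key structural fact to exploit is the consistency relation \eqref{eq:consistency_rel1}, which under regularity holds in $R$: in the not necessarily abelian setting the proof of Theorem~\ref{thm:consistency} in \cite{FutHar2012b} still yields that $\mu$-consistency forces
\[
\si_i\si_j(t_it_j)=\mu_{ij}\mu_{ji}\,\si_i(t_i)\si_j(t_j),\qquad
\si_j\si_i(t_it_j)=\mu_{ji}\mu_{ij}\,\si_j(t_j)\si_i(t_i),
\]
for all $i\neq j$ (the relevant computations in $\mathcal{C}_\mu(R,\si,t)$ involving only the generators $X_i,Y_i,X_j,Y_j$ and the central elements $t_i,t_j$ go through verbatim, since commutativity of $\si_i$ with $\si_j$ was never used there — only that each $\si_i$ is an automorphism fixing the center pointwise is irrelevant). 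First I would record these two identities, conclude $\si_i\si_j(t_it_j)=\si_j\si_i(t_it_j)$, and more importantly, by tracking the individual computations, derive the sharper relations $\si_i\si_j(t_j)=\si_j\si_i(t_j)$ and $\si_i\si_j(t_i)=\si_j\si_i(t_i)$ when these are available from the second consistency equation; but this alone only controls the automorphism group on the $t_i$, not on all of $R$.

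The real mechanism must be the following: in a regular consistent TGW algebra, by Theorem~\ref{thm:regularity}, each $X_i$ generates a free rank-one left and right $R$-module, and the gradation is regular. Consider the degree-zero calculation $X_iX_j Y_j Y_i$ and $X_j X_i Y_i Y_j$, together with $X_jX_i=\mu_{ij}^{-1}$-twisted commutation with $Y$'s. The cleanest route: for $i\neq j$, relation \eqref{eq:tgwarels3} gives $X_iY_j=\mu_{ij}Y_jX_i$; multiply on the left by $Y_i$ and on the right by $X_j$ and use \eqref{eq:tgwarels1}, \eqref{eq:tgwarels2} to express both sides in degree zero. One obtains an identity of the form $\si_i^{-1}$ applied to something equals $\si_j$ applied to something, which, after using regularity to cancel, pins down $\si_i^{-1}\si_j=\si_j\si_i^{-1}$ on the relevant elements. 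The clean way to make this global is: the relation $X_i r = \si_i(r)X_i$ holding in $A$ for all $r\in R$, combined with $X_iX_j = $ (a nonzero multiple, up to the Serre-type obstruction, of) $X_jX_i$ — but in general $X_iX_j\neq X_jX_i$, so instead I would compute $X_iX_j r$ two ways: $X_iX_j r = X_i\si_j(r)X_j = \si_i\si_j(r)X_iX_j$, while also expanding via moving $r$ through in the other order is not directly available. So the honest approach is to use that $X_iX_j$ is a nonzero element of $A_{i+j}$ which, by regularity, is a non-zero-divisor, and that $X_jX_i$ is another such element; in the degree-$(i+j)$ component, which is a free rank-one $R$-module, write $X_iX_j = p_{ij}\cdot(X_jX_i)$ for some $p_{ij}\in \mathrm{Frac}(R)$ — actually in $R$ after clearing — and then $X_iX_j r = \si_i\si_j(r)X_iX_j = \si_i\si_j(r)p_{ij}X_jX_i$ while $p_{ij}X_jX_i r = p_{ij}\si_j\si_i(r)X_jX_i$. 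Since $p_{ij}$ is central (an element of $R$) and $X_jX_i$ is a non-zero-divisor, cancelling gives $\si_i\si_j(r)\,p_{ij} = p_{ij}\,\si_j\si_i(r)$ for all $r\in R$; as $R$ is commutative this yields $\si_i\si_j(r)=\si_j\si_i(r)$ for all $r$, i.e. $\si_i\si_j=\si_j\si_i$, provided $p_{ij}\neq 0$, which holds since $X_iX_j\neq 0$ by regularity (Theorem~\ref{thm:regularity}(ii)).

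The main obstacle I anticipate is justifying that $X_iX_j$ and $X_jX_i$ lie in the \emph{same} free rank-one $R$-submodule $A_{i+j}$ with a well-defined ratio $p_{ij}\in R$ rather than merely in $\mathrm{Frac}(R)$: this needs that $A_{i+j}$ is genuinely a cyclic $R$-module (not just torsion-free), which is exactly the content of Theorem~\ref{thm:regularity}(iii) once we know consistency — and here one must be slightly careful that the cited theorems from \cite{HarOin2013} do not secretly use abelianness of $\si$. I believe they do not, since their proofs rest on the non-degeneracy of the gradation form (Theorem~\ref{thm:nondeg}) and the structure of reduced monomials (Lemma~\ref{lem:monomials}), neither of which invokes commutativity of the $\si_i$. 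A secondary subtlety: one should make sure $X_iX_j\ne 0$, which again is Theorem~\ref{thm:regularity}(ii). Modulo checking that those structural results survive the weakening, the argument closes quickly. If that check fails for some reason, the fallback is the direct monomial computation with $X_iY_j=\mu_{ij}Y_jX_i$ sketched above, which only uses the defining relations \eqref{eq:tgwarels} and regularity in the form that $t_i,\si_g(t_i)$ are non-zero-divisors, and extracts $\si_i^{-1}\si_j=\si_j\si_i^{-1}$ directly; conjugating by $\si_i\si_j$ then gives $\si_i\si_j=\si_j\si_i$.
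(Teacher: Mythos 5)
Your primary argument has a genuine gap at its pivot: the claim that $X_iX_j=p_{ij}\,X_jX_i$ for some $p_{ij}\in \Frac(R)$ (``actually in $R$ after clearing'') is not justified by what you cite. Theorem \ref{thm:regularity}(iii) (``regularly graded'') only says each homogeneous component contains a nonzero regular element; it does \emph{not} say that $A_{i+j}$ is cyclic over $R$, and cyclicity of homogeneous components is not automatic for TGW algebras -- the paper has to impose it as a separate hypothesis in Proposition \ref{prp:quotients}. Moreover, your cancellation step appeals to commutativity of $R$, which is not part of a TGW datum, and ``clearing denominators'' to land in $R$ is not available. What \emph{is} true is that one can derive directly from \eqref{eq:tgwarels} the identity $\si_j(t_j)X_iX_j=\mu_{ij}^{-1}\si_j\si_i(t_j)X_jX_i$ (this is the computation used in the proof of Corollary \ref{cor:centralizer}), and with that relation a variant of your argument closes; but deriving it is exactly the kind of monomial manipulation your fallback performs anyway, so the detour through a proportionality constant, as justified, does not stand.

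Your fallback, on the other hand, is correct and is essentially the proof the paper intends (the paper only says the check is straightforward from the relations). Spelled out: for $i\neq j$ and $r\in R$, relation \eqref{eq:tgwarels1} gives $X_iY_jr=\si_i\si_j^{-1}(r)X_iY_j$, while \eqref{eq:tgwarels3} followed by \eqref{eq:tgwarels1} gives $X_iY_jr=\mu_{ij}Y_jX_ir=\si_j^{-1}\si_i(r)X_iY_j$, hence $\bigl(\si_i\si_j^{-1}(r)-\si_j^{-1}\si_i(r)\bigr)X_iY_j=0$. Multiplying on the right by $X_jY_i$ and using \eqref{eq:tgwarels1}--\eqref{eq:tgwarels2} gives $X_iY_jX_jY_i=\si_i(t_j)\si_i(t_i)$, so $\rho\bigl((\si_i\si_j^{-1}(r)-\si_j^{-1}\si_i(r))\,\si_i(t_jt_i)\bigr)=0$. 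Note that here you need $\mu$-consistency and not only regularity: injectivity of $\rho$ is what converts this into an identity in $R$ itself; then regularity of $t_i,t_j$ (hence of the central element $\si_i(t_jt_i)$, automorphisms preserving both centrality and regularity) cancels the factor and yields $\si_i\si_j^{-1}=\si_j^{-1}\si_i$, i.e.\ $\si_i\si_j=\si_j\si_i$. Present this as the proof, and drop both the opening appeal to the consistency equations (which, as you yourself note, only controls the action on the $t_i$) and the $p_{ij}$ argument.
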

\begin{proof} Straightforward to check using the relations \eqref{eq:tgwarels}.
\end{proof}

We now prove the main theorem of this section. Recall that a module over a finite-dimensional simple Lie algebra is \emph{completely pointed} if it is a multiplicity free weight module.

\begin{thm}\label{thm:primquotient} Let $\mathfrak g$ be a finite-dimensional simple Lie algebra with
Serre generators $e_i,f_i$, $i=1,\dots,n$ and $J$ be a primitive ideal
of $U(\mathfrak g)$. The following conditions are equivalent:
\begin{enumerate}[{\rm (a)}]
\item There exists a not necessarily abelian TGW algebra $\TGWA{\mu}{R}{\si}{t}$ and a surjective homomorphism:  $\psi: U(\mathfrak g)\to\TGWA{\mu}{R}{\si}{t}$ with kernel $J$ such  
that $\psi(e_i)=X_i$, $\psi(f_i)=Y_i$;
\item There exists a simple completely pointed $\mathfrak g$-module $M$ such that
$\operatorname{Ann}_{U(\mathfrak g)} M=J$.
\end{enumerate}
\end{thm}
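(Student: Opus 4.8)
The strategy is to prove the two implications by quite different routes: $\mathrm{(b)}\Rightarrow\mathrm{(a)}$ will be constructive, relying on the explicit realizations of Sections~\ref{sec:special-linear-lie-algebras}--\ref{sec:symplectic-lie-algebras}, while $\mathrm{(a)}\Rightarrow\mathrm{(b)}$ will manufacture a simple weight module directly from the TGW structure of $U(\mathfrak g)/J$. Throughout we identify the index set $\{1,\dots,n\}$ with the simple roots, so that $\Z V=Q$ is the root lattice; we write $Q^+=\sum_i\Z_{\ge0}\cdot i$ and $\operatorname{ht}$ for the height function on $Q^+$.

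\emph{Proof of $\mathrm{(b)}\Rightarrow\mathrm{(a)}$.} Let $M$ be simple completely pointed with $\Ann_{U(\mathfrak g)}M=J$. First apply the classification of simple multiplicity-free weight modules over finite-dimensional simple Lie algebras (Benkart--Britten--Lemire \cite{BenBriLem1997}, together with Mathieu's analysis \cite{Mathieu2000} of the supports of simple weight modules). If $M$ is infinite-dimensional this forces $\mathfrak g$ to be of type $A_n$ or $C_n$, and then Theorem~\ref{thm:sln-quotient} (for $\mathfrak g=\mathfrak{sl}_{n+1}$), respectively Theorem~\ref{thm:sp2n-quotient} (for $\mathfrak g=\mathfrak{sp}_{2n}$), already identifies $U(\mathfrak g)/J$ as a $\Z V$-graded algebra with $\mathcal A^\la(\widetilde{A_n})$, respectively $\mathcal A(\widetilde{C_n})$; tracing through the maps $\psi$ constructed there, $e_i\mapsto X_i$ and $f_i\mapsto Y_i$ --- in type $C$ after rescaling $X_n,Y_n$ to absorb the normalizing constant $\sqrt{-1}/2$ at the short end of the diagram, which changes the distinguished generators but not the algebra. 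If $M$ is finite-dimensional it is minuscule, $U(\mathfrak g)/J\cong\End_\K(M)$ by the Jacobson density theorem, and one realizes this full matrix algebra as a (in general non-abelian and non-regular) TGW algebra by hand: take $R=\K^{\Supp M}$, let $\si_i$ be the involution of $\Supp M$ that reverses each $\alpha_i$-string (these have length $\le2$ since $M$ is minuscule), let $t_i\in R$ be the diagonal operator induced by $f_ie_i$, set $\mu_{ij}=1$, and let $X_i,Y_i$ be the images of $e_i,f_i$; the relations \eqref{eq:tgwarels} hold because $M$ is multiplicity-free, and since the degree-zero component of the TGW construction equals $\rho(R)$ and embeds into the diagonal operators, the graded radical maps to $0$ and hence $\TGWA{\mu}{R}{\si}{t}\cong\End_\K(M)$.

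\emph{Proof of $\mathrm{(a)}\Rightarrow\mathrm{(b)}$.} Let $\psi\colon U(\mathfrak g)\to A=\TGWA{\mu}{R}{\si}{t}$ be a surjection as in $\mathrm{(a)}$, with kernel $J$. Since $h_i=[e_i,f_i]$, we have $\psi(\mathfrak h)\subseteq[A_i,A_{-i}]\subseteq A_0=\rho(R)$, a commutative subring (Lemma~\ref{lem:monomials}). Now mimic the proof of Theorem~\ref{thm:primitivity}: pick a maximal ideal $\mathfrak m$ of $R$ with $t_i\notin\mathfrak m$ for all $i$ and with trivial $\si$-stabilizer, and let $M$ be the unique simple subquotient of $A\otimes_RR/\mathfrak m$ whose $\mathfrak m$-component is nonzero \cite[Prop.~7.2]{MazPonTur2003}; if $R$ is too small for such an $\mathfrak m$ to exist then $M$ comes out finite-dimensional and $A$ is a matrix algebra, reducing us to the minuscule case above, and if the $\si_i$ do not commute one fixes a word representing each $g\in\Z V$. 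Since $t_i\notin\mathfrak m$, the generators $X_i,Y_i$ act injectively on $M$, so $\Ann_RM=\bigcap_g\si_g(\mathfrak m)=0$; as $\Ann_AM$ is a graded ideal of the TGW algebra $A$ it meets $R$, hence $\Ann_AM=0$, and pulling back along $\psi$ shows $M$ is a simple $\mathfrak g$-module with $\Ann_{U(\mathfrak g)}M=J$. The grading $M=\bigoplus_gM_g$ is a weight-space decomposition: $\psi(h_i)$ acts on $M_g$ by a scalar, and since $[\psi(h_i),X_j]\in\Z\cdot X_j$ the weight of $M_g$ equals $\lambda+g$, where $\lambda$ is the weight of $M_0$, so distinct $g$ give distinct weights. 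It therefore remains to prove $\dim_\K M_g\le1$ for all $g$. Using the involution and Lemma~\ref{lem:monomials} one reduces to the case $g\in Q^+$, where every reduced monomial of degree $g$ is a product of $X_i$'s, so that $M_g=\psi\bigl(U(\mathfrak n^+)_g\bigr)v$ with $v$ a generator of $M_0\cong R/\mathfrak m\cong\K$; and if $A_g=\rho(R)w_g$ is cyclic, then $\psi(U(\mathfrak n^+)_g)=I_gw_g$ for some $\K$-subspace $I_g\subseteq\rho(R)$, whence $M_g=I_gw_gv\subseteq\K\cdot w_gv$ is at most one-dimensional. Thus $\mathrm{(a)}\Rightarrow\mathrm{(b)}$ reduces to the cyclicity of every homogeneous component $A_g$ over $A_0$.

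\emph{The main obstacle.} The heart of the matter is exactly this cyclicity of $A_g$ as a left $\rho(R)$-module --- equivalently, the multiplicity-freeness of the module $M$ constructed above. It is false for general TGW algebras (already $A_{i+j}$ need not be cyclic), so it must be extracted from the hypothesis that $A$ is a homomorphic image of $U(\mathfrak g)$ with $\mathfrak n^\pm$ generated in degree one: for $g\in Q^+$ the component $A_g$ is $\rho(R)$-spanned by monomials in the $X_i$, and one argues by induction on $\operatorname{ht}(g)$ --- using $X_iX_j-X_jX_i=\psi([e_i,e_j])$, the relations \eqref{eq:tgwarels}, and the Serre-type relations of Theorem~\ref{thm:serre-relations} --- that modulo a generic maximal ideal all these monomials collapse to a single one; it is here that the special structure of the primitive ideal $J$ (and ultimately the restriction of types to $A$ and $C$) has to be used. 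The remaining points --- the existence of a maximal ideal in general position, and the bookkeeping required when the $\si_i$ do not commute --- are routine, being automatic when $R$ is large and otherwise absorbed into the matrix-algebra analysis.
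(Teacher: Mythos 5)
Your implication (b)$\Rightarrow$(a) breaks down in the finite-dimensional case. A finite-dimensional simple completely pointed module need \emph{not} be minuscule: for $\mathfrak{sl}_2$ every finite-dimensional simple module is completely pointed, and for $\mathfrak{sl}_{n+1}$ (symmetric powers of the natural module), $\mathfrak{sp}_{2n}$ (the natural module), etc., there are further non-minuscule examples. Consequently the $\alpha_i$-strings in $\Supp M$ can have length $\ge 3$, and your choice of $\si_i$ as the \emph{reversal} of each string then violates the defining relation $X_i r=\si_i(r)X_i$: already for $\mathfrak g=\mathfrak{sl}_2$ and $M$ the three-dimensional module the reversal fixes the projector $E_0$ onto the zero weight space, so the relation would force $X_1E_0$ (which is $e$ restricted to $M_0$, a nonzero map into $M_{\alpha}$) to equal $E_0X_1$ (which vanishes on $M_0$), a contradiction. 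The paper's proof covers all finite-dimensional completely pointed $M$ by taking $R=\End_\K(M)_0$ and letting $\si_i$ \emph{translate} each projector one step along its $\alpha_i$-string, wrapping the end of the string around to the other end; this works for strings of arbitrary length, simplicity of $\End_\K(M)$ then yields the TGW presentation, and the wraparound is exactly what produces the non-abelian groups of Remark \ref{rem:abelian}. (Your argument that the graded radical maps to zero is fine, but it is applied to data that satisfy the relations only in the minuscule case.)

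Your implication (a)$\Rightarrow$(b) is not established either, and you say so yourself: the multiplicity-freeness of the module you construct --- equivalently the cyclicity of each $A_g$ over $A_0$, which you correctly identify as false for general TGW algebras --- is left as ``the main obstacle'' with only a hoped-for induction on height; moreover the existence of a maximal ideal $\mathfrak m$ with all $t_i\notin\mathfrak m$, trivial stabilizer and $\bigcap_g\si_g(\mathfrak m)=0$ is not available for a general (possibly noncommutative, non-abelian) base $R$ --- the appeal to \cite[Prop.~7.2]{MazPonTur2003} and to the argument of Theorem \ref{thm:primitivity} relies on the special polynomial algebra $R_E$ used there. The paper bypasses all of this: since $J$ is primitive, Duflo's theorem provides a simple \emph{highest weight} module $M$ with $\Ann_{U(\mathfrak g)}M=J$; by Dixmier (Prop.~9.6.1) each weight space $M_\gamma$ is a simple $U(\mathfrak g)_0$-module, and since $\psi$ is graded the action on $M_\gamma$ factors through $\psi(U(\mathfrak g)_0)=A_0$, which is commutative (it is generated by the $X_iY_i$ and $\psi(\mathfrak h)$), so $\dim_\K M_\gamma=1$. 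No genericity, no cyclicity of $A_g$, and no restriction to types $A$ and $C$ enters this direction. As it stands, then, both implications have genuine gaps: (b)$\Rightarrow$(a) fails outside the minuscule finite-dimensional case, and (a)$\Rightarrow$(b) rests on an unproved key step.
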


\begin{proof} First, let us prove that (a) implies (b). Let $Q$ denote
  the root lattice of $\mathfrak g$. The
  adjoint action of the Cartan subalgebra $\mathfrak h$ defines
a  $Q$-grading of $U(\mathfrak g)$ and it induces a $Q$-grading on $\TGWA{\mu}{R}{\si}{t}$.
Thus, $\psi$ is a homomorphism of graded algebras and therefore
$\psi(U(\mathfrak g)_0)=R_0$. Note that $R_0$ is generated by
$X_iY_i$ and $\psi(\mathfrak h)$. Hence $R_0$ is commutative.
By Duflo's theorem there exists a simple highest weight $\mathfrak g$-module $M$
whose annihilator is $J$. By Proposition 9.6.1 in [Dix] any weight
space $M_\gamma$ is a simple $R_0$-module. Therefore
$\operatorname{dim} M_\gamma=1$, i.e. $M$ is completely pointed.  

Now let us prove that (b) implies (a). 
Let us assume first that $M$ is infinite-dimensional.
By the classification of completely pointed weight modules \cite{BenBriLem1997},  $\mathfrak g=\mathfrak{sl}_n$ or
$\mathfrak{sp}_{2n}$. Then (a) follows from
Theorem \ref{thm:sln-quotient} and Theorem \ref{thm:sp2n-quotient}.

Thus we may assume that $M$ is finite dimensional. 
Let $\Gamma$ denote the set of weights of $M$ and $\mathcal A=\operatorname{End}(M)$.
The homomorphism $\rho:U(\mathfrak g)\to \mathcal A$ is surjective by
the Jacobson density theorem.
Note that $\mathcal A$ has a $Q$-grading $\mathcal A=\bigoplus_{\alpha\in Q} \mathcal A_\alpha$ defined
by 
$$\mathcal A_\alpha=\{X\in\mathcal A | X M_\gamma\subset M_{\gamma+\alpha}\forall \gamma\in\Gamma\}.$$ 
We set $R=\mathcal A_0$, $X_i=\rho(e_i), Y_i=\rho(f_i)$, $t_i=Y_iX_i$ and claim that
$\mathcal A$ is a TGW algebra with all $\mu_{ij}=1$. Since $\mathcal A$
is simple, it
suffices to prove the existence of automorphisms $\sigma_i$ of $R$
such that  $X_i r=\sigma_i(r) X_i$ and $Y_i r=\sigma_i^{-1}(r) Y_i$
for all $r\in R$.
Let $E_\gamma$ denote the $\mathfrak h$-invariant projector on
$M_\gamma$. Clearly $E_\gamma$ for all $\gamma\in\Gamma$ is a basis of $R$. 
Let $\alpha_i$ denote the simple root (weight of $e_i$).
If $\gamma-\alpha_i\in \Gamma$ we set $\sigma_i(E_\gamma)=E_{\gamma-\alpha_i}$.
If $\gamma-\alpha_i\notin \Gamma$, we set  $\sigma_i(E_\gamma)=E_{\gamma+k_\gamma\alpha_i}$
where $k_\gamma$ is the maximal integer $k$ such that
$\gamma+k\alpha_i\in \Gamma$. In the former case $E_\gamma X_i=X_iE_{\gamma-\alpha_i}$
since $X_iM_{\gamma-\alpha_i}=M_\gamma$. In the latter case $E_\gamma X_i=0$ since 
$M_\gamma\notin X_iM$ and by construction
$X_iE_{\gamma+k_\gamma\alpha_i}=0$. The proof of $Y_ir=\sigma_i^{-1}(r) Y_i$ is similar. That concludes the proof.
\end{proof}

\begin{rem} \label{rem:abelian}
If $M$ is infinite-dimensional then $\sigma_i$ commute by
  our construction of the subalgebra in a Weyl algebra in Theorem \ref{thm:primquotient}.
In the case of finite-dimensional $M$, the choice of
  $\sigma_i$ is not unique. The choice used in our proof usually does not imply that the group $G$ generated by $\sigma_i$ is abelian. For example, if $M$ is a minuscule representation the choice of
  $\sigma_i$ as in the proof of Theorem \ref{thm:primquotient} gives
  $G$ isomorphic to the Weyl group of $\mathfrak g$.
\end{rem}

\subsection{Completely pointed loop modules} \label{sec:affine}
Let $\hat{\mathfrak g}$ denote the affinizaion of $\mathfrak g$. Recall that  
 $$\hat{\mathfrak g}=\mathfrak g\otimes\K[t,t^{-1}]\oplus \K c\oplus\K t\frac{\partial}{\partial t},$$ 
where $c$ is the central element. For any $\mathfrak g$-module $M$ let
$\hat M=M\otimes \K[t,t^{-1}]$ denote the corresponding loop module
(with trivial action of $c$). Recall that Serre generators of
$\hat{\mathfrak g}$ are $e_i\otimes 1,i=1,\dots n,e_0\otimes t$ and  $f_i\otimes 1,i=1,\dots,n,f_0\otimes t^{-1}$,
where is $e_0$ is the lowest and $f_0$ is the higest vector in the
adjoint representation of $\mathfrak g$.

\begin{thm}\label{affine} Let $M$ be a simple completely pointed
  $\mathfrak g$-module, $J=\operatorname{Ann}_{U(\hat{\mathfrak g})}\hat M$.
 There exists a TGW algebra $\TGWA{\mu}{R}{\si}{t}$ and a surjective homomorphism:  $\psi: U(\hat{\mathfrak g})\to\TGWA{\mu}{R}{\si}{t}$ with kernel $J$ such  
then $\psi(e_i\otimes 1)=X_i$, $\psi(f_i\otimes 1)=Y_i$ for
$i=1,\dots n$ and $\psi(e_0\otimes t)=X_0$, $\psi(f_0\otimes t^{-1})=Y_0$.
\end{thm}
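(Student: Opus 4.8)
The plan is to reduce the affine statement to the finite-dimensional one (Theorem \ref{thm:primquotient} and, when $M$ is infinite-dimensional, Theorems \ref{thm:sln-quotient} and \ref{thm:sp2n-quotient}) by exploiting that the loop module $\hat M = M\otimes\K[t,t^{-1}]$ is still a weight module whose weight spaces are $\hat M_{(\gamma,k)} = M_\gamma\otimes \K t^k$, hence still multiplicity-free in the appropriate sense. First I would set up the grading: the adjoint action of the Cartan subalgebra $\hat{\mathfrak h}=\mathfrak h\oplus\K c\oplus\K t\partial/\partial t$ gives $U(\hat{\mathfrak g})$ a grading by the affine root lattice $\hat Q=Q\oplus\Z\delta$, and $\End_{\mathrm{fin}}(\hat M)$ (or the relevant image algebra) inherits it. Setting $\mathcal A = \psi(U(\hat{\mathfrak g}))$, $R = \mathcal A_0$, $X_i=\psi(e_i\otimes 1)$, $Y_i=\psi(f_i\otimes 1)$ for $i=1,\dots,n$, $X_0=\psi(e_0\otimes t)$, $Y_0=\psi(f_0\otimes t^{-1})$, and $t_i = Y_iX_i$, I would argue exactly as in the (b)$\Rightarrow$(a), finite-dimensional case of Theorem \ref{thm:primquotient}: one produces automorphisms $\si_i$ of $R$ with $X_i r = \si_i(r)X_i$ and $Y_i r = \si_i^{-1}(r)Y_i$ by using the $\hat{\mathfrak h}$-invariant projectors onto weight spaces of $\hat M$ as a basis of $R$ and defining $\si_i$ on this basis via the shift by the $i$-th affine simple root (with the "jump to the extreme weight" convention when the shifted weight is not a weight of $\hat M$). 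Since $\mathcal A$ is the image of a primitive quotient, it is simple, so the existence of such $\si_i$ is enough to identify $\mathcal A$ with the (possibly non-abelian) TGW algebra $\TGWA{1}{R}{\si}{t}$, giving $\psi\colon U(\hat{\mathfrak g})\to\TGWA{1}{R}{\si}{t}$ surjective with kernel $J$.

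The first point needing care is that $\hat M$ is infinite-dimensional even when $M$ is finite-dimensional, so $\End(\hat M)$ is too big; the correct target is the image algebra $\mathcal A=\psi(U(\hat{\mathfrak g}))$, and I must check that $J=\mathrm{Ann}_{U(\hat{\mathfrak g})}\hat M$ really is the kernel of the map to $\mathcal A$, i.e. that $\hat M$ is a faithful $\mathcal A$-module. This is clear from $\mathcal A\subseteq\End(\hat M)$. I also need $R=\mathcal A_0$ to be commutative: as in Theorem \ref{thm:primquotient}, $\mathcal A_0$ is generated by $\psi(\hat{\mathfrak h})$ together with the elements $X_iY_i,Y_iX_i$; on each weight line $\hat M_{(\gamma,k)}$ every such element acts by a scalar (here one uses that $\hat M$ is multiplicity-free, which follows from $M$ being completely pointed and $\K[t,t^{-1}]$ contributing one-dimensional pieces graded by $\delta$), so $\mathcal A_0$ consists of operators diagonal in the weight basis and is therefore commutative. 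For the infinite-dimensional $M$ case ($\mathfrak g=\mathfrak{sl}_n$ or $\mathfrak{sp}_{2n}$) I would instead tensor the realization of $M$ inside a Weyl algebra module by $\K[t,t^{-1}]$ and recycle the argument with commuting roots from the proof of Theorem \ref{thm:sln-quotient}/\ref{thm:sp2n-quotient} to get $\mathrm{Ann}_{R}\hat M=0$, then invoke those theorems' constructions extended by the loop variable.

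The genuinely delicate step, and the one I expect to be the main obstacle, is handling the affine node $0$ on equal footing with the others: $e_0\otimes t$ and $f_0\otimes t^{-1}$ are not Serre generators of the form "simple root vector of $\mathfrak g$", so one must verify that the operators $X_0,Y_0$ satisfy the TGW relations $X_0 r=\si_0(r)X_0$, $Y_0 r=\si_0^{-1}(r)Y_0$, $Y_0X_0=t_0$, $X_0Y_0=\si_0(t_0)$, and $X_0Y_j=\mu_{0j}Y_jX_0$ for $j\neq 0$, with a well-defined automorphism $\si_0$ of $R=\mathcal A_0$. The construction of $\si_0$ mimics the one for $\si_i$ using the projectors $E_{(\gamma,k)}$ and the shift by $\alpha_0=\delta-\theta$ (with $\theta$ the highest root), but one has to confirm that this shift is again a bijection-up-to-truncation on the weight support of $\hat M$ and that no zero-divisor subtleties force $\si_0$ to fail to be an automorphism; here the simplicity of $\mathcal A$ and the fact that $\hat M$'s support is a union of full $\alpha_0$-strings (because $M\otimes t^k\to M\otimes t^{k\pm1}$ just shifts the loop grading) are what make it work. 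Once all these relations are in place, the universality of the TGW construction yields the surjection $U(\hat{\mathfrak g})\twoheadrightarrow\TGWA{\mu}{R}{\si}{t}$ with kernel exactly $J$, and one reads off $\mu_{ij}=1$ from the commutativity of $\mathcal A_0$ combined with the $\hat Q$-grading, completing the proof.
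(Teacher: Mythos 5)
Your overall strategy (realize $\mathcal A=\psi(U(\hat{\mathfrak g}))$ inside $\End(\hat M)$, take $X_i,Y_i,t_i=Y_iX_i$, and build automorphisms $\si_i$ of the degree-zero part from weight-space projectors as in Theorem \ref{thm:primquotient}) is the same as the paper's in outline, but there is a genuine gap at the decisive step. To conclude that $\mathcal A$ \emph{is} a TGW algebra you must verify the defining condition that $\mathcal A$ has no nonzero graded ideal intersecting $R$ (equivalently the degree-zero component) trivially; you dispose of this by asserting ``since $\mathcal A$ is the image of a primitive quotient, it is simple.'' That justification fails here: the loop module $\hat M=M\otimes\K[t,t^{-1}]$ is not simple in general, so $J=\Ann_{U(\hat{\mathfrak g})}\hat M$ is not known (and not claimed) to be primitive; moreover, even for a primitive ideal the quotient need not be simple once the module is infinite-dimensional, and $\hat M$ is always infinite-dimensional, so the Jacobson-density argument that gave simplicity in the finite-dimensional case of Theorem \ref{thm:primquotient} is unavailable. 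Without this, your construction only produces a surjection onto some graded $R$-ring satisfying the TGW relations, not onto $\TGWA{\mu}{R}{\si}{t}$ itself, and the identification of the kernel with $J$ is not complete.

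The paper supplies exactly the missing argument: every ideal $I$ of $\mathcal A$ is homogeneous for the $\Z$-grading given by $\ad\psi(t\frac{\partial}{\partial t})$; if $I\cap R=0$ then $I_0\cap R=0$, and since $\mathcal A_0\simeq\K[t\frac{\partial}{\partial t}]\otimes\psi(U(\mathfrak g))$, Theorem \ref{thm:primquotient} forces $I_0=0$; finally, if $I_k\neq 0$ for some $k\neq 0$, multiplying by a suitable $\psi(h\otimes t^{-k})$, $h\in\mathfrak h$, produces a nonzero element of $I_0$, a contradiction. You would need to add an argument of this kind (or another substitute for simplicity). A smaller remark: the ``delicate'' construction of $\si_0$ by $\alpha_0$-shifts of projectors that you anticipate is sidestepped in the paper, which simply sets $\si_0=\si_1^{-a_1}\cdots\si_n^{-a_n}$ (coefficients of the highest root) on $\psi(U(\mathfrak g))_0$ and $\si_0(t\frac{\partial}{\partial t})=t\frac{\partial}{\partial t}+1$, $\si_i(t\frac{\partial}{\partial t})=t\frac{\partial}{\partial t}$ for $i\geq 1$; this yields the needed relations for the affine node with no new string analysis, at the price of the datum being possibly non-abelian, which the theorem permits.
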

\begin{proof} Let $\psi: U(\hat{\mathfrak  g})\to\operatorname{End}(\hat M)$
be the natural homomorphism. We claim that $\mathcal A=\operatorname{Im}\psi$ is a TGW
algebra with $R$ generated by $X_iY_i$, $i=0,\dots,n$ and
$t\frac{\partial}{\partial t}$. It is easy to see that
$R$ is isomorphic to $\K[t\frac{\partial}{\partial  t}]\otimes \psi (U(\mathfrak g))_0$. 
For $i=1,\dots,n$ we define $\sigma_i$ on
$\psi(U(\mathfrak g)_0)$ as in Theorem  \ref{thm:primquotient} and
set $\sigma_0=\sigma_1^{-a_1}\cdots\sigma_n^{-a_n}$
where $a_1,\dots,a_n$ are the coefficients in the decomposition of the
highest root of $\mathfrak g$ into linear combination of simple roots.
We set $\sigma_i(t\frac{\partial}{\partial t})=t\frac{\partial}{\partial t}$ for $i=1,\dots,n$, 
and $\sigma_0(t\frac{\partial}{\partial t})=t\frac{\partial}{\partial t}+1$.
All relations are clear from the construction and it remains to show
that  $\mathcal A$ does not have non-zero ideals with zero
intersection with $R$. 

We use the $\mathbb Z$-grading of  $\mathcal A$ induced 
by the adjoint action of $t\frac{\partial}{\partial t}$. By our
construction $\mathcal A_0=\K[t\frac{\partial}{\partial  t}]\otimes \psi (U(\mathfrak g))$.
Every ideal $I$ in $\mathcal A$ is homogeneous.  Since
$R\subset \mathcal A_0$ we have $I\cap R=I_0\cap R$ for any ideal
$I\subset\mathcal A$. Let $I$ be  ideal of $\mathcal A$ such that
$I\cap R=0$. Then $I_0\cap R=0$. Hence $I_0=0$ as follows easily from  Theorem \ref{thm:primquotient}. 
Suppose $I_0=0$ but $I_k\neq 0$ for some $k$. There exists $h\in\mathfrak h$ such that 
$\psi(h\otimes t^{-k})$ is invertible in $\operatorname{End}(M)$. Then $\psi(h\otimes t^{-k})I_k\neq 0$,
which is a contradiction.
\end{proof}

\section{Appendix: Relation to a previous family of TGW algebras}
\label{sec:symmetric}
The following TGW algebras (along with quantum analogues) were first defined in \cite{Hartwig2010}. We show that modulo a graded ideal, they are special cases of $\mathcal{A}(\Ga)$.

Let $n$ be a positive integer and $C=(a_{ij})_{i,j=1}^n$ be an $n\times n$ symmetric generalized Cartan matrix (GCM). Define a TGW datum $(R_C,\si_C,t_C)$ as follows:
\begin{gather}
R_C=\K[H_{ij}^{(k)}\mid 1\le i<j\le n,\; k=a_{ij},a_{ij}+2,\ldots,|a_{ij}|] \\
\si_C=(\si_i)_{i=1}^n,\quad \si_r(H_{ij}^{(k)})=
\begin{cases}
H_{ij}^{(k)}+H_{ij}^{(k-2)},& \text{$r=j$ and $k>a_{ij}$,}\\
H_{ij}^{(k)}-H_{ij}^{(k-2)}+H_{ij}^{(k-4)}-\cdots \pm H_{ij}^{(a_{ij})}, & r=i,\\ 
H_{ij}^{(k)},& \text{otherwise,}
\end{cases} \\ 
t_C=(t_i)_{i=1}^n,\quad
t_i=H_{i1}H_{i2}\cdots H_{in},\quad
H_{ij}=
\begin{cases}
H_{ij}^{(|a_{ij}|)}, & i<j, \\ 
1,&i=j,\\
\si_i^{-1}(H_{ji}^{(|a_{ij}|)}), & i>j.
\end{cases}
\end{gather}
Define $\mathcal{T}(C)=\mathcal{A}(R_C,\si_C,t_C)$.
The main point of these algebras is that the GCM associated to $(R_C,\si_C,t_C)$ is exactly $C$ (see \cite{Hartwig2010} for details).

We now define a multiquiver $\Ga_C=(V,E,s,t)$ associated to $C$. Let 
\begin{gather}
V=\{1,2,\ldots,n\},\qquad  E=\big\{(i,j)\in V\times V\mid i<j, a_{ij}\neq 0\big\},\\ 
s((i,j))=(i,|a_{ij}|),\quad t((i,j))=(j,|a_{ij}|)\quad \forall (i,j)\in E.
\end{gather}
Note that $\Ga_C$ is a symmetric simple quiver. Conversely, one can show that any finite symmetric simple quiver satisfying those two conditions is sign-equivalent to $\Ga_C$ for some symmetric GCM $C$. In what follows we identify the group $\Z V$ with $\Z^n$ in the obvious way.

\begin{prp}
Let $C$ be a symmetric GCM and $\Ga_C$ the corresponding multiquiver as defined above.
Then there is a surjective homomorphism of $\Z^n$-graded $\K$-algebras
\[\mathcal{F}:\mathcal{T}(C)\to \mathcal{A}(\Ga_C).\] 
satisfying $\mathcal{F}(X_i)=X_i$ and $\mathcal{F}(Y_i)=Y_i$ for all $i\in V$.
\end{prp}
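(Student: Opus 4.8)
The plan is to build $\mathcal{F}$ in two stages: first on the level of TGW \emph{constructions}, then descend to the algebras by the same gradation--form argument used in the proof of Theorem~\ref{thm:map_to_Weyl_algebra}. For the first stage it suffices to produce a $\K$-algebra homomorphism $f\colon R_C\to R_E$ that is a morphism of the underlying TGW data, i.e.\ $f\circ\si_r^C=\si_r^{\Ga_C}\circ f$ and $f(t_r^C)=t_r^{\Ga_C}$ for all $r\in V$, where $t_r^C,t_r^{\Ga_C}$ denote the $t$-elements of $(R_C,\si_C,t_C)$ and $(R_E,\si^{\Ga_C},t^{\Ga_C})$. Given such an $f$, the assignments $X_i\mapsto X_i$, $Y_i\mapsto Y_i$, $r\mapsto f(r)$ send each defining relation \eqref{eq:tgwarels} of $\TGWC{\mu}{R_C}{\si_C}{t_C}$ to $0$ in $\mathcal{A}(\Ga_C)$ --- relation \eqref{eq:tgwarels3} because all $\mu$-entries equal $1$ on both sides --- and hence induce a homomorphism of $\Z^n$-graded $R_C$-rings with involution $\TGWC{\mu}{R_C}{\si_C}{t_C}\to\mathcal{A}(\Ga_C)$.

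To define $f$, fix $i<j$ with $a_{ij}=a_{ji}\neq 0$, put $m=|a_{ij}|>0$, let $e=(i,j)\in E$ be the corresponding edge, and recall $\ga_{ei}=-m$, $\ga_{ej}=m$, so that on $\K[u_e]\subseteq R_E$ one has $\si_i^{\Ga_C}(u_e)=u_e+m$ and $\si_j^{\Ga_C}(u_e)=u_e-m$; in particular $\si_i^{\Ga_C}$ and $\si_j^{\Ga_C}$ restrict to mutually inverse automorphisms of $\K[u_e]$ --- this holds because $\ga_{ei}+\ga_{ej}=0$, i.e.\ because $\Ga_C$ is a symmetric quiver, and it is the only place symmetry of $C$ is used. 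With $D_j=\si_j^{\Ga_C}-\Id$ on $\K[u_e]$, set
\[
f\big(H_{ij}^{(m-2l)}\big)=D_j^{\,l}\Big(\textstyle\prod_{p=1}^{m}(u_e-p)\Big),\qquad l=0,1,\ldots,m,
\]
and $f(H_{ij}^{(0)})=1$ when $a_{ij}=0$; since $R_C$ is a polynomial algebra this defines $f$ uniquely. The identity $f\circ\si_r^C=\si_r^{\Ga_C}\circ f$ is then checked on the generators $H_{ij}^{(k)}$: for $r\notin\{i,j\}$ both sides act trivially; for $r=j$ the recursion $\si_j(H_{ij}^{(k)})=H_{ij}^{(k)}+H_{ij}^{(k-2)}$ translates into $\si_j^{\Ga_C}=\Id+D_j$ applied to $f(H_{ij}^{(k)})$, and the truncation $\si_j(H_{ij}^{(a_{ij})})=H_{ij}^{(a_{ij})}$ into $D_j^{\,m+1}\big(\prod_p(u_e-p)\big)=0$ (degree count); for $r=i$ the alternating-sum law $\si_i(H_{ij}^{(k)})=\sum_{l'\ge 0}(-1)^{l'}H_{ij}^{(k-2l')}$ translates into $\sum_{l'\ge 0}(-1)^{l'}D_j^{\,l'}=(\Id+D_j)^{-1}=(\si_j^{\Ga_C})^{-1}=\si_i^{\Ga_C}$ on $\K[u_e]$, using the inverse-automorphism observation. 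Finally $f(t_r^C)=t_r^{\Ga_C}$ is verified factor by factor over the edges incident to $r$: a factor $H_{rq}^{(m)}$ of $t_r^C$ (case $r<q$) maps to $\prod_{p=1}^m(u_e-p)=u_{er}$, and a factor $\si_r^{-1}(H_{qr}^{(m)})$ of $t_r^C$ (case $q<r$) maps to $\si_r^{-1}\big(\prod_{p=1}^m(u_e-p)\big)=u_e(u_e+1)\cdots(u_e+m-1)=u_{er}$, which is exactly the factorization \eqref{eq:t-definition} of $t_r^{\Ga_C}$.

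For the descent, the construction-level map $\TGWC{\mu}{R_C}{\si_C}{t_C}\to\mathcal{A}(\Ga_C)$ respects the involutions, so if $a$ is a homogeneous element of the radical $\TGWI{\mu}{R_C}{\si_C}{t_C}$ of the gradation form (Theorem~\ref{thm:nondeg}), then $aa^\ast=\mathfrak{p}_0(aa^\ast)=0$, whence the image of $a$ annihilates the image of $a^\ast$ and therefore vanishes, $\mathcal{A}(\Ga_C)$ being a domain (Corollary~\ref{cor:domain}). Since $\TGWI{\mu}{R_C}{\si_C}{t_C}$ is spanned by homogeneous elements, the map factors through a homomorphism of $\Z^n$-graded $\K$-algebras $\mathcal{F}\colon\mathcal{T}(C)\to\mathcal{A}(\Ga_C)$ with $\mathcal{F}(X_i)=X_i$, $\mathcal{F}(Y_i)=Y_i$. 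It is surjective: taking $l=m-1$ in the display above, $f\big(H_{ij}^{(-m+2)}\big)=D_j^{\,m-1}\big(\prod_{p=1}^m(u_e-p)\big)$ is a polynomial in $u_e$ of degree $1$ with nonzero leading coefficient $(-m)^{m-1}m!$, so $u_e\in\operatorname{im} f$ for every $e\in E$; hence $R_E\subseteq\operatorname{im}\mathcal{F}$, and since $\mathcal{A}(\Ga_C)$ is generated as a $\K$-algebra by $R_E$ together with the $X_i,Y_i$ (Lemma~\ref{lem:monomials}), $\mathcal{F}$ is onto.

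The main obstacle is the middle step: pinning down the target of each $H_{ij}^{(k)}$ and verifying the three families of $\si_C$-relations together with the $t_r$-identity. The shortcut that keeps this manageable is that on $\K[u_e]$ the automorphisms $\si_i^{\Ga_C}$ and $\si_j^{\Ga_C}$ are mutually inverse (equivalently $\ga_{ei}+\ga_{ej}=0$), so once $f$ is forced to intertwine $\si_j$ --- which it is by the defining formula together with $f(H_{ij}^{(|a_{ij}|)})=u_{ei}$ --- it automatically intertwines $\si_i$ as well, and everything else is bookkeeping.
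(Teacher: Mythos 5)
Your proof is correct and follows essentially the same route as the paper: the map $f$ you define on the generators $H_{ij}^{(|a_{ij}|-2l)}$ is exactly the paper's $F(H_{ij}^{(|a_{ij}|-2k)})=(\si_j-\Id)^k(u_{(i,j),i})$, and the intertwining, $t$-element, and degree-one surjectivity checks coincide with the paper's. The only (harmless) difference is that where the paper invokes functoriality and \cite[Cor.~3.2]{FutHar2012b} to descend to the TGW algebras and get surjectivity of $\mathcal{F}$, you redo these steps in a self-contained way via the gradation-form/domain argument of Theorem~\ref{thm:map_to_Weyl_algebra} and Lemma~\ref{lem:monomials}.
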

\begin{proof}
Put $\Ga=\Ga_C$. Let $F:R_C\to R_E$ be the unique $\K$-algebra homomorphism determined by
\begin{equation}
F(H_{ij}^{(|a_{ij}|-2k)})=(\si_j-\Id)^k(u_{(i,j),i}),\qquad\forall i<j,\; k=0,1,\ldots,|a_{ij}|.
\end{equation}
By definition, $F\circ (\si_j-\Id) = (\si_j^\Ga-\Id)\circ F$ on $H_{ij}^{(k)}$. So $F\circ \si_j=\si_j^\Ga\circ F$ on $H_{ij}^{(k)}$. Then $F\circ \si_i = \si_i^\Ga\circ F$ on $H_{ij}^{(k)}$ too since $\si_i\si_j=\Id$ on $H_{ij}^{(k)}$. Also, $F\circ\si_r=\si_r^\Ga\circ F$ on $H_{ij}^{(k)}$ for $r\notin\{i,j\}$ because $\si_r(u_{(i,j)})=u_{(i,j)}$ for $r\notin\{i,j\}$. This shows that $F\circ\si_i = \si_i^\Ga\circ F$ on $R_C$. Furthermore, by a direct calculation using that $\si_j(u_{(i,j)},j)=u_{(i,j),i}$ one verifies that $F(t_i)=t_i^\Ga$ for all $i$. This means that $F$ is a morphism of TGW data, as defined in \cite{FutHar2012b}, and that applying the functor $\mathcal{A}$ from \cite{FutHar2012b} we obtain a homomorphism
\[\mathcal{F}:\mathcal{A}(R_C,\si_C,t_C)\to\mathcal{A}(\Ga)\]
of $\Z^n$-graded algebras.
One checks that $F(H_{ij}^{(2-|a_{ij}|)})$ is a polynomial in $u_{(i,j)}$ of degree $1$ for any $(i,j)\in E$. This proves that $F$ is surjective. By \cite[Cor.~3.2]{FutHar2012b}, $\mathcal{F}$ is also surjective.
\end{proof}

\bibliographystyle{siam} 

\begin{thebibliography}{10}

\bibitem{Bavula1992}
{\sc V.~V. Bavula}, {\em {Generalized Weyl algebras and their representations}}, Algebra i Analiz, 4 (1992), pp.~75--97; English transl. in St. Petersburg Math. J. 4 (1993), no. 1, 71-92.

\bibitem{BavJor2000}
{\sc V.~V. Bavula and D.~A. Jordan}, {\em {Isomorphism problems and groups of
  automorphisms for generalized Weyl algebras}}, Transactions of the American
  Mathematical Society, 353 (2000), pp.~769--794.

\bibitem{BenBriLem1997}
{\sc G.~Benkart, D.~J. Britten, and F.~Lemire}, {\em {Modules with bounded
  weight multiplicities for simple Lie algebras}}, Math. Zeit., 225 (1997),
  pp.~333--353.
  
\bibitem{Block1981}
{\sc R.~Block},
{\em {The irreducible representations of the Lie algebra $\mathfrak{sl}(2)$ and of the Weyl algebra}}, Advances in Mathematics, 39 (1981), 69--110.

\bibitem{CasShe2004}
{\sc T.~Cassidy and B.~Shelton}, {\em {Basic properties of generalized down-up
  algebras}}, J. Algebra, 279 (2004), pp.~402--421.

\bibitem{Dixmier}
{\sc J.~Dixmier}, {\em {Enveloping Algebras}}. Revised reprint of the 1977 translation. Graduate Studies in Mathematics, 11. American Mathematical Society, Providence, RI, 1996. xx+379 pp. ISBN: 0-8218-0560-6

\bibitem{FutHar2012a}
{\sc V.~Futorny and J.~T. Hartwig}, {\em {Multiparameter twisted Weyl algebras}}, J. Algebra, 357 (2012), pp.~69--93.

\bibitem{FutHar2012b}
{\sc V.~Futorny and J.~T. Hartwig}, {\em {On the consistency of twisted
  generalized Weyl algebras}}, Proceedings of the American Mathematical
  Society, 140 (2012), pp.~3349--3363.

\bibitem{Hartwig2006}
{\sc J.~T. Hartwig}, {\em {Locally finite simple weight modules over twisted
  generalized Weyl algebras}}, Journal of Algebra, 303 (2006), pp.~42--76.


\bibitem{Hartwig2010}
\leavevmode\vrule height 2pt depth -1.6pt width 23pt, {\em {Twisted generalized
  Weyl algebras, polynomial Cartan matrices and Serre-type relations}},
  Communications in Algebra, 38 (2010), pp.~4375--4389.

\bibitem{HarOin2013}
{\sc J.~T. Hartwig and J.~\"{O}inert}, {\em {Simplicity and maximal commutative
  subalgebras of twisted generalized Weyl algebras}},
  Journal of Algebra, 373 (2013), pp.~312--339.

\bibitem{Kac}
{\sc V. Kac}, {\em {Infinite dimensional Lie algebras}}, 3rd ed., Cambridge University Press, 1994.

\bibitem{Mathieu2000}
{\sc O.~Mathieu}, {\em {Classification of irreducible weight modules}}, Annales
  de l'institut Fourier, 50 (2000), pp.~537--592.

\bibitem{MazPonTur2003}
{\sc V.~Mazorchuk, M.~Ponomarenko, and L.~Turowska}, {\em {Some associative
  algebras related to $U(\mathfrak{g})$ and twisted generalized Weyl
  algebras}}, Mathematica Scandinavica, 92 (2003), pp.~5--30.

\bibitem{MazTur1999}
{\sc V.~Mazorchuk and L.~Turowska}, {\em {Simple weigth modules over twisted
  generalized Weyl algebras}}, Communications in Algebra, 27 (1999),
  pp.~2613--2625.

\bibitem{MazTur2002}
\leavevmode\vrule height 2pt depth -1.6pt width 23pt, {\em {*-Representations
  of twisted generalized Weyl constructions}}, Algebras and Representation
  Theory, 5 (2002), pp.~2163--186.

\bibitem{NauVan2008}
{\sc E.~Nauwelaerts, F.~Van Oystaeyen}
{\em {Introducing Crystalline Graded Algebras}},
Algebras and Representation Theory,
11 Issue 2 (2008), pp.~133--148.

\bibitem{Sergeev2001}
{\sc A.~Sergeev}, {\em {Enveloping algebra U(gl(3)) and orthogonal polynomials
  in several discrete indeterminates}}, in "Noncommutative structures in
  mathematics and physics", Proc. NATO Advanced Research Workshop, Kiev, 2000,
  S.~Duplij and J.~Wess, eds., Kluwer, Feb. 2001, pp.~113--124. Preprint http://arxiv.org/abs/math/0202182.
\end{thebibliography}

\end{document}